\numberwithin{equation}{section}
\numberwithin{figure}{section}
\numberwithin{table}{section}
\theoremstyle{plain}
\newtheorem{thm}{\protect\theoremname}[section]
\theoremstyle{plain}
\newtheorem{lem}[thm]{\protect\lemmaname}
\theoremstyle{plain}
\newtheorem{cor}[thm]{\protect\corollaryname}
\theoremstyle{definition}
\newtheorem{defn}[thm]{\protect\definitionname}
\theoremstyle{remark}
\newtheorem{rem}[thm]{\protect\remarkname}
\theoremstyle{plain}
\newtheorem{prop}[thm]{\protect\propositionname}
\theoremstyle{definition}
\newtheorem{example}[thm]{\protect\examplename}
\theoremstyle{remark}
\newtheorem{notation}[thm]{\protect\notationname}
\subjclass[2010]{Primary: 14D23, Secondary: 13F25, 14H30}
\DeclareMathOperator{\Add}{\textup{Add}}
\DeclareMathOperator{\Aff}{\textup{Aff}}
\DeclareMathOperator{\Alg}{\textup{Alg}}
\DeclareMathOperator{\Ann}{\textup{Ann}}
\DeclareMathOperator{\Arr}{\textup{Arr}}
\DeclareMathOperator{\Art}{\textup{Art}}
\DeclareMathOperator{\Ass}{\textup{Ass}}
\DeclareMathOperator{\Aut}{\textup{Aut}}
\DeclareMathOperator{\Autsh}{\underline{\textup{Aut}}}
\DeclareMathOperator{\Bi}{\textup{B}}
\DeclareMathOperator{\CAdd}{\textup{CAdd}}
\DeclareMathOperator{\CAlg}{\textup{CAlg}}
\DeclareMathOperator{\CMon}{\textup{CMon}}
\DeclareMathOperator{\CPMon}{\textup{CPMon}}
\DeclareMathOperator{\CRings}{\textup{CRings}}
\DeclareMathOperator{\CSMon}{\textup{CSMon}}
\DeclareMathOperator{\CaCl}{\textup{CaCl}}
\DeclareMathOperator{\Cart}{\textup{Cart}}
\DeclareMathOperator{\Cl}{\textup{Cl}}
\DeclareMathOperator{\Coh}{\textup{Coh}}
\DeclareMathOperator{\Coker}{\textup{Coker}}
\DeclareMathOperator{\Cov}{\textup{Cov}}
\DeclareMathOperator{\Der}{\textup{Der}}
\DeclareMathOperator{\Div}{\textup{Div}}
\DeclareMathOperator{\End}{\textup{End}}
\DeclareMathOperator{\Endsh}{\underline{\textup{End}}}
\DeclareMathOperator{\Ext}{\textup{Ext}}
\DeclareMathOperator{\Extsh}{\underline{\textup{Ext}}}
\DeclareMathOperator{\FAdd}{\textup{FAdd}}
\DeclareMathOperator{\FCoh}{\textup{FCoh}}
\DeclareMathOperator{\FGrad}{\textup{FGrad}}
\DeclareMathOperator{\FLoc}{\textup{FLoc}}
\DeclareMathOperator{\FMod}{\textup{FMod}}
\DeclareMathOperator{\FPMon}{\textup{FPMon}}
\DeclareMathOperator{\FRep}{\textup{FRep}}
\DeclareMathOperator{\FSMon}{\textup{FSMon}}
\DeclareMathOperator{\FVect}{\textup{FVect}}
\DeclareMathOperator{\Fibr}{\textup{Fibr}}
\DeclareMathOperator{\Fix}{\textup{Fix}}
\DeclareMathOperator{\Fl}{\textup{Fl}}
\DeclareMathOperator{\Fr}{\textup{Fr}}
\DeclareMathOperator{\Funct}{\textup{Funct}}
\DeclareMathOperator{\GAlg}{\textup{GAlg}}
\DeclareMathOperator{\GExt}{\textup{GExt}}
\DeclareMathOperator{\GHom}{\textup{GHom}}
\DeclareMathOperator{\GL}{\textup{GL}}
\DeclareMathOperator{\GMod}{\textup{GMod}}
\DeclareMathOperator{\GRis}{\textup{GRis}}
\DeclareMathOperator{\GRiv}{\textup{GRiv}}
\DeclareMathOperator{\Gal}{\textup{Gal}}
\DeclareMathOperator{\Gl}{\textup{Gl}}
\DeclareMathOperator{\Grad}{\textup{Grad}}
\DeclareMathOperator{\Hilb}{\textup{Hilb}}
\DeclareMathOperator{\Hl}{\textup{H}}
\DeclareMathOperator{\Hom}{\textup{Hom}}
\DeclareMathOperator{\Homsh}{\underline{\textup{Hom}}}
\DeclareMathOperator{\ISym}{\textup{Sym}^*}
\DeclareMathOperator{\Imm}{\textup{Im}}
\DeclareMathOperator{\Irr}{\textup{Irr}}
\DeclareMathOperator{\Iso}{\textup{Iso}}
\DeclareMathOperator{\Isosh}{\underline{\textup{Iso}}}
\DeclareMathOperator{\Ker}{\textup{Ker}}
\DeclareMathOperator{\LAdd}{\textup{LAdd}}
\DeclareMathOperator{\LAlg}{\textup{LAlg}}
\DeclareMathOperator{\LMon}{\textup{LMon}}
\DeclareMathOperator{\LPMon}{\textup{LPMon}}
\DeclareMathOperator{\LRings}{\textup{LRings}}
\DeclareMathOperator{\LSMon}{\textup{LSMon}}
\DeclareMathOperator{\Left}{\textup{L}}
\DeclareMathOperator{\Lex}{\textup{Lex}}
\DeclareMathOperator{\Loc}{\textup{Loc}}
\DeclareMathOperator{\M}{\textup{M}}
\DeclareMathOperator{\ML}{\textup{ML}}
\DeclareMathOperator{\MLex}{\textup{MLex}}
\DeclareMathOperator{\Map}{\textup{Map}}
\DeclareMathOperator{\Mod}{\textup{Mod}}
\DeclareMathOperator{\Mon}{\textup{Mon}}
\DeclareMathOperator{\Ob}{\textup{Ob}}
\DeclareMathOperator{\Obj}{\textup{Obj}}
\DeclareMathOperator{\PDiv}{\textup{PDiv}}
\DeclareMathOperator{\PGL}{\textup{PGL}}
\DeclareMathOperator{\PML}{\textup{PML}}
\DeclareMathOperator{\PMLex}{\textup{PMLex}}
\DeclareMathOperator{\PMon}{\textup{PMon}}
\DeclareMathOperator{\Pic}{\textup{Pic}}
\DeclareMathOperator{\Picsh}{\underline{\textup{Pic}}}
\DeclareMathOperator{\Pro}{\textup{Pro}}
\DeclareMathOperator{\Proj}{\textup{Proj}}
\DeclareMathOperator{\QAdd}{\textup{QAdd}}
\DeclareMathOperator{\QAlg}{\textup{QAlg}}
\DeclareMathOperator{\QCoh}{\textup{QCoh}}
\DeclareMathOperator{\QMon}{\textup{QMon}}
\DeclareMathOperator{\QPMon}{\textup{QPMon}}
\DeclareMathOperator{\QRings}{\textup{QRings}}
\DeclareMathOperator{\QSMon}{\textup{QSMon}}
\DeclareMathOperator{\R}{\textup{R}}
\DeclareMathOperator{\Rep}{\textup{Rep}}
\DeclareMathOperator{\Rings}{\textup{Rings}}
\DeclareMathOperator{\Riv}{\textup{Riv}}
\DeclareMathOperator{\SFibr}{\textup{SFibr}}
\DeclareMathOperator{\SMLex}{\textup{SMLex}}
\DeclareMathOperator{\SMex}{\textup{SMex}}
\DeclareMathOperator{\SMon}{\textup{SMon}}
\DeclareMathOperator{\SchI}{\textup{SchI}}
\DeclareMathOperator{\Sh}{\textup{Sh}}
\DeclareMathOperator{\Soc}{\textup{Soc}}
\DeclareMathOperator{\Spec}{\textup{Spec}}
\DeclareMathOperator{\Specsh}{\underline{\textup{Spec}}}
\DeclareMathOperator{\Stab}{\textup{Stab}}
\DeclareMathOperator{\Supp}{\textup{Supp}}
\DeclareMathOperator{\Sym}{\textup{Sym}}
\DeclareMathOperator{\TMod}{\textup{TMod}}
\DeclareMathOperator{\Top}{\textup{Top}}
\DeclareMathOperator{\Tor}{\textup{Tor}}
\DeclareMathOperator{\Vect}{\textup{Vect}}
\DeclareMathOperator{\alt}{\textup{ht}}
\DeclareMathOperator{\car}{\textup{char}}
\DeclareMathOperator{\codim}{\textup{codim}}
\DeclareMathOperator{\degtr}{\textup{degtr}}
\DeclareMathOperator{\depth}{\textup{depth}}
\DeclareMathOperator{\divis}{\textup{div}}
\DeclareMathOperator{\et}{\textup{et}}
\DeclareMathOperator{\ffpSch}{\textup{ffpSch}}
\DeclareMathOperator{\h}{\textup{h}}
\DeclareMathOperator{\ilim}{\displaystyle{\lim_{\longrightarrow}}}
\DeclareMathOperator{\ind}{\textup{ind}}
\DeclareMathOperator{\indim}{\textup{inj dim}}
\DeclareMathOperator{\lf}{\textup{LF}}
\DeclareMathOperator{\op}{\textup{op}}
\DeclareMathOperator{\ord}{\textup{ord}}
\DeclareMathOperator{\pd}{\textup{pd}}
\DeclareMathOperator{\plim}{\displaystyle{\lim_{\longleftarrow}}}
\DeclareMathOperator{\pr}{\textup{pr}}
\DeclareMathOperator{\pt}{\textup{pt}}
\DeclareMathOperator{\rk}{\textup{rk}}
\DeclareMathOperator{\tr}{\textup{tr}}
\DeclareMathOperator{\type}{\textup{r}}
\DeclareMathOperator*{\colim}{\textup{colim}}
\theoremstyle{plain}
\newtheorem{thma}{Theorem}
\newtheorem{thmb}{Theorem}
\providecommand{\corollaryname}{Corollary}
\providecommand{\definitionname}{Definition}
\providecommand{\examplename}{Example}
\providecommand{\lemmaname}{Lemma}
\providecommand{\notationname}{Notation}
\providecommand{\propositionname}{Proposition}
\providecommand{\remarkname}{Remark}
\providecommand{\theoremname}{Theorem}
\begin{document}
\title{Moduli of formal torsors}
\author{Fabio Tonini, Takehiko Yasuda}
\address{Universitá degli Studi di Firenze, Dipartimento di Matematica e Informatica
\textquoteright Ulisse Dini\textquoteright , Viale Giovanni Battista
Morgagni, 67/A, 50134 Firenze, Italy}
\email{fabio.tonini@unifi.it}
\address{Department of Mathematics, Graduate School of Science, Osaka University,
Toyonaka, Osaka 560-0043, Japan}
\email{takehikoyasuda@math.sci.osaka-u.ac.jp}
\keywords{power series field, the Artin-Schreier theory, Deligne-Mumford stacks,
torsors}
\begin{abstract}
We construct the moduli stack of torsors over the formal punctured
disk in characteristic $p>0$ for a finite group isomorphic to the
semidirect product of a $p$-group and a tame cyclic group. We prove
that the stack is a limit of separated Deligne-Mumford stacks with
finite and universally injective transition maps.
\end{abstract}

\maketitle
\global\long\def\FLAlg{\textup{FLAlg}}%

\global\long\def\PLAlg{\textup{PLAlg}}%

\global\long\def\A{\mathbb{A}}%

\global\long\def\Ab{(\textup{Ab})}%

\global\long\def\C{\mathbb{C}}%

\global\long\def\Cat{(\textup{cat})}%

\global\long\def\Di#1{\textup{D}(#1)}%

\global\long\def\E{\mathcal{E}}%

\global\long\def\F{\mathbb{F}}%

\global\long\def\GCov{G\textup{-Cov}}%

\global\long\def\Gcat{(\textup{Galois cat})}%

\global\long\def\Gfsets#1{#1\textup{-fsets}}%

\global\long\def\Gm{\mathbb{G}_{m}}%

\global\long\def\GrCov#1{\textup{D}(#1)\textup{-Cov}}%

\global\long\def\Grp{(\textup{Grps})}%

\global\long\def\Gsets#1{(#1\textup{-sets})}%

\global\long\def\HCov{H\textup{-Cov}}%

\global\long\def\MCov{\textup{D}(M)\textup{-Cov}}%

\global\long\def\MHilb{M\textup{-Hilb}}%

\global\long\def\N{\mathbb{N}}%

\global\long\def\PGor{\textup{PGor}}%

\global\long\def\PGrp{(\textup{Profinite Grp})}%

\global\long\def\PP{\mathbb{P}}%

\global\long\def\Pj{\mathbb{P}}%

\global\long\def\Q{\mathbb{Q}}%

\global\long\def\RCov#1{#1\textup{-Cov}}%

\global\long\def\RR{\mathbb{R}}%

\global\long\def\Sch{\textup{Sch}}%

\global\long\def\WW{\textup{W}}%

\global\long\def\Z{\mathbb{Z}}%

\global\long\def\acts{\curvearrowright}%

\global\long\def\alA{\mathscr{A}}%

\global\long\def\alB{\mathscr{B}}%

\global\long\def\arr{\longrightarrow}%

\global\long\def\arrdi#1{\xlongrightarrow{#1}}%

\global\long\def\catC{\mathscr{C}}%

\global\long\def\catD{\mathscr{D}}%

\global\long\def\catF{\mathscr{F}}%

\global\long\def\catG{\mathscr{G}}%

\global\long\def\comma{,\ }%

\global\long\def\covU{\mathcal{U}}%

\global\long\def\covV{\mathcal{V}}%

\global\long\def\covW{\mathcal{W}}%

\global\long\def\duale#1{{#1}^{\vee}}%

\global\long\def\fasc#1{\widetilde{#1}}%

\global\long\def\fsets{(\textup{f-sets})}%

\global\long\def\iL{r\mathscr{L}}%

\global\long\def\id{\textup{id}}%

\global\long\def\la{\langle}%

\global\long\def\odi#1{\mathcal{O}_{#1}}%

\global\long\def\ra{\rangle}%

\global\long\def\rig{\mathbin{\!\!\pmb{\fatslash}}}%

\global\long\def\set{(\textup{Sets})}%

\global\long\def\sets{(\textup{Sets})}%

\global\long\def\shA{\mathcal{A}}%

\global\long\def\shB{\mathcal{B}}%

\global\long\def\shC{\mathcal{C}}%

\global\long\def\shD{\mathcal{D}}%

\global\long\def\shE{\mathcal{E}}%

\global\long\def\shF{\mathcal{F}}%

\global\long\def\shG{\mathcal{G}}%

\global\long\def\shH{\mathcal{H}}%

\global\long\def\shI{\mathcal{I}}%

\global\long\def\shJ{\mathcal{J}}%

\global\long\def\shK{\mathcal{K}}%

\global\long\def\shL{\mathcal{L}}%

\global\long\def\shM{\mathcal{M}}%

\global\long\def\shN{\mathcal{N}}%

\global\long\def\shO{\mathcal{O}}%

\global\long\def\shP{\mathcal{P}}%

\global\long\def\shQ{\mathcal{Q}}%

\global\long\def\shR{\mathcal{R}}%

\global\long\def\shS{\mathcal{S}}%

\global\long\def\shT{\mathcal{T}}%

\global\long\def\shU{\mathcal{U}}%

\global\long\def\shV{\mathcal{V}}%

\global\long\def\shW{\mathcal{W}}%

\global\long\def\shX{\mathcal{X}}%

\global\long\def\shY{\mathcal{Y}}%

\global\long\def\shZ{\mathcal{Z}}%

\global\long\def\st{\ | \ }%

\global\long\def\stA{\mathcal{A}}%

\global\long\def\stB{\mathcal{B}}%

\global\long\def\stC{\mathcal{C}}%

\global\long\def\stD{\mathcal{D}}%

\global\long\def\stE{\mathcal{E}}%

\global\long\def\stF{\mathcal{F}}%

\global\long\def\stG{\mathcal{G}}%

\global\long\def\stH{\mathcal{H}}%

\global\long\def\stI{\mathcal{I}}%

\global\long\def\stJ{\mathcal{J}}%

\global\long\def\stK{\mathcal{K}}%

\global\long\def\stL{\mathcal{L}}%

\global\long\def\stM{\mathcal{M}}%

\global\long\def\stN{\mathcal{N}}%

\global\long\def\stO{\mathcal{O}}%

\global\long\def\stP{\mathcal{P}}%

\global\long\def\stQ{\mathcal{Q}}%

\global\long\def\stR{\mathcal{R}}%

\global\long\def\stS{\mathcal{S}}%

\global\long\def\stT{\mathcal{T}}%

\global\long\def\stU{\mathcal{U}}%

\global\long\def\stV{\mathcal{V}}%

\global\long\def\stW{\mathcal{W}}%

\global\long\def\stX{\mathcal{X}}%

\global\long\def\stY{\mathcal{Y}}%

\global\long\def\stZ{\mathcal{Z}}%

\global\long\def\then{\ \Longrightarrow\ }%

\global\long\def\L{\textup{L}}%

\global\long\def\l{\textup{l}}%

\section*{Introduction}

The main subject of this paper is the moduli space of formal torsors,
that is, $G$-torsors (also called principal $G$-bundles) over the
formal punctured disk $\Spec k((t))$ for a given finite group (or\emph{
}étale finite group scheme) $G$ and field $k$. More precisely, we
are interested in a space over a field $k$ whose $k$-points are
$G$-torsors over $\Spec k((t))$. Since torsors may have non-trivial
automorphisms, this space should actually be a stack in groupoids
and it should not be confused with $\Bi G=[\Spec k((t))/G]$, which
is a stack defined over $k((t))$.

The case where the characteristic of $k$ and the order of $G$ are
coprime is called tame and the other case is called wild. The two
cases are strikingly different: in the tame case the moduli space
is expected to be zero-dimensional, while in the wild case it is expected
to be infinite-dimensional. 

An important work on this subject is Harbater's one \cite{Harbater1980}.
He constructed the coarse moduli space for pointed formal torsors
when $k$ is an algebraically closed field of characteristic $p>0$
and $G$ is a $p$-group. This coarse moduli space is isomorphic to
the inductive limit $\varinjlim_{n}\A^{n}$ of affine spaces such
that the transition map $\A^{n}\to\A^{n+1}$ is the composition of
the closed embedding $\A^{n}\hookrightarrow\A^{n+1}$ and the Frobenius
map of $\A^{n+1}$. In particular it is neither a scheme nor an algebraic
space, but an ind-scheme. Some of the differences between Harbater's
space and our space are explained in Remark \ref{rem:Harbater vs us}.
As a consequence Harbater shows that there is a bijective correspondence
between $G$-torsors over the affine line $\A^{1}$ and over $\Spec k((t))$.
In this direction an important development has been given by Gabber
and Katz in \cite{Katz}. Later Pries \cite{Pries} and Obus-Pries
\cite{Obus-Pries} constructed moduli/parameter spaces for groups
$\Z/p\rtimes C$ and $\Z/p^{m}\rtimes C$ with $C$ a tame cyclic
group respectively and Fried-Mezard \cite{Fried-Mezard} constructed
a parameter space of (not necessarily Galois) covers of $\Spec k((t))$
with given ramification data; all these works assumed $k$ to be algebraically
closed. 

In recent works \cite{Yasuda-p-cyclic,Yasuda-toward} of the second
named author, an unexpected relation of this moduli space to singularities
of algebraic varieties was discovered. He has formulated a conjectural
generalization of the motivic McKay correspondence by Batyrev \cite{Batyrev}
and Denef-Loeser \cite{Denef-Loeser} to arbitrary characteristics,
which relates a motivic integral over the moduli space of formal torsors
with a stringy invariant of wild quotient singularities. The motivic
integral can be viewed as the motivic counterpart of mass formulas
for local Galois representations, see \cite{Wood-Yasuda-I,Wood-Yasuda-II}.
The first and largest problem for other groups is the construction
of the moduli space. From the arithmetic viewpoint, the case where
$k$ is finite is the most interesting, which motivates us to remove
the ``algebraically closed'' assumption in earlier works.

The main result of this paper is to construct the moduli stack of
formal torsors and to show that it is a limit of Deligne-Mumford stacks
(DM stacks for short) when $k$ is an arbitrary field of characteristic
$p>0$ and $G$ is an étale group scheme over $k$ which is geometrically
the semidirect product $H\rtimes C$  of a $p$-group $H$ and a cyclic
group $C$ of order coprime with $p$. This is an important step towards
the general case, because, if $k$ is algebraically closed, then connected
$G$-torsors over $\Spec k((t))$ (or equivalently Galois extensions
of $k((t))$ with group $G$) exist only for semidirect products as
before. Moreover any $G'$-torsor for a general $G'$ is induced by
some connected $G$-torsor along an embedding $G\hookrightarrow G'$. 

To give the precise statement of the result, we introduce the following
notation. We denote by $\Delta_{G}$ the category fibered in groupoids
over the category of affine $k$-schemes such that for a $k$-algebra
$B$, $\Delta_{G}(\Spec B)$ is the category of $G$-torsors over
$\Spec B((t))$. The following is the precise statement of the main
result:

\begin{thma}\label{A} Let $k$ be a field of positive characteristic
$p$ and $G$ be a finite and étale group scheme over $k$ such that
$G\times_{k}\overline{k}$ is a semidirect product $H\rtimes C$  of
a $p$-group $H$ and a cyclic group $C$ of rank coprime with $p$. 

\begin{enumerate}

\item \label{thma-general} Then there exists a direct system $\stX_{*}$
of separated DM stacks with finite and universally injective transition
maps, with a direct system of finite and étale atlases (see \ref{def:system of atlases}
for the definition) $X_{n}\arr\stX_{n}$ from affine schemes and with
an isomorphism $\varinjlim_{n}\stX_{n}\simeq\Delta_{G}$.

\item \label{thma-p-gp} If $G$ is a constant $p$-group then the
stacks $\stX_{n}$ can be chosen to be  smooth and integral. More
precisely there is a strictly increasing sequence $v\colon\N\arr\N$
such that $X_{n}=\A^{v_{n}}$, the maps $\A^{v_{n}}\arr\stX_{n}$
are finite and étale of degree $\sharp$G and the transition maps
$\A^{v_{n}}\arr\A^{v_{n+1}}$ are composition of the inclusion $\A^{v_{n}}\arr\A^{v_{n+1}}$
and the Frobenius $\A^{v_{n+1}}\arr\A^{v_{n+1}}$.

\item \label{thma-abelian-p} If $G$ is an abelian constant group
of order $p^{r}$ then we also have an equivalence 
\[
\left(\varinjlim_{n}\A^{v_{n}}\right)\times\Bi G\simeq\Delta_{G}
\]
and the map from $\varinjlim_{n}\A^{v_{n}}$ to the sheaf of isomorphism
classes of $\Delta_{G}$, which is nothing but the rigidification
$\Delta_{G}\rig G$ (see Appendix \ref{sec:Rigidification}), is an
isomorphism.\end{enumerate}\end{thma}

As a consequence of assertion \ref{thma-general} of this theorem
(and \ref{prop:limit of stacks is stack}) the fibered category $\Delta_{G}$
is a stack. 

We now explain the outline of our construction. We first consider
the case of a constant group scheme of order $p^{r}$. Following Harbater's
strategy, we prove the theorem in this case by induction. We obtain
the explicit description of $\Delta_{G}$ as in assertion \ref{thma-abelian-p}
when $G=\Z/p\Z$ by the Artin-Schreier theory (Theorem \ref{thm:The case of Zp});
this is one of the two base cases. It is not difficult to generalize
it to the case $G\simeq(\Z/p\Z)^{n}$ (Lemma \ref{lem:direct system of Fp vector spaces}),
which forms the initial step of induction. Since a general $p$-group
has a central subgroup $H\subset G$ isomorphic to $(\Z/p\Z)^{n}$,
we have a natural map $\Delta_{G}\longrightarrow\Delta_{G/H}$, enabling
the induction to work. We then use the fact (Proposition \ref{prop:factorizing through the rigidification })
that this map factors into the rigidification $\Delta_{G}\longrightarrow\Delta_{G}\rig H$
and an $X_{H}$-torsor $\Delta_{G}\rig H\longrightarrow\Delta_{G/H}$
with $X_{H}=\Delta_{H}\rig H$, to construct a direct system for $\Delta_{G}$
from one for $\Delta_{G/H}$.

Next we consider the other base case, the case of the group scheme
$\mu_{n}$ of $n$-th roots of unity with $n$ coprime to $p$. In
this case, we have the following explicit description of $\Delta_{G}$,
including also the case of characteristic zero:

\begin{thmb}\label{cyclic} Let $k$ be a field and $n\in\N$ such
that $n\in k^{*}$. We have an equivalence
\[
\bigsqcup_{q=0}^{n-1}\Bi(\mu_{n})\arr\Delta_{\mu_{n}}
\]
where the map $\Bi(\mu_{n})\arr\Delta_{\mu_{n}}$ in the index $q$
maps the trivial $\mu_{n}$-torsor to the $\mu_{n}$-torsor $\frac{k((t))[Y]}{(Y^{n}-t^{q})}\in\Delta_{\mu_{n}}(k)$.
\end{thmb}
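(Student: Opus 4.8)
\emph{Overview and set-up.} The plan is to pass to Kummer theory, where a $\mu_{n}$-torsor becomes a line bundle together with a trivialization of its $n$-th tensor power, and then reduce the statement to bookkeeping with line bundles and units on $\Spec B((t))$. So first I would fix the dictionary: since $n$ is invertible in $k$, hence in every $k$-algebra $B$, the fppf Kummer sequence $1\arr\mu_{n}\arr\Gm\xrightarrow{\,n\,}\Gm\arr 1$ identifies, for a $k$-scheme $X$, the groupoid of $\mu_{n}$-torsors on $X$ with that of pairs $(\shL,\lambda)$, where $\shL$ is an invertible $\shO_{X}$-module and $\lambda\colon\shL^{\otimes n}\xrightarrow{\sim}\shO_{X}$, a morphism being an isomorphism of invertible modules compatible with the $\lambda$'s; the torsor attached to $(\shL,\lambda)$ is $\Specsh_{X}\bigl(\bigoplus_{i=0}^{n-1}\shL^{\otimes i}\bigr)$ with multiplication twisted by $\lambda$, and a unit $f\in\shO_{X}(X)^{\times}$ corresponds to $\shO_{X}[Y]/(Y^{n}-f)$. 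Taking $X=\Spec B$ describes $\Bi(\mu_{n})$ and taking $X=\Spec B((t))$ describes $\Delta_{\mu_{n}}$. In these terms the functor of the statement sends, on the $q$-th summand, a pair $(\shM,\mu)$ over $\Spec B$ to $\bigl(\shM\otimes_{B}B((t)),\,t^{q}\cdot(\mu\otimes_{B}B((t)))\bigr)$, which for the trivial torsor gives $\Spec B((t))[Y]/(Y^{n}-t^{q})$. That it factors through the full subcategory $\Delta_{\mu_{n}}^{*}$ follows from \ref{def:Delta*}, the relevant finite flat model over $\Spec B[[t]]$ being $\Specsh_{B[[t]]}\bigl(\bigoplus_{i}(\shM^{\otimes i}\otimes_{B}B[[t]])\bigr)$, whose special fibre is the given $\mu_{n}$-torsor on $\Spec B$.

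Next, for full faithfulness, since source and target are stacks it suffices to argue fppf-locally on $\Spec B$, where all line bundles split, so the question becomes the computation of isomorphisms between the torsors $\Spec B((t))[Y]/(Y^{n}-t^{q})$ and $\Spec B((t))[Z]/(Z^{n}-t^{q'})$. An equivariant morphism must carry the weight-one generator $Z$ into the weight-one part $B((t))\cdot Y$, say $Z\mapsto cY$, and is an isomorphism precisely when $c\in B((t))^{\times}$ and $c^{n}=t^{q'-q}$. Reading off $t$-orders (constant after a clopen decomposition of $\Spec B$) shows this set is empty unless $q=q'$, in which case it equals $\mu_{n}(B((t)))$; and $\mu_{n}(B((t)))=\mu_{n}(B)=\Aut_{\Bi(\mu_{n})}(\shO_{B},\id)$, since an $n$-th root of unity in $B((t))$ is a unit of $B[[t]]$ reducing into $\mu_{n}(B)$ and is determined by its reduction by the uniqueness of $n$-th roots in $1+tB[[t]]$. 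This matches the isomorphism set on the source side.

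Essential surjectivity is the main point, and the place where $\Delta^{*}$ is genuinely used. Given $(\shL,\lambda)\in\Delta_{\mu_{n}}^{*}(\Spec B)$, by \ref{def:Delta*} it admits a finite flat model over $\Spec B[[t]]$; its weight-one piece is an invertible $\shO_{\Spec B[[t]]}$-module $\widetilde{\shL}$ restricting to $\shL$, equipped with an injection $\widetilde{\lambda}\colon\widetilde{\shL}^{\otimes n}\hookrightarrow\shO_{\Spec B[[t]]}$ restricting to $\lambda$. Since $B[[t]]$ is $t$-adically complete one has $\Pic(\Spec B[[t]])\simeq\Pic(\Spec B)$, so $\widetilde{\shL}$ is pulled back from an invertible module $\shM$ on $\Spec B$; trivializing $\shM^{\otimes n}$ fppf-locally on $\Spec B$ turns $\widetilde{\lambda}$ into multiplication by a non-zero-divisor $g\in B[[t]]$ that becomes a unit in $B((t))$, and after a clopen decomposition of $\Spec B$ one may write $g=t^{q}u$ with $u\in B[[t]]^{\times}$ and $q\ge 0$. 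Using $B[[t]]^{\times}=B^{\times}\cdot(1+tB[[t]])$ and the unique $n$-divisibility of $1+tB[[t]]$, the class of $g$ in $B((t))^{\times}/(B((t))^{\times})^{n}$ equals that of $t^{\,q\bmod n}u_{0}$ for some $u_{0}\in B^{\times}$; absorbing $u_{0}$ into the trivialization of $\shM^{\otimes n}$ and descending along the fppf cover exhibits $(\shL,\lambda)$ as the image of a pair $(\shM,\mu)$ on the $(q\bmod n)$-th summand.

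The hard part is precisely this last step: controlling $\Pic$ and the unit group of $\Spec B((t))$ for non-field $B$ and matching them with the ``$t$-order in $\Z/n\Z$'', which is exactly the pathology that the passage from $\Delta_{\mu_{n}}$ to $\Delta_{\mu_{n}}^{*}$ is designed to remove. As a sanity check, over a field $k'$ (for instance on the fibres of $\Delta_{\mu_{n}}^{*}$) all of the above is classical and the statement reduces to $\Pic(\Spec k'((t)))=0$ together with $k'((t))^{\times}/(k'((t))^{\times})^{n}\simeq\Z/n\Z\times k'^{\times}/(k'^{\times})^{n}$.
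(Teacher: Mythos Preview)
Your argument rests on a misreading of Definition~\ref{def:Delta*}. That definition does \emph{not} say that an object of $\Delta_{\mu_n}^{*}(B)$ extends to a finite flat model over $\Spec B[[t]]$; unwinding it for $G=\mu_n$ (as the paper does in the lemma immediately preceding the proof) says only that the underlying line bundle $\shL$ on $\Spec B((t))$ becomes free after an fpqc cover of $\Spec B$. Your essential surjectivity argument therefore breaks at the very first step ``by \ref{def:Delta*} it admits a finite flat model over $\Spec B[[t]]$'': no such model is given, and producing one is essentially equivalent to what you are trying to prove. The paper instead uses the correct characterization directly: after an fpqc cover of $\Spec B$ one has $\shL\simeq B((t))$, so the torsor is $B((t))[Y]/(Y^n-b)$ for some $b\in B((t))^{\times}$; then one shows the order of $b$ is locally constant, disposes of the nilpotent negative-degree part of $b$ via an \'etale lifting trick, and finally passes to an \'etale cover of $\Spec B$ (using \ref{cor:shrinking to etale neighborhood}) to extract an $n$-th root of the remaining $B[[t]]^{\times}$ factor. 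Your ``$g=t^{q}u$ with $u\in B[[t]]^{\times}$'' line hides exactly this nilpotent issue, which does not go away just by a clopen decomposition.

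Your full faithfulness outline is correct in spirit and matches the paper's, but the key input $\mu_n(B((t)))=\mu_n(B)$ is asserted without justification: you write ``an $n$-th root of unity in $B((t))$ is a unit of $B[[t]]$'', but for non-reduced $B$ this is not obvious (a priori there could be nilpotent negative-degree coefficients), and the paper devotes a separate lemma (\ref{lem:roots in power series}) with an inductive argument to it. Once that is granted, your reduction via unique $n$-divisibility of $1+tB[[t]]$ is fine and agrees with the paper's conclusion.
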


When $G$ is a constant group of the form $H\rtimes C$ and $k$ contains
all $n$-th roots of unity, then $C\simeq\mu_{n}$ and there exists
a map $\Delta_{G}\longrightarrow\Delta_{\mu_{n}}$. Using Theorem
\ref{A} for $p$-groups, we show that the fiber products $\Delta_{G}\times_{\Delta_{\mu_{n}}}\Spec k$
with respect to $n$ maps $\Spec k\to\Delta_{\mu_{n}}$ induced from
the equivalence in Theorem \ref{cyclic} are limits of DM stacks.
Finally, to conclude that $\Delta_{G}$ itself is a limit of DM stacks
and also to reduce the problem to the case of a constant group, we
need a proposition (Proposition \ref{lem:descent of ind along torsors})
roughly saying that if $\shY$ is a $G$-torsor over a stack $\shX$
for a constant group $G$ and $\shY$ is a limit of DM stacks, then
$\shX$ is also a limit of DM stacks. This innocent-looking proposition
turns out to be rather hard to prove and we will make full use of
2-categories.

The moduli stack of formal torsors introduced in this paper is used
in \cite{Formal-Torsors-II} to construct a moduli space in a weaker
sense for general finite étale group schemes and in \cite{Yasuda-motivic}
to develop the motivic integration over wild DM stacks. Moreover it
is showed in \cite{Formal-Torsors-II} that the motivic integral in
the conjecture mentioned above on the McKay correspondence makes rigorous
sense and this conjecture is finally proved in \cite{Yasuda-motivic}.
According to discussion and observation in \cite{alpha_p}, it appears
quite meaningful to generalize the McKay correspondence further to
nonreduced finite group schemes. For this reason, the moduli problem
of formal torsors for such group schemes would be important as a future
study.

Notice that points of $\Delta_{G}$ over a field $L$, namely $G$-torsors
over $L((t))$, can also be seen as (not necessarily connected) Galois
extensions of $L((t))$ and, taking integers, as special covers of
$L[[t]]$ with an action of $G$. It is therefore natural to ask and
indeed this has been our initial approach to the problem, if one can
define a moduli space of special $G$-covers of $B[[t]]$ for varying
$B$ or, more precisely, give a different moduli interpretation of
$G$-torsors of $B((t))$ in terms of covers of $B[[t]]$, in the
spirit of \cite{Tonini-monoidal} and \cite{Tonini-diagonalizable}.
We don't have a precise answer to this question, but in \cite{Formal-Torsors-II,Yasuda-motivic}
we give partial answers.

The paper is organized as follows. In Section \ref{sec:Notation-and-Terminology}
we set up notation and terminology frequently used in the paper. In
Section \ref{sec:Preliminaries} we collect basic results on power
series rings, finite and universally injective morphisms and torsors.
In Section \ref{sec:system of DM stacks}, after introducing a few
notions and proving a few easy results, the rest of the section is
devoted to the proof of the proposition mentioned above (Proposition
\ref{lem:descent of ind along torsors}). Section \ref{sec:formal torsors}
is the main body of the paper, where we prove Theorems \ref{A} and
\ref{cyclic}. The proof of Theorem \ref{cyclic} is given on page
\pageref{pf: B}, the one of Theorem \ref{A}, \ref{thma-p-gp} and \ref{thma-abelian-p}
is given on page \pageref{pf: A p} and the one of Theorem \ref{A},
\ref{thma-general} is given on page \pageref{pf: A-general}. Lastly
we include two Appendices about limits of fibered categories, implicitly
used in Theorem \ref{A}, and rigidification, an operation introduced
in \cite{Abramovich2007} for algebraic stacks and that we extends
to more general stacks.

\subsection*{Acknowledgments}

We thank Alexis Bouthier, Ted Chinburg, Ofer Gabber, David Harbater,
Florian Pop, Shuji Saito, Takeshi Saito, Melanie Matchett Wood and
Lei Zhang for stimulating discussion and helpful information. The
second author was supported by JSPS KAKENHI Grant Numbers JP15K17510
and JP16H06337. Parts of this work were done when the first author
was staying as the Osaka University and when the second author was
staying at the Max Planck Institute for Mathematics and the Institut
des Hautes Études Scientifiques. We thank hospitality of these institutions. 

\section{\label{sec:Notation-and-Terminology}Notation and terminology}

Given a ring $B$ we denote by $B((t))$ the ring of Laurent series
$\sum_{i=r}^{\infty}b_{i}t^{i}$ with $b_{i}\in B$ and $r\in\Z$,
that is, the localization $B[[t]]_{t}=B[[t]][t^{-1}]$ of the formal
power series ring $B[[t]]$ with coefficients in $B$. This should
not be confused with the fraction field of $B[[t]]$ (when $B$ is
a domain). 

By a fibered category over a ring $B$ we always mean a category fibered
in groupoids over the category $\Aff/B$ of affine $B$-schemes.

Recall that a finite map between fibered categories is by definition
affine and therefore represented by finite maps of algebraic spaces.

By a vector bundle on a scheme $X$ we always mean a locally free
sheaf of finite rank. A vector bundle on a ring $B$ is a vector bundle
on $\Spec B$ or, before sheafification, a projective $B$-module
of finite type.

If $\shC$ is a category, $X\colon\shC\arr(\textup{groups})$ is a
functor of groups and $S$ is a set we denote by $X^{(S)}\colon\shC\arr(\textup{groups})$
the functor so defined: if $c\in\shC$ then $X^{(S)}(c)$ is the set
of functions $u\colon S\arr X(c)$ such that $\{s\in S\st u(s)\neq1_{X(c)}\}$
is finite.

We recall that for a morphism $f\colon\shX\to\shY$ of fibered categories
over a ring $B$, $f$ is faithful (resp. fully faithful, an equivalence)
if and only if for every affine $B$-scheme $U$, $f_{U}\colon\shX(U)\to\shY(U)$
is so (see \cite[\href{http://stacks.math.columbia.edu/tag/003Z}{003Z}]{SP014}).
A morphism of fibered categories is called a \emph{monomorphism }if
it is fully faithful. We also note that every representable (by algebraic
spaces) morphism of stacks is faithful (\cite[\href{http://stacks.math.columbia.edu/tag/02ZY}{02ZY}]{SP014}).

A map $f\colon\stY\arr\stX$ between fibered categories over $\Aff/k$
is a torsor under a sheaf of groups $\shG$ over $\Aff/k$ if it is
given a $2$-Cartesian diagram\[   \begin{tikzpicture}[xscale=1.5,yscale=-1.2]     \node (A0_0) at (0, 0) {$\stY$};     \node (A0_1) at (1, 0) {$\Spec k$};     \node (A1_0) at (0, 1) {$\stX$};     \node (A1_1) at (1, 1) {$\Bi \shG$};     \path (A0_0) edge [->]node [auto] {$\scriptstyle{}$} (A0_1);     \path (A0_0) edge [->]node [auto] {$\scriptstyle{}$} (A1_0);     \path (A0_1) edge [->]node [auto] {$\scriptstyle{}$} (A1_1);     \path (A1_0) edge [->]node [auto] {$\scriptstyle{}$} (A1_1);   \end{tikzpicture}   \] 

By a stack we mean a stack over the category $\Aff$ of affine schemes
with respect to the fppf topology, unless a different site is specified.

We often abbreviate ``Deligne-Mumford stack'' to ``DM stack''.

\section{\label{sec:Preliminaries}Preliminaries}

In this section we collect some general results that will be used
later.

\subsection{Some results on power series}
\begin{lem}
\label{lem:open subset of B=00005B=00005Bt=00005D=00005D}Let $C$
be a ring, $J\subseteq C$ be an ideal and assume that $C$ is $J$-adically
complete. If $U\subseteq\Spec C$ is an open subset containing $\Spec(C/J)$
then $U=\Spec C$.
\end{lem}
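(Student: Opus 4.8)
The plan is to show that the complement $Z := \Spec C \setminus U$ is empty. Since $Z$ is a closed subset of $\Spec C$, it is of the form $V(I)$ for some ideal $I \subseteq C$; the hypothesis $\Spec(C/J) \subseteq U$ says precisely that $V(I) \cap V(J) = \emptyset$, i.e.\ $I + J = C$. Writing $1 = a + b$ with $a \in I$ and $b \in J$, I want to conclude that $I = C$ (hence $Z = \emptyset$ and $U = \Spec C$), and the tool for that is $J$-adic completeness: the element $b$ lies in $J$, so $1 - b = a$ is a unit in the $J$-adic completion because its "reduction mod $J$" is $1$, and since $C$ is already $J$-adically complete, $a$ is a unit in $C$ itself. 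Therefore $I \ni a$ is a unit, so $I = C$.

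More explicitly, for the key step: if $b \in J$ then the geometric-type series $\sum_{m \geq 0} b^m$ converges in the $J$-adic topology (the partial sums form a Cauchy sequence since $b^m \in J^m$), and its limit $s \in C$ satisfies $(1-b)s = 1$; so $a = 1-b$ is invertible with inverse $s$. Since $a \in I$, we get $1 = a \cdot s \in I$, whence $I = C$ and $V(I) = \emptyset$.

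One subtlety worth spelling out is why the complement of an open set being empty follows from $I = C$: $U$ open means $\Spec C \setminus U = V(I)$ for the radical ideal $I$ cutting it out (or just any defining ideal), and $V(I) = \emptyset$ iff $I = C$ by the fact that every proper ideal is contained in a maximal (hence prime) ideal. The hypothesis that $U$ \emph{contains} $\Spec(C/J)$, viewed as the closed subset $V(J) \subseteq \Spec C$ via the quotient map $C \to C/J$, translates to $V(I) \cap V(J) = V(I+J) = \emptyset$, which is exactly $I + J = C$. I do not foresee a genuine obstacle here; the only thing to be careful about is correctly packaging the topological translation (open complement $\leftrightarrow$ vanishing locus of an ideal, disjointness $\leftrightarrow$ ideals summing to the unit ideal) and then invoking completeness in the standard way to upgrade "$1 - b$ with $b \in J$" to a unit of $C$. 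This last point is the crux, and it is exactly the statement that $1 + \mathfrak{a}$ consists of units whenever the ambient ring is complete along the ideal $\mathfrak{a}$.
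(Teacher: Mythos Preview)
Your proof is correct and follows essentially the same approach as the paper: write the complement of $U$ as $V(I)$, deduce $I+J=C$, pick $a\in I$ with $a\in 1+J$, and use $J$-adic completeness to conclude $a$ is a unit, hence $I=C$. The paper phrases the invertibility of $a$ via its image in each $C/J^n$ rather than the geometric series, but this is the same standard argument.
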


\begin{proof}
Let $U=\Spec C-V(I)$, where $I\subseteq C$ is an ideal. The condition
$\Spec(C/J)\subseteq U$ means that $I+J=C$. In particular there
exists $g\in I$ and $j\in J$ such that $g=1+j$. Since $j$ is nilpotent
in all the rings $C/J^{n}$ we see that $g$ is invertible in all
the rings $C/J^{n}$, which easily implies that $g$ is invertible
in $C$. Thus $I=C$.
\end{proof}
\begin{lem}
\label{lem:lifting for fomally etale maps and series} Let $R$ be
a ring, $X$ be a quasi-affine scheme formally étale over $R$, $C$
be an $R$-algebra and $J$ be an ideal such that $C$ is $J$-adically
complete. Then the projection $C\arr C/J^{n}$ induces a bijection
\[
X(C)\arr X(C/J^{n})\text{ for all }n\in\N
\]
\end{lem}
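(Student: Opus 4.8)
The plan is to factor the map $X(C)\arr X(C/J^{n})$ of the statement as
\[
X(C)\arr\varprojlim_{m}X(C/J^{m})\arr X(C/J^{n}),
\]
where the inverse limit is over all $m\in\N$ with the obvious transition maps, the first arrow is the tautological one, and the second is the projection to the $n$-th term; then I would show each arrow is a bijection. Throughout I use that $C$ being $J$-adically complete means $C\arr\varprojlim_{m}C/J^{m}$ is an isomorphism of $R$-algebras, and that, $X$ being quasi-affine over $R$, the canonical morphism $j\colon X\arr Y:=\Spec\Gamma(X,\odi X)$ is an open immersion of $R$-schemes.

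For the arrow $\varprojlim_{m}X(C/J^{m})\arr X(C/J^{n})$ I would check that \emph{all} transition maps $X(C/J^{m})\arr X(C/J^{n})$ with $m\geq n\geq1$ are already bijections: the kernel $J^{n}/J^{m}$ of $C/J^{m}\arr C/J^{n}$ is nilpotent, so this is a nilpotent thickening of affine $R$-schemes, and $X$ being formally \'etale over $R$ provides a unique lifting (decomposing the thickening into square-zero steps if one only has the square-zero form of the criterion). Hence the inverse system is essentially constant and its projection to the $n$-th term is bijective.

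For the arrow $X(C)\arr\varprojlim_{m}X(C/J^{m})$ I would first observe that the analogous statement for the affine scheme $Y$ is trivial: $Y(A)=\Hom_{R\textup{-alg}}(\Gamma(X,\odi X),A)$, and $\Hom$ out of a fixed $R$-algebra commutes with the limit $C=\varprojlim_{m}C/J^{m}$, so $Y(C)\arr\varprojlim_{m}Y(C/J^{m})$ is a bijection. Via the open immersion $j$, for every $R$-algebra $A$ the set $X(A)$ is the subset of $Y(A)$ consisting of those morphisms $\varphi\colon\Spec A\arr Y$ with $\varphi^{-1}(j(X))=\Spec A$; thus $X(C)\subseteq Y(C)=\varprojlim_{m}Y(C/J^{m})$ and $X(C/J^{m})\subseteq Y(C/J^{m})$, which makes injectivity of $X(C)\arr\varprojlim_{m}X(C/J^{m})$ immediate. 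For surjectivity, given a compatible family $(\varphi_{m})_{m}$ with $\varphi_{m}\in X(C/J^{m})$, let $\varphi\colon\Spec C\arr Y$ be the corresponding point of $Y(C)$; its restriction along the closed immersion $\Spec(C/J)\hookrightarrow\Spec C$ is $\varphi_{1}$, which factors through $j$, so the open subset $\varphi^{-1}(j(X))\subseteq\Spec C$ contains $\Spec(C/J)$. By Lemma \ref{lem:open subset of B=00005B=00005Bt=00005D=00005D}, applied to the $J$-adically complete ring $C$, this forces $\varphi^{-1}(j(X))=\Spec C$, i.e. $\varphi\in X(C)$, and $\varphi$ manifestly restricts to $(\varphi_{m})_{m}$.

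I expect the only genuine obstacle to be this last point: promoting a compatible system of maps that land in the open part $X$ to a single map out of $\Spec C$ that \emph{still} lands in $X$. This is exactly what Lemma \ref{lem:open subset of B=00005B=00005Bt=00005D=00005D} is for, and it is the only place where quasi-affineness of $X$ (rather than affineness) enters; the remaining ingredients — the infinitesimal lifting property of formally \'etale morphisms and the commutation of $\Hom$ out of an affine scheme with inverse limits — are purely formal.
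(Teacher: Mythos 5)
Your proposal is correct and follows essentially the same route as the paper: use formal étaleness to see that all maps $X(C/J^{m})\arr X(C/J^{n})$ are bijective, reduce to showing $X(C)\arr\varprojlim_{m}X(C/J^{m})$ is bijective, treat the affine hull $\Spec\Gamma(X,\odi X)$ trivially, deduce injectivity from it, and obtain surjectivity by noting that the preimage of the open $X$ contains $\Spec(C/J)$ and invoking Lemma \ref{lem:open subset of B=00005B=00005Bt=00005D=00005D}. No gaps.
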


\begin{proof}
Since $X$ is formally étale the projections $C/J^{m}\arr C/J^{n}$
for $m\geq n$ induce bijections 
\[
X(C/J^{m})\arr X(C/J^{n})
\]
Thus it is enough to prove that if $Y$ is any quasi-affine scheme
over $R$ then the natural map
\[
\alpha_{Y}\colon Y(C)\arr\varprojlim_{n\in N}Y(C/J^{n})
\]
is bijective. This is clear when $Y$ is affine. Let $B=\Hl^{0}(\odi Y)$,
so that $Y$ is a quasi-compact open subset of $U=\Spec B$. The fact
that $\alpha_{U}$ is an isomorphism tells us that $\alpha_{Y}$ is
injective. To see that it is surjective we have to show that if $B\arr C$
is a map such that all $\Spec C/J^{n}\arr\Spec B$ factors through
$Y$ then also $\phi\colon\Spec C\arr\Spec B$ factors through $Y$.
But the first condition implies that $\phi^{-1}(Y)$ is an open subset
of $\Spec C$ containing $\Spec C/J$. The equality $\phi^{-1}(Y)=\Spec C$
then follows from \ref{lem:open subset of B=00005B=00005Bt=00005D=00005D}.
\end{proof}
\begin{cor}
\label{cor:shrinking to etale neighborhood} Let $B$ be a ring, $f\colon Y\arr\Spec B[[t]]$
an étale map, $\xi\colon\Spec L\arr\Spec B$ a geometric point and
assume that the geometric point $\Spec L\arr\Spec B\arr\Spec B[[t]]$
is in the image on $f$. Then there exists an étale neighborhood $\Spec B'\arr\Spec B$
of $\xi$ such that $\Spec B'[[t]]\arr\Spec B[[t]]$ factors through
$Y\arr\Spec B[[t]]$.
\end{cor}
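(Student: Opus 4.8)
The plan is to reduce the statement to Lemma~\ref{lem:lifting for fomally etale maps and series} after shrinking $Y$ to an affine open around a chosen preimage of $\xi$. First, since the image of the geometric point $\Spec L\arr\Spec B\arr\Spec B[[t]]$ lies in $f(Y)$ and $f$ is étale, the base change $Y\times_{\Spec B[[t]]}\Spec L$ is a nonempty étale $L$-scheme; as $L$ is separably closed it has an $L$-point, which gives a morphism $\Spec L\arr Y$ over $\Spec B[[t]]$ lying over $\xi$. Its set-theoretic image is a single point $y\in Y$, so choosing an affine open $\Spec A\subseteq Y$ containing $y$, this morphism factors through $\Spec A$; note that $\Spec A$ is quasi-affine and $\Spec A\arr\Spec B[[t]]$ is étale, hence formally étale.

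I would then set $B'=A/tA=A\otimes_{B[[t]]}B$, where $B$ is viewed as the quotient $B[[t]]/(t)$. Base-changing the étale morphism $\Spec A\arr\Spec B[[t]]$ along $\Spec B\arr\Spec B[[t]]$ shows that $\Spec B'=\Spec A\times_{\Spec B[[t]]}\Spec B\arr\Spec B$ is étale, and the pair consisting of the morphism $\Spec L\arr\Spec A$ over $\Spec B[[t]]$ and $\xi\colon\Spec L\arr\Spec B$ induces a morphism $\Spec L\arr\Spec B'$ lifting $\xi$. Thus $\Spec B'\arr\Spec B$ is an étale neighborhood of $\xi$, and it remains to construct a $B[[t]]$-morphism $\Spec B'[[t]]\arr Y$.

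For this I would apply Lemma~\ref{lem:lifting for fomally etale maps and series} with $R=B[[t]]$, $X=\Spec A$, $C=B'[[t]]$ (which is $(t)$-adically complete with $C/(t)=B'$) and $n=1$: reduction modulo $t$ identifies the set of $B[[t]]$-morphisms $\Spec B'[[t]]\arr\Spec A$ with the set of $B[[t]]$-morphisms $\Spec B'\arr\Spec A$. The canonical projection $\Spec B'=\Spec(A/tA)\arr\Spec A$ is one such morphism (it is $B[[t]]$-linear because $B[[t]]\arr A$ reduces modulo $t$ to $B\arr A/tA$), so it lifts to a $B[[t]]$-morphism $\Spec B'[[t]]\arr\Spec A\hookrightarrow Y$, which is the asserted factorization. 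The steps are all short, and I do not expect a genuine obstacle; the only points that require care are extracting the honest morphism $\Spec L\arr Y$ from the set-theoretic hypothesis on images and checking throughout that the morphisms in sight lie over $\Spec B[[t]]$, so that the corollary is really a spreading-out of the lemma over an étale neighborhood of $\xi$.
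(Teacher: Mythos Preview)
Your proof is correct and follows essentially the same approach as the paper: reduce to an affine open of $Y$, set $B'=A/tA$, and apply Lemma~\ref{lem:lifting for fomally etale maps and series} to lift the canonical map $\Spec B'\to\Spec A$ to $\Spec B'[[t]]\to\Spec A$. The only difference is that you spell out the reduction to the affine case (via the $L$-point of $Y\times_{\Spec B[[t]]}\Spec L$) more carefully than the paper, which simply asserts ``we can assume $Y$ affine''.
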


\begin{proof}
We can assume $Y$ affine, say $Y=\Spec C$. Set $B'=C/tC$, so that
the induced map $f_{0}\colon Y_{0}=\Spec B'\arr\Spec B$ is étale.
By hypothesis the geometric point $\Spec L\arr\Spec B$ is in the
image of $f_{0}$ and therefore factors through $f_{0}$. Moreover
the map $Y_{0}\arr Y$ gives an element of $Y(B')$ which, by \ref{lem:lifting for fomally etale maps and series},
lifts to an element of $Y(B'[[t]])$, that is a factorization of $\Spec B'[[t]]\arr\Spec B[[t]]$
through $Y\arr\Spec B[[t]]$.
\end{proof}
\begin{lem}
\label{lem:change of rings for series}Let $R$ be a ring, $S$ be
an $R$-algebra and consider the map
\[
\omega_{S/R}\colon R[[t]]\otimes_{R}S\to S[[t]]
\]
The image of $\omega_{S/R}$ is the subring of $S[[t]]$ of series
$\sum s_{n}t^{n}$ such that there exists a finitely generated $R$
submodule $M\subseteq S$ with $s_{n}\in M$ for all $n\in\N$. 

If any finitely generated $R$ submodule of $S$ is contained in a
finitely presented $R$ submodule of $S$ then $\omega_{S/R}$ is
injective.
\end{lem}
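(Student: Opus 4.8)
For the description of the image, one inclusion is immediate: a general element of $R[[t]]\otimes_{R}S$ is a finite sum $x=\sum_{i=1}^{m}f_{i}\otimes s_{i}$ with $f_{i}=\sum_{n}r_{i,n}t^{n}\in R[[t]]$ and $s_{i}\in S$, and $\omega_{S/R}(x)=\sum_{n}\bigl(\sum_{i}r_{i,n}s_{i}\bigr)t^{n}$ has all coefficients in the finitely generated submodule $M=\sum_{i}Rs_{i}$. For the converse, given $\sigma=\sum_{n}s_{n}t^{n}$ whose coefficients all lie in a finitely generated submodule $M=\sum_{i=1}^{m}Rm_{i}$, I would write $s_{n}=\sum_{i}r_{i,n}m_{i}$ and observe that $\sigma=\omega_{S/R}\bigl(\sum_{i}\bigl(\sum_{n}r_{i,n}t^{n}\bigr)\otimes m_{i}\bigr)$. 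That this set is then a subring of $S[[t]]$ needs no separate argument, being the image of the ring homomorphism $\omega_{S/R}$.

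For injectivity, the plan is to reduce to the case of a finitely presented module and to treat that case by means of a free presentation. The relevant observation is that $(-)[[t]]$, as an endofunctor of the category of $R$-modules, is nothing but the product $\prod_{n\in\N}(-)$, hence exact, whereas $R[[t]]\otimes_{R}(-)$ is only right exact -- this is exactly where the hypothesis on $S$ will be used, since $R[[t]]$ need not be flat over $R$. If $M$ is finitely presented, fix a presentation $R^{a}\xrightarrow{\phi}R^{b}\arr M\arr 0$ and apply $R[[t]]\otimes_{R}(-)$ and $(-)[[t]]$ to it; using the canonical identifications $R[[t]]\otimes_{R}R^{n}=R[[t]]^{n}=(R^{n})[[t]]$, with respect to which $\omega_{R^{n}/R}$ is the identity, one obtains a commutative diagram in which both rows exhibit $R[[t]]\otimes_{R}M$, respectively $M[[t]]$, as the cokernel of the same map $R[[t]]^{a}\arr R[[t]]^{b}$. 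Hence $\omega_{M/R}$, being the map induced on these cokernels, is an isomorphism.

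To finish, suppose $x\in R[[t]]\otimes_{R}S$ satisfies $\omega_{S/R}(x)=0$ and write $x=\sum_{i=1}^{m}f_{i}\otimes s_{i}$ as above. By hypothesis the finitely generated submodule $\sum_{i}Rs_{i}$ is contained in a finitely presented submodule $M\subseteq S$, so $x$ is the image of $\tilde{x}=\sum_{i}f_{i}\otimes s_{i}\in R[[t]]\otimes_{R}M$ under the map induced by $M\hookrightarrow S$. In the commutative square relating $\omega_{M/R}$ and $\omega_{S/R}$ the right-hand vertical map $M[[t]]\arr S[[t]]$ is $\prod_{n\in\N}(M\hookrightarrow S)$ and therefore injective, so $\omega_{S/R}(x)=0$ forces $\omega_{M/R}(\tilde{x})=0$, whence $\tilde{x}=0$ by the previous paragraph and $x=0$. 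The part I expect to carry the real content is this reduction: one must use the hypothesis to trap the finitely many elements $s_{i}$ appearing in a chosen expression of $x$ inside a single finitely presented submodule, and exploit that power series with coefficients in a submodule embed into power series over the ambient module; once one is over a finitely presented module, everything is formal bookkeeping with presentations.
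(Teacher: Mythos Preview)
Your proof is correct and follows essentially the same approach as the paper: both establish that $\omega_{M/R}$ is an isomorphism for finitely presented $M$ by comparing the right-exact functor $R[[t]]\otimes_{R}(-)$ with the exact functor $(-)[[t]]=\prod_{\N}(-)$, and then use the hypothesis to reduce to this case. The only difference is cosmetic: the paper packages the reduction as a filtered colimit $R[[t]]\otimes_{R}S\simeq\varinjlim_{M}(R[[t]]\otimes_{R}M)\to\varinjlim_{M}M[[t]]\subseteq S[[t]]$ over finitely presented submodules, whereas you unwind this element by element.
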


\begin{proof}
The claim about the image of $\omega_{S/R}$ is easy.

Given an $R$-module $M$ we define $M[[t]]$ as the $R$-module $M^{\N}$.
Its elements are thought of as series $\sum_{n}m_{n}t^{n}$ and $M[[t]]$
has a natural structure of $R[[t]]$-module. This association extends
to a functor $\Mod A\to\Mod A[[t]]$ which is easily seen to be exact.
Moreover there is a natural map
\[
\omega_{M/R}\colon R[[t]]\otimes_{R}M\to M[[t]]
\]
Since both functors are right exact and $\omega_{M/R}$ is an isomorphism
if $M$ is a free $R$-module of finite rank, we can conclude that
$\omega_{M/R}$ is an isomorphism if $M$ is a finitely presented
$R$-module. Let $\shP$ be the set of finitely presented $R$ submodules
of $S$. By hypothesis this is a filtered set. Passing to the limit
we see that the map

\[
\omega_{S/R}\colon R[[t]]\otimes_{R}S\simeq\varinjlim_{M\in\shP}(R[[t]]\otimes_{R}M)\arr\varinjlim_{M\in\shP}M[[t]]=\bigcup_{M\in\shP}M[[t]]\subseteq S[[t]]
\]
is injective.
\end{proof}
\begin{lem}
\label{lem: constant sheaves and t} Let $N$ be a finite set and
denote by $\underline{N}\colon\Aff/\Z\arr\sets$ the associated constant
sheaf. Then the maps
\[
\underline{N}(B)\arr\underline{N}(B[[t]])\arr\underline{N}(B((t)))
\]
are bijective. In other words if $B$ is a ring and $B((t))\simeq C_{1}\times\cdots\times C_{l}$
(resp. $B[[t]]=C_{1}\times\cdots\times C_{l}$) then $B\simeq B_{1}\times\cdots\times B_{l}$
and $C_{j}=B_{j}((t))$ (resp. $C_{j}=B_{j}[[t]]$).
\end{lem}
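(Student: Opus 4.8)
The plan is to reduce the sheaf-theoretic statement to the concrete ring-theoretic reformulation given in the second sentence, and then to prove the latter by an idempotent-lifting argument. Since $\underline{N}$ is a constant sheaf associated to a finite set $N$, a section of $\underline{N}$ over a ring $A$ is precisely a decomposition $\Spec A = \bigsqcup_{s\in N} U_s$ into open-and-closed pieces (equivalently a complete orthogonal system of idempotents $\{e_s\}_{s\in N}$ in $A$ with $\sum_s e_s = 1$). So what must be shown is: the natural maps from idempotent-decompositions of $\Spec B$ to those of $\Spec B[[t]]$ and of $\Spec B((t))$ are bijections. It suffices to show each of $B \to B[[t]]$ and $B \to B((t))$ induces a bijection on idempotents, compatibly with products; then the statement about factorizations $B((t)) \simeq C_1 \times \cdots \times C_l$ follows, with $B_j$ the image of $B$ in $C_j$ and the identification $C_j = B_j((t))$ (resp. $B_j[[t]]$) coming from the fact that the idempotent cutting out $C_j$ already lies in $B$.

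The key step is therefore: \emph{every idempotent of $B[[t]]$, and every idempotent of $B((t))$, lies in $B$.} For $B[[t]]$ this is standard: if $e = \sum_{n\ge 0} b_n t^n$ satisfies $e^2 = e$, comparing coefficients of $t^0$ gives $b_0^2 = b_0$, and then an induction on $n$ using $e^2 = e$ shows that $b_n$ is annihilated by a power of $b_0(1-b_0)$ — but $b_0(1-b_0) = 0$, so in fact one gets $b_n = 0$ for all $n \ge 1$ directly by induction (the coefficient of $t^n$ in $e^2 - e$ is $2b_0 b_n + (\text{terms in } b_1,\dots,b_{n-1}) - b_n$, and $2b_0 b_n - b_n = -b_n$ once $b_0$ is a "half" — more carefully, multiply the relation by $b_0$ and by $1-b_0$ separately). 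For $B((t))$, write an idempotent as $e = t^{-r} f$ with $f \in B[[t]]$; from $e^2 = e$ one gets $f^2 = t^r f$ in $B[[t]]$, and looking at the lowest-degree coefficient forces $r \le 0$, i.e. $e \in B[[t]]$ already, reducing to the previous case. Alternatively, and perhaps more cleanly, one can argue geometrically: $\Spec B[[t]] \to \Spec B$ is surjective with connected (indeed local-looking) fibers in the sense that $V(t) = \Spec B \hookrightarrow \Spec B[[t]]$ meets every closed-and-open subset, by Lemma \ref{lem:open subset of B=00005B=00005Bt=00005D=00005D} applied with $C = B[[t]]$ and $J = (t)$: any open-and-closed $U \subseteq \Spec B[[t]]$ containing $\Spec B = \Spec(B[[t]]/(t))$ equals all of $\Spec B[[t]]$, and applying this to $U$ and to its complement shows $U$ is the preimage of an open-and-closed subset of $\Spec B$. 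For $B((t))$ one further notes that a decomposition of $\Spec B((t))$ extends over $\Spec B[[t]]$: given a clopen $U' \subseteq \Spec B((t)) = \Spec B[[t]] \setminus V(t)$, its closure in $\Spec B[[t]]$, it is enough to see that $U'$ is already closed in $\Spec B[[t]]$, which holds because the idempotent defining $U'$ lies in $B[[t]]$ by the lowest-term computation above.

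The main obstacle, and the only real content, is controlling idempotents of $B((t))$: the localization $B[[t]] \to B[[t]][t^{-1}]$ can a priori create new idempotents, and one must rule this out. The $t$-adic completeness of $B[[t]]$ (used through Lemma \ref{lem:open subset of B=00005B=00005Bt=00005D=00005D}) is exactly what prevents this — it is the reason $\Spec B[[t]]$ cannot "break apart" near the locus $t = 0$ — but the passage to the punctured situation $\Spec B((t))$ requires the small extra argument that the lowest-order coefficient of an idempotent Laurent series is itself an idempotent sitting in degree $0$, forcing the series to have no negative part. Once idempotents are pinned down, compatibility with finite products is immediate and the "In other words" reformulation follows formally. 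The whole argument is short; I would present the idempotent computation for $B[[t]]$ explicitly and then deduce the $B((t))$ case and the product reformulation.
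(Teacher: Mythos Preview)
Your reduction to idempotents and the treatment of $B[[t]]$ are correct; in fact $(2b_0-1)^2 = 4b_0^2 - 4b_0 + 1 = 1$ since $b_0^2 = b_0$, so $2b_0 - 1$ is a unit and the induction is immediate. The gap is in the passage to $B((t))$. Writing an idempotent as $e = t^{-r}f$ with $f \in B[[t]]$, $f(0) \ne 0$, and $r > 0$, the relation $f^2 = t^r f$ only yields $f(0)^2 = 0$: the lowest-order coefficient is \emph{nilpotent}, not idempotent as you assert in the final paragraph, and over a non-reduced ring this does not force $f(0) = 0$ or $r \le 0$. Your argument as written is valid only when $B$ is reduced.

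The paper closes exactly this gap with a minimality trick. One takes $k > 0$ minimal, over all rings simultaneously, for which some ring $B$ and some $a \in B[[t]]$ have $a/t^k$ idempotent in $B((t))$ but $a/t^k \notin B[[t]]$. As above $a_0 = a(0)$ satisfies $a_0^2 = 0$; passing to $C = B/(a_0)$ the image of $a/t^k$ has order at most $k-1$, hence by minimality lies in $C[[t]]$. Since $B[[t]] \to C[[t]]$ and $B((t)) \to C((t))$ are surjections with nilpotent kernel they induce bijections on idempotents, and chasing the resulting commutative square forces $a/t^k \in B[[t]]$, a contradiction. This quotient-by-a-nilpotent step is the missing idea; without it the $B((t))$ case is genuinely incomplete.
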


\begin{proof}
Notice that $\underline{N}$ is an affine scheme étale over $\Spec\Z$.
Since $A=B[[t]]$ is $t$-adically complete we obtain that $\underline{N}(B[[t]])\arr\underline{N}(B[[t]]/tB[[t]])$
is bijective thanks to \ref{lem:lifting for fomally etale maps and series}.
Since $B\arr B[[t]]/tB[[t]]$ is an isomorphism we can conclude that
$\underline{N}(B)\arr\underline{N}(B[[t]])$ is bijective. 

Let $n$ be the cardinality of $N$ and $C$ be a ring. An element
of $\underline{N}(C)$ is a decomposition of $\Spec C$ into $n$-disjoint
open and closed subsets. In particular if $n=2$ then $\underline{N}(C)$
is the set of open and closed subsets of $\Spec C$. Taking this into
account it is easy to reduce the problem to the case $n=2$. In this
case another way to describe $\underline{N}$ is $\underline{N}=\Spec\Z[x]/(x^{2}-x)$,
so that $\underline{N}(C)$ can be identified with the set of idempotents
of $C$. Consider the map $\alpha_{B}\colon\underline{N}(B[[t]])\arr\underline{N}(B((t)))$,
which is injective since $B[[t]]\arr B((t))$ is so. If, by contradiction,
$\alpha_{B}$ is not surjective, we can define $k>0$ as the minimum
positive number for which there exist a ring $B$ and $a\in B[[t]]$
such that $a/t^{k}\in\underline{N}(B((t)))$ and $a/t^{k}\notin B[[t]]$.
Let $B,a$ as before and set $a_{0}=a(0)$. It is easy to check that
$a_{0}^{2}=0$ in $B$. Set $C=B/\langle a_{0}\rangle$. By \ref{lem:change of rings for series}
we have that $B[[t]]/a_{0}B[[t]]=C[[t]]$ and that $B((t))/a_{0}B((t))=C((t))$.
Thus we have a commutative diagram   \[   \begin{tikzpicture}[xscale=2.7,yscale=-1.2]     \node (A0_0) at (0, 0) {$\underline N(B[[t]])$};     \node (A0_1) at (1, 0) {$\underline N(B((t)))$};     \node (A1_0) at (0, 1) {$\underline N(C[[t]])$};     \node (A1_1) at (1, 1) {$\underline N(C((t)))$};     \path (A0_0) edge [->]node [auto] {$\scriptstyle{\alpha_B}$} (A0_1);     \path (A0_0) edge [->]node [auto] {$\scriptstyle{}$} (A1_0);     \path (A0_1) edge [->]node [auto] {$\scriptstyle{\beta}$} (A1_1);     \path (A1_0) edge [->]node [auto] {$\scriptstyle{\alpha_C}$} (A1_1);   \end{tikzpicture}   \] 
in which the vertical maps are bijective, since the topological space
of a spectrum does not change modding out by a nilpotent. By construction
$\beta(a/t^{k})=a'/t^{k-1}$ where $a'=(a-a_{0})/t$. By minimality
of $k$ we must have that $a'/t^{k-1}\in C[[t]]$. Since the vertical
maps in the above diagram are bijective we can conclude that also
$a/t^{k}\in B[[t]]$, a contradiction.
\end{proof}
\begin{lem}
\label{lem:Hom sheaves}Let $M,N$ be vector bundles on $B((t))$.
Then the functor
\[
\Homsh_{B((t))}(M,N)\colon\Aff/B\arr\sets\comma C\longmapsto\Hom_{C((t))}(M\otimes C((t)),N\otimes C((t)))
\]
is a sheaf in the fpqc topology.
\end{lem}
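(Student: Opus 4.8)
The plan is to prove the stronger statement that $\Homsh_{B((t))}(M,N)$ is a sheaf of $B((t))$-modules (a fortiori of sets) for the fpqc topology, and to reduce the whole matter to the single presheaf $F\colon\Aff/B\arr\sets$, $C\mapsto C((t))$. Since $M$ and $N$ are vector bundles on $B((t))$, fix $B((t))$-modules $M',N'$ with $M\oplus M'\simeq B((t))^{n}$ and $N\oplus N'\simeq B((t))^{m}$. Base change along $B((t))\arr C((t))$ is functorial in the $B$-algebra $C$ and commutes with finite direct sums, so $M\otimes_{B((t))}C((t))$ is functorially a direct summand of $C((t))^{n}$ and $N\otimes_{B((t))}C((t))$ a direct summand of $C((t))^{m}=F(C)^{m}$. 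Applying $\Hom_{C((t))}(-,N\otimes_{B((t))}C((t)))$ to the splitting $M\otimes C((t))\oplus M'\otimes C((t))\simeq C((t))^{n}$ exhibits $\Homsh_{B((t))}(M,N)$ as a direct summand, as a presheaf of abelian groups, of $C\mapsto (N\otimes_{B((t))}C((t)))^{n}$, hence of $C\mapsto F(C)^{mn}$. A finite product of fpqc sheaves is an fpqc sheaf, and a direct summand of a sheaf of abelian groups is again one; so it suffices to show that $F$ is an fpqc sheaf.

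For that, recall it suffices to check that $F$ sends finite products of rings to products and satisfies descent along every faithfully flat ring map $C\arr D$ with $C$ a $B$-algebra. The first is trivial, as $(C_{1}\times C_{2})((t))=C_{1}((t))\times C_{2}((t))$. For the second, use the decomposition of $C$-modules, functorial in $C$,
\[
C((t))=t^{-1}C[t^{-1}]\oplus C[[t]]\simeq C^{(\N)}\oplus C^{\N},
\]
which separates terms of negative order from those of non-negative order and is therefore respected by every ring homomorphism. Thus the candidate equalizer diagram $F(C)\arr F(D)\rightrightarrows F(D\otimes_{C}D)$ splits as the direct sum of the analogous diagrams for the functors $C\mapsto C^{(\N)}$ and $C\mapsto C^{\N}$, which may be treated separately.

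The summand $C\mapsto C^{(\N)}$ is handled by ordinary faithfully flat descent for modules applied to $C^{(\N)}$: the sequence $C^{(\N)}\arr C^{(\N)}\otimes_{C}D\rightrightarrows C^{(\N)}\otimes_{C}D\otimes_{C}D$ is an equalizer, and since tensor products commute with direct sums this is exactly $C^{(\N)}\arr D^{(\N)}\rightrightarrows(D\otimes_{C}D)^{(\N)}$. The summand $C\mapsto C^{\N}$ is the one genuinely delicate point, and I expect it to be the main obstacle: base change does \emph{not} commute with an infinite product, so one cannot argue by tensoring up. Instead, observe that the functor $X\mapsto X^{\N}=\prod_{n\in\N}X$ commutes with all limits, being itself a limit; applying it coefficientwise to the equalizer presentation $C=\mathrm{eq}(D\rightrightarrows D\otimes_{C}D)$ provided by faithfully flat descent for the module $C$ yields $C^{\N}=\mathrm{eq}(D^{\N}\rightrightarrows(D\otimes_{C}D)^{\N})$. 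Summing the two contributions gives $F(C)=\mathrm{eq}(F(D)\rightrightarrows F(D\otimes_{C}D))$, which completes the proof. The subtlety to watch is precisely the asymmetry between ``commutes with colimits'' — false for $X\mapsto X^{\N}$ — and ``commutes with limits'' — true — and using it in the correct direction.
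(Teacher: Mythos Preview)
Your argument is correct and follows essentially the same route as the paper: reduce to a single faithfully flat map and verify that $0\to C((t))\to D((t))\to (D\otimes_C D)((t))$ is an equalizer. The only cosmetic differences are that the paper packages the reduction by setting $H=\Hom_{B((t))}(M,N)$ and tensoring the exact sequence with the flat $B((t))$-module $H$, whereas you use the direct-summand trick; and the paper simply asserts the exactness of the base sequence, while you spell it out via the functorial splitting $C((t))\simeq C^{(\N)}\oplus C^{\N}$ and the observation that products preserve equalizers.
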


\begin{proof}
Set $H=\Hom_{B((t))}(M,N)$, which is a vector bundle over $B((t))$.
Moreover if $C$ is a $B$-algebra we have $H\otimes_{B((t))}C((t))\simeq\Homsh_{B((t))}(M,N)(C)$
because $M$ and $N$ are vector bundles. By \ref{lem:check stack on finite coverings}
we have to prove descent on coverings indexed by a finite set and,
by \ref{lem: constant sheaves and t}, it is enough to consider a
faithfully flat map $B\arr C$. If $i_{1},i_{2}\colon C\arr C\otimes_{B}C$
are the two inclusions, descent corresponds to the exactness of the
sequence

\[
0\arr H\arr C((t))\otimes H\arrdi{(i_{1}-i_{2})\otimes\id_{H}}(C\otimes_{B}C)((t))\otimes H
\]
Since this sequence is obtained applying $\otimes_{B((t))}H$ to the
exact sequence $0\arr B((t))\arr C((t))\arrdi{i_{1}-i_{2}}(C\otimes_{B}C)((t))$
and $H$ is flat we get the result.
\end{proof}

\subsection{Finite and universally injective morphisms}
\begin{defn}
A map $\stX\arr\stY$ between algebraic stacks is \emph{universally
injective }(resp. \emph{universally bijective}, a\emph{ universal
homeomorphism})\emph{ }if for all maps $\stY'\arr\stY$ from an algebraic
stack the map $|\stX\times_{\stY}\stY'|\arr|\stY'|$ on topological
spaces is injective (resp. bijective, an homeomorphism).
\end{defn}

\begin{rem}
In order to show that a map $\stX\arr\stY$ is universally injective
(resp. universally bijective, a universal homeomorphism) it is enough
to test on maps $\stY'\arr\stY$ where $\stY'$ is an affine scheme.
Indeed injectivity and surjectivity can be tested on the geometric
fibers. Moreover if $\bigsqcup_{i}Y_{i}\arr\stY$ is a smooth surjective
 map and the $Y_{i}$ are affine then $|\stX|\arr|\stY|$ is open
if $|\stX\times_{\stY}Y_{i}|\arr|Y_{i}|$ is open for all $i$. In
particular if $\stX\arr\stY$ is representable then it is universally
injective (resp. bijective, a universal homeomorphism) if and only
if it is represented by map of algebraic spaces which are universally
injective (resp. universally bijective, universal homeomorphisms)
in the usual sense.
\end{rem}

\begin{prop}
\label{prop:finite and universally injective maps} Let $f\colon\stX\arr\stY$
be a map of algebraic stacks. Then $f$ is finite and universally
injective if and only if it is a composition of a finite universal
homeomorphism and a closed immersion. More precisely, if $\shI=\Ker(\odi{\stY}\arr f_{*}\odi{\stX})$,
then $\stX\arr\Spec(\odi{\stY}/\shI)$ is finite and a universal homeomorphism.
\end{prop}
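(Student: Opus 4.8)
The plan is to reduce immediately to the case where $\stX\arr\stY$ is representable, so that (by the remark preceding the statement) we may work with a morphism of algebraic spaces, or even — after a smooth presentation of $\stY$ — with a morphism of affine schemes. First I would observe that the formation of $\shI=\Ker(\odi{\stY}\arr f_*\odi{\stX})$ and of the closed substack $\stZ:=\Spec(\odi\stY/\shI)$ commutes with smooth base change on $\stY$: since $f$ is affine, $f_*\odi\stX$ is quasi-coherent, and smooth (hence flat) pullback is exact and preserves the kernel. Thus both the statement ``$f$ is finite and universally injective'' and the statement ``$\stX\arr\stZ$ is a finite universal homeomorphism, $\stZ\hookrightarrow\stY$ a closed immersion'' can be checked after pulling back along a smooth surjective $Y=\bigsqcup_i Y_i\arr\stY$ with $Y_i$ affine. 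So it suffices to treat $f\colon X\arr Y$ with $Y=\Spec A$ affine; then $f$ finite means $X=\Spec B$ with $B$ a finite $A$-algebra, and $\shI$ corresponds to $I=\Ker(A\arr B)$, $\stZ=\Spec(A/I)$.

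Next I would prove the ``only if'' direction. Given $f$ finite and universally injective, $B$ is a finite $A$-algebra and $A/I\hookrightarrow B$ is injective, so $\Spec B\arr\Spec(A/I)$ is a finite morphism which is surjective (a finite injection of rings induces a surjection on spectra by lying-over) and still universally injective (it is obtained from $f$ by base change along the closed immersion $\Spec(A/I)\hookrightarrow\Spec A$, and universal injectivity is stable under base change). A finite, surjective, universally injective — equivalently, radicial — morphism is a universal homeomorphism: finiteness gives universal closedness, and ``universally bijective + universally closed'' is exactly ``universal homeomorphism''. Hence $X\arr\stZ$ is a finite universal homeomorphism and $\stZ\hookrightarrow Y$ is a closed immersion, giving the required factorization. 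For the ``if'' direction, if $f$ factors as $X\xrightarrow{g}\stZ'\xrightarrow{j}Y$ with $g$ a finite universal homeomorphism and $j$ a closed immersion, then $f$ is finite (composition of a finite morphism and a closed immersion), and $f$ is universally injective because both $g$ (a universal homeomorphism) and $j$ (a closed immersion, hence a monomorphism) are universally injective, and universal injectivity is stable under composition. Finally, to pin down that $\stZ'$ must be the $\stZ$ of the statement: $g^\#\colon\odi{\stZ'}\arr g_*\odi X$ is injective because a universal homeomorphism induces an injection on structure sheaves (it is in particular schematically dominant), so on the affine model $j$ corresponds to a surjection $A\twoheadrightarrow A/I'$ with $I'\subseteq I=\Ker(A\arr B)$; but $I'\supseteq I$ as well since $A/I'\hookrightarrow B$ factors $A\arr B$, so $I'=I$ and $\stZ'=\stZ$, which also yields the ``more precisely'' clause.

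The main obstacle I anticipate is purely bookkeeping: making the descent along the smooth presentation watertight, i.e. checking that ``finite universal homeomorphism'' and ``closed immersion'' are smooth-local on the target for the factorization to descend from the affine model back to $\stY$ — this uses that $\stZ$ is intrinsically defined via the quasi-coherent ideal $\shI$, so its pullback under $Y\arr\stY$ is the affine-model $\stZ$, and that being a universal homeomorphism (resp. closed immersion) is fppf-local on the target. Everything else (lying-over, radicial $\Leftrightarrow$ universally injective, finite $\Rightarrow$ universally closed, universally bijective $+$ universally closed $\Rightarrow$ universal homeomorphism) is standard and can be cited from the Stacks Project. I would therefore spend the bulk of the write-up on the reduction to the affine case and on the uniqueness of $\stZ$, and dispatch the ring-theoretic core in a couple of lines.
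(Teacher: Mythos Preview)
Your approach is essentially the same as the paper's: factor $f$ through the closed substack $\stZ=\Spec(\odi\stY/\shI)$ and check that $g\colon\stX\to\stZ$ is finite, surjective and universally injective, hence a universal homeomorphism. The paper is more economical in that it works directly at the stack level rather than reducing to the affine case: since the closed immersion $h\colon\stZ\hookrightarrow\stY$ is a monomorphism, for any scheme $U\to\stZ$ one has $\stX\times_\stZ U\simeq\stX\times_\stY U$, which transfers universal injectivity from $f$ to $g$ in one line. Your reduction to the affine case is correct but unnecessary.

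One genuine error, though in an extraneous part: your final paragraph claims that any factorization $\stX\xrightarrow{g}\stZ'\xrightarrow{j}\stY$ with $g$ a finite universal homeomorphism and $j$ a closed immersion must have $\stZ'=\stZ$, because ``a universal homeomorphism induces an injection on structure sheaves (it is in particular schematically dominant)''. This is false: the closed immersion $\Spec k\hookrightarrow\Spec k[\epsilon]/(\epsilon^2)$ is a finite universal homeomorphism but certainly not schematically dominant. In this example $f$ itself admits the two distinct factorizations $(\id,f)$ and $(f,\id)$. Fortunately the proposition does not assert uniqueness of the factorization, only that the specific factorization through $\Spec(\odi\stY/\shI)$ works, so you should simply drop that paragraph.
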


\begin{proof}
The if part in the statement is clear. So assume that $f$ is finite
and universally injective and consider the factorization $\stX\arrdi g\stZ=\Spec(\odi{\stY}/\shI)\arrdi h\stY$.
Since $f$ is finite, the map $g$ is finite and surjective. Since
$h$ is a monomorphism, given a map $U\arr\stZ$ from a scheme we
have that $\stX\times_{\stZ}U\arr\stX\times_{\stY}U$ is an isomorphism,
which implies that $g$ is also universally injective as required.
\end{proof}
\begin{rem}
The following properties of morphisms of schemes are stable by base
change and fpqc local on the base: finite, closed immersion, universally
injective, surjective and universal homeomorphism (see \cite[\href{http://stacks.math.columbia.edu/tag/02WE}{02WE}]{SP014}).
In particular for representable maps of algebraic stacks those properties
can be checked on an atlas.
\end{rem}

\begin{rem}
\label{rem:product of universally injective finite} Let $f\colon\stS'\arr\stS$
be a map of algebraic stacks, $\stU,\stV$ and $\stU',\stV'$ algebraic
stacks with a map to $\stS$ and $\stS'$ respectively and $u\colon\stU'\arr\stU$,
$v\colon\stV'\arr\stV$ be $\stS$-maps. If $f,u,v$ are finite and
universally injective then so is the induced map $\stU'\times_{\stS'}\stV'\arr\stU\times_{\stS}\stV$.
The map $(\stU\times_{\stS}\times\stV)\times_{\stS}\stS'\arr\stU\times_{\stS}\stV$
is finite and universally injective. The map $\stU'\arr\stU\times_{\stS}\stS'$
is also finite and universally injective because $\stU\times_{\stS}\stS'\arr\stU$
and $\stU'\arr\stU$ are so (use \cite[\href{http://stacks.math.columbia.edu/tag/01S4}{01S4}]{SP014}
for the universal injectivity). Thus we can assume $\stS=\stS'$ and
$f=\id$. In this case it is enough to use the factorization $\stU'\times_{\stS}\stV'\arr\stU\times_{\stS}\stV'\arr\stU\times_{\stS}\stV$.
\end{rem}

\subsection{Some results on torsors}

In what follows, actions of groups (or sheaves of groups) are supposed
to be right actions. Recall that for a sheaf $\shG$ of groups on
a site $\shC$, $\Bi\shG$ denotes the category of $\shG$-torsors
over objects of $\shC$, and that given a map $\shG\to\shH$ of sheaves
of groups, then there exists a functor $\Bi\shG\to\Bi\shH$ sending
a $\shG$-torsor $\shP$ to the $\shH$-torsor $(\shP\times\shH)/\shG$. 
\begin{lem}
\label{lem:central subgroups and torsors} Let $\shG$ be a sheaf
of groups on a site $\shC$ and $\shH$ be a sheaf of subgroups of
the center $Z(\shG)$. Then $\shH$ is normal in $\shG$, the map
$\mu\colon\shG\times\shH\arr\shG$ restriction of the multiplication
is a morphism of groups and the first diagram   \[   \begin{tikzpicture}[xscale=2.1,yscale=-1.2]     \node (A0_0) at (0, 0) {$\Bi (\shG\times \shH)$};     \node (A0_1) at (1, 0) {$\Bi(\shG)$};     \node (A0_2) at (2, 0) {$\shG\times\shH$};     \node (A0_3) at (3, 0) {$\shG$};     \node (A1_0) at (0, 1) {$\Bi(\shG)$};     \node (A1_1) at (1, 1) {$\Bi(\shG/\shH)$};     \node (A1_2) at (2, 1) {$\shG$};     \node (A1_3) at (3, 1) {$\shG/\shH$};     \path (A0_0) edge [->]node [auto] {$\scriptstyle{}$} (A0_1);     \path (A0_1) edge [->]node [auto] {$\scriptstyle{}$} (A1_1);     \path (A1_0) edge [->]node [auto] {$\scriptstyle{}$} (A1_1);     \path (A0_3) edge [->]node [auto] {$\scriptstyle{}$} (A1_3);     \path (A0_2) edge [->]node [auto] {$\scriptstyle{\pr_1}$} (A1_2);     \path (A0_0) edge [->]node [auto] {$\scriptstyle{}$} (A1_0);     \path (A0_2) edge [->]node [auto] {$\scriptstyle{\mu}$} (A0_3);     \path (A1_2) edge [->]node [auto] {$\scriptstyle{}$} (A1_3);   \end{tikzpicture}   \] induced
by the second one is $2$-Cartesian. A quasi-inverse $\Bi(\shG)\times_{\Bi(\shG/\shH)}\Bi(\shG)\arr\Bi(\shG\times\shH)=\Bi(\shG)\times\Bi(\shH)$
is obtained as follows: given $(\shP,\shQ,\lambda)\in\Bi(\shG)\times_{\Bi(\shG/\shH)}\Bi(\shG)$
(so that $\lambda\colon\shP/\shH\arr\shQ/\shH$ is a $\shG/\shH$-equivariant
isomorphism) we associate $(\shP,\shI_{\lambda})$, where $\shI_{\lambda}$
is the fiber of $\lambda$ along the map $\Isosh^{\shG}(\shP,\shQ)\arr\Isosh^{\shG/\shH}(\shP/\shH,\shQ/\shH)$
and the action of $\shH$ is given by $\shH\arr\shG\arr\Autsh(\shQ)$.
\end{lem}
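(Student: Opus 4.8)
The plan is to dispose of the group-theoretic assertions by a direct computation and then to prove $2$-cartesianness by showing that the functor $F$ of the statement, $(\shP,\shQ,\lambda)\mapsto(\shP,\shI_\lambda)$, is a quasi-inverse of the canonical comparison functor $c\colon\Bi(\shG\times\shH)=\Bi(\shG)\times\Bi(\shH)\arr\Bi(\shG)\times_{\Bi(\shG/\shH)}\Bi(\shG)$. As for the preliminaries: since $\shH$ lies in $Z(\shG)$, conjugation by any section of $\shG$ fixes $\shH$ pointwise, so $\shH$ is normal; and for $\mu(g,h)=gh$ one has $\mu\bigl((g_1,h_1)(g_2,h_2)\bigr)=g_1g_2h_1h_2=g_1h_1g_2h_2=\mu(g_1,h_1)\mu(g_2,h_2)$, the middle equality being exactly the centrality of $\shH$, so $\mu$ is a homomorphism. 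The outer square commutes because $(g,h)\mapsto\overline{gh}=\overline g$, and it is even cartesian as a square of sheaves of groups, the comparison morphism $\shG\times\shH\arr\shG\times_{\shG/\shH}\shG$, $(g,h)\mapsto(g,gh)$, having inverse $(g_1,g_2)\mapsto(g_1,g_1^{-1}g_2)$; I would stress, though, that applying $\Bi(-)$ to a cartesian square of sheaves of groups does \emph{not} in general produce a $2$-cartesian square, so this last observation is not itself the proof.

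The heart of the matter is to understand $\shI_\lambda$. For $\shG$-torsors $\shP,\shQ$ the sheaf $\Isosh^\shG(\shP,\shQ)$ of $\shG$-equivariant isomorphisms is a torsor under $\Autsh^\shG(\shQ)$ acting by post-composition, and likewise $\Isosh^{\shG/\shH}(\shP/\shH,\shQ/\shH)$ is a torsor under $\Autsh^{\shG/\shH}(\shQ/\shH)$, the map between them being equivariant along $\Autsh^\shG(\shQ)\arr\Autsh^{\shG/\shH}(\shQ/\shH)$. The decisive point is that the kernel of this morphism is \emph{canonically} $\shH$: a section $h$ of $\shH$ acts on $\shQ$ by $q\mapsto qh$, which is $\shG$-equivariant because $h$ is central and trivial on $\shQ/\shH$; conversely a $\shG$-automorphism of $\shQ$ trivial modulo $\shH$ is, after a local trivialization $\shQ\simeq\shG$, left multiplication by a section lying over $1\in\shG/\shH$, hence by a section of $\shH$, which is independent of the chosen trivialization precisely because it is central. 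Thus $\shI_\lambda$, the fiber over $\lambda$ of $\Isosh^\shG(\shP,\shQ)\arr\Isosh^{\shG/\shH}(\shP/\shH,\shQ/\shH)$, is a pseudo-torsor under $\shH$ via $\shH\arr\shG\arr\Autsh(\shQ)$; it is a genuine torsor because that structure map is an epimorphism, since locally $\shP$ and $\shQ$ are trivial and a $\shG/\shH$-automorphism of $\shG/\shH$, being left multiplication by some $\overline g$, lifts to left multiplication by $g$. This shows $F$ is well defined on objects, and functoriality is immediate.

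Finally I would check that $F$ and $c$ are mutually quasi-inverse. A $(\shG\times\shH)$-torsor is a pair $(\shP,\shR)$ with $\shP$ a $\shG$-torsor and $\shR$ an $\shH$-torsor, and $c$ sends it to $\bigl(\pr_{1,*}(\shP\times\shR),\,\mu_*(\shP\times\shR),\,\lambda_{\mathrm{can}}\bigr)$; a contracted-product computation gives $\pr_{1,*}(\shP\times\shR)\simeq\shP$ and $\mu_*(\shP\times\shR)\simeq(\shP\times\shR)/\shH$, where $\shH$ acts by $(p,r)\cdot h=(ph^{-1},rh)$ (through $\shH\hookrightarrow\shG$ on the first factor), the residual right $\shG$-action descending because $\shH$ is central, and, as $\shR/\shH=\ast$, one has $\bigl((\shP\times\shR)/\shH\bigr)/\shH\simeq\shP/\shH$, which is $\lambda_{\mathrm{can}}$. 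Then $r\mapsto(p\mapsto[p,r])$ is an $\shH$-equivariant isomorphism $\shR\simeq\shI_{\lambda_{\mathrm{can}}}$, so $F\circ c\simeq\id$; conversely, for $(\shP,\shQ,\lambda)$ the evaluation $[p,\psi]\mapsto\psi(p)$ is a well-defined $\shG$-equivariant isomorphism $(\shP\times\shI_\lambda)/\shH\arrdi{\ \sim\ }\shQ$ compatible with the canonical and the given identifications of the $\shH$-quotients, so $c\circ F\simeq\id$. The only delicate part of this last step is keeping straight the several right $\shH$-actions at play — on $\shP$ through the inclusion into $\shG$, on $\shQ$ through $\Autsh(\shQ)$, on $\shR$, and the (anti)diagonal one — and I expect the canonical identification of $\Ker\bigl(\Autsh^\shG(\shQ)\arr\Autsh^{\shG/\shH}(\shQ/\shH)\bigr)$ with $\shH$ in the middle paragraph to be the one genuinely substantive point; everything else is a diagram chase, carried out at the level of the site $\shC$.
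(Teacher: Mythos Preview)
Your proof is correct and follows essentially the same route as the paper's: both verify that $\shI_\lambda$ is an $\shH$-torsor by reducing locally to the projection $\shG\to\shG/\shH$, then establish that the comparison functor and the functor $(\shP,\shQ,\lambda)\mapsto(\shP,\shI_\lambda)$ are quasi-inverse via the same two maps --- $r\mapsto(p\mapsto[p,r])$ in one direction and evaluation $[p,\psi]\mapsto\psi(p)$ in the other. Your presentation is slightly more explicit (you spell out the group-theoretic preliminaries and the kernel identification $\Ker\bigl(\Autsh^{\shG}(\shQ)\to\Autsh^{\shG/\shH}(\shQ/\shH)\bigr)\simeq\shH$, and you describe the induced torsor as $(\shP\times\shR)/\shH$ rather than the full contracted product $(\shP\times\shR\times\shG)/(\shG\times\shH)$), but these are cosmetic differences, not a different argument.
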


\begin{proof}
Let $(\shP,\shQ,\lambda)\in\Bi(\shG)\times_{\Bi(\shG/\shH)}\Bi(\shG)$
over an object $c\in\shC$. The composition $\shH\arr\shG\arr\Autsh(\shQ)$
has image in $\Autsh^{\shG}(\shQ)$ because $\shH$ is central. Moreover
the map $\Isosh^{\shG}(\shP,\shQ)\arr\Isosh^{\shG/\shH}(\shP/\shH,\shQ/\shH)$
is $\shH$-equivariant. It is also an $\shH$-torsor: locally when
$\shP$ and $\shQ$ are isomorphic to $\shG$, the previous map become
$\shG\arr\shG/\shH$. Thus $\shI_{\lambda}$ is an $\shH$-torsor
over $c\in\shC$. Thus we have two well defined functors
\[
\Lambda\colon\Bi(\shG)\times_{\Bi(\shG/\shH)}\Bi(\shG)\arr\Bi(\shG)\times\Bi(\shH)\text{ and }\Delta\colon\Bi(\shG)\times\Bi(\shH)\arr\Bi(\shG)\times_{\Bi(\shG/\shH)}\Bi(\shG)
\]
and we must show they are quasi-inverses of each other. Let's consider
the composition $\Lambda\circ\Delta$ and $(\shP,\shE)\in\Bi(\shG)\times\Bi(\shH)$
over $c\in\shC$. We have $\Delta(\shP,\shE)=(\shP,(\shP\times\shE\times\shG)/\shG\times\shH,\lambda)$
where $\lambda$ is the inverse of
\[
[(\shP\times\shE\times\shG)/\shG\times\shH]/\shH\arr\shP/\shH\comma(p,e,1)\arr p
\]
We have to give an $\shH$-equivariant map $\shE\arr\shI_{\lambda}$.
Given a global section $e\in\shE$, that is a map $\shH\arr\shE$,
we get a $\shG\times\shH$ equivariant morphism $\shP\times\shH\arr\shP\times\shE$
and thus a $\shG$-equivariant morphism
\[
\delta\colon\shP\arr(\shP\times\shH\times\shG)/\shG\times\shH\arr(\shP\times\shE\times\shG)/\shG\times\shH
\]
which is easily seen to induce $\lambda$. Mapping $e$ to $\delta$
gives an $\shH$-equivariant map $\stE\to\stI_{\lambda}$. There are
several conditions that must be checked but they are all elementary
and left to the reader.

Now consider $\Delta\circ\Lambda$ and $(\shP,\shQ,\lambda)\in\Bi(\shG)\times_{\Bi(\shG/\shH)}\Bi(\shG)$
over an object $c\in\shC$. It is easy to see that
\[
(\shP\times\shI_{\lambda}\times\shG)/\shG\times\shH\arr\shQ\comma(p,\phi,g)\mapsto\phi(p)g
\]
is a $\shG$-equivariant morphism and it induces a morphism $\Delta\circ\Lambda(\shP,\shQ,\lambda)\arr(\shP,\shQ,\lambda)$.
\end{proof}
\begin{rem}
\label{rem:BG insensible to universal homeo, finite} If $X\arr Y$
is integral (e.g. finite) and a universal homeomorphism of schemes
and $G$ is an étale group scheme over a field $k$ then $\Bi G(Y)\arr\Bi G(X)$
is an equivalence. Indeed by \cite[Expose VIII, Theorem 1.1]{SGA4-2}
the fiber product induces an equivalence between the category of schemes
étale over $Y$ and the category of schemes étale over $X$.
\end{rem}

\begin{lem}
\label{lem:dividing rank for gerbes} Let $G$ be a finite group scheme
over $k$ of rank $\rk G$, $U\arr\shG$ a finite, flat and finitely
presented map of degree $\rk G$ and $\shG\arr T$ be a map locally
equivalent to $\Bi G$, where $U$, $\shG$ and $T$ are categories
fibered in groupoids. If $U\arr T$ is faithful then it is an equivalence.
\end{lem}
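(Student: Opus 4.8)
The plan is to reduce to the case of a \emph{trivial} gerbe and then to exhibit $U$ as a quotient of a scheme by $G$. Write $\pi\colon\shG\arr T$ for the structure map and $f\colon U\arr\shG$ for the given finite, flat, finitely presented degree‑$\rk G$ morphism, so that the map in question is $\pi f\colon U\arr T$. All of the hypotheses are stable under base change on $T$ (for faithfulness, because it means the relative diagonal of $\pi f$ is a monomorphism), and being an equivalence amounts to being representable — a property that descends in the fppf topology — and then, once representable, to being an isomorphism of algebraic spaces after base change to any scheme, which may be checked fppf‑locally on $T$. Since $\shG\arr T$ is fppf‑locally on $T$ equivalent to $\Bi G$, I may therefore assume $\shG=\Bi G_T$ with $G_T:=G\times_k T$ and that $\pi$ is the projection.

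Next, let $a\colon T\arr\Bi G_T$ be the atlas, i.e.\ the universal ($=$ trivial) $G_T$‑torsor; it is finite, flat, finitely presented and surjective of degree $\rk G$, and $a$ is itself a $G_T$‑torsor over $\Bi G_T$. Set $P:=U\times_{\Bi G_T}T$, the fibre product of $f$ and $a$, with projections $p_1\colon P\arr U$ and $p_2\colon P\arr T$. Pulling back the torsor $a$ along $f$ shows that $p_1$ is a $G_U$‑torsor ($G_U:=G\times_k U$): in particular $P$ is representable over $U$, $p_1$ is finite locally free of rank $\rk G$ and faithfully flat, and the $G_U$‑action on $P$ over $U$ is free. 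Pulling back the degree‑$\rk G$ map $f$ along $a$ shows $p_2$ is representable, finite, flat and of degree $\rk G$, hence faithfully flat. Using $\pi a=\id_T$ one gets $p_2=(\pi f)\circ p_1$, so the $G$‑action on $P$ is also an action over $T$ and $p_2$ is invariant for it. The key step — which I expect to be the main obstacle — is to prove that the graph
\[
\sigma\colon P\times_T G_T\arr P\times_T P,\qquad (x,g)\longmapsto(x,x\cdot g),
\]
is an isomorphism. I would factor $\sigma$ as $P\times_T G_T=P\times_U G_U\xrightarrow{\ \sim\ }P\times_U P\hookrightarrow P\times_T P$, where the first arrow is the graph of the $G_U$‑torsor $p_1$ (hence invertible) and the second is the base change, along $P\times_T P\arr U\times_T U$, of the relative diagonal $U\arr U\times_T U$ of $\pi f$. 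Faithfulness of $\pi f$ is exactly the statement that this relative diagonal is a monomorphism, so $\sigma$ is a monomorphism; being a morphism between schemes finite over $P$ (via the first projections) it is also finite, hence a closed immersion; and a closed immersion between $\odi P$‑modules that are locally free of the same rank $\rk G$ is an isomorphism. Without faithfulness $\sigma$ genuinely need not be a monomorphism: e.g.\ for $U=\Spec(k[\varepsilon])\times\Bi(\Z/2\Z)$ over $T=\Spec k$ the map $U\arr\Bi(\Z/2\Z)$ is finite flat of degree $2$, yet $U\arr T$ is neither faithful nor an equivalence.

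Once $\sigma$ is an isomorphism, $p_2\colon P\arr T$ is faithfully flat and carries a $G_T$‑action over $T$ with invertible graph, so $P\arr T$ is a $G_T$‑torsor and $T\simeq[P/G_T]$; on the other hand $p_1\colon P\arr U$ is a $G_U$‑torsor, so $U\simeq[P/G_U]$, and the two quotient stacks agree because they are formed with respect to the same action of $G$ on $P$ (via the canonical identification $P\times_U G_U\simeq P\times_k G\simeq P\times_T G_T$). Hence $\pi f\colon U\arr T$ is an equivalence, and by the first paragraph this proves the lemma in general. The routine points left to check are the two descent statements used in the reduction, the fact that the atlas of $\Bi G_T$ is a $G_T$‑torsor over it, and the standard equivalence between $G$‑torsors over a base and presentations of that base as a quotient stack.
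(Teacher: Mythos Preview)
Your proof is correct, but it takes a different and more elaborate route than the paper's. After the same reduction to $T$ a scheme and $\shG=\Bi G_T$, the paper simply observes that $U$ is an algebraic space (faithfulness of $U\to T$ makes it representable), that $U\to T$ is flat and finitely presented (by descent along the $G$-torsor $P\to U$, since $P\to T$ is), proper (as the composite $U\to\Bi G_T\to T$), and quasi-finite; hence $U\to T$ is finite locally free, and multiplicativity of degrees in $P\to U\to T$ forces it to have degree~$1$. No analysis of the action graph or identification of quotient stacks is needed.

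Your argument instead proves the stronger intermediate statement that $P\to T$ is itself a $G$-torsor, by showing the graph $\sigma$ is a finite monomorphism between locally free $\odi P$-algebras of equal rank. This is perfectly valid and makes the role of faithfulness more geometrically transparent (it is exactly what makes $\sigma$ injective, as your counterexample illustrates). The cost is that you must then argue carefully that the two presentations $U\simeq[P/G]$ and $T\simeq[P/G]$ are compatible with $\pi f$; this is fine, but the paper's degree count sidesteps it entirely. One minor notational point: writing $[P/G_U]$ and $[P/G_T]$ is potentially confusing, since those would naturally denote quotients over $U$ and over $T$; what you mean (and what makes the identification immediate) is the single quotient $[P/G]$ over $k$, which maps to both $U$ and $T$ via the $G$-invariant maps $p_1$ and $p_2$.
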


\begin{proof}
By changing the base $T$ we can assume that $T$ is a scheme, $\shG=\Bi G\times T$
and $U$ is an algebraic space. We must prove that if $P\arr U$ is
a $G$-torsor and $P\arr U\arr T$ is a cover of degree $\rk G$ then
$f\colon U\arr T$ is an isomorphism. It follows that $f\colon U\arr T$
is flat, finitely presented and quasi-finite. Moreover $f\colon U\arr\Bi G\times T\arr T$
is proper. We can conclude that $f\colon U\arr T$ is finite and flat.
Looking at the ranks of the involved maps we see that $f$ must have
rank $1$.
\end{proof}

\section{\label{sec:system of DM stacks}Direct system of Deligne-Mumford
stacks}

In this section we discuss some general facts about direct limits
of DM stacks. For the general notion of limit see Appendix \ref{sec:Limit}.
By a direct system in this section we always mean a direct system
indexed by $\N$.
\begin{defn}
\label{def:system of atlases}Let $\stX$ be a category fibered in
groupoid over $\Z$. A coarse ind-algebraic space for $\stX$ is a
map $\stX\arr X$ to an ind-algebraic space $X$ which is universal
among maps from $\stX$ to an ind-algebraic space and such that, for
all algebraically closed field $K$, the map $\stX(K)/\simeq\arr X(K)$
is bijective.
\end{defn}

\begin{lem}
\label{lem:limit of coarse}Let $\stZ_{*}$ be a direct system of
quasi-compact and quasi-separated algebraic stacks admitting coarse
moduli spaces $\stZ_{n}\arr\overline{\stZ_{n}}$. Then the limit of
those maps $\Delta\arr\overline{\Delta}$ is a coarse ind-algebraic
space.

Assume moreover that the transition maps of $\stZ_{*}$ are finite
and universally injective. Then for all $n\in\N$ and all reduced
rings $B$ the functors $\stZ_{n}(B)\arr\stZ_{n+1}(B)\arr\Delta(B)$
are fully faithful. In particular the maps $\overline{\stZ_{n}}\arr\overline{\stZ_{n+1}}$
are universally injective and, if all $\stZ_{m}$ are DM, $\stZ_{n}\arr\Delta$
preserves the geometric stabilizers.
\end{lem}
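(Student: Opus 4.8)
The statement has two parts: first, that the limit $\Delta \to \overline{\Delta}$ of the coarse moduli maps is a coarse ind-algebraic space; second, under the extra hypothesis that the transition maps are finite and universally injective, a collection of fully-faithfulness and injectivity assertions. For the first part, I would argue directly from the universal property. Given a map $\Delta \to Y$ to an ind-algebraic space $Y = \varinjlim Y_m$, quasi-compactness of each $\stZ_n$ forces the composite $\stZ_n \to \Delta \to Y$ to factor through some $Y_{m(n)}$; since $\stZ_n \to \overline{\stZ_n}$ is coarse, this factors uniquely through $\overline{\stZ_n} \to Y_{m(n)} \to Y$. Compatibility of these maps with the transition maps of $\stZ_*$ follows from the uniqueness in each coarse moduli property, so we obtain $\overline{\Delta} = \varinjlim \overline{\stZ_n} \to Y$, and it is the unique such map. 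The bijectivity on $K$-points for $K$ algebraically closed follows because $\Delta(K)/\!\simeq\, = \varinjlim (\stZ_n(K)/\!\simeq) = \varinjlim \overline{\stZ_n}(K) = \overline{\Delta}(K)$, using that filtered colimits commute with the quotient by isomorphism and with evaluation at $K$ (here one should note $K$-points of an ind-algebraic space are the colimit of the $K$-points of the pieces).

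\textbf{The second part} is where the real content lies. Fix a reduced ring $B$. I want to show $\stZ_n(B) \to \stZ_{n+1}(B)$ is fully faithful; by induction it then suffices to treat a single finite universally injective transition map $g \colon \stZ_n \to \stZ_{n+1}$, and the statement about $\Delta(B)$ follows by passing to the filtered colimit (a filtered colimit of fully faithful functors between groupoids is fully faithful). By Proposition \ref{prop:finite and universally injective maps}, $g$ factors as a finite universal homeomorphism $\stZ_n \to \stZ'$ followed by a closed immersion $\stZ' \hookrightarrow \stZ_{n+1}$, where $\stZ' = \Spec(\odi{\stZ_{n+1}}/\shI)$. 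A closed immersion is a monomorphism, hence induces fully faithful maps on all $B$-points, so it remains to handle the finite universal homeomorphism $h\colon \stZ_n \to \stZ'$. Here is the crucial point: for a universal homeomorphism of algebraic stacks, the induced functor on groupoids of $B$-valued points for \emph{reduced} $B$ should be fully faithful — morally because a universal homeomorphism is an isomorphism after taking reductions, and a reduced test scheme cannot detect the nilpotent discrepancy. Concretely, I would reduce to the representable case using an atlas: pick a smooth atlas $W \to \stZ'$ with $W$ a reduced (or reducible-but-we-can-reduce) scheme, base change, and use that $\stZ_n \times_{\stZ'} W \to W$ is a universal homeomorphism of algebraic spaces, together with the fact that such a map becomes an isomorphism on $\Spec B$-points when $B$ is reduced. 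Faithfulness is automatic since $h$ is representable. For fullness and essential surjectivity on $B$-points with $B$ reduced, the key input is that a universal homeomorphism of schemes (or algebraic spaces) that is, say, finite, induces an equivalence $X_{\mathrm{red}} \xrightarrow{\sim} Y_{\mathrm{red}}$, and a map from a reduced scheme factors uniquely through the reduction.

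\textbf{Consequences.} Once fully faithfulness of $\stZ_n(B) \to \stZ_{n+1}(B)$ is established for reduced $B$, the remaining assertions follow formally. Universal injectivity of $\overline{\stZ_n} \to \overline{\stZ_{n+1}}$ can be checked on geometric points, i.e. on $K$-points for $K$ an algebraically closed field (in particular reduced): injectivity of $\overline{\stZ_n}(K) \to \overline{\stZ_{n+1}}(K)$ amounts to injectivity on isomorphism classes of $\stZ_n(K) \to \stZ_{n+1}(K)$, which follows from fullness (a morphism witnessing that two objects of $\stZ_n(K)$ become isomorphic in $\stZ_{n+1}(K)$ lifts to $\stZ_n(K)$). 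Actually one must be slightly careful — universal injectivity is about the topological spaces of all base changes, not just $K$-points — so I would instead invoke the remark following Definition of universally injective: it suffices to test on affine schemes, and injectivity plus surjectivity can be checked on geometric fibers, reducing again to $K$-points with $K$ algebraically closed. Finally, that $\stZ_n \to \Delta$ preserves geometric stabilizers when all $\stZ_m$ are DM: for a geometric point $x \colon \Spec K \to \stZ_n$, the automorphism group $\Aut_{\stZ_n(K)}(x)$ maps to $\Aut_{\Delta(K)}(x)$, and this is an isomorphism precisely because $\stZ_n(K) \to \Delta(K)$ is fully faithful (as $K$ is reduced).

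\textbf{Main obstacle.} The heart of the argument — and the step I expect to require the most care — is proving that a finite universal homeomorphism of algebraic stacks induces a fully faithful functor on $B$-points for reduced $B$. One must handle the subtlety that ``universal homeomorphism'' a priori only controls underlying topological spaces, not the structure sheaves, so the passage to reductions and the representability reduction via an atlas must be done carefully; in particular one needs that taking an atlas and reducing interact well, and that the relevant statement for finite universal homeomorphisms of schemes (equivalence on reductions, unique factorization of maps from reduced schemes through the reduction) is available. Everything else is a formal manipulation of filtered colimits, monomorphisms, and the definition of coarse (ind-)moduli spaces.
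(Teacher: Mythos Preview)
Your treatment of the first claim (coarse ind-algebraic space) and of the consequences at the end is fine and essentially matches the paper. The overall architecture for the second claim---reduce to a single transition map, then analyse the finite universally injective morphism---is also sound.

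The gap is in your justification of the key step. You assert that a finite universal homeomorphism $h\colon X\to Y$ of schemes induces an isomorphism $X_{\mathrm{red}}\xrightarrow{\sim}Y_{\mathrm{red}}$, and you build the argument on this. That assertion is false: the Frobenius $F\colon\A^1\to\A^1$ over $\F_p$ is a finite universal homeomorphism between reduced schemes which is not an isomorphism, and for $B=\F_p(t)$ the induced map on $B$-points is not surjective. So ``equivalence on reduced points'' fails, and your planned reduction via an atlas does not go through as stated. Note too that you only need \emph{fully faithful}, not essentially surjective; aiming for an equivalence is what leads you to the false claim.

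The paper avoids this by pulling back along the specific $B$-point rather than along an atlas. Given $\xi,\eta\in\stZ_n(B)$ and $a\colon\psi(\xi)\to\psi(\eta)$, set $W=\stZ_n\times_{\stZ_{n+1},\psi(\eta)}\Spec B$; this is a scheme, finite and universally injective over $\Spec B$, and it carries two sections $\overline\xi=(\xi,a)$ and $\overline\eta=(\eta,\id)$. A lift of $a$ is exactly an isomorphism $\overline\xi\simeq\overline\eta$. The presence of a section forces $W\to\Spec B$ to be surjective, hence a universal homeomorphism; each section is then a closed immersion whose (reduced) image has full underlying space, so both sections land in $W_{\mathrm{red}}$, which is thereby isomorphic to $\Spec B$, and the two sections coincide. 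The point is that $W_{\mathrm{red}}\simeq\Spec B$ holds precisely because you already have a section from a reduced scheme---this is what distinguishes the situation from the Frobenius example.

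If you prefer to stay closer to your outline, the correct replacement for your false input is: for a finite universal homeomorphism $h\colon X\to Y$, the diagonal $X\to X\times_Y X$ is a closed immersion which is bijective on underlying spaces (base-change the universal injectivity), hence a nilpotent thickening; therefore any map from a reduced scheme into $X\times_Y X$ factors through the diagonal, which gives injectivity of $X(B)\to Y(B)$ for reduced $B$. Combined with faithfulness from representability, this yields exactly the fully faithfulness you need.
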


\begin{proof}
The first claim follows easily taking into account that, since $\stZ_{n}$
is quasi-compact and quasi-separated, a functor from $\stZ_{n}$ to
an ind-algebraic space factors through an algebraic space and therefore
uniquely through $\overline{\stZ_{n}}$. It is also easy to reduce
the second claim to the case of some $\stZ_{n}$.

Denote by $\psi\colon\stZ_{n}\arr\stZ_{n+1}$ the transition map.
Let $\xi,\eta\in\stZ_{n}(B)$ and $a\colon\psi(\xi)\arr\psi(\eta)$.
Set $\psi(\eta)=\zeta\in\stZ_{n+1}(B)$. If $W$ is the base change
of $\stZ_{n}\arr\stZ_{n+1}$ along $\Spec B\arrdi{\zeta}\stZ_{n+1}$
then $\overline{\xi}=(\xi,\zeta,a),\overline{\eta}=(\eta,\zeta,\id)\in W(B)$.
A lifting of $a$ to an isomorphism $\xi\arr\eta$ is exactly an isomorphism
$\overline{\xi}\arr\overline{\eta}$. Such an isomorphism exists and
it is unique because, since $W\arr\Spec B$ is an homeomorphism and
$B$ is reduced it has at most one section.

Applying the above property when $B$ is an algebraically closed field
we conclude that $\overline{\stZ_{n}}\arr\overline{\stZ_{n+1}}$ is
universally injective. If all $\stZ_{m}$ are DM then the geometric
stabilizers are constant. Since for all algebraically closed field
$K$ the functor $\stZ_{n}(K)\arr\stZ_{n+1}(K)$ is fully faithful
we see that $\stZ_{n}\arr\stZ_{n+1}$ is an isomorphism on geometric
stabilizers.
\end{proof}
\begin{defn}
Given a direct system of stacks $\stY_{*}$, a direct system of smooth
(resp. étale) atlases for $\stY_{*}$ is a direct system of algebraic
spaces $U_{*}$ together with smooth (resp. étale) atlases $U_{i}\arr\stY_{i}$
and $2$-Cartesian diagrams   \[   \begin{tikzpicture}[xscale=2.1,yscale=-1.2]     \node (A0_0) at (0, 0) {$U_i$};     \node (A0_1) at (1, 0) {$U_{i+1}$};     \node (A1_0) at (0, 1) {$\stY_i$};     \node (A1_1) at (1, 1) {$\stY_{i+1}$};     \path (A0_0) edge [->]node [auto] {$\scriptstyle{}$} (A0_1);     \path (A1_0) edge [->]node [auto] {$\scriptstyle{}$} (A1_1);     \path (A0_1) edge [->]node [auto] {$\scriptstyle{}$} (A1_1);     \path (A0_0) edge [->]node [auto] {$\scriptstyle{}$} (A1_0);   \end{tikzpicture}   \] for
all $i\in\N$.
\end{defn}

\begin{lem}
\label{lem:from qcqs stacks to limit} Let $\stY_{*}$ be a direct
system of stacks and $\stX$ be a quasi-compact and quasi-separated
algebraic stack. Then the functor
\[
\varinjlim_{n}\Hom(\stX,\stY_{n})\arr\Hom(\stX,\varinjlim_{n}\stY)
\]
is an equivalence of categories. If the transition maps of $\stY_{*}$
are faithful (resp. fully faithful) so are the transition maps in
the above limit.
\end{lem}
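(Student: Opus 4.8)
The plan is to reduce to the case where $\stX$ is an affine scheme, where the statement is essentially the $2$-Yoneda lemma, and then to bootstrap by presenting $\stX$ by affine schemes and using that filtered $2$-colimits of groupoids commute with finite $2$-limits. I will use two preliminary facts, drawn from Appendix \ref{sec:Limit}: that $\varinjlim_{n}\stY_{n}$ is computed fiberwise, i.e.\ $(\varinjlim_{n}\stY_{n})(U)\simeq\varinjlim_{n}\stY_{n}(U)$ for $U$ affine, and that $\varinjlim_{n}\stY_{n}$ is again a stack --- the latter because, by \ref{lem:check stack on finite coverings}, descent need only be checked along a single faithfully flat affine morphism, where it is a finite $2$-limit condition; it holds for $\varinjlim_{n}\stY_{n}$ since it holds for each $\stY_{n}$ and filtered $2$-colimits commute with finite $2$-limits. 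Granting this, when $\stX=U$ is affine the $2$-Yoneda lemma gives
\[
\varinjlim_{n}\Hom(U,\stY_{n})\simeq\varinjlim_{n}\stY_{n}(U)\simeq(\varinjlim_{n}\stY_{n})(U)\simeq\Hom(U,\varinjlim_{n}\stY_{n}),
\]
compatibly with the transition functors.

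For a general quasi-compact and quasi-separated algebraic stack $\stX$, I would present it by affine schemes. Using quasi-compactness, pick a smooth surjection $X_{0}\arr\stX$ with $X_{0}$ affine; by quasi-separatedness $X_{1}=X_{0}\times_{\stX}X_{0}$ and $X_{2}=X_{0}\times_{\stX}X_{0}\times_{\stX}X_{0}$ are quasi-compact and quasi-separated algebraic spaces. For any stack $\stZ$, descent along $X_{0}\arr\stX$ identifies $\Hom(\stX,\stZ)$ with the groupoid of descent data relative to $X_{0}$ --- pairs $(z,\varphi)$ with $z\in\stZ(X_{0})$ and $\varphi$ an isomorphism over $X_{1}$ between the two pullbacks of $z$ satisfying the cocycle identity over $X_{2}$, morphisms being isomorphisms over $X_{0}$ compatible with the $\varphi$'s --- which is a finite $2$-limit built from $\stZ(X_{0})$, $\stZ(X_{1})$, $\stZ(X_{2})$. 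Iterating this reduction, first for the algebraic spaces $X_{1},X_{2}$, then for quasi-compact quasi-separated schemes, and finally for quasi-affine schemes, where it terminates because a quasi-affine scheme has a finite affine cover all of whose multiple intersections are affine, exhibits $\Hom(\stX,-)$, as a functor from the $2$-category of stacks to groupoids, in the form $\stZ\mapsto L\bigl(\stZ(U_{1}),\dots,\stZ(U_{m})\bigr)$ for a finite $2$-limit $L$ and affine schemes $U_{i}$; finiteness of $L$ is exactly what quasi-compactness and quasi-separatedness of $\stX$ provide.

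Putting the pieces together: applying this description to $\stZ=\stY_{n}$ and to $\stZ=\varinjlim_{n}\stY_{n}$, rewriting $(\varinjlim_{n}\stY_{n})(U_{i})\simeq\varinjlim_{n}\stY_{n}(U_{i})$ by the affine case, and commuting the filtered colimit out through $L$ yields $\varinjlim_{n}\Hom(\stX,\stY_{n})\simeq\Hom(\stX,\varinjlim_{n}\stY_{n})$; unwinding the construction shows it is compatible with the transition functors. For the last assertion, post-composition with a faithful (resp.\ fully faithful) functor of groupoids is again faithful (resp.\ fully faithful), as one checks by testing the relevant natural transformations objectwise; hence if $\stY_{n}\arr\stY_{n+1}$ has either property then so does the induced functor $\Hom(\stX,\stY_{n})\arr\Hom(\stX,\stY_{n+1})$.

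The main obstacle is not any single estimate but the $2$-categorical bookkeeping behind the reductions above: making precise that $\varinjlim_{n}\stY_{n}$ is a stack with fiberwise colimit, and --- more delicately --- that $\Hom(\stX,-)$ on stacks is a genuinely finite $2$-limit of evaluation functors on affine schemes (equivalently, that a quasi-compact quasi-separated algebraic stack is a compact object). Once that framework is set up, the proof is the slogan ``filtered colimits commute with finite limits.''
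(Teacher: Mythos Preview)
Your proposal is correct and follows essentially the same strategy as the paper: settle the affine case via the fiberwise description of the colimit, then bootstrap by descent along a smooth affine atlas, iterating down through algebraic spaces and quasi-compact schemes to affines. The paper organizes the induction slightly more concretely---first proving faithfulness of $\zeta_{\stX}$ in general, then treating quasi-compact schemes via a Zariski cover, then the general stack---whereas you package the same reduction as the slogan that $\Hom(\stX,-)$ is a finite $2$-limit of evaluations at affines and filtered $2$-colimits commute with finite $2$-limits; the content is the same.
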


\begin{proof}
Denotes by $\zeta_{\stX}$ the functor in the statement. When $\stX$
is an affine scheme $\zeta_{\stX}$ is an equivalence thanks to \ref{prop:limit of stacks is stack}.
In general there is a smooth atlas $U\arr\stX$ from an affine scheme.
It is easy to see that the functor $\zeta_{\stX}$ is faithful. If
two morphisms become equal in the limit it is enough to pullback to
$U$ and get a finite index for $\zeta_{U}$. By descent this index
will work in general.

The next step is to look at the case when $\stX$ is a quasi-compact
scheme. Using the faithfulness just proved and taking a Zariski covering
of $\stX$ (here one uses that the intersection of two open quasi-compact
subschemes of $\stX$ is again quasi-compact) one proves that $\zeta_{\stX}$
is an equivalence.

Finally using that $\zeta_{U}$, $\zeta_{U\times_{\stX}U}$ and $\zeta_{U\times_{\stX}U\times_{\stX}U}$
are equivalences and using descent one get that $\zeta_{\stX}$ is
an equivalence. The last statement can be proved directly.
\end{proof}
\begin{prop}
\label{lem:descent of ind along torsors}Consider a $2$-Cartesian
diagram   \[   \begin{tikzpicture}[xscale=1.5,yscale=-1.2]     \node (A0_0) at (0, 0) {$\stY$};     \node (A0_1) at (1, 0) {$\Spec k$};     \node (A1_0) at (0, 1) {$\stX$};     \node (A1_1) at (1, 1) {$\Bi G$};     \path (A0_0) edge [->]node [auto] {$\scriptstyle{}$} (A0_1);     \path (A0_0) edge [->]node [auto] {$\scriptstyle{F}$} (A1_0);     \path (A0_1) edge [->]node [auto] {$\scriptstyle{}$} (A1_1);     \path (A1_0) edge [->]node [auto] {$\scriptstyle{}$} (A1_1);   \end{tikzpicture}   \] where
$G$ is a finite group and $\stX$ is a stack over $\Aff/k$. Suppose
that there exists a direct system $\stY_{*}$ of DM stacks of finite
type over $k$ with finite and universally injective transition maps,
affine diagonal, with a direct system of étale atlases $Y_{n}\arr\stY_{n}$
from affine schemes and with an isomorphism $\varinjlim_{n}\stY_{n}\simeq\stY$.
Then there exists a direct system of DM stacks $\stX_{*}$ of finite
type over $k$ with finite and universally injective transition maps,
affine diagonal, with a direct system of étale atlases $X_{n}\arr\stX_{n}$
from affine schemes and with an isomorphism $\varinjlim_{n}\stX_{n}\simeq\stX$.
If all the stacks in $\stY_{*}$ are separated then the stacks $\stX_{*}$
can also be chosen separated. If $Y_{n}\arr\stY_{n}$ is finite and
étale then $X_{n}\arr\stX_{n}$ can also be chosen finite and étale.
\end{prop}

The remaining part of this section is devoted to the proof of the
above Proposition. Its outline is as follows. For some data $\underline{\omega}$,
we define a stack $\shX_{\underline{\omega}}$, and for a suitable
sequence $\underline{\omega_{u}}$, $u\in\N$ of such data, we will
prove that the sequence 
\[
\shX_{\underline{\omega_{0}}}\to\shX_{\underline{\omega_{1}}}\to\cdots
\]
has the desired property. To do this, we reduce the problem to proving
a similar property for the induced sequence $\shZ_{\underline{\omega_{u}}}\cong\shX_{\underline{\omega_{u}}}\times_{\shX}\shY$,
$u\in\N$. Then we describe $\shZ_{\underline{\omega}}$ using fiber
products of simple stacks. Once these are done, it is straightforward
to see the desired properties of $\shZ_{\underline{\omega_{u}}}$,
$u\in\N$. 

We recall that stacks and more generally categories fibered in groupoids
form 2-categories; 1-morphisms are base preserving functors between
them and 2-morphisms are base preserving natural isomorphisms between
functors. In a diagram of stacks, 1-morphisms (functors) are written
as normal thin arrows and 2-morphisms as thick arrows. For instance,\emph{
}in the diagram of categories fibered in groupoids
\[
\xymatrix{A\ar[r]^{f}\ar[d]_{h} & B\ar[d]^{g}\ar@2[dl]^{\lambda}\\
C\ar[r]_{i} & D
}
\]
$A,B,C$ and $D$ are categories fibered in groupoids, $f$, $g$,
$h$ and $i$ are functors and $\lambda$ is a natural isomorphism
$g\circ f\to i\circ h$. We will also say that \emph{$\lambda$ makes
the diagram $2$-commutative}. For a diagram including several 2-morphisms
such as 
\[
\xymatrix{A\ar[r]^{f}\ar[d]_{i} & \ar@2[dl]_{\lambda}B\ar[r]^{g}\ar[d] & \ar@2_{\lambda'}[dl]C\ar[d]^{h}\\
D\ar[r]_{j} & E\ar[r]_{k} & F
}
\]
the \emph{induced natural isomorphism} means that the induced natural
isomorphism of the two outer paths from the upper left to the bottom
right; concretely, in the above diagram, it is the natural isomorphism
$h\circ g\circ f\to k\circ j\circ i$ induced by $\lambda$ and $\lambda'$. 

Let us denote the functor $\stX\arr\Bi G$ in \ref{lem:descent of ind along torsors}
by $Q$. An object of $\shY$ over $T\in\Aff/k$ is identified with
a pair $(\xi,c)$ such that $\xi\in\shX(T)$ and $c$ is a section
of the $G$-torsor $Q(\xi)\arr T$, in other words $c\in Q(\xi)(T)$.
For each $g\in G$, we define an automorphism $\iota_{g}\colon\shY\to\shY$
sending $(x,c)$ to $(x,cg)$. By construction we have $\iota_{1}=\id$
and $\iota_{g}\circ\iota_{h}=\iota_{hg}$ and we interpret those maps
as a map $\iota\colon\stY\times G\arr\stY$. For all $\xi\in\stX$
the map $\iota$ induces the action of $G$ on $Q(\xi)$.
\begin{defn}
We define a category fibered in groupoids $\widetilde{\shX}$ as follows.
An object of $\widetilde{\shX}$ over a scheme $T$ is a tuple $(P,\eta,\mu)$
where $P$ is a $G$-torsor over $T\in\Aff/k$ with a $G$-action
$m_{P}\colon P\times G\to P$, $\eta\colon P\to\shY$ is a morphism
and $\mu$ is a natural isomorphism
\begin{equation}
\xymatrix{P\times G\ar[r]^{m_{P}}\ar[d]_{\eta\times\id_{G}} & \ar@2[dl]_{\mu}P\ar[d]^{\eta}\\
\shY\times G\ar[r]_{\iota} & \shY
}
\label{eq:mu-2}
\end{equation}
such that if $\mu_{g}$ denotes the natural isomorphism induced from
$\mu$ by composing $P\simeq P\times\{g\}\hookrightarrow P\times G$
and $m_{g}\colon P\to P$ denotes the action of $g$, then the diagram
\begin{equation}
\xymatrix{P\ar[r]^{m_{h}}\ar[d]_{\eta} & \ar@2[dl]_{\mu_{h}}P\ar[r]^{m_{g}}\ar[d]_{\eta} & \ar@2[dl]_{\mu_{g}}P\ar[d]_{\eta}\\
\shY\ar[r]_{\iota_{h}} & \shY\ar[r]_{\iota_{g}} & \shY
}
\label{eq:tau-cocycle}
\end{equation}
induces $\mu_{hg}$. A morphism $(P,\eta,\mu)\to(P',\eta',\mu')$
over $T$ is a pair $(\alpha,\beta)$ where $\alpha\colon P\to P'$
is a $G$-equivariant isomorphism over $T$ and $\beta$ is a natural
isomorphism 
\[
\xymatrix{P\ar[rr]^{\alpha}\ar[dr]_{\eta} &  & P'\ar[dl]^{\eta'}\ar@{<=}[dll(0.6)]\sb(0.7){\beta}\\
 & \shY
}
\]
such that  the diagram
\begin{equation}
\xymatrix{P\times G\ar[dd]_{\eta\times\id}\ar[rrr]\ar[dr]^{\alpha\times\id} &  & \ar@2[dl]_{\id} & \ar[dl]^{\alpha}P\ar[dd]^{\eta}\\
 & \ar@2[l]^{\beta^{-1}\times\id}P'\times G\ar[r]\ar[dl]^{\eta'\times\id} & P'\ar[dr]_{\eta'}\ar@2[dl]_{\mu'} & \ar@2[l]\sp(0.4){\beta}\\
\shY\times G\ar[rrr]_{\iota} &  &  & \shY
}
\label{eq:beta-compatible-1}
\end{equation}
induces $\mu$.
\end{defn}

There is a functor $\stX\arr\widetilde{\stX}$: given $\xi\in\stX(T)$
one gets a $G$-torsor $Q(\xi)\to T$ and a morphism $\eta\colon Q(\xi)\to\shY$
and using the Cartesian diagram relating $\stX$ and $\stY$ we also
get a natural transformation as above. The following is a generalization
of \cite[Theorem 4.1]{Romagny2005} for stacks without geometric properties.
\begin{lem}
\label{lem:X iso to tilde X}The functor $\stX\arr\widetilde{\stX}$
is an equivalence. 
\end{lem}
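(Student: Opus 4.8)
Write $\Phi\colon\stX\arr\widetilde{\stX}$ for the functor in the statement, $\xi\mapsto(Q(\xi),\eta_{\xi},\mu_{\xi})$. The plan is to construct an explicit quasi-inverse $\Psi\colon\widetilde{\stX}\arr\stX$ by fppf descent along the torsor $P\arr T$, the mechanism being the standard identification $P\times_{T}P\simeq P\times G$, $(x,xg)\mapsto(x,g)$ (and $P\times_{T}P\times_{T}P\simeq P\times G\times G$), under which the two projections $\pr_{1},\pr_{2}\colon P\times_{T}P\arr P$ correspond to $\pr_{1}$ and to the action $m_{P}\colon P\times G\arr P$. Given $(P,\eta,\mu)\in\widetilde{\stX}(T)$, form the object $F\eta\in\stX(P)$ by composing $\eta\colon P\arr\stY$ with $F\colon\stY\arr\stX$; note that $P$ is affine over $T$ (as $G$ is finite), so $P$ is again affine and $P\arr T$ is an fppf covering in $\Aff/k$. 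Pasting $\mu$ from \eqref{eq:mu-2} with the structural $2$-isomorphism of the defining $2$-Cartesian square --- which gives $F\circ\iota\simeq F\circ\pr_{1}$ as functors $\stY\times G\arr\stX$, since each $\iota_{g}$ covers $\id_{\stX}$ --- produces a $2$-isomorphism between $\pr_{1}^{*}(F\eta)$ and $\pr_{2}^{*}(F\eta)$ in $\stX(P\times_{T}P)$, that is, a descent datum on $F\eta$ relative to $P\arr T$.

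First I would check that the cocycle identity for this descent datum over $P\times_{T}P\times_{T}P$, transported to $P\times G\times G$ and using $\iota_{g}\circ\iota_{h}=\iota_{hg}$, is precisely the compatibility \eqref{eq:tau-cocycle}. Since $\stX$ is a stack for the fppf topology, $F\eta$ with this datum descends to an object $\Psi(P,\eta,\mu)\in\stX(T)$, unique up to unique isomorphism and carrying a canonical isomorphism $\Psi(P,\eta,\mu)|_{P}\simeq F\eta$ matching the two descent data. On morphisms, a pair $(\alpha,\beta)\colon(P,\eta,\mu)\arr(P',\eta',\mu')$ yields --- applying $F$ to $\beta$ and using the $T$-isomorphism $\alpha\colon P\arr P'$ to identify the two coverings --- an isomorphism between the restrictions to $P$ of $\Psi(P,\eta,\mu)$ and $\Psi(P',\eta',\mu')$, and diagram \eqref{eq:beta-compatible-1} says exactly that this isomorphism commutes with the descent data, so that it descends to $\Psi(\alpha,\beta)$ over $T$. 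Functoriality of $\Psi$ is routine.

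It then remains to produce natural isomorphisms $\Psi\circ\Phi\simeq\id_{\stX}$ and $\Phi\circ\Psi\simeq\id_{\widetilde{\stX}}$. For the first, the canonical $\eta_{\xi}\colon Q(\xi)\arr\stY$ satisfies $F\circ\eta_{\xi}\simeq\xi|_{Q(\xi)}$, and the descent datum built from $\mu_{\xi}$ is, by construction, carried to the canonical descent datum of $\xi|_{Q(\xi)}$ relative to $Q(\xi)\arr T$, so descending it returns $\xi$. For the second, put $\xi=\Psi(P,\eta,\mu)$: applying $Q$ to $\xi|_{P}\simeq F\eta$ and using that $Q\circ F\colon\stY\arr\Bi G$ is the trivial torsor $\stY\arr\Spec k\arr\Bi G$, one finds that $Q(\xi)$ is obtained by descending the trivial $G$-torsor on $P$ along $P\arr T$ with the datum induced from $\mu$, which under $P\times_{T}P\simeq P\times G$ is exactly the datum recovering $P\arr T$ itself; hence $Q(\xi)\simeq P$ as $G$-torsors over $T$, compatibly with the $\eta$'s and $\mu$'s, giving $\Phi(\xi)\simeq(P,\eta,\mu)$. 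Naturality of both is checked by descent. (Once $\Psi$ is in hand, full faithfulness of $\Phi$ is automatic; it can also be seen directly, since for $\xi,\xi'\in\stX(T)$ a morphism $\Phi(\xi)\arr\Phi(\xi')$ is a pair $(\alpha,\beta)$ whose $\beta$ corresponds, via $F$ and over the fppf cover $Q(\xi)\arr T$, to an isomorphism $\xi|_{Q(\xi)}\simeq\xi'|_{Q(\xi)}$ compatible with the descent data exactly when \eqref{eq:beta-compatible-1} holds, hence to a unique morphism $\xi\arr\xi'$.)

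I expect the main obstacle to be none of these steps individually but the $2$-categorical coherence bookkeeping underlying all of them: one must verify repeatedly that the relevant pastings of $2$-isomorphisms agree --- those of the defining Cartesian square, those witnessing $F\circ\iota_{g}\simeq F$, the $\mu$'s and the $\beta$'s --- and, above all, that under $P\times_{T}P\simeq P\times G$ the notion ``descent datum satisfying the cocycle condition'' coincides on the nose with the data $\mu$ and condition \eqref{eq:tau-cocycle} in the definition of $\widetilde{\stX}$, and likewise that \eqref{eq:beta-compatible-1} is literally compatibility with descent data. This is where, exactly as for \cite[Theorem 4.1]{Romagny2005} in the algebraic case, one must argue carefully inside the $2$-category of categories fibered in groupoids; the finiteness of $G$ plays no role beyond making $P\arr T$ an fppf covering.
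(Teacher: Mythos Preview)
Your proof is correct but takes a genuinely different route from the paper. The paper first observes that $\stX\to\widetilde{\stX}$ is a morphism over $\Bi G$ (via the forgetful functor $\widetilde{\stX}\to\Bi G$ and $Q\colon\stX\to\Bi G$), asserts that both source and target are stacks, and then reduces to checking the equivalence after base change along the fppf cover $\Spec k\to\Bi G$. This replaces $\stX$ by $\stY$ and $\widetilde{\stX}$ by $\widetilde{\stY}:=\widetilde{\stX}\times_{\Bi G}\Spec k$, whose objects all have \emph{trivial} underlying torsor $T\times G$; the quasi-inverse $\Lambda\colon\widetilde{\stY}\to\stY$ is then simply ``restrict $\eta$ along the identity section $T\hookrightarrow T\times G$,'' and the verification that $\Xi$ and $\Lambda$ are mutually inverse is immediate, with essentially no coherence bookkeeping. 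Your approach instead descends along each individual torsor $P\to T$, building the quasi-inverse $\Psi$ directly on all of $\widetilde{\stX}$. The two arguments encode the same content --- your descent datum on $F\eta$ is precisely what becomes tautological once the torsor is trivialized --- but the paper's reduction trades a single global appeal to ``$\widetilde{\stX}$ is a stack'' (stated without proof) for a much lighter check on the trivial-torsor fiber, whereas your argument is more self-contained (it needs only that $\stX$ is a stack, which is a hypothesis) at the cost of exactly the $2$-categorical pasting verifications you flag at the end.
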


\begin{proof}
The forgetful functor $\widetilde{\stX}\arr\Bi G$ composed with the
functor in the statement is $Q\colon\stX\arr\Bi G$. Since $\stX$
and $\widetilde{\stX}$ are stacks, it is enough to show that the
functor $\Xi\colon\stY\arr\widetilde{\stY}=\widetilde{\stX}\times_{\Bi G}\Spec k$
is an equivalence.

An object of the stack $\widetilde{\shY}$ can be regarded as a pair
$(\eta,\mu)$ such that $\eta\colon T\times G\to\shY$ is a morphism
and $\mu$ is a natural isomorphism as in \eqref{eq:mu-2} with $m_{P}\colon P\times G\to P$
replaced with $\id_{T}\times m_{G}\colon T\times G\times G\to T\times G$,
where $m_{G}$ is the multiplication of $G$. A morphism $(\eta,\mu)\to(\eta',\mu')$
in $\widetilde{\shY}(T)$ is a natural isomorphism $\beta\colon\eta\to\eta'$
satisfying the same compatibility as in \eqref{eq:beta-compatible-1}
where $P$ and $P'$ are replaced with $T\times G$ and $\alpha$
is replaced with $\id_{T\times G}$. The functor $\Xi$ sends an object
$\rho\in\shY(T)$ to $(\widetilde{\rho}\colon T\times G\to\shY,\mu)$
such that $\widetilde{\rho}|_{T\times\{g\}}=\iota_{g}\circ\rho$ and
$\mu$ is the canonical natural isomorphism, and a morphism $\gamma\colon\rho\to\rho'$
to $(\id_{T\times G},\widetilde{\gamma})$ where $\widetilde{\gamma}|_{T\times\{g\}}=\iota_{g}(\gamma)$.
One also gets a functor $\Lambda\colon\widetilde{\stY}\arr\stY$ by
composing with the identity of $G$ and it is easy to see that $\Lambda\circ\Xi=\id$.
The compatibilities defining the objects of $\widetilde{\stY}$ also
allow to define an isomorphism $\Xi\circ\Lambda\simeq\id$. 
\end{proof}
Set $\delta_{u}\colon\stY_{u}\arr\stY$ for the structure maps and
$\delta_{u,v}\colon\stY_{u}\arr\stY_{v}$ for the transition maps
for all $u\leq v\in\N$. Given $w\ge v\geq u\in\N$ we denote by $\shR(u,v,w)$
the collection of tuples $(\omega,\omega',\theta,\theta')$ forming
$2$-commutative diagrams:
\begin{equation}
\xymatrix{\shY_{u}\times G\ar[r]^{\omega}\ar[d]_{\delta_{u}\times\id} & \shY_{v}\ar[d]^{\delta_{v}}\ar@2[dl]_{\theta}\\
\shY\times G\ar[r]_{\iota} & \shY
}
\label{eq:theta}
\end{equation}
\begin{equation}
\xymatrix{\shY_{v}\times G\ar[r]^{\omega'}\ar[d]_{\delta_{v}\times\id} & \shY_{w}\ar[d]^{\delta_{w}}\ar@2[dl]_{\theta'}\\
\shY\times G\ar[r]_{\iota} & \shY
}
\label{eq:theta'}
\end{equation}
We also require the existence of a natural isomorphism 
\begin{equation}
\xymatrix{\shY_{u}\times G\ar[r]^{\omega}\ar[d]_{\delta_{u,v}\times\id} & \shY_{v}\ar[d]^{\delta_{v,w}}\ar@2[dl]_{\zeta}\\
\shY_{v}\times G\ar[r]_{\omega'} & \shY_{w}
}
\label{eq:zeta}
\end{equation}
compatible with $\theta$ and $\theta'$ and, for $g,h\in G$, the
existence of a natural isomorphism $\lambda_{h,g}$ 
\begin{equation}
\xymatrix{\shY_{u}\ar[r]^{\omega_{h}}\ar[d]_{\omega_{hg}} & \shY_{v}\ar[d]^{\omega'_{g}}\ar@2[dl]_{\lambda_{h,g}}\\
\shY_{v}\ar[r]_{\delta_{v,w}} & \shY_{w}
}
\label{eq:lambda}
\end{equation}
such that the natural isomorphism induced by
\begin{equation}
\xymatrix{\shY_{u}\ar[drr]^{\omega_{h}}\ar[ddd]_{\delta_{u}}\ar[rrrr]^{\delta_{v,w}(\omega_{hg})} &  & {}\ar@2[d]^{\lambda_{h,g}^{-1}} &  & \shY_{w}\ar[ddd]^{\delta_{w}}\\
 &  & \ar@2[dll]_{\theta_{h}}\shY_{v}\ar[urr]^{\omega'_{g}}\ar[d]_{\delta_{v}} &  & \ar@2[dll]_{\theta'_{g}}\\
{} &  & \shY\ar[drr]_{\iota_{g}}\ar@2[d]^{\text{id}}\\
\shY\ar[urr]_{\iota_{h}}\ar[rrrr]_{\iota_{hg}} &  & {} &  & \shY
}
\label{eq:theta-cocycle}
\end{equation}
coincides with the one induced by $\theta_{hg}$ and $\delta_{v,w}$.
Since the transition maps of $\stY_{*}$ are faithful, the functor
$\Hom(\shY_{u},\shY_{w})\to\Hom(\shY_{u},\shY)$ is faithful as well,
which means that natural transformations $\zeta$ and $\lambda_{g,h}$
are uniquely determined.
\begin{defn}
For $\underline{\omega}=(\omega,\omega',\theta,\theta')\in\shR(u,v,w)$,
we define the following category fibered in groupoids $\stX_{\underline{\omega}}$
as follows. 

An object of $\stX_{\underline{\omega}}$ over $T$ is a triple $(P,\eta,\mu)$
of a $G$-torsor $P$ over $T$, $\eta\colon P\arr\stY_{u}$ and a
natural isomorphism $\mu$ making the diagram
\[
\xymatrix{P\times G\ar[r]^{m_{P}}\ar[d]_{\eta\times\id} & P\ar[d]^{\delta_{u,v}(\eta)}\ar@2[dl]_{\mu}\\
\shY_{u}\times G\ar[r]_{\omega} & \shY_{v}
}
\]
$2$-commutative such that the diagram
\begin{equation}
\xymatrix{P\ar[rr]^{m_{h}}\ar[dd]_{\eta} &  & \ar@2[dll]_{\mu_{h}}P\ar[rr]^{m_{g}}\ar[d]_{\delta_{u,v}(\eta)} &  & \ar@2[dll]_{\delta_{v,w}(\mu_{g})}P\ar[dd]^{\delta_{u,w}(\eta)}\\
 &  & \shY_{v}\ar[drr]_{\omega'_{g}}\ar@2[d]^{\lambda_{h,g}}\\
\shY_{u}\ar[urr]_{\omega_{h}}\ar[rrrr]_{\delta_{v,w}(\omega_{hg})} &  & {} &  & \shY_{w}
}
\label{eq:mu-cocycle-1}
\end{equation}
induces $\delta_{v,w}(\mu_{hg})$.

A morphism $(P,\eta,\mu)\arr(P',\eta',\mu')$ in $\stX_{\underline{\omega}}(T)$
is a pair $(\alpha,\beta)$ where $\alpha\colon P\arr P'$ is a $G$-equivariant
isomorphism over $T$ and $\beta$ is a natural isomorphism making
the following diagram $2$-commutative 
\[
\xymatrix{P\ar[rr]^{\alpha}\ar[dr]_{\eta} &  & P'\ar[dl]^{\eta'}\ar@{<=}[dll(0.6)]\sb(0.7){\beta}\\
 & \shY_{u}
}
\]
such that the diagram
\begin{equation}
\xymatrix{P\times G\ar[dd]_{\eta\times\id}\ar[rrr]\ar[dr]^{\alpha\times\id} &  & \ar@2[dl]_{\mathrm{id}} & \ar[dl]^{\alpha}P\ar[dd]^{\delta_{u,v}(\eta)}\\
 & \ar@2[l]^{\beta^{-1}\times\id}P'\times G\ar[r]\ar[dl]^{\eta'\times\id} & P'\ar[dr]_{\delta_{u,v}(\eta')}\ar@2[dl]_{\mu'} & \ar@2[l]\sp(0.4){\delta_{u,v}(\beta)}\\
\shY_{u}\times G\ar[rrr]_{\omega} &  &  & \shY_{v}
}
\label{eq:beta-compatible}
\end{equation}
induces the natural isomorphism $\mu$. 
\end{defn}

By \ref{lem:from qcqs stacks to limit} for all algebraic stacks $\stX$
the functor $\varinjlim_{w}\Hom(\stX,\shY_{w})\to\Hom(\stX,\shY)$
is an equivalence. This allows us to choose increasing functions $v,w\colon\N\arr\N$
such that $u\leq v(u)\le w(u)$ and $\underline{\omega_{u}}=(\omega_{u},\omega_{u}',\theta_{u},\theta_{u}')\in\shR(u,v(u),w(u))$,
so that $\stX_{\underline{\omega_{u}}}$ is defined, for all $u\in\N$.
Moreover we can assume there exist natural isomorphisms $\kappa$
and $\kappa'$ 
\[
\xymatrix{\shY_{u}\times G\ar[r]^{\omega_{u}}\ar[d]_{\delta_{u,u+1}\times\id} & \shY_{v(u)}\ar[d]^{\delta_{v(u),v(u+1)}}\ar@2[dl]_{\kappa}\\
\shY_{u+1}\times G\ar[r]_{\omega_{u+1}} & \shY_{v(u+1)}
}
\]
\[
\xymatrix{\shY_{v(u)}\times G\ar[r]^{\omega'_{u}}\ar[d]_{\delta_{v(u),v(u+1)}\times\id} & \shY_{w(u)}\ar[d]^{\delta_{w(u),w(u+1)}}\ar@2[dl]_{\kappa'}\\
\shY_{v(u+1)}\times G\ar[r]_{\omega'_{u+1}} & \shY_{w(u+1)}
}
\]
such that $\delta_{v(u+1)}(\kappa)$ is induced from $\theta_{u}$
and $\theta_{u+1}$ and $\delta_{w(u+1)}(\kappa')$ is induced from
$\theta'_{u}$ and $\theta'_{u+1}$. Again, since the transition maps
of $\stY_{*}$ are faithful, natural transformations $\kappa$ and
$\kappa'$ are uniquely determined.

For each $u\in\N$, there exist canonical functors $\shX_{\underline{\omega_{u}}}\to\shX_{\underline{\omega_{u+1}}}$
and $\shX_{\underline{\omega_{u}}}\to\widetilde{\shX}$, which lead
to a functor
\[
\varinjlim_{u}\stX_{\underline{\omega_{u}}}\arr\widetilde{\stX}.
\]

\begin{prop}
\label{prop:equivalence X omega to tilde X }The functor $\varinjlim_{u}\stX_{\underline{\omega_{u}}}\arr\widetilde{\stX}$
is an equivalence.
\end{prop}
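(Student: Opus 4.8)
The plan is to reduce the statement to the same assertion for the base-changed system $\shZ_{\underline{\omega_u}}:=\shX_{\underline{\omega_u}}\times_{\widetilde\stX}\stY$, $u\in\N$, and then to identify each $\shZ_{\underline{\omega_u}}$ with an explicit finite $2$-fiber product so that its limit can be computed by hand. Note first that $\varinjlim_u\shX_{\underline{\omega_u}}\arr\widetilde\stX$ is a morphism \emph{over} $\Bi G$: every $\shX_{\underline{\omega_u}}$ and $\widetilde\stX$ carries the forgetful functor remembering the underlying $G$-torsor $P$, and all transition functors, together with the functor to $\widetilde\stX$, respect it. The canonical atlas $a\colon\Spec k\arr\Bi G$ is finite and étale, and $\Spec k\times_{\Bi G}\Spec k\simeq\bigsqcup_{g\in G}\Spec k$, $\Spec k\times_{\Bi G}\Spec k\times_{\Bi G}\Spec k\simeq\bigsqcup_{g,h\in G}\Spec k$ are affine, so descent along $a$ is governed by a \emph{finite} $2$-limit. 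As each $\shX_{\underline{\omega_u}}$ is an fppf stack (it is built from $G$-torsors, morphisms to the DM stacks $\stY_u$, and natural isomorphisms) and a filtered $2$-colimit commutes with finite $2$-limits, $\varinjlim_u\shX_{\underline{\omega_u}}$ satisfies descent along $a$; so does the stack $\widetilde\stX$. Hence a morphism over $\Bi G$ between them is an equivalence once its pullback along $a$ is so. Pulling back along $a$ trivializes $P$, replaces $\widetilde\stX$ by $\widetilde\stX\times_{\Bi G}\Spec k=\widetilde\stY\simeq\stY$ (the equivalence produced in the proof of the previous lemma), replaces $\shX_{\underline{\omega_u}}$ by $\shZ_{\underline{\omega_u}}=\shX_{\underline{\omega_u}}\times_{\Bi G}\Spec k$, and --- since direct limits commute with $2$-fiber products (Appendix \ref{sec:Limit}) --- replaces $\varinjlim_u\shX_{\underline{\omega_u}}$ by $\varinjlim_u\shZ_{\underline{\omega_u}}$. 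So it is enough to prove that $\varinjlim_u\shZ_{\underline{\omega_u}}\arr\stY$ is an equivalence.

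Next I would unwind $\shZ_{\underline{\omega_u}}$. Writing $v=v(u)$ and $w=w(u)$, an object of $\shZ_{\underline{\omega_u}}(T)$ is an object $(P,\eta,\mu)$ of $\shX_{\underline{\omega_u}}(T)$ together with a trivialization of $P$; identifying $P$ with $T\times G$ via it, the datum becomes a family of morphisms $\eta_g\colon T\arr\stY_u$ (the restrictions $\eta|_{T\times\{g\}}$, $g\in G$) together with natural isomorphisms $\mu_{g,h}\colon\delta_{u,v}(\eta_{gh})\Rightarrow(\omega_u)_h(\eta_g)$, $g,h\in G$ (the components of $\mu$), subject to the single cocycle relation \eqref{eq:mu-cocycle-1} read over $\stY_w$; a morphism is a family $\beta_g\colon\eta_g\Rightarrow\eta'_g$ satisfying the compatibility \eqref{eq:beta-compatible}. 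In other words $\shZ_{\underline{\omega_u}}$ is a finite $2$-fiber product of products of copies of $\stY_u$, $\stY_v$ and $\stY_w$ formed along the transition maps $\delta_{u,v}$, $\delta_{v,w}$ and the component maps $(\omega_u)_g$, $(\omega'_u)_g$; the remaining data $\theta_u,\theta'_u,\zeta,\lambda,\kappa,\kappa'$ serve only to make these identifications $2$-functorial in $u$.

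Finally I would pass to the limit. Since $u\le v(u)\le w(u)$ with $v,w$ strictly increasing, the subsystems $(\stY_u)_u$, $(\stY_{v(u)})_u$, $(\stY_{w(u)})_u$ are cofinal in $\stY_*$, so each has limit $\stY$; direct limits commute with the finite $2$-fiber products above (Appendix \ref{sec:Limit}); the limit of $\delta_{u,v(u)}$ and of $\delta_{v(u),w(u)}$ is $\id_{\stY}$; and by \eqref{eq:theta} the limit of the $(\omega_u)_g$ is $\iota_g$. Hence $\varinjlim_u\shZ_{\underline{\omega_u}}(T)$ becomes the groupoid of families $\rho_g\colon T\arr\stY$ ($g\in G$) together with isomorphisms $\rho_{gh}\Rightarrow\iota_h(\rho_g)$ obeying the cocycle relation, which is precisely the description of $\widetilde\stY(T)$, hence of $\stY(T)$; one then checks that under these identifications $\varinjlim_u\shZ_{\underline{\omega_u}}\arr\stY$ is the identity.

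I expect the main obstacle to be not any single idea but the $2$-categorical bookkeeping in this last step, i.e.\ showing that forming the limit genuinely collapses the three levels $u\le v(u)\le w(u)$ onto a single copy of $\stY$. Concretely, every natural isomorphism living on the $\stY$-level --- a comparison $\mu_{g,h}$, a morphism datum $\beta_g$, or a piece of a cocycle --- must be pushed down to some finite level $\stY_n$ and the cocycle and compatibility identities checked there; this uses Lemma \ref{lem:from qcqs stacks to limit} applied to $\Spec k$ (equivalently to the quasi-compact quasi-separated torsors $P$), together with the faithfulness of the transition maps of $\stY_*$, which is exactly what makes the descended $2$-morphisms unique and their relations detectable after enlarging $n$. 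Carrying the cofinal choice $\underline{\omega_u}\in\shR(u,v(u),w(u))$, with its auxiliary isomorphisms $\kappa,\kappa'$, coherently through this process is the delicate point.
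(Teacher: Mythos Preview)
Your route is viable but is a substantial detour compared with the paper. The paper proves the proposition directly in a few lines, without passing to $\shZ_{\underline{\omega_u}}$, without the descent reduction along $\Spec k\to\Bi G$, and without any fibre-product description: since $P$ and $P\times G$ are quasi-compact and quasi-separated, Lemma~\ref{lem:from qcqs stacks to limit} says that every $\eta\colon P\to\stY$, every $\mu$, and every morphism datum $\beta$ descends to some finite level $u$; this gives essential surjectivity and fullness outright. Faithfulness is handled by projecting $\Hom$-sets to $\Iso_T^G(P,P')$ and observing that over a fixed $\alpha$ the fibres are subsets of $\Hom_{\stY_u(P)}(\eta,\eta'\circ\alpha)$ and of its limit $\Hom_{\stY(P)}(\eta_\infty,\eta'_\infty\circ\alpha)$, so injectivity again comes from Lemma~\ref{lem:from qcqs stacks to limit}.

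The machinery you propose---trivialize the torsor, express $\shZ_{\underline{\omega_u}}$ as an iterated finite $2$-fibre product, then commute the colimit past it---is exactly what the paper develops \emph{after} this proposition (Lemmas~\ref{lem:W as a fiber product} and~\ref{lem:Z as a fiber product}) to extract the geometric properties (DM, affine diagonal, finite universally injective transition maps) of the system $\shZ_{\underline{\omega_*}}$. For the bare equivalence it is unnecessary, and indeed the step you yourself flag as the obstacle---making the limit collapse the three levels $u\le v(u)\le w(u)$ coherently through the cocycle condition---is precisely what the paper's direct argument bypasses. Your reduction also tacitly uses that each $\shX_{\underline{\omega_u}}$ (or at least their colimit) is already an fppf stack, which the paper has not established at this point; the direct argument needs no such input.
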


\begin{proof}
By definition, every object and every morphism of $\widetilde{\shX}$
come from ones of $\shX_{\underline{\omega_{u}}}$ for some $u$.
Namely the above functor is essentially surjective and full. To see
the faithfulness, we take objects $(P,\eta,\mu)$, $(P',\eta',\mu')$
of $\shX_{\underline{\omega_{u}}}(T)$ and their images $(P,\eta_{\infty},\mu_{\infty})$,
$(P,\eta'_{\infty},\mu_{\infty}')$ in $\widetilde{\shX}(T)$. The
map
\[
\Hom_{\shX_{\underline{\omega_{u}}}(T)}((P,\eta,\mu),(P',\eta',\mu'))\to\Hom_{\widetilde{\shX}(T)}((P,\eta_{\infty},\mu_{\infty}),(P',\eta'_{\infty},\mu'_{\infty}))
\]
is compatible to projections to the set $\Iso_{T}^{G}(P,P')$ of $G$-equivariant
isomorphisms over $T$. The fibers over $\alpha\in\Iso_{T}^{G}(P,P')$
are respectively identified with subsets of $\Hom_{\shY_{u}(P)}(\eta,\eta'\circ\alpha)$
and of $\Hom_{\shY(P)}(\eta_{\infty},\eta'_{\infty}\circ\alpha)$.
Since $\stY(P)$ is the limit of the categories $\stY_{u}(P)$ by
\ref{lem:from qcqs stacks to limit} we get the faithfulness.
\end{proof}
\begin{defn}
For $\underline{\omega}=(\omega,\omega',\theta,\theta')\in\shR(u,v,w)$,
we define $\stZ_{\underline{\omega}}$ as the stack of pairs $(\eta,\mu)$
where $\eta\colon T\times G\arr\stY_{u}$ is a morphism and $\mu$
is a natural isomorphism making the diagram 
\begin{equation}
\xymatrix{T\times G\times G\ar[d]_{\eta\times\id}\ar[r]\sp(0.6){\id\times m_{G}} & T\times G\ar[d]^{\delta_{u,v}(\eta)}\ar@2[dl]_{\mu}\\
\shY_{u}\times G\ar[r]_{\omega} & \shY_{v}
}
\label{eq:mu}
\end{equation}
$2$-commutative and such that
\begin{equation}
\xymatrix{T\times G\ar[rr]^{\id\times h}\ar[dd]_{\eta} &  & \ar@2[dll]_{\mu_{h}}T\times G\ar[rr]^{\id\times g}\ar[d]_{\delta_{u,v}(\eta)} &  & \ar@2[dll]_{\delta_{v,w}(\mu_{g})}T\times G\ar[dd]^{\delta_{u,w}(\eta)}\\
{} &  & \shY_{v}\ar[drr]^{\omega'_{g}}\ar@2[d]^{\lambda_{h,g}}\\
\shY_{u}\ar[urr]^{\omega_{h}}\ar[rrrr]_{\delta_{v,w}(\omega_{hg})} &  & {} &  & \shY_{w}
}
\label{eq:mu-cocycle}
\end{equation}
induces $\delta_{v,w}(\tau_{hg})$. 

A morphism $(\eta,\mu)\to(\eta',\mu')$ in $\shZ_{\underline{\omega}}$
is a natural isomorphism $\beta\colon\eta\to\eta'$,
\[
\xymatrix{\ar@/_{25pt}/[d]_{\eta}="a"T\times G\ar@/^{25pt}/[d]^{\eta'}="b"\\
\shY_{u}\ar@2"a";"b"^{\beta}
}
\]
such that the diagram
\begin{equation}
\xymatrix{\ar@/_{25pt}/[d]_{\eta}="a"T\times G\times G\ar@/^{25pt}/[d]^{\eta'}="b"\ar[rrr]^{\id\times m_{G}} &  & \ar@2[dl]_{\mu'} & \ar@/_{25pt}/[d]_{\delta_{u,v}(\eta')}="c"T\times G\ar@/^{25pt}/[d]^{\delta_{u,v}(\eta)}="d"\\
\shY_{u}\times G\ar[rrr]_{\omega}\ar@2"b";"a"_{\beta^{-1}\times\id} & {} &  & \shY_{v}\ar@2"d";"c"_{\delta_{u,v}(\beta)}
}
\label{eq:beta-condition}
\end{equation}
induces the natural isomorphism $\mu$. 
\end{defn}

\begin{lem}
\label{lem:Z iso to a fiber prod}There exists a natural equivalence
$\shZ_{\underline{\omega}}\simeq\shX_{\underline{\omega}}\times_{\shX}\shY$.
Here the morphism $\shX_{\underline{\omega}}\longrightarrow\shX$
implicit in the fiber product is the composite of the morphism $\shX_{\underline{\omega}}\longrightarrow\widetilde{\shX}$
and a quasi-inverse $\widetilde{\shX}\stackrel{\sim}{\longrightarrow}\shX$
of the equivalence in \ref{lem:X iso to tilde X}.
\end{lem}

\begin{proof}
Set $\widetilde{\shZ}_{\underline{\omega}}=\stX_{\underline{\omega}}\times_{\stX}\stY$.
We may identify an object of $\widetilde{\shZ}_{\underline{\omega}}(T)$
with a tuple $(P,\eta,\mu,s)$ such that $(P,\eta,\mu)$ is an object
of $\shX_{\underline{\omega}}(T)$ and $s$ is a section of $P\to T$.
A morphism $(P,\eta,\mu,s)\to(P',\eta',\mu',s')$ is identified with
a morphism $(\alpha,\beta)\colon(P,\eta,\mu)\to(P',\eta',\mu')$ in
$\shX_{\underline{\omega}}$ satisfying $\alpha\circ s=s'$. The section
$s$ induces a $G$-equivariant isomorphism $T\times G\arr P$. Identifying
$P$ with $T\times G$ through this isomorphism, we see that $\widetilde{\shZ}_{\underline{\omega}}$
is equivalent to $\shZ_{\underline{\omega}}$.
\end{proof}
Notice that if $\stU\arr\stV$ is a $G$-torsor then $\stV$ has affine
diagonal (resp. is separated) if and only if $\stU$ has the same
property. The ``only if'' part is clear. The ``if'' part follows
because $\Bi G$ is separated, descent and the $2$-Cartesian diagrams
  \[   \begin{tikzpicture}[xscale=1.8,yscale=-1.2]     \node (A0_0) at (0, 0) {$\stU$};     \node (A0_1) at (1, 0) {$\stU\times\stU$};     \node (A0_2) at (2, 0) {$\Spec k$};     \node (A1_0) at (0, 1) {$\stV$};     \node (A1_1) at (1, 1) {$\stV\times_{\Bi G}\stV$};     \node (A1_2) at (2, 1) {$\Bi G$};     \path (A0_0) edge [->]node [auto] {$\scriptstyle{}$} (A0_1);     \path (A0_1) edge [->]node [auto] {$\scriptstyle{}$} (A1_1);     \path (A1_0) edge [->]node [auto] {$\scriptstyle{}$} (A1_1);     \path (A1_1) edge [->]node [auto] {$\scriptstyle{}$} (A1_2);     \path (A0_2) edge [->]node [auto] {$\scriptstyle{}$} (A1_2);     \path (A0_0) edge [->]node [auto] {$\scriptstyle{}$} (A1_0);     \path (A0_1) edge [->]node [auto] {$\scriptstyle{}$} (A0_2);   \end{tikzpicture}   \]  

From this remark and from \ref{lem:X iso to tilde X}, \ref{prop:equivalence X omega to tilde X }
and \ref{lem:Z iso to a fiber prod}, the proof of \ref{lem:descent of ind along torsors}
reduces to:
\begin{lem}
\label{prop:reduction to Z}The stacks $\stZ_{\underline{\omega_{*}}}$
form a direct system of DM stacks of finite type over $k$ with affine
diagonal, with finite and universally injective transition maps, and
with a direct system of étale atlases $Z_{*}\arr\stZ_{\underline{\omega_{*}}}$
from affine schemes. Moreover if all $\stY_{*}$ are separated so
are the $\stZ_{\underline{\omega_{*}}}$ and if $Y_{n}\arr\stY_{n}$
is finite and étale then $Z_{*}\arr\stZ_{\underline{\omega_{*}}}$
can be chosen to be finite and étale.
\end{lem}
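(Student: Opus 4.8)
The plan is to describe $\stZ_{\underline{\omega}}$ explicitly as an iterated $2$-fiber product of copies of $\stY_u$ and $\stY_v$ followed by one closed immersion, and then to deduce each assertion from the behaviour of $2$-fiber products and closed immersions. The first step is purely a matter of bookkeeping: since $G$ is finite, a morphism $\eta\colon T\times G\arr\stY_u$ is the same datum as a tuple $(\eta_g)_{g\in G}$ with $\eta_g\in\stY_u(T)$, and restricting the natural isomorphism $\mu$ of \eqref{eq:mu} to the components $T\times\{h\}\times\{g\}$ breaks it into isomorphisms $\mu_{h,g}\colon\omega_g(\eta_h)\arr\delta_{u,v}(\eta_{gh})$ in $\stY_v(T)$. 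Similarly a morphism $(\eta,\mu)\arr(\eta',\mu')$ is a tuple $\beta=(\beta_g)_g$ of isomorphisms of $\stY_u(T)$ subject to \eqref{eq:beta-condition}, and since the transition maps of $\stY_*$ are faithful this compatibility may be checked componentwise and involves no extra data.

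Using this, I would realize $\stZ_{\underline{\omega}}$ as follows. Start from the $\sharp G$-fold product $\prod_{g\in G}\stY_u$ over $\Spec k$, whose objects are the tuples $(\eta_g)_g$. For each pair $(h,g)$, adjoin the datum of the isomorphism $\mu_{h,g}$ by the $2$-fiber product $(-)\times_{\stY_v\times_k\stY_v}\stY_v$ along the two morphisms sending $(\eta_\bullet)$ to $\omega_g(\eta_h)$ and to $\delta_{u,v}(\eta_{gh})$; iterating over all pairs produces a stack $\stZ'_{\underline{\omega}}$ whose objects are the tuples $(\eta_g,\mu_{h,g})$ making the relevant squares $2$-commutative. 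The cocycle condition \eqref{eq:mu-cocycle}, which asks that two natural isomorphisms with target $\stY_w$ agree, and the analogous conditions built from $\theta,\theta',\zeta$, then cut out a \emph{closed} substack $\stZ_{\underline{\omega}}\hookrightarrow\stZ'_{\underline{\omega}}$, because $\stY_w$ (and $\stY_v$) have separated diagonal, so that the equalizer of two morphisms to them is the pullback of the diagonal. In particular $\stZ_{\underline{\omega}}$ is a stack.

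Now all the geometric assertions are formal. A $2$-fiber product of DM stacks of finite type over $k$ with affine diagonal is again such, and — because the diagonals of the $\stY$'s are affine — each of the fiber-product steps above is an affine morphism, so $\stZ'_{\underline{\omega}}$, hence also $\stZ_{\underline{\omega}}$, is DM of finite type over $k$ with affine diagonal; if the $\stY_*$ are separated, the same steps keep it separated. If $Y_u\arr\stY_u$ and $Y_v\arr\stY_v$ are étale atlases from affine schemes, the corresponding iterated fiber product of these schemes is an affine scheme (affine over a product of copies of $Y_u$ and $Y_v$ over $k$) and an étale atlas of $\stZ'_{\underline{\omega}}$; its base change along the closed immersion is an étale atlas $Z_{\underline{\omega}}$ of $\stZ_{\underline{\omega}}$ from an affine scheme, and if the $Y_n\arr\stY_n$ are finite and étale then, base change and composition preserving this class, $Z_{\underline{\omega}}\arr\stZ_{\underline{\omega}}$ can be taken finite and étale. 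Finally, the transition map $\stZ_{\underline{\omega_u}}\arr\stZ_{\underline{\omega_{u+1}}}$ is by construction the iterated $2$-fiber product of the transition maps $\delta_{u,u+1}$, $\delta_{v(u),v(u+1)}$ of $\stY_*$ (glued via the uniquely determined isomorphisms $\kappa,\kappa'$), possibly composed with the closed immersion; since these transition maps are finite and universally injective and this class is stable under $2$-fiber product by \ref{rem:product of universally injective finite} and under composition with closed immersions, the transition maps of $\stZ_{\underline{\omega_*}}$ are finite and universally injective, and the $Z_*$ with their transition maps arise in the same way from the $Y_*$.

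The main obstacle will be the $2$-categorical bookkeeping of the first two paragraphs: identifying the precise iterated $2$-fiber product whose objects, morphisms and $2$-morphisms match those of $\stZ_{\underline{\omega}}$ on the nose, keeping track of the auxiliary natural isomorphisms $\theta,\theta',\zeta,\lambda_{h,g},\kappa,\kappa'$, and verifying that \eqref{eq:mu-cocycle} and \eqref{eq:beta-condition} really describe a closed substack of the ``free'' fiber product. Once this translation is carried out, everything else is a routine application of the stability of the relevant classes of morphisms under base change and composition.
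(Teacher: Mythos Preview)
Your approach is essentially the one the paper takes: build the ``free'' stack of pairs $(\eta,\mu)$ (your $\stZ'_{\underline\omega}$, the paper's $\shW_{\underline\omega}$) as a fiber product of copies of $\stY_u$ over the diagonal of $\stY_v^{G\times G}$, then impose the cocycle condition as a closed condition, and read off all the assertions from the stability of the relevant classes under base change and composition. The paper carries out the second step by writing the cocycle condition as the requirement that a morphism $\shW_{\underline\omega}\to I(\stY_w^{G\times G\times G})$ land in the identity section; since $I(\stY_w^{G\times G\times G})\to\stY_w^{G\times G\times G}$ is affine (hence separated), that section is a closed immersion, which is exactly the mechanism making your ``closed substack'' claim work. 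Your phrasing ``the equalizer of two morphisms to them is the pullback of the diagonal'' is slightly off---the cocycle is an equality of two $2$-morphisms, not of two $1$-morphisms---but the correct argument (equality of two sections of the affine Isom sheaf, or equivalently the inertia trick) gives the same closed locus, so your outline goes through once this is said precisely.
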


To prove this lemma, we will describe $\stZ_{\underline{\omega}}$
by using fiber products of simpler stacks. In what follows, for a
stack $\shK$ and a finite set $I$, we denote by $\shK^{I}$ the
product $\prod_{i\in I}\shK$ and identify its objects over a scheme
$T$ with the morphisms $T\times I=\sqcup_{i}T\to\shK$.

Let $\shW_{\underline{\omega}}$ be the stack of pairs $(\eta,\mu)$
where $\eta\colon T\times G\arr\stY_{u}$ and $\mu$ is a natural
isomorphism as in \eqref{eq:mu}, but not necessarily satisfying the
compatibility imposed on objects of $\stZ_{\underline{\omega}}$.
\begin{rem}
\label{rem:objs plus iso}  Let $F,G\colon\stW_{1}\arr\stW_{0}$ be
two maps of stacks and denote by $\stW_{2}$ the stack of pairs $(w,\zeta)$
were $w\in\stW_{1}(T)$ and $\zeta\colon G(w)\arr F(w)$ is an isomorphism
in $\stW_{0}(T)$. Then there is a $2$-Cartesian diagram   \[   \begin{tikzpicture}[xscale=2,yscale=-1.2]     \node (A0_0) at (0, 0) {$\stW_2$};     \node (A0_1) at (1, 0) {$\stW_1$};     \node (A1_0) at (0, 1) {$\stW_1$};     \node (A1_1) at (1, 1) {$\stW_1\times \stW_0$};     \path (A0_0) edge [->]node [auto] {$\scriptstyle{p}$} (A0_1);     \path (A1_0) edge [->]node [auto] {$\scriptstyle{\Gamma_F}$} (A1_1);     \path (A0_1) edge [->]node [auto] {$\scriptstyle{\Gamma_G}$} (A1_1);     \path (A0_0) edge [->]node [auto] {$\scriptstyle{p}$} (A1_0);   \end{tikzpicture}   \] where
$\Gamma_{*}$ denotes the graph and $p$ the projection. Notice that
the sheaf of isomorphisms of an object of a fiber product can be expressed
as fiber products of the sheaves of isomorphisms of its factors. In
particular, if $\stW_{0},\stW_{1}$ have affine diagonals, then $\stW_{2}$
has affine diagonal. If $\stW_{1}$ has affine diagonal and $F$ is
affine then $p$ is also affine. This is because the graph $\Gamma_{F}$
can be factors as the diagonal $\stW_{1}\arr\stW_{1}\times\stW_{1}$
followed by $\id\times F\colon\stW_{1}\times\stW_{1}\arr\stW_{1}\times\stW_{0}$.
\end{rem}

This remark particularly gives:
\begin{lem}
\label{lem:W as a fiber product}Let $\Phi\colon\shY_{u}^{G}\to\shY_{v}^{G\times G}$
be the morphism sending $\eta\colon T\times G\to\shY_{u}$ to the
composition
\[
\Phi(\eta)\colon T\times G\times G\xrightarrow{\id\times m_{G}}T\times G\xrightarrow{\eta}\shY_{u}\xrightarrow{\delta_{u,v}}\shY_{v}
\]
and let $\Psi\colon\shY_{u}^{G}\to\shY_{v}^{G\times G}$ be the morphism
sending $\eta\colon T\times G\to\shY_{u}$ to the composition
\[
\Psi(\eta)\colon T\times G\times G\xrightarrow{\eta\times\id}\shY_{u}\times G\xrightarrow{\omega}\shY_{v}.
\]
Let $\Gamma_{\Phi},\Gamma_{\Psi}\colon\shY_{u}^{G}\to\shY_{u}^{G}\times\shY_{v}^{G\times G}$
be their respective graph morphisms. Then
\[
\shW_{\underline{\omega}}\simeq\shY_{u}^{G}\times_{\Gamma_{\Phi},\shY_{u}^{G}\times\shY_{v}^{G\times G},\Gamma_{\Psi}}\shY_{u}^{G}.
\]
\end{lem}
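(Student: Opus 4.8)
The plan is to recognize the claimed equivalence as the particular instance of Remark~\ref{rem:objs plus iso} obtained by taking $\stW_1=\shY_u^G$, $\stW_0=\shY_v^{G\times G}$, and for the two morphisms $\stW_1\to\stW_0$ the functors $\Phi$ and $\Psi$ of the statement. With these choices Remark~\ref{rem:objs plus iso} produces a fibered category $\stW_2$, whose objects over $T$ are pairs $(\eta,\zeta)$ with $\eta\in\shY_u^G(T)$ and $\zeta$ an isomorphism $\Psi(\eta)\to\Phi(\eta)$ in $\shY_v^{G\times G}(T)$, together with a $2$-Cartesian square exhibiting $\stW_2$ as equivalent to $\shY_u^G\times_{\Gamma_\Phi,\shY_u^G\times\shY_v^{G\times G},\Gamma_\Psi}\shY_u^G$. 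So the whole content reduces to identifying $\shW_{\underline{\omega}}$ with this $\stW_2$.

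For that I would simply unwind both definitions. Using the identification of objects of $\shY_v^{G\times G}$ with morphisms $T\times G\times G\to\shY_v$, one has $\Psi(\eta)=\omega\circ(\eta\times\id)$ and $\Phi(\eta)=\delta_{u,v}(\eta)\circ(\id\times m_G)$, so an isomorphism $\zeta\colon\Psi(\eta)\to\Phi(\eta)$ is exactly a natural isomorphism filling the square \eqref{eq:mu}, that is the datum $\mu$ occurring in the definition of $\shW_{\underline{\omega}}$ (up to replacing $\mu$ by $\mu^{-1}$, according to the orientation chosen for the $2$-cell). Since $\shW_{\underline{\omega}}$, unlike $\shZ_{\underline{\omega}}$, imposes no further cocycle condition such as \eqref{eq:mu-cocycle}, its objects over $T$ are precisely the pairs $(\eta,\mu)$, so the objects of $\shW_{\underline{\omega}}$ and of $\stW_2$ coincide. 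For morphisms: a morphism of $\stW_2$ over $T$ covering $\alpha\colon\eta\to\eta'$ in $\shY_u^G(T)$ is, by Remark~\ref{rem:objs plus iso}, the commutativity of a square of $2$-cells relating $\zeta$, $\zeta'$, $\Psi(\alpha)$ and $\Phi(\alpha)$; evaluating $\Psi(\alpha)$ and $\Phi(\alpha)$ via the functoriality of $\Psi$ and $\Phi$ turns this into exactly the compatibility \eqref{eq:beta-condition} required of a morphism $(\eta,\mu)\to(\eta',\mu')$ in $\shW_{\underline{\omega}}$. Hence the evident functor $\shW_{\underline{\omega}}\to\stW_2$ is an isomorphism of fibered categories, and composing it with the equivalence above gives the statement.

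I do not expect a genuine obstacle: the argument is purely formal $2$-categorical bookkeeping, and its substance is packaged in Remark~\ref{rem:objs plus iso}. The one point demanding care is keeping the orientations of the various $2$-morphisms mutually consistent (that of $\mu$ in \eqref{eq:mu} against that of the $\zeta$ coming out of Remark~\ref{rem:objs plus iso}, and those appearing in \eqref{eq:beta-condition}), which at worst forces inserting a few inverses and changes nothing of substance.
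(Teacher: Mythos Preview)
Your proposal is correct and matches the paper's approach exactly: the paper simply states that the lemma is a particular instance of Remark~\ref{rem:objs plus iso}, which is precisely what you do. Your additional unwinding of the identification of objects and morphisms, and your remark about the orientation of $\mu$ versus $\zeta$, are exactly the details one would fill in if asked to expand that one-line reference.
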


Let $(\eta,\mu)\in\stW_{\underline{\omega}}(T)$. In the two diagrams,
\begin{equation}
\xymatrix{T\times G\times G\times G\ar[rr]^{\id_{T}\times m_{G}\times\id_{G}}\ar[d]_{\eta} &  & \ar@2[dll]_{\mu\times\id}T\times G\times G\ar[rr]^{\id_{T}\times m_{G}}\ar[d]_{\delta_{u,v}(\eta)} &  & \ar@2[dll]_{\delta_{v,w}(\mu)}T\times G\ar[d]^{\delta_{u,w}(\eta)}\\
\shY_{u}\times G\times G\ar[rr]_{\omega\times\id_{G}} &  & \shY_{v}\times G\ar[rr]_{\omega'} &  & \shY_{w}
}
\label{eq:mu-cocycle-1-2}
\end{equation}
and
\begin{equation}
\xymatrix{T\times G\times G\times G\ar[rr]^{\id_{T\times G}\times m_{G}}\ar[d]_{\eta} &  & \ar@2[dll]_{\textrm{canonical}}T\times G\times G\ar[rr]^{\id_{T}\times m_{G}}\ar[d]_{\delta_{u,v}(\eta)} &  & \ar@2[dll]_{\delta_{v,w}(\mu)}T\times G\ar[d]^{\delta_{u,w}(\eta)}\\
\shY_{u}\times G\times G\ar[rr]_{\delta_{u,v}\times m_{G}} &  & \shY_{v}\times G\ar[rr]_{\omega'} &  & \shY_{w}
}
\label{eq:mu-cocycle-1-1-1}
\end{equation}
the paths from $T\times G\times G\times G$ to $\shY_{w}$ through
the upper right corner are identical; we denote this morphism $T\times G\times G\times G\to\shY_{w}$
by $r(\eta)$. As for the paths through the left bottom corner, there
is a natural isomorphism between them given by $\zeta$ \eqref{eq:zeta}
and $\lambda_{h,g}$ \eqref{eq:lambda}. We identify the two lower
paths through this natural isomorphism and denote it by $s(\eta)$.
We denote the natural isomorphism $r(\eta)\to s(\eta)$ induced from
the former diagram by $\alpha(\mu)$ and the one from the latter diagram
by $\beta(\eta)$. The compatibility \eqref{eq:mu-cocycle} is nothing
but $\alpha(\mu)=\beta(\mu)$. 

For a stack $\shK$, we denote by $I(\shK)$ its inertia stack. An
object of $I(\shK)$ is a pair $(x,\alpha)$ where $x$ is an object
of $\shK$ and $\alpha$ is an automorphism of $x$. There is an equivalence
$I(\shK)\simeq\shK\times_{\Delta,\shK\times\shK,\Delta}\shK$. We
have the forgetting morphism $I(\shK)\to\shK$, which has the section
$\shK\to I(\shK)$, $x\mapsto(x,\id)$. If $\shK$ is a DM stack of
finite type with finite diagonal, then $I(\shK)\to\shK$ is finite
and unramified and $\shK\to I(\shK)$ is a closed immersion.
\begin{lem}
\label{lem:Z as a fiber product}Let $\Theta,\Lambda\colon\shW_{\underline{\omega}}\to I(\shY_{w}^{G\times G\times G})$
be the functors sending an object $(\eta,\mu)$ of $\shW_{\underline{\omega}}$
to $(r(\eta),\beta(\mu)^{-1}\circ\alpha(\mu))$ and to $(r(\eta),\id)$
respectively. Consider also the functor $\shZ_{\underline{\omega}}\to\shY_{w}^{G\times G\times G}$
sending $(\eta,\mu)$ to $r(\eta)$. Then 
\[
\shZ_{\underline{\omega}}\times_{\shY_{w}^{G\times G\times G}}I(\shY_{w}^{G\times G\times G})\simeq\shW_{\underline{\omega}}\times_{\Gamma_{\Theta},\shW_{\underline{\omega}}\times I(\shY_{w}^{G\times G\times G}),\Gamma_{\Lambda}}\shW_{\underline{\omega}}.
\]
\end{lem}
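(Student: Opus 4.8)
The plan is to prove this purely formally, by unwinding both sides of the asserted equivalence into one and the same explicit description. Recall first that $\shZ_{\underline{\omega}}$ is obtained from $\shW_{\underline{\omega}}$ by imposing on objects the single condition $\alpha(\mu)=\beta(\mu)$, equivalently $\beta(\mu)^{-1}\circ\alpha(\mu)=\id_{r(\eta)}$, and imposing nothing on morphisms, so $\shZ_{\underline{\omega}}$ is a full substack of $\shW_{\underline{\omega}}$. Recall also that both $\Theta$ and $\Lambda$ lie over the functor $r\colon\shW_{\underline{\omega}}\arr\shY_{w}^{G\times G\times G}$, $(\eta,\mu)\mapsto r(\eta)$, in the sense that composing either with the forgetful morphism $\pi\colon I(\shY_{w}^{G\times G\times G})\arr\shY_{w}^{G\times G\times G}$ gives back $r$; moreover $\Lambda$ is $r$ followed by the unit section $x\mapsto(x,\id)$.

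First I would treat the right-hand side. Applying \ref{rem:objs plus iso} to $\Theta,\Lambda\colon\shW_{\underline{\omega}}\arr I(\shY_{w}^{G\times G\times G})$, the right-hand side is canonically equivalent to the stack whose objects over $T$ are pairs $(w,\zeta)$ with $w=(\eta,\mu)\in\shW_{\underline{\omega}}(T)$ and $\zeta\colon\Lambda(w)\arr\Theta(w)$ an isomorphism in $I(\shY_{w}^{G\times G\times G})(T)$, a morphism $(w,\zeta)\arr(w',\zeta')$ being a morphism $w\arr w'$ in $\shW_{\underline{\omega}}(T)$ compatible with $\zeta$ and $\zeta'$. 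Now $\Lambda(w)=(r(\eta),\id)$ and $\Theta(w)=(r(\eta),\beta(\mu)^{-1}\circ\alpha(\mu))$ have the same underlying object $r(\eta)$; since an isomorphism in the inertia stack conjugates one automorphism into the other and $\zeta$ is invertible, the existence of such a $\zeta$ is equivalent to $\beta(\mu)^{-1}\circ\alpha(\mu)=\id$, i.e.\ to $w\in\shZ_{\underline{\omega}}(T)$, and when this holds $\zeta$ may be taken to be any automorphism of $r(\eta)$ in $\shY_{w}^{G\times G\times G}(T)$. Hence the right-hand side is equivalent to the stack of pairs $(z,\zeta)$ with $z=(\eta,\mu)\in\shZ_{\underline{\omega}}(T)$ and $\zeta\in\Aut_{\shY_{w}^{G\times G\times G}(T)}(r(\eta))$.

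Next I would treat the left-hand side. By definition $\shZ_{\underline{\omega}}\times_{r,\,\shY_{w}^{G\times G\times G},\,\pi}I(\shY_{w}^{G\times G\times G})$ has objects $(z,(x,\gamma),\phi)$ with $z=(\eta,\mu)\in\shZ_{\underline{\omega}}(T)$, $(x,\gamma)\in I(\shY_{w}^{G\times G\times G})(T)$ and $\phi\colon r(\eta)\arr x$ an isomorphism. Transporting $\gamma$ along $\phi$, that is discarding the redundant datum $(x,\phi)$, identifies this with the stack of pairs $(z,\gamma)$, $z=(\eta,\mu)\in\shZ_{\underline{\omega}}(T)$, $\gamma\in\Aut_{\shY_{w}^{G\times G\times G}(T)}(r(\eta))$, namely with the pullback along $r|_{\shZ_{\underline{\omega}}}$ of the inertia stack. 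Comparing with the previous paragraph, both sides are this same stack, and $(z,\zeta)\mapsto(z,\zeta)$ is the desired equivalence. To see that morphisms correspond, note that a morphism $\rho\colon z\arr z'$ in $\shZ_{\underline{\omega}}(T)$ induces, via $\Theta$ and via $\Lambda$, one and the same underlying morphism $r(\rho)$ of $\shY_{w}^{G\times G\times G}(T)$ (because both functors lie over $r$), so compatibility of $\rho$ with $\zeta$ on the right and with $\gamma$ on the left is in both cases the single identity $r(\rho)\circ(-)=(-)\circ r(\rho)$; the resulting equivalence is moreover compatible with the projections to $\shY_{w}^{G\times G\times G}$ and to $\shZ_{\underline{\omega}}$.

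The whole argument only uses \ref{rem:objs plus iso}, the description of the inertia stack $I(\shK)$ as $\shK\times_{\Delta,\shK\times\shK,\Delta}\shK$, and the way $\shZ_{\underline{\omega}}$ sits inside $\shW_{\underline{\omega}}$. The one point needing care is the bookkeeping of directions of $2$-morphisms in the inertia stack: concretely, the observation that \emph{existence} of the isomorphism $\zeta\colon\Lambda(\eta,\mu)\arr\Theta(\eta,\mu)$ is equivalent to the cocycle condition $\alpha(\mu)=\beta(\mu)$ that carves out $\shZ_{\underline{\omega}}$ inside $\shW_{\underline{\omega}}$. Beyond that it is a routine diagram chase, which I would leave to the reader in the style of the previous lemmas.
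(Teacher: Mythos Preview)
Your proof is correct and follows essentially the same route as the paper's: both apply \ref{rem:objs plus iso} to unwind the right-hand side as pairs $((\eta,\mu),\zeta)$, observe that an isomorphism $\zeta$ in the inertia stack between $(r(\eta),\id)$ and $(r(\eta),\beta(\mu)^{-1}\alpha(\mu))$ exists precisely when $\beta(\mu)^{-1}\alpha(\mu)=\id$, and that $\zeta$ is then an arbitrary automorphism of $r(\eta)$, matching the left-hand side. You spell out the description of the left-hand side and the morphism bookkeeping a bit more explicitly than the paper, but the argument is the same.
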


\begin{proof}
From \ref{rem:objs plus iso}, the right hand side is regarded as
the stack of pairs $((\eta,\mu),\epsilon)$ such that $(\eta,\mu)$
is an object of $\shW_{\underline{\omega}}$ and $\epsilon$ is a
natural isomorphism $\Theta((\eta,\mu))\to\Lambda((\eta,\mu))$. Thus
$\epsilon$ is an isomorphism $r(\eta)\to r(\eta)$ making the diagram
\[
\xymatrix{r(\eta)\ar[d]_{\beta(\mu)^{-1}\circ\alpha(\mu)}\ar[r]^{\epsilon} & r(\eta)\ar[d]^{\id}\\
r(\eta)\ar[r]_{\epsilon} & r(\eta)
}
\]
commutative. Therefore $\beta(\mu)^{-1}\circ\alpha(\mu)=\id$, equivalently,
the compatibility \eqref{eq:mu-cocycle} holds, and $\epsilon$ can
be an arbitrary automorphism of $r(\eta)$. This shows the equivalence
of the lemma.
\end{proof}
\begin{lem}
\label{lem:transition Z}The stack $\shZ_{\underline{\omega}}$ is
a DM stack of finite type with affine diagonal and it is separated
if all the $\stY_{*}$ are separated. Moreover, for functions $v,w\colon\N\to\N$
and $\underline{\omega_{u}}\in\shR(u,v(u),w(u))$, $u\in\N$, the
morphism $\shZ_{\underline{\omega_{u}}}\to\shZ_{\underline{\omega_{u+1}}}$
is finite and universally injective.
\end{lem}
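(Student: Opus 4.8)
The plan is to build $\shZ_{\underline{\omega}}$ out of the $\shY_n$ by a short list of operations — finite products, $2$-fibre products over a stack with affine diagonal (Remark~\ref{rem:objs plus iso}), passing to the inertia stack, and passing to a closed substack — each of which preserves the class of DM stacks of finite type over $k$ with affine diagonal, as well as separatedness; and then to run the transition maps $\shZ_{\underline{\omega_u}}\to\shZ_{\underline{\omega_{u+1}}}$ through exactly the same operations, using Remark~\ref{rem:product of universally injective finite} to propagate ``finite and universally injective'' at every step.

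First I would record the stability facts. Each $\shY_n$ is a DM stack of finite type over $k$ with affine (hence separated) diagonal, and separated when $\shY_*$ is a separated system; finite products preserve all of this (the diagonal of a product is the product of the diagonals), so the stacks $\shY_u^{G}$, $\shY_v^{G\times G}$ and $\shY_w^{G\times G\times G}$ are again of this kind. By Remark~\ref{rem:objs plus iso}, a $2$-fibre product of DM stacks of finite type over $k$ with affine diagonal again has affine diagonal, and it is visibly DM, of finite type, and separated if its three factors are. Since $I(\shK)\simeq\shK\times_{\Delta,\shK\times\shK,\Delta}\shK$, this applies to $I(\shY_w^{G\times G\times G})$; moreover, because $\shY_w^{G\times G\times G}$ has separated diagonal, the forgetting morphism $I(\shY_w^{G\times G\times G})\to\shY_w^{G\times G\times G}$ is separated, so its zero section $\shY_w^{G\times G\times G}\to I(\shY_w^{G\times G\times G})$ is a closed immersion.

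Now I would feed this into Lemmas~\ref{lem:W as a fiber product} and~\ref{lem:Z as a fiber product}. The first presents $\shW_{\underline{\omega}}$ as a $2$-fibre product of copies of $\shY_u^{G}$ over $\shY_u^{G}\times\shY_v^{G\times G}$, so $\shW_{\underline{\omega}}$ is a DM stack of finite type over $k$ with affine diagonal, separated in the separated case. The second presents $\shZ_{\underline{\omega}}\times_{\shY_w^{G\times G\times G}}I(\shY_w^{G\times G\times G})$ as a $2$-fibre product of copies of $\shW_{\underline{\omega}}$ over $\shW_{\underline{\omega}}\times I(\shY_w^{G\times G\times G})$, so this stack has the same properties. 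Finally $\shZ_{\underline{\omega}}$ is the base change of the zero section $\shY_w^{G\times G\times G}\to I(\shY_w^{G\times G\times G})$ along the morphism $\shZ_{\underline{\omega}}\times_{\shY_w^{G\times G\times G}}I(\shY_w^{G\times G\times G})\to\shY_w^{G\times G\times G}$, hence a closed substack of a DM stack of finite type over $k$ with affine diagonal; it therefore has all those properties itself, and is separated if the $\shY_*$ are.

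For the transition maps, the hypothesis that $\delta_{u,u+1}\colon\shY_u\to\shY_{u+1}$ is finite and universally injective gives, by Remark~\ref{rem:product of universally injective finite}, the same for the induced maps $(\delta_{u,u+1})^{G}$, $(\delta_{v(u),v(u+1)})^{G\times G}$ and $(\delta_{w(u),w(u+1)})^{G\times G\times G}$ on the corresponding powers. The coherences $\kappa,\kappa'$ (which are uniquely determined since the transition maps of $\shY_*$ are faithful) together with the naturality of the $\delta$'s make the comparison squares of structure morphisms at levels $u$ and $u+1$ $2$-commute, so Remark~\ref{rem:product of universally injective finite}, applied to the fibre-product presentations, successively yields that $\shW_{\underline{\omega_u}}\to\shW_{\underline{\omega_{u+1}}}$, then $I(\shY_{w(u)}^{G\times G\times G})\to I(\shY_{w(u+1)}^{G\times G\times G})$, then $\shZ_{\underline{\omega_u}}\times_{\shY_{w(u)}^{G\times G\times G}}I(\shY_{w(u)}^{G\times G\times G})\to\shZ_{\underline{\omega_{u+1}}}\times_{\shY_{w(u+1)}^{G\times G\times G}}I(\shY_{w(u+1)}^{G\times G\times G})$ are all finite and universally injective. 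These sit in a commutative square with the two zero-section closed immersions, so the composite $\shZ_{\underline{\omega_u}}\to\shZ_{\underline{\omega_{u+1}}}\hookrightarrow\shZ_{\underline{\omega_{u+1}}}\times_{\shY_{w(u+1)}^{G\times G\times G}}I(\shY_{w(u+1)}^{G\times G\times G})$ — being a closed immersion after a finite universally injective morphism — is finite and universally injective; since it factors through the monomorphism $\shZ_{\underline{\omega_{u+1}}}\hookrightarrow\shZ_{\underline{\omega_{u+1}}}\times_{\shY_{w(u+1)}^{G\times G\times G}}I(\shY_{w(u+1)}^{G\times G\times G})$, the map $\shZ_{\underline{\omega_u}}\to\shZ_{\underline{\omega_{u+1}}}$ is finite and universally injective as well. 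The main obstacle I expect is organizational rather than conceptual: one must check that all these comparison squares — for $\Phi,\Psi$ and their graphs, for $\Theta,\Lambda$ and the morphism $r$, and for the zero sections — genuinely $2$-commute, which is exactly where the conditions packaged into $\shR(u,v,w)$ and the isomorphisms $\kappa,\kappa'$ are used; once those are in place, everything else is a formal consequence of Remark~\ref{rem:product of universally injective finite} and the fact that the zero section of $I(\shY_w^{G\times G\times G})$ is a closed immersion.
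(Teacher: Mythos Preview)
Your proposal is correct and follows essentially the same route as the paper's proof: build $\shW_{\underline{\omega}}$ and then $\shZ_{\underline{\omega}}\times_{\shY_w^{G\times G\times G}}I(\shY_w^{G\times G\times G})$ via the fibre-product descriptions of Lemmas~\ref{lem:W as a fiber product} and~\ref{lem:Z as a fiber product}, cut down to $\shZ_{\underline{\omega}}$ by the closed-immersion zero section, and then propagate ``finite and universally injective'' through these constructions using Remark~\ref{rem:product of universally injective finite}, concluding via the factorization through the closed immersion $\shZ_{\underline{\omega_{u+1}}}\hookrightarrow\shZ_{\underline{\omega_{u+1}}}\times_{\shY_{w(u+1)}^{G\times G\times G}}I(\shY_{w(u+1)}^{G\times G\times G})$. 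One small slip: where you write ``the base change of the zero section \ldots\ along the morphism $\shZ_{\underline{\omega}}\times_{\shY_w^{G\times G\times G}}I(\shY_w^{G\times G\times G})\to\shY_w^{G\times G\times G}$'', the target should be $I(\shY_w^{G\times G\times G})$ (the second projection), not $\shY_w^{G\times G\times G}$.
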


\begin{proof}
If $\shU$, $\shV$ and $\shW$ are DM stacks of finite type with
affine (resp. finite) diagonals, then so is $\shU\times_{\shW}\shV$.
Indeed, $\shU\times\shV$ is a DM stack of finite type with affine
(resp. finite) diagonal and $\shU\times_{\shW}\shV\to\shU\times\shV$
is an affine (resp. finite) morphism since it is a base change of
the diagonal $\shW\to\shW\times\shW$. Hence $\shU\times_{\shW}\shV$
is a DM stack of finite type with affine (resp. finite) diagonal.

From \ref{lem:W as a fiber product} and \ref{lem:Z as a fiber product},
$\shZ_{\underline{\omega}}\times_{\shY_{w}^{G\times G\times G}}I(\shY_{w}^{G\times G\times G})$
is a DM stack of finite type with affine (resp. finite, provided that
all $\stY_{*}$ are separated) diagonal. Since the section $\shY_{w}^{G\times G\times G}\to I(\shY_{w}^{G\times G\times G})$
is a closed immersion, the same conclusion holds for $\shZ_{\underline{\omega}}$.

From \ref{rem:product of universally injective finite}, we can conclude
that the morphism $I(\shY_{w(u)}^{G\times G\times G})\to I(\shY_{w(u+1)}^{G\times G\times G})$
is finite and universally injective. From \ref{rem:product of universally injective finite},
\ref{lem:W as a fiber product} and \ref{lem:Z as a fiber product},
\[
\shZ_{\underline{\omega_{u}}}\times_{\shY_{w(u)}^{G\times G\times G}}I(\shY_{w(u)}^{G\times G\times G})\to\shZ_{\underline{\omega_{u+1}}}\times_{\shY_{w(u+1)}^{G\times G\times G}}I(\shY_{w(u+1)}^{G\times G\times G})
\]
is finite and universally injective, and so is the composition
\[
\shZ_{\underline{\omega_{u}}}\to\shZ_{\underline{\omega_{u}}}\times_{\shY_{w(u)}^{G\times G\times G}}I(\shY_{w(u)}^{G\times G\times G})\to\shZ_{\underline{\omega_{u+1}}}\times_{\shY_{w(u+1)}^{G\times G\times G}}I(\shY_{w(u+1)}^{G\times G\times G}).
\]
The morphism $\shZ_{\underline{\omega_{u}}}\to\shZ_{\underline{\omega_{u+1}}}$
factorizes this and hence is finite and universally injective.
\end{proof}
If $\shU\to\shV$ is a map of stacks and $\stV$ has affine diagonal
then $\shU\times_{\shV}\shU\to\shU\times\shU$ is affine, because
it is the base change of $\Delta\colon\shV\to\shV\times\shV$ along
$\shU\times\shU\to\shV\times\shV$. Therefore the morphism 
\[
\shZ_{\underline{\omega_{u}}}\to\shW_{\underline{\omega_{u}}}\times\shW_{\underline{\omega_{u}}}\to(\shY_{u}^{G}\times\shY_{u}^{G})\times(\shY_{u}^{G}\times\shY_{u}^{G})
\]
induced from equivalences in \ref{lem:W as a fiber product} and \ref{lem:Z as a fiber product}
is affine. Pulling back a direct system of étale atlases for $(\shY_{u}^{G}\times\shY_{u}^{G})\times(\shY_{u}^{G}\times\shY_{u}^{G})$
to $\shZ_{\underline{\omega_{u}}}$, we obtain a system of atlases
as in \ref{prop:reduction to Z}, which completes the proof of \ref{prop:reduction to Z}
and the one of \ref{lem:descent of ind along torsors}.

\section{\label{sec:formal torsors}The stack of formal $G$-torsors}

We fix a field $k$ and an étale group scheme $G$ over $k$. In this
section we will introduce and study the stack of formal $G$-torsors.
\begin{defn}
We denote by $\Delta_{G}$ the category fibered in groupoids over
$\Aff/k$ whose objects over $B$ are $\Delta_{G}(B)=\Bi G(B((t)))$.
\end{defn}

\begin{rem}
\label{rem:base change for Delta} By construction we have that if
$k'/k$ is a field extension then $\Delta_{G}\times_{k}k'\simeq\Delta_{G\times_{k}k'}$.
\end{rem}

\begin{cor}
\label{cor:Delta G is a prestack} The fiber category $\Delta_{G}$
is a pre-stack in the fpqc topology.
\end{cor}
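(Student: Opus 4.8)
The plan is to verify directly that $\Delta_{G}$ satisfies descent for morphisms in the fpqc topology, which is exactly the pre-stack condition. Fix a $k$-algebra $B$ and two $G$-torsors $\alA,\alB\in\Delta_{G}(B)$; we must show that the presheaf on $\Aff/B$ sending $\Spec C$ to the set of isomorphisms of $G$-torsors $\alA\otimes_{B((t))}C((t))\arr\alB\otimes_{B((t))}C((t))$ is a sheaf in the fpqc topology. Since $G$ is finite and \'etale over $k$, write $G=\Spec\Gamma$ with $\Gamma$ a finite \'etale, in particular finite-dimensional, $k$-algebra. Every $G$-torsor over $B((t))$ is then finite and \'etale over $B((t))$, hence the spectrum of a finite locally free $B((t))$-algebra, that is of a vector bundle on $B((t))$ endowed with a $B((t))$-algebra structure and a $\Gamma$-coaction; note that if $A$ is such a vector bundle then $A\otimes_{k}\Gamma$ is again a vector bundle on $B((t))$. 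Writing $\alA=\Spec A$ and $\alB=\Spec\widetilde{A}$, an isomorphism of $G$-torsors $\alA\arr\alB$ is the same datum as an isomorphism of $B((t))$-modules $\phi\colon\widetilde{A}\arr A$ which (i) is multiplicative, (ii) sends $1$ to $1$ and (iii) intertwines the two $\Gamma$-coactions, and similarly after every base change $B\arr C$; in particular the isomorphism presheaf above is a subpresheaf of $\Homsh_{B((t))}(\widetilde{A},A)$.

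By \ref{lem:Hom sheaves} the presheaves $\Homsh_{B((t))}(\widetilde{A},A)$, $\Homsh_{B((t))}(\widetilde{A}\otimes_{B((t))}\widetilde{A},A)$, $\Homsh_{B((t))}(B((t)),A)$ and $\Homsh_{B((t))}(\widetilde{A},A\otimes_{k}\Gamma)$ are all fpqc sheaves, since the modules involved are vector bundles on $B((t))$. Writing a subscript $C$ for $-\otimes_{B((t))}C((t))$, condition (i) for $\phi\in\Homsh_{B((t))}(\widetilde{A},A)(C)$ is the vanishing of the $C((t))$-linear map $\widetilde{A}_{C}\otimes_{C((t))}\widetilde{A}_{C}\arr A_{C}$, $x\otimes y\mapsto\phi(xy)-\phi(x)\phi(y)$; condition (ii) is the vanishing of $\phi(1)-1\in A_{C}=\Homsh_{B((t))}(B((t)),A)(C)$; and condition (iii) is the vanishing of the difference of the two $C((t))$-linear maps $\widetilde{A}_{C}\arr A_{C}\otimes_{k}\Gamma$ induced by $\phi$ and the coactions. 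Each such obstruction is functorial in $C$ and takes values in one of the fpqc sheaves above, so the locus where it vanishes is a subpresheaf of $\Homsh_{B((t))}(\widetilde{A},A)$ which is again an fpqc sheaf: for a covering $\{C\arr C_{i}\}$ and a compatible family $(\phi_{i})$ satisfying the condition, the gluing $\phi$ provided by \ref{lem:Hom sheaves} has obstruction restricting to the vanishing obstructions of the $\phi_{i}$, hence vanishes by separatedness of the target sheaf. Thus the subpresheaf $\shS\subseteq\Homsh_{B((t))}(\widetilde{A},A)$ cut out by (i)--(iii) is an fpqc sheaf.

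Finally I would descend the invertibility of $\phi$. Let $\{C\arr C_{i}\}$ be a covering and $(\phi_{i})$ a compatible family of $G$-torsor isomorphisms over the $C_{i}$; by the above they glue to a unique $\phi\in\shS(C)$. The inverses $\phi_{i}^{-1}$ form a compatible family in $\Homsh_{B((t))}(A,\widetilde{A})$ --- over $C_{i}\times_{C}C_{j}$ they are both the inverse of the common restriction of $\phi_{i}$ and $\phi_{j}$ --- and hence glue to a $B((t))$-module map $\psi\colon A\arr\widetilde{A}$ over $C$. Since $\psi\circ\phi$ and $\id_{\widetilde{A}}$ (resp. $\phi\circ\psi$ and $\id_{A}$) are sections of $\Homsh_{B((t))}(\widetilde{A},\widetilde{A})$ (resp. $\Homsh_{B((t))}(A,A)$) over $C$ with equal restrictions over each $C_{i}$, they coincide; so $\phi$ is an isomorphism of $G$-torsors. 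Therefore the isomorphism presheaf of $\alA$ and $\alB$ is an fpqc sheaf and $\Delta_{G}$ is a pre-stack. The only delicate point is this last step: invertibility, unlike (i)--(iii), is not the vanishing of a single obstruction, but is still ``local'' because a local inverse, when it exists, is unique and therefore glues; granting \ref{lem:Hom sheaves}, the rest is formal. (No reduction to finite coverings or to a single faithfully flat $B\arr C$ is needed, as \ref{lem:Hom sheaves} already yields the fpqc sheaf property for arbitrary coverings; were such a reduction preferred, it would be furnished by \ref{lem: constant sheaves and t}.)
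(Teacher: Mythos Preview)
Your proof is correct and follows essentially the same route as the paper's: both rely on \ref{lem:Hom sheaves} to get an fpqc sheaf of module maps and then verify that the algebra and $G$-equivariance conditions descend (the paper does this in one stroke by noting $D_i\hookrightarrow D_i\otimes C((t))$, while you phrase it as obstructions landing in auxiliary Hom sheaves, which amounts to the same thing). One simplification: the step you flag as ``delicate'', descending invertibility, is unnecessary, since any $G$-equivariant morphism of $G$-torsors over a base is automatically an isomorphism.
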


\begin{proof}
{} Let $D_{1},D_{2}\in\Delta_{G}(B)=\Bi G(B((t)))$. We must show that
\[
I=\Isosh_{\Delta_{G}}(D_{1},D_{2})\colon\Aff/B\arr\sets
\]
 is an fpqc sheaf. By \ref{lem:Hom sheaves}, $\Homsh_{B((t))}(D_{1},D_{2})$
is a sheaf, so that in particular $I$ is separated. Thus we must
show that if $B\arr C$ is an fpqc covering, $\phi\colon D_{1}\arr D_{2}$
is a map and $\phi\otimes C((t))$ is a $G$-equivariant morphisms
of $C((t))$-algebras, then $\phi$ is also $G$-equivariant. But
this is obvious since $D_{i}$ is a subset of $D_{i}\otimes C((t))$. 
\end{proof}
\begin{defn}
\label{def:Frobenius}If $\stX$ is a category fibered in groupoids
over $\F_{p}$ then its Frobenius $F_{\stX}\colon\stX\arr\stX$ is
the functor mapping $\xi\in\stX(B)$ to $F_{B}^{*}(\xi)\in\stX(B)$,
where $F_{B}\colon B\arr B$ is the absolute Frobenius. The Frobenius
is $\F_{p}$-linear, natural in $\stX$ and coincides with the usual
Frobenius if $\stX$ is a scheme.

A category fibered in groupoid $\stX$ over $\F_{p}$ is called perfect
if the Frobenius $F_{\stX}\colon\stX\arr\stX$ is an equivalence.
\end{defn}

\begin{example}
As a consequence of \ref{rem:BG insensible to universal homeo, finite}
DM stacks étale over a perfect field are perfect.
\end{example}

We have the following basic property of $\Delta_{G}$, although we
will not use it later.
\begin{prop}
\label{prop:DeltaG is perfect} If $k$ is perfect the fiber category
$\Delta_{G}$ is perfect.
\end{prop}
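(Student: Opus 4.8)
It suffices to show that for every $k$-algebra $B$ the functor that $F_{\Delta_{G}}$ induces on the fiber $\Delta_{G}(B)=\Bi G(B((t)))$ is an equivalence of categories: since $k$ is perfect, $F_{\Delta_{G}}$ covers an auto-equivalence of $\Aff/k$ (base change along the automorphism $F_{k}$ of $k$), so it is an equivalence precisely when it is so on every fiber. The first task is then to make this fiber functor explicit. Applying the absolute Frobenius $F_{B}$ of $B$ to the coefficients of a Laurent series and leaving $t$ fixed defines a ring endomorphism
\[
\sigma_{B}\colon B((t))\arr B((t))\comma\quad\sum b_{i}t^{i}\longmapsto\sum b_{i}^{p}t^{i},
\]
and, up to the canonical identification of $G$ with its Frobenius twist $G^{(p)}=G\times_{k,F_{k}}k$ --- an isomorphism because $G$ is \'etale over the perfect field $k$ --- the functor $F_{\Delta_{G}}$ on $\Delta_{G}(B)$ is the functor $\Bi G(\sigma_{B})$ induced by base change of $G$-torsors along $\sigma_{B}$.

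The main step is to prove that $\Spec(\sigma_{B})\colon\Spec B((t))\arr\Spec B((t))$ is an integral universal homeomorphism, for then \ref{rem:BG insensible to universal homeo, finite} applies directly and shows that $\Bi G(\sigma_{B})$ is an equivalence, finishing the argument. I would do this by factoring $\sigma_{B}=\iota_{B}\circ\pi_{B}$ through $B^{(p)}((t))$, where $B^{(p)}=\{b^{p}\st b\in B\}$ is a subring of $B$ by additivity of Frobenius in characteristic $p$, the map $\pi_{B}\colon B((t))\twoheadrightarrow B^{(p)}((t))$ raises coefficients to the $p$-th power, and $\iota_{B}$ is the inclusion. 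The kernel of $\pi_{B}$ is the ideal of Laurent series all of whose coefficients have zero $p$-th power, a nil ideal; hence $\Spec(\pi_{B})$ is a nilpotent closed immersion, in particular an integral universal homeomorphism. And $\iota_{B}$ is an integral extension, since every $x=\sum b_{i}t^{i}$ satisfies $x^{p}=\sum b_{i}^{p}t^{ip}\in B^{(p)}((t))$ and so is a root of $T^{p}-x^{p}$ over $B^{(p)}((t))$; thus $\Spec(\iota_{B})$ is surjective and universally closed, and it is universally injective because $B((t))$ has characteristic $p$, so any ring homomorphism from it to a field takes values in a field of characteristic $p$ and is thus determined by its restriction to $p$-th powers. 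A composition of integral universal homeomorphisms being again one, the claim follows.

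I expect the only genuinely delicate point to be the bookkeeping in the first step: one has to be careful that $\sigma_{B}$ is $F_{k}$-semilinear, so that $\sigma_{B}^{*}$ of a $G$-torsor is a priori a $G^{(p)}$-torsor, and keep track of the canonical identification $G\simeq G^{(p)}$, natural in $B$, that brings it back into $\Delta_{G}$. Once $F_{\Delta_{G}}$ is identified with $\Bi G(\sigma_{B})$, the remainder is just the two elementary ring-theoretic facts above together with the already-recorded topological invariance of the \'etale site, and nothing substantial is left.
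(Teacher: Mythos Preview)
Your proof is correct and follows essentially the same approach as the paper: identify $F_{\Delta_G}$ on fibers with pullback along the coefficient-wise Frobenius $\sigma_B\colon B((t))\to B((t))$, show that $\Spec(\sigma_B)$ is an integral universal homeomorphism, and invoke \ref{rem:BG insensible to universal homeo, finite}. The paper compresses your image factorization into the single remark that $p$-th powers of elements of $B((t))$ lie in the image of $\sigma_B$, from which integrality and the universal homeomorphism property follow at once; your explicit passage through $B^{(p)}((t))$ is just the unpacking of that sentence, and your treatment of the identification $G\simeq G^{(p)}$ makes explicit what the paper leaves implicit. One terminological quibble: the kernel of $\pi_B$ is a nil ideal (every element has $p$-th power zero) but need not be nilpotent as an ideal, so ``nilpotent closed immersion'' is slightly inaccurate---though this does not affect the argument, since a closed immersion cut out by a nil ideal is already an integral universal homeomorphism.
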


\begin{proof}
By \ref{rem:BG insensible to universal homeo, finite} the functor
$\Bi G(B((t)))\arr\Bi G(B((t)))$ induced by the Frobenius $F_{B}\colon B\arr B$
is an equivalence: the $p$-th powers of elements in $B((t))$ are
in the image of $F_{B}\colon B((t))\arr B((t))$ and therefore the
spectrum of this map is integral and a universal homeomorphism. 
\end{proof}
Another example of a perfect object that will be used later is the
following:
\begin{defn}
\label{def:X^infty}If $X$ is a functor $\Aff/\F_{p}\arr\sets$ we
denote by $X^{\infty}$ the direct limit of the direct system of Frobenius
morphisms $X\arrdi FX\arrdi F\cdots$.
\end{defn}

Notice that if $X$ is a $k$-pre-sheaf then $X^{\infty}$ does not
necessarily have a $k$-structure unless $k$ is perfect.
\begin{prop}
\label{prop:fiber product modding out by a central subgroup} Let
$H$ be a central subgroup of $G$. Then the equivalence $\Bi G\times\Bi H\arr\Bi G\times_{\Bi(G/H)}\Bi G$
of \ref{lem:central subgroups and torsors} induces an equivalence
$\Delta_{G}\times\Delta_{H}\arr\Delta_{G}\times_{\Delta_{G/H}}\Delta_{G}$.
If $\stX$ is a fibered category with a map $\stX\arr\Delta_{G}$
then we have a $2$-Cartesian diagram   \[   \begin{tikzpicture}[xscale=2.1,yscale=-1.2]     \node (A0_0) at (0, 0) {$\stX\times \Delta_H$};     \node (A0_1) at (1, 0) {$\Delta_G$};     \node (A1_0) at (0, 1) {$\stX$};     \node (A1_1) at (1, 1) {$\Delta_{G/H}$};     \path (A0_0) edge [->]node [auto] {$\scriptstyle{\alpha}$} (A0_1);     \path (A1_0) edge [->]node [auto] {$\scriptstyle{}$} (A1_1);     \path (A0_1) edge [->]node [auto] {$\scriptstyle{}$} (A1_1);     \path (A0_0) edge [->]node [auto] {$\scriptstyle{\pr_1}$} (A1_0);   \end{tikzpicture}   \] where
$\alpha$ is given by $\stX\times\Delta_{H}\arr\Delta_{G}\times\Delta_{H}=\Delta_{G\times H}\arr\Delta_{G}$
and the last map is induced by the multiplication $G\times H\arr G$.
\end{prop}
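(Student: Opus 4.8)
The plan is to deduce the whole statement from Lemma \ref{lem:central subgroups and torsors} by a base change along the operation ``adjoin $((t))$''. First I would observe that, for a $k$-algebra $B$, one has $\Delta_G(B)=\Bi G(B((t)))$ by definition, so that, writing $L\colon\Aff/k\arr\Aff/k$ for the functor $\Spec B\mapsto\Spec B((t))$ (well defined since $B\mapsto B((t))$ is functorial in $B$), the fibered category $\Delta_G$ is the pullback $L^{*}\Bi G$ of the fibered category $\Bi G$ over $\Aff/k$, and similarly $\Delta_{G/H}=L^{*}\Bi(G/H)$ and $\Delta_{G\times H}=L^{*}\Bi(G\times H)$. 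Since a $(G\times H)$-torsor over a scheme is the same datum as a pair of a $G$-torsor and an $H$-torsor, $\Bi(G\times H)\simeq\Bi G\times\Bi H$ over $\Aff/k$, hence $\Delta_{G\times H}\simeq\Delta_G\times\Delta_H$.

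Next I would apply Lemma \ref{lem:central subgroups and torsors} with $\shC$ the site $\Aff/k$ with the fppf topology, $\shG=G$ and $\shH=H$ (a central subgroup scheme of $G$, hence a central subsheaf): it produces an equivalence of fibered categories over $\Aff/k$ making the square with corners $\Bi(G\times H)$, $\Bi G$, $\Bi G$, $\Bi(G/H)$ --- with upper horizontal arrow $\Bi(\mu)$, where $\mu\colon G\times H\arr G$ is the multiplication (a group homomorphism, as $H$ is central), and left vertical arrow $\Bi(\pr_1)$ --- into a $2$-Cartesian one. Pulling this back along $L$, and using that pullback of fibered categories preserves $2$-fiber products and equivalences, yields the first assertion: the analogous square of $\Delta$'s is $2$-Cartesian, that is, $\Delta_G\times\Delta_H=\Delta_{G\times H}\arr\Delta_G\times_{\Delta_{G/H}}\Delta_G$ is an equivalence.

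For the last assertion I would base-change this $2$-Cartesian square of $\Delta$'s along the given map $Q\colon\stX\arr\Delta_G$, viewed as landing in the lower-left copy of $\Delta_G$ (the target of $\Bi(\pr_1)$). Under the identification $\Delta_{G\times H}=\Delta_G\times\Delta_H$ the arrow $\Bi(\pr_1)$ is the first projection, so $\stX\times_{\Delta_G}\Delta_{G\times H}$ is canonically equivalent to $\stX\times\Delta_H$: an object over $B$ is a triple $(\xi,(\shP,\shE),\psi)$ with $\xi\in\stX(B)$, $\shP$ a $G$-torsor and $\shE$ an $H$-torsor over $B((t))$, and $\psi\colon Q(\xi)\arr\shP$ an isomorphism, and forgetting the part $(\shP,\psi)$ is an equivalence. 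Under this equivalence the upper horizontal arrow of the base-changed square sends $(\xi,\shE)$ to $\Bi(\mu)(Q(\xi)\times\shE)$, which is precisely $\alpha(\xi,\shE)$ by the definition of $\alpha$ as $\stX\times\Delta_H\arr\Delta_G\times\Delta_H=\Delta_{G\times H}\arr\Delta_G$; and the lower arrow $\stX\arr\Delta_{G/H}$ becomes the composite of $Q$ with the map $\Delta_G\arr\Delta_{G/H}$ induced by the quotient $q\colon G\arr G/H$, using $q\circ\mu=q\circ\pr_1$. The only delicate point is this bookkeeping of the $1$- and $2$-morphisms in the base-change square; the substance of the proposition is contained in Lemma \ref{lem:central subgroups and torsors}.
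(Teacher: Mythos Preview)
Your proof is correct and follows essentially the same route as the paper's. The paper simply says the first claim is ``clear'' (your pullback-along-$L$ argument is exactly the intended fiberwise justification), and for the second claim the paper pastes the two $2$-Cartesian squares
\[
\begin{tikzpicture}[xscale=2.1,yscale=-1.2]
\node (A0_0) at (0, 0) {$\stX\times \Delta_H$};
\node (A0_1) at (1, 0) {$\Delta_G\times \Delta_H$};
\node (A0_2) at (2, 0) {$\Delta_G$};
\node (A1_0) at (0, 1) {$\stX$};
\node (A1_1) at (1, 1) {$\Delta_G$};
\node (A1_2) at (2, 1) {$\Delta_{G/H}$};
\path (A0_0) edge [->] (A0_1);
\path (A0_1) edge [->] (A0_2);
\path (A1_0) edge [->] (A1_1);
\path (A1_1) edge [->] (A1_2);
\path (A0_0) edge [->] (A1_0);
\path (A0_1) edge [->] (A1_1);
\path (A0_2) edge [->] (A1_2);
\end{tikzpicture}
\]
which is precisely your base-change step rephrased.
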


\begin{proof}
The first claim is clear. For the other we have the following Cartesian
diagrams   \[   \begin{tikzpicture}[xscale=2.1,yscale=-1.2]     \node (A0_0) at (0, 0) {$\stX\times \Delta_H$};     \node (A0_1) at (1, 0) {$\Delta_G\times \Delta_H$};     \node (A0_2) at (2, 0) {$\Delta_G$};     \node (A1_0) at (0, 1) {$\stX$};     \node (A1_1) at (1, 1) {$\Delta_G$};     \node (A1_2) at (2, 1) {$\Delta_{G/H}$};     \path (A0_0) edge [->]node [auto] {$\scriptstyle{}$} (A0_1);     \path (A0_1) edge [->]node [auto] {$\scriptstyle{}$} (A0_2);     \path (A1_0) edge [->]node [auto] {$\scriptstyle{}$} (A1_1);     \path (A1_1) edge [->]node [auto] {$\scriptstyle{}$} (A1_2);     \path (A0_2) edge [->]node [auto] {$\scriptstyle{}$} (A1_2);     \path (A0_0) edge [->]node [auto] {$\scriptstyle{\pr_1}$} (A1_0);     \path (A0_1) edge [->]node [auto] {$\scriptstyle{\pr_1}$} (A1_1);   \end{tikzpicture}   \] 
\end{proof}

\subsection{The group $G=\protect\Z/p\protect\Z$ in characteristic $p$.}

In this section we consider $G=\Z/p\Z$ over $k=\F_{p}$. 

Let $C$ be an $\F_{p}$-algebra. By Artin-Schreier a $\Z/p\Z$-torsor
over $C$ is of the form $C[X]/(X^{p}-X-c)$, where $c\in C$ and
the action is induced by $X\longmapsto X+f$ for $f\in\F_{p}$.
\begin{lem}
\label{lem:bij Iso Artin-Schreier}Let $c,d\in C$. Then   \[   \begin{tikzpicture}[xscale=5.5,yscale=-0.7]     \node (A0_0) at (0, 0) {$\{u\in C\st u^{p}-u+c=d\}$};     \node (A0_1) at (1, 0) {$\Iso_{C}^{\Z/p\Z}(\frac{C[X]}{(X^{p}-X-c)},\frac{C[X]}{(X^{p}-X-d)})$};     \node (A1_0) at (0, 1) {$u$};     \node (A1_1) at (1, 1) {$(X\longmapsto X-u)$};     \path (A0_0) edge [->]node [auto] {$\scriptstyle{}$} (A0_1);     \path (A1_0) edge [serif cm->]node [auto] {$\scriptstyle{}$} (A1_1);   \end{tikzpicture}   \] is
bijective.
\end{lem}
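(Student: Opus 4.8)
The plan is to make everything explicit at the level of Artin--Schreier algebras. Write $A_{c}=C[X]/(X^{p}-X-c)$ and $A_{d}=C[X]/(X^{p}-X-d)$, let $x,y$ denote the images of $X$ in $A_{c},A_{d}$, and recall that both are free $C$-modules on $1,X,\dots,X^{p-1}$ with $G=\Z/p\Z$ acting by $x\mapsto x+f$ and $y\mapsto y+f$. An element of $\Iso_{C}^{\Z/p\Z}(A_{c},A_{d})$ is a $G$-equivariant $C$-algebra isomorphism $A_{c}\arr A_{d}$ (equivalently an isomorphism of the associated $G$-torsors), and the map in the statement sends $u$ to the assignment $x\mapsto y-u$. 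So the three things to prove are: this assignment is a well-defined $G$-equivariant $C$-algebra isomorphism whenever $u^{p}-u+c=d$; distinct $u$ give distinct morphisms; and every $G$-equivariant $C$-algebra map $A_{c}\arr A_{d}$ arises this way.

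Well-definedness is a one-line computation: $(y-u)^{p}-(y-u)-c=(y^{p}-y)-(u^{p}-u)-c=d-(d-c)-c=0$, so $x\mapsto y-u$ respects the relation defining $A_{c}$; it is $G$-equivariant since $(y-u)+f=(y+f)-u$; and it is an isomorphism either because every morphism of $G$-torsors is one, or by exhibiting the inverse $y\mapsto x+u$ (the condition $u^{p}-u+c=d$ is equivalent to $(-u)^{p}-(-u)+d=c$, which makes this work symmetrically). Injectivity is immediate from freeness of $A_{d}$ over $C$: if $x\mapsto y-u$ and $x\mapsto y-u'$ agree then $u=u'$ in $A_{d}$, hence in $C$. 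The one point requiring attention here is the sign bookkeeping, since the correspondence is $u\mapsto(X\mapsto X-u)$ rather than $X\mapsto X+u$.

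The real content is surjectivity. Given a $G$-equivariant $C$-algebra map $\psi\colon A_{c}\arr A_{d}$, the element $\psi(x)-y$ is $G$-invariant, because $\sigma_{f}(\psi(x)-y)=\psi(x+f)-(y+f)=\psi(x)-y$. Now $A_{d}$ is a $G$-torsor over $C$, so $(A_{d})^{G}=C$: fppf-locally on $C$ the torsor is trivial, $A_{d}\cong C\times\cdots\times C$ ($p$ factors) with $G$ permuting the factors cyclically, whose invariants are the diagonal copy of $C$; alternatively one reads this off the $C$-basis $1,y,\dots,y^{p-1}$ using that the integers $1,\dots,p-1$ are invertible modulo $p$, so a descending induction on the degree forces all coefficients of $\psi(x)-y$ in positive degree to vanish. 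Hence $\psi(x)=y+v$ for a unique $v\in C$, and imposing $\psi(x)^{p}-\psi(x)-c=0$ gives $d+v^{p}-v-c=0$, i.e. $v^{p}-v=c-d$. Setting $u=-v$ yields $u^{p}-u+c=-(v^{p}-v)+c=d$, so $u$ lies in the asserted set and $\psi$ is precisely the morphism attached to $u$. I expect the identity $(A_{d})^{G}=C$ --- equivalently, the assertion that $G$-equivariance forces $\psi(x)$ to be $y$ plus a scalar --- to be the only non-formal step; the rest is routine bookkeeping.
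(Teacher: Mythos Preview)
Your proof is correct. The paper takes a different, shorter route: it notes that the set on the left is the set of $C$-points of the Artin--Schreier torsor $\Spec C[X]/(X^{p}-X-(d-c))$, and that the sheaf $\Isosh^{\Z/p\Z}(A_{c},A_{d})$ is itself a $\Z/p\Z$-torsor over $\Spec C$ (being the iso-sheaf between two $\Z/p\Z$-torsors). The displayed map is then a $\Z/p\Z$-equivariant map between $\Z/p\Z$-torsors, hence an isomorphism. So rather than computing invariants to pin down $\psi(x)$, the paper invokes the ``any map of torsors is an isomorphism'' principle at the sheaf level.

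Your approach trades this one-line structural argument for an explicit computation, and the trade is a fair one: you get a self-contained argument that does not appeal to the iso-sheaf being a torsor, at the cost of verifying $(A_{d})^{G}=C$ by hand. Note that this identity is in fact equivalent to the torsor property (invariants of a torsor equal the base), so the two proofs ultimately rest on the same fact, just deployed differently. Your descending-induction argument for $(A_{d})^{G}=C$ using the basis $1,y,\dots,y^{p-1}$ is clean and avoids any local trivialization.
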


\begin{proof}
The map in the statement is well defined and it induces a morphism
\[
\Spec(C[X]/(X^{p}-X-(d-c)))\arr\Isosh^{\Z/p\Z}(C[X]/(X^{p}-X-c),C[X]/(X^{p}-X-d))=I
\]
The group $\Z/p\Z$ acts on both sides and the map is equivariant.
Since both sides are $\Z/p\Z$-torsors it follows that the above map
is an isomorphism.
\end{proof}
\begin{notation}
\label{nota:description of ZpZ torsors} If $C$ is an $\F_{p}$-algebra,
according to \ref{lem:bij Iso Artin-Schreier}, we identify $(\Bi\Z/p\Z)(C)$
with the category whose objects are elements of $C$ and a morphism
$c\arrdi ud$ is an element $u\in C$ such that $u^{p}-u+c=d$. Composition
is given by the sum, identities correspond to $0\in C$ and the inverse
of $u\in C$ is $-u$.
\end{notation}

In particular we see that if $c\in(\Bi\Z/p\Z)(C)$ then $c\simeq c^{p}$.
\begin{lem}
\label{lem:removing the positive part} Any element $b\in tB[[t]]$
is of the form $u^{p}-u$ for a unique element $u\in tB[[t]]$.
\end{lem}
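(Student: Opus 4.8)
The plan is to solve $u^{p}-u=b$ coefficient by coefficient in the variable $t$. Since $B$ is an $\F_{p}$-algebra, the Frobenius $x\mapsto x^{p}$ is a ($t$-adically continuous) ring endomorphism of $B[[t]]$, so for a candidate $u=\sum_{i\geq 1}u_{i}t^{i}\in tB[[t]]$ one has $u^{p}=\sum_{i\geq 1}u_{i}^{p}t^{ip}$. Writing $b=\sum_{n\geq 1}b_{n}t^{n}$ and comparing the coefficients of $t^{n}$ in $u^{p}-u=b$, I would obtain, for every $n\geq 1$,
\[
u_{n}=\begin{cases}u_{n/p}^{p}-b_{n}, & p\mid n,\\ -b_{n}, & p\nmid n.\end{cases}
\]

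Next I would observe that this is a well-founded recursion in $n$: the right-hand side involves only $b_{n}$ together with, in the case $p\mid n$, the coefficient $u_{n/p}$, and $n/p<n$. Hence by induction on $n$ the sequence $(u_{n})_{n\geq 1}$, equivalently the element $u=\sum_{n\geq 1}u_{n}t^{n}\in tB[[t]]$, is \emph{uniquely} determined by $b$; and running the same coefficient identity in reverse shows that the element $u$ produced by this recursion satisfies $u^{p}-u=b$, giving existence. Thus both assertions follow at once.

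I do not expect a real obstacle here; the one point worth flagging is the role of the hypothesis $u\in tB[[t]]$. It is precisely the absence of a constant term that forces uniqueness: in degree $0$ the equation would read $u_{0}^{p}-u_{0}=0$, which admits every element of $\F_{p}$ as a solution. One could instead argue by successive approximation modulo $t^{N}$, using that $(p-1)n\geq 1$ for $n\geq 1$ so that the Frobenius term never interferes with the coefficient of $t^{n}$ being solved; but the direct recursion above is the cleanest route.
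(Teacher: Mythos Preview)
Your proof is correct and follows essentially the same approach as the paper: both compare coefficients in $u^{p}-u=b$ to obtain $b_{n}=u_{n/p}^{p}-u_{n}$ (with the convention $u_{n/p}=0$ when $p\nmid n$) and observe that this determines $u$ uniquely. The only cosmetic difference is that the paper unwinds your recursion to the closed form $u_{n}=-\sum_{m\ge 0}b_{n/p^{m}}^{p^{m}}$, whereas you leave it as a well-founded induction on $n$; the content is identical.
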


\begin{proof}
Let $b,u\in tB[[t]]$ and $b_{s},u_{s}$ for $s\in\N$ their coefficients,
so that $b_{0}=u_{0}=0$. We extend the symbol $b_{s},u_{s}$ for
$s\in\Q$ by setting $b_{s}=u_{s}=0$ if $s\notin\N$. The equation
$u^{p}-u=b$ translates in $b_{s}=u_{s/p}^{p}-u_{s}$ for all $s\in\N$.
A simple computation shows that, given $b$, the only solution of
the system is 
\[
u_{s}=-\sum_{n\in\N}b_{s/p^{n}}^{p^{n}}
\]
\end{proof}
\begin{notation}
In what follows we set
\[
S=\{n\geq1\st p\nmid n\}
\]
and $\A^{(S)}\colon\Aff/\F_{p}\arr\sets$ where $\A^{(S)}(B)$ is
the set of maps $b\colon S\arr B$ such that $\{s\in S\st b_{s}\neq0\}$
is finite. 
\end{notation}

Given $k\in\N$ we set
\[
\phi_{k}\colon\A^{(S)}\arr\Delta_{\Z/p\Z}\comma\phi_{k}(b)=\sum_{s\in S}b_{s}t^{-sp^{k}}\in B((t))=\Delta_{\Z/p\Z}(B)
\]
and $\psi_{k}\colon\A^{(S)}\times\Bi(\Z/p\Z)\arr\Delta_{\Z/p\Z}$,
$\psi_{k}(b,b_{0})=\phi_{k}(b)+b_{0}$. Let $F_{\A^{(S)}}$ be the
Frobenius morphism of $\A^{(S)}$ defined in \ref{def:Frobenius}
and let $(\A^{(S)})^{\infty}$ be the limit defined as in \ref{def:X^infty}.
For all $b\in\A^{(S)}(B)$ and $b_{0}\in B$ there is a natural map
\[
\psi_{k+1}\circ(F_{\A^{(S)}}\times\id_{\Bi(\Z/p\Z)})(b,b_{0})\arrdi{-\phi_{k}(b)}\psi_{k}(b,b_{0})
\]
which therefore induces a functor $(\A^{(S)})^{\infty}\times\Bi(\Z/p\Z)\arr\Delta_{\Z/p\Z}$. 
\begin{thm}
\label{thm:The case of Zp} The functor $(\A^{(S)})^{\infty}\times\Bi(\Z/p\Z)\arr\Delta_{\Z/p\Z}$
is an equivalence of fibered categories.
\end{thm}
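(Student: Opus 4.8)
The plan is to use the Artin--Schreier description of \ref{nota:description of ZpZ torsors}, so that $\Delta_{\Z/p\Z}(B)$ becomes the category whose objects are $c\in B((t))$ and whose morphisms $c\to d$ are elements $w\in B((t))$ with $w^{p}-w+c=d$, and then to show that the functor is fully faithful and essentially surjective on each fibre over $\Aff/\F_{p}$; this suffices to conclude it is an equivalence of fibered categories. The one computation used repeatedly is that for $c',c''\in B((t))$ and $n\ge 0$ there is an isomorphism $c'+c''\simeq(c')^{p^{n}}+c''$, realised by $w=c'+(c')^{p}+\dots+(c')^{p^{n-1}}$ since then $w^{p}-w=(c')^{p^{n}}-c'$. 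Applied to a single monomial $c'=c_{-j}t^{-j}$ with $j=sp^{m}$, $p\nmid s$, this reads $c_{-j}t^{-j}\simeq c_{-j}^{p^{n}}t^{-sp^{m+n}}$, which is exactly how the Frobenius colimit $(\A^{(S)})^{\infty}$ enters.

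For essential surjectivity I would start from $c=\sum_{i\ge r}c_{i}t^{i}\in B((t))$, note that $\sum_{i\ge 1}c_{i}t^{i}\in tB[[t]]$ equals $u^{p}-u$ for some $u\in tB[[t]]$ by \ref{lem:removing the positive part}, and replace $c$ by the isomorphic object to assume $c=c_{0}+\sum_{j\ge1}c_{-j}t^{-j}$ with finite negative part. Choosing $k$ larger than every $m$ occurring as $j=sp^{m}$ in that finite support and applying the monomial isomorphism above term by term (one at a time, leaving the others fixed), $c$ is moved to $c_{0}+\sum_{s\in S}b_{s}t^{-sp^{k}}$ with $b_{s}=\sum_{j=sp^{m}}c_{-j}^{p^{k-m}}$; this $b$ has finite support, so $b\in\A^{(S)}(B)$, and the resulting object is the image of $(b,c_{0})$ at level $k$.

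For full faithfulness I would bring two source objects to a common level $k$, represented by $b,b'\in\A^{(S)}(B)$ and $b_{0},b_{0}'\in B$, with images $c=\phi_{k}(b)+b_{0}$ and $c'=\phi_{k}(b')+b_{0}'$. A morphism $c\to c'$ is $w\in B((t))$ with $w^{p}-w=\phi_{k}(b')-\phi_{k}(b)+(b_{0}'-b_{0})$; splitting $w=w_{+}+w_{0}+w_{-}$ into strictly positive, degree-zero and strictly negative parts and matching degrees, the degree $\ge 1$ part gives $w_{+}^{p}-w_{+}=0$ hence $w_{+}=0$ (uniqueness in \ref{lem:removing the positive part}), the degree-zero part gives $w_{0}^{p}-w_{0}+b_{0}=b_{0}'$ so that $w_{0}$ is a morphism $b_{0}\to b_{0}'$ in $\Bi(\Z/p\Z)(B)$, and the negative part gives $w_{-}^{p}-w_{-}=\sum_{s}(b_{s}'-b_{s})t^{-sp^{k}}$ with $w_{-}\in B[t^{-1}]$ without constant term. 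The hard part will be deducing from this last relation that every $b_{s}'-b_{s}$ is nilpotent, so that $b^{p^{N}}=b'^{p^{N}}$ for $N\gg0$ and $\overline b=\overline b'$ in $(\A^{(S)})^{\infty}(B)$: nilpotence is tested over $F=\overline{\kappa(\mathfrak p)}$ for each prime $\mathfrak p$, where the relation rewrites as $\widetilde w^{p}-\widetilde w=g$ with $\widetilde w\in F[t^{-1}]$ without constant term and $g=\sum_{s}(b_{s}'-b_{s})^{1/p^{k}}t^{-s}$ supported on exponents prime to $p$ (using $y^{p}\equiv y$ modulo the image of $x\mapsto x^{p}-x$ to absorb the $p^{k}$-th power at the cost of modifying $w_{-}$), and then a comparison of top degrees forces $\widetilde w=0$, hence $g=0$; this is the uniqueness of the canonical Artin--Schreier form, and the reduction to residue fields is what deals with a non-reduced $B$.

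It remains to match $w$ with an actual source morphism. Once $\overline b=\overline b'$, let $w_{1}\in B((t))$ underlie the canonical isomorphism $\phi_{k}(b)\to\phi_{k}(b')$ obtained by passing to a level where $b$ and $b'$ coincide; then $w-w_{0}$ and $w_{1}$ are two morphisms $\phi_{k}(b)+b_{0}'\to\phi_{k}(b')+b_{0}'$ in $\Delta_{\Z/p\Z}(B)$, so they differ by an automorphism $\epsilon$ of the source, i.e.\ $\epsilon^{p}=\epsilon$, and by \ref{lem: constant sheaves and t} applied to the constant sheaf $\underline{\F_{p}}=\Spec\F_{p}[x]/(x^{p}-x)$ we get $\epsilon\in B$. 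Then $w_{0}+\epsilon$ is again a morphism $b_{0}\to b_{0}'$ in $\Bi(\Z/p\Z)(B)$, and $w=w_{1}+(w_{0}+\epsilon)$ is the image of the source morphism given by the unique isomorphism $\overline b\to\overline b'$ together with $w_{0}+\epsilon$, which gives fullness; faithfulness follows from the same bookkeeping, since after fixing the $\A^{(S)}$-component the functor acts on the remaining $\Bi(\Z/p\Z)(B)$-component as the injection $B\hookrightarrow B((t))$.
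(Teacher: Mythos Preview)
Your proof is correct and follows the same Artin--Schreier strategy as the paper: remove the positive part, reorganize negative monomials via $y\simeq y^{p^n}$ for essential surjectivity, and for fullness split the witness and show the differences $b'_s-b_s$ are nilpotent. The only differences are technical: for nilpotence you pass to algebraically closed residue fields and take $p^k$-th roots before the top-degree argument, whereas the paper applies the coefficient Frobenius $F_B^k$ to pass from $\phi_k$ to $\phi_0$ and reads off a recursion $v_{-sp^l}=v_{-s}^{p^l}$ directly over $B$; and for the final lift you build the explicit $w_1$ together with an $\epsilon$ satisfying $\epsilon^p=\epsilon$, while the paper simply re-represents both source objects at level $k+j$ where $b=c$, so that the negative part $u_-$ satisfies $u_-^p=u_-$ and the same coefficient recursion kills it. Your route is slightly heavier here---in fact your $\epsilon$ is automatically $0$, since it lies in $t^{-1}B[t^{-1}]\cap B$---but everything goes through.
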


\begin{proof}
\emph{Essential surjectivity}. Let $b(t)=\sum_{j}b_{j}t^{j}\in\Delta_{\Z/p\Z}(B)$.
By \ref{lem:removing the positive part} and the definition of the
map in the statement we can assume that $b_{j}=0$ for $j>0$. Let
$k\in\N$ be a sufficiently large positive integer such that every
$j<0$ with $b_{j}\ne0$ is written as $-j=p^{k-m(j)}s(j)$ for some
$m(j)\ge0$ and $s(j)\in S$. Then $b_{j}t^{j}\simeq(b_{j}t^{j})^{p^{m(j)}}=b_{j}^{p^{m(j)}}t^{-p^{k}s(j)}$
if $b_{j}\neq0$. We see therefore that, up to change $b$ with an
isomorphic element, $b$ can be written as $\psi_{k}(c)$ for some
$c\in(\A^{(S)}\times\Bi(\Z/p\Z))(B)$.

\emph{Faithfulness. }Let $([b,k],b_{0}),([c,k],c_{0})\in(\A^{(S)})^{\infty}(B)\times\Bi(\Z/p\Z)(B)$
and $u,v\colon(b,b_{0})\arr(c,c_{0})$ two morphisms in $\A^{(S)}\times\Bi(\Z/p\Z)$,
that is $b=c$ and $u^{p}-u=v^{p}-v=c_{0}-b_{0}$ with $u,v\in B$.
If $\psi_{k}(u)=\psi_{k}(v)$ then $u=v$ by definition of $\psi_{k}$
as desired.

\emph{Fullness. }Let $([b,k],b_{0}),([c,k'],c_{0})\in(\A^{(S)})^{\infty}(B)\times\Bi(\Z/p\Z)(B)$
and let $u\colon\psi_{k}(b,b_{0})\arr\psi_{k'}(c,c_{0})$ be a map
in $\Delta_{\Z/p\Z}$. We want to lift this morphism to $(\A^{(S)})^{\infty}\times\Bi(\Z/p\Z)$.
We can assume $k=k'$. The element $u=\sum_{q}u_{q}t^{q}\in B((t))$
can be written as $u=u_{-}+u_{+}$, where $u_{-}=\sum_{q<0}u_{q}t^{q}$
and $u_{+}=\sum_{q\ge0}u_{q}t^{q}$. In particular we obtain that
$u_{-}^{p}-u_{-}=\phi_{k}(c)-\phi_{k}(b)$ and $u_{+}^{p}-u_{+}=c_{0}-b_{0}$.
By \ref{lem:removing the positive part} it follows that $u_{+}\in B$.
It suffices to show that $u_{-}=0$. To see this, we first show that
$c-b$ is nilpotent. We have $\phi_{k}(c)\simeq\phi_{k}(b)$ and,
applying $F_{B}$ to both side we get $\phi_{0}(b)\simeq\phi_{0}(b)^{p^{k}}=F_{B}^{k}(\phi_{k}(b))\simeq F_{B}^{k}(\phi_{k}(c))=\phi_{0}(c)^{p^{k}}\simeq\phi_{0}(c)$.
Thus there exists $v=\sum_{q<0}v_{q}t^{q}\in B((t))$ such that 
\[
\phi_{0}(c)-\phi_{0}(b)=\sum_{s\in S}(c_{s}-b_{s})t^{-s}=v^{p}-v=\sum_{q<0}(v_{q/p}^{p}-v_{q})t^{q}
\]
where we set $v_{l}=0$ if $l\in\Q-\Z_{<0}$. In particular for $s\in S$
and $l\in\N$ we obtain $b_{s}-c_{s}=v_{-s}$ and $v_{-sp^{l}}=v_{-s}^{p^{l}}$.
Since $v_{-sp^{l}}=0$ for $l\gg0$ we see that $b_{s}-c_{s}$ is
nilpotent. This means that there exists $j>0$ such that $F_{\A^{(S)}}^{j}(b)=F_{\A^{(S)}}^{j}(c)$.
Thus, up to replace $k$ by $k+j$, we can assume $b=c$, so that
$u_{-}^{p}=u_{-}$. If $u_{-}=\sum_{q<0}u_{q}t^{q}$ and we put $u_{q}=0$
for $q\notin\Z$ then we have $u_{q}=(u_{q/p^{l}})^{p^{l}}$ for every
$q\in\Q$ with $q<0$ and $l\in\N$. For each $q$, taking a sufficiently
large $l$ with $q/p^{l}\notin\Z$, we see $u_{q}=0$ as desired. 
\end{proof}
\begin{rem}
The addition $\Z/p\Z\times\Z/p\Z\longrightarrow\Z/p\Z$ induces maps
$\Bi(\Z/p\Z)\times\Bi(\Z/p\Z)\longrightarrow\Bi(\Z/p\Z)$ and $\Delta_{\Z/p\Z}\times\Delta_{\Z/p\Z}\longrightarrow\Delta_{\Z/p\Z}$.
The ind-scheme $(\A^{(S)})^{\infty}$ also has a natural group structure
by addition. Notice that the functor in the last theorem preserves
the induced ``group structure'' on both sides. This is because the
maps $\psi_{k}$ preserve the sum and the Frobenius of $\A^{(S)}$
is a group homomorphism. In particular the induced map from $(\A^{(S)})^{\infty}$
to the coarse ind-algebraic space of $\Delta_{\Z/p\Z}$ is an isomorphism
of sheaves of groups.
\end{rem}

\begin{rem}
\label{rem:extension of torsors}If $B$ is an $\F_{p}$-algebra,
$G$ is any constant $p$-group and $H$ is a central subgroup consisting
of elements of order at most $p$ then any map $\Spec B\arr\Delta_{G/H}$
lifts to a map $\Spec B\arr\Delta_{G}$. More generally any $G/H$-torsor
over $B$ extends to a $G$-torsor. This follows from the fact that
there is an exact sequence of sets $\Hl^{1}(B,G)\arr\Hl^{1}(B,G/H)\arr\Hl^{2}(B,H)=0$.
The last vanishing follows because $H\simeq(\Z/p\Z)^{r}$ for some
$r$ and using the Artin-Schreier sequence.
\end{rem}

\begin{cor}
\label{cor:Delta p-gp stack}If $G$ is an étale $p$-group scheme
over a field $k$ then $\Delta_{G}$ is a stack in the fpqc topology.
\end{cor}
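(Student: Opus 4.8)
\emph{Setup and reduction to a constant group.} Since $G$ is a $p$-group in characteristic $p$ we have $\Delta_G=\Delta_G^*$ by \ref{rem:not star for p-groups}, and $\Delta_G$ is already a pre-stack in the fpqc topology by \ref{cor:Delta G is a prestack}; so only effective descent of objects remains, and by \ref{lem:check stack on finite coverings} it is enough to treat one faithfully flat map $B\arr C$ of $k$-algebras. Choose a finite separable extension $k'/k$ with $G_{k'}:=G\times_k k'$ constant. As $k'/k$ is finite, $B\otimes_k k'$ is finite free over $B$, hence $(B\otimes_k k')((t))=B((t))\otimes_k k'$, a finite \'etale faithfully flat $B((t))$-algebra; since torsors under the affine group scheme $G$ satisfy fpqc descent, $\Delta_G$ has effective descent along every covering $\{B\arr B\otimes_k k'\}$. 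Because $\Delta_G$ restricted to $\Aff/k'$ is $\Delta_{G_{k'}}$ by \ref{rem:base change for Delta}, the standard refinement argument — $B\arr C$ is refined by the composite covering $B\arr B\otimes_k k'\arr C\otimes_k k'$, with $C\otimes_k k'=C\otimes_B(B\otimes_k k')$ — reduces the problem to showing that $\Delta_{G_{k'}}$ is a stack. So from now on $G$ is a constant $p$-group over $k$, and we argue by induction on $\sharp G$.

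\emph{The group $\Z/p\Z$ and direct products.} By \ref{thm:The case of Zp} (and \ref{rem:base change for Delta} to pass from $\F_p$ to $k$) there is an equivalence $\Delta_{\Z/p\Z}\simeq(\A^{(S)})^\infty\times\Bi(\Z/p\Z)$, so it suffices that each factor be an fpqc stack. Now $\Bi(\Z/p\Z)$ is an fpqc stack because torsors under a finite group scheme satisfy fpqc descent; and $\A^{(S)}$ is an fpqc sheaf — the subsheaf of $\prod_{s\in S}\A^1$ of the finitely supported sections — so, since the fpqc sheaf axiom for a single covering is an equalizer, i.e.\ a finite limit, which commutes with filtered colimits, the colimit along the Frobenius endomorphisms defining $(-)^\infty$ leaves $(\A^{(S)})^\infty$ a sheaf. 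Finally $\Bi(G_1\times G_2)\simeq\Bi G_1\times\Bi G_2$ gives $\Delta_{G_1\times G_2}\simeq\Delta_{G_1}\times\Delta_{G_2}$, so $\Delta_{(\Z/p\Z)^r}\simeq(\Delta_{\Z/p\Z})^r$ is an fpqc stack for every $r$; this also covers $\sharp G=1$, where $\Delta_G\simeq\Spec k$.

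\emph{Inductive step.} Let $G$ be a nontrivial constant $p$-group and let $H\le Z(G)$ be the subgroup of elements of order dividing $p$: it is central, $H\simeq(\Z/p\Z)^r$ for some $r\ge1$, and $\sharp(G/H)<\sharp G$, so $\Delta_{G/H}$ is an fpqc stack by the inductive hypothesis and $\Delta_H$ is one by the previous paragraph. I will use the elementary fact that a pre-stack $\catF$ over $\Aff/k$ equipped with a morphism $\catF\arr\catG$ to a stack $\catG$ such that $\catF\times_\catG\Spec B$ is a stack for every affine scheme $\Spec B$ and every morphism $\Spec B\arr\catG$ is itself a stack: given a descent datum in $\catF$ along $B\arr C$, push it to $\catG$, glue it there, represent the glued object by a morphism $\Spec B\arr\catG$, and run descent in the fiber, which is a stack. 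Apply this to the natural morphism $\Delta_G\arr\Delta_{G/H}$. By \ref{rem:extension of torsors} every morphism $\Spec B\arr\Delta_{G/H}$ lifts to a morphism $\Spec B\arr\Delta_G$, and then the $2$-Cartesian square of \ref{prop:fiber product modding out by a central subgroup} with $\stX=\Spec B$ identifies $\Delta_G\times_{\Delta_{G/H}}\Spec B$ with $\Spec B\times\Delta_H$, a product of the stacks $\Spec B$ and $\Delta_H$ and hence a stack. Therefore $\Delta_G$ is an fpqc stack, which completes the induction and the proof.

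\emph{The main obstacle.} The assembly above is essentially formal; its two substantive inputs are the Artin--Schreier description \ref{thm:The case of Zp} and the lifting statement \ref{rem:extension of torsors}, the latter resting on the vanishing of the relevant Artin--Schreier $\Hl^2$ and being exactly what makes $p$-groups tractable here — for a general group one must instead pass to $\Delta_G^*$ and invoke the machinery behind Theorem \ref{A}. The only places that demand a little care are the refinement bookkeeping in the reduction to a constant group and the verification that $(\A^{(S)})^\infty$ remains a sheaf, and I expect no genuine difficulty with either.
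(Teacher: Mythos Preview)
Your argument is correct and follows essentially the same route as the paper's: reduce to a constant group via a finite extension (using that $(B\otimes_k A)((t))=B((t))\otimes_k A$ for finite $A/k$), handle the base case via the Artin--Schreier equivalence $\Delta_{\Z/p\Z}\simeq(\A^{(S)})^\infty\times\Bi(\Z/p\Z)$, and run the induction by picking a central subgroup $H$ and identifying $\Delta_G\times_{\Delta_{G/H}}\Spec B\simeq\Spec B\times\Delta_H$ via \ref{prop:fiber product modding out by a central subgroup} and \ref{rem:extension of torsors}. Your write-up is in fact slightly more careful than the paper's in two places: you specify $H$ as the $p$-torsion of the center (so $\Delta_H$ falls under the $(\Z/p\Z)^r$ case already established, avoiding the need for $H$ to be proper), and you spell out why $(\A^{(S)})^\infty$ is a sheaf, whereas the paper silently relies on \ref{prop:limit of stacks is stack}.
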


\begin{proof}
If $B$ is a $k$-algebra and $A/k$ is a finite $k$-algebra then
$(B\otimes_{k}A)((t))\simeq B((t))\otimes_{k}A$ by \ref{lem:change of rings for series}.
Therefore $\Delta_{G}$ satisfies descent along coverings of the form
$B\arr B\otimes_{k}A$. This implies that it is enough to show that
$\Delta_{G}\times_{k}L\simeq\Delta_{G\times_{k}L}$ is a stack, where
$L/k$ is a finite field extension such that $G\times_{k}L$ is constant.
Again using base change, we can assume $k=\F_{p}$ and $G$ a constant
$p$-group. If $\sharp G=p^{l}$ we proceed by induction on $l$.
If $l=1$ then $\Delta_{\Z/p\Z}\simeq(\A^{(S)})^{\infty}\times\Bi(\Z/p\Z)$
which is a product of stacks. For a general $G$ let $H$ a non-trivial
central subgroup. By induction $\Delta_{G/H}$ is a stack and it is
enough to show that all base change of $\Delta_{G}\arr\Delta_{G/H}$
along a map $\Spec B\arr\Delta_{G/H}$ is a stack. This fiber product
is $\Spec B\times\Delta_{H}$ thanks to \ref{prop:fiber product modding out by a central subgroup}
and \ref{rem:extension of torsors}, which is a stack by inductive
hypothesis.
\end{proof}

\subsection{Tame cyclic case}

Let $k$ be a field and $n\in\N$ such that $n\in k^{*}$. The aim
of this section is to prove Theorem \ref{cyclic}. 

Set $G=\mu_{n}$, the group of $n$-th roots of unity, which is a
finite and étale group scheme over $k$. In particular $\Delta_{G}(B)$
can be seen as the category of pairs $(\shL,\sigma)$ where $\shL$
is an invertible sheaf over $B((t))$ and $\sigma\colon\shL^{\otimes n}\arr B((t))$
is an isomorphism. When $\shL=B((t))$ the isomorphism $\sigma$ will
often be thought of as an element $\sigma\in B((t))^{*}$.
\begin{lem}
\label{lem:Pic(B((t))) torsions}An invertible sheaf $\shL$ over
$B((t))$ with $\shL^{\otimes n}\simeq B((t))$ is the pullback of
an invertible sheaf over $\Spec B$. More precisely , the $n$-torsion
part of $\Pic(B((t)))$ is naturally isomorphic to the one of $\Pic(B)$. 
\end{lem}

\begin{proof}
Gabber's formula \cite[(1.2.2)]{BouthierCesnavicius} says
\[
\Pic(B((t)))\simeq\Pic(B[t^{-1}])\oplus\mathrm{H}_{\textrm{ét}}^{1}(B,\Z).
\]
This is proved in a slightly more general form in Theorem 3.1.7 of
the cited paper by Bouthier-\v{C}esnavi\v{c}ius. Let $N\Pic(B)$
denote the kernel of $\Pic(B[t])\to\Pic(B)$ so that 
\[
\Pic(B[t^{-1}])\simeq\Pic(B[t])=\Pic(B)\oplus N\Pic(B).
\]
According to \cite[Th.\ 6.1]{Swan}, $N\Pic(B)$ has no $n$-torsion
if and only if $B_{\mathrm{red}}$ is $n$-seminormal. The $n$-seminormality
is defined as follows. For a reduced ring $A$, there exists an extension
$A\subset\overline{A}$ called the seminormalization (see \cite[Section 4]{Swan}).
For our purpose, we only need to know its existence. We say that $A$
is $n$-seminormal if every element $x\in\overline{A}$ with $x^{2},x^{3},nx\in A$
belongs to $A$. In our situation, since $n$ is invertible in $k$,
every $k$-algebra is $n$-seminormal.  Thus $B_{\mathrm{red}}$
is $n$-seminormal and $N\Pic(B)$ has no $n$-torsion. 

It remains to show that $\mathrm{H}_{\textrm{ét}}^{1}(B,\Z)$ has
no $n$-torsion. To do so, we consider the exact sequence
\[
0\longrightarrow\Z\xrightarrow{\times n}\Z\longrightarrow\Z/n\Z\longrightarrow0
\]
of constant étale sheaves on the small étale site of $\Spec B$. Taking
cohomology groups, we get the following exact sequence:
\[
\mathrm{H}^{0}(B,\Z)\longrightarrow\mathrm{H}^{0}(B,\Z/n\Z)\longrightarrow(\textrm{the \ensuremath{n}-torsion part of }\mathrm{H}_{\textrm{ét}}^{1}(B,\Z))\longrightarrow0
\]
The left map is surjective, since every locally constant function
$\Spec B\longrightarrow\Z/n\Z$ lifts to a locally constant function
$\Spec B\longrightarrow\Z$. It follows that $\mathrm{H}_{\textrm{ét}}^{1}(B,\Z)$
has no $n$-torsion. 
\end{proof}
\begin{lem}
\label{lem:roots in power series}For a $k$-algebra $B$, we have
\[
\mu_{n}(B)=\{b\in B^{*}\mid b^{n}=1\}=\{b\in B((t))^{*}\mid b^{n}=1\}=\mu_{n}(B((t))).
\]
\end{lem}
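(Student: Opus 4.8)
The plan is to treat the two inclusions $B\subseteq B[[t]]\subseteq B((t))$ separately, the key structural input being that $\mu_n=\Spec k[x]/(x^n-1)$ is \'etale over $k$ because $n\in k^*$. For the first map, I would argue that since $\mu_n$ is an affine (hence quasi-affine) scheme which is formally \'etale over $k$, and $B[[t]]$ is $tB[[t]]$-adically complete, \ref{lem:lifting for fomally etale maps and series} applied with $R=k$, $C=B[[t]]$, $J=tB[[t]]$ shows that the reduction $\mu_n(B[[t]])\arr\mu_n(B[[t]]/tB[[t]])=\mu_n(B)$ is a bijection; as its composite with $\mu_n(B)\arr\mu_n(B[[t]])$ is the identity, the latter is bijective too.

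The remaining, and harder, point is that $\mu_n(B[[t]])\arr\mu_n(B((t)))$ is bijective. Injectivity is automatic (both sides are subsets of the corresponding rings and $B[[t]]\arr B((t))$ is injective), so the real content is surjectivity: every $b\in B((t))$ with $b^n=1$ already lies in $B[[t]]$. Since a coefficient-by-coefficient attempt is awkward when $B$ has nilpotents or nontrivial idempotents, I would reduce to the \emph{constant} case by base change. Let $k'=k(\zeta)$ be a splitting field of $x^n-1$ over $k$: it is finite over $k$ and contains a primitive $n$-th root of unity $\zeta$, so over $k'$ the group scheme $\mu_n$ is identified naturally (hence compatibly with all ring homomorphisms) with the constant group scheme $\underline{\Z/n\Z}$ via $1\mapsto\zeta$. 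Put $B'=B\otimes_k k'$; since $k\arr k'$ is faithfully flat, so is $B\arr B'$, hence $B\arr B'$ is injective and therefore so are the coefficientwise maps $B[[t]]\arr B'[[t]]$ and $B((t))\arr B'((t))$. Now \ref{lem: constant sheaves and t}, applied to the finite set $\Z/n\Z$ and the ring $B'$, shows that $\mu_n(B'[[t]])=\underline{\Z/n\Z}(B'[[t]])\arr\underline{\Z/n\Z}(B'((t)))=\mu_n(B'((t)))$ is bijective. Given $b\in B((t))$ with $b^n=1$, its image $b'\in B'((t))$ satisfies $(b')^n=1$, hence lies in $B'[[t]]$ by the surjectivity just obtained; writing $b=\sum_i b_it^i$ with $b_i\in B$, this says that the image of $b_i$ in $B'$ vanishes for all $i<0$, and injectivity of $B\arr B'$ forces $b_i=0$ for $i<0$, i.e.\ $b\in B[[t]]$. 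This proves surjectivity, and the three equalities of the lemma follow.

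The only real obstacle I anticipate is exactly this last surjectivity; once one notices that base-changing to a splitting field turns $\mu_n$ into the constant sheaf $\underline{\Z/n\Z}$, so that \ref{lem: constant sheaves and t} (whose proof already did the delicate nilpotent bookkeeping) applies, everything else is formal. (A direct argument would instead require showing, over a ring in which $n$ is invertible, that a solution $c\in B[[t]]$ of $c^n=t^{mn}$ is divisible by $t^m$, precisely the kind of computation the base-change trick sidesteps; one should of course also check — immediate from naturality — that the identification $\mu_{n,k'}\simeq\underline{\Z/n\Z}$ is compatible with the maps $B'\arr B'[[t]]\arr B'((t))$, and that \ref{lem:change of rings for series} is only needed if one wants the tensor-product descriptions $B'[[t]]=B[[t]]\otimes_B B'$ and $B'((t))=B((t))\otimes_B B'$, which here are not actually required.)
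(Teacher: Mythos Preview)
Your proof is correct and takes a genuinely different route from the paper's. The paper argues directly on $B((t))$: assuming some $c\in\mu_n(B((t)))\setminus B[[t]]$ exists, choose one of maximal naive order $i_0<0$; differentiating $c^n=1$ gives $nc^{n-1}c'=0$, hence $c'=0$ since $n$ and $c$ are units. In characteristic zero this already forces $c\in B$, while in characteristic $p$ it means $c$ lies in the image of the injective $B$-algebra map $t\mapsto t^p$, so $c=f(d)$ with $d\in\mu_n(B((t)))\setminus B[[t]]$ of naive order $i_0/p>i_0$, contradicting maximality. Your argument is instead structural: you first invoke \ref{lem:lifting for fomally etale maps and series} for the step $B\to B[[t]]$ (using that $\mu_n$ is affine and \'etale over $k$), and for $B[[t]]\to B((t))$ you pass to a finite extension $k'/k$ splitting $x^n-1$, so that $\mu_{n,k'}\simeq\underline{\Z/n\Z}$ becomes constant and \ref{lem: constant sheaves and t} applies to $B'=B\otimes_k k'$; injectivity of the faithfully flat map $B\hookrightarrow B'$ then pulls the conclusion back to $B$. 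The paper's derivative trick is self-contained and pleasantly elementary, while your approach is more conceptual---it makes transparent that the statement holds for any \'etale affine $k$-scheme (not just $\mu_n$), and it reuses the delicate idempotent/nilpotent analysis already carried out in the proof of \ref{lem: constant sheaves and t} rather than redoing it.
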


\begin{proof}
Let $L$ and $R$ denote the left and right sides respectively. Obviously
$L\subset R$. It is also easy to see $R\cap B[[t]]=L$. Thus it suffices
to show that $R\subset B[[t]]$. Conversely, we suppose that it was
not the case. We define the naive order $\ord_{naive}(a)$ of $a=\sum_{i\in\Z}a_{i}t^{i}\in B[[t]]$
as $\min\{i\mid a_{i}\ne0\}$. Elements outside $B[[t]]$ have negative
naive orders and choose an element $c=\sum_{i\in\Z}c_{i}t^{i}\in R\setminus B[[t]]$
such that $\ord_{naive}(c)$ attains the maximum, say $i_{0}<0$.
Taking derivatives of $c^{n}=1$, we get $ncc'=0$ with $c'$ the
derivative of $c$. Since $nc$ is invertible, $c'=0$. If $\car k=0$
it immediately follows that $c\in B$. So assume $\car k=p>0$. In
this case $c_{i}=0$ for all $i$ with $p\nmid i$. This means that
$c$ is in the image of the injective $B$-algebra homomorphism $f\colon B[[t]]\to B[[t]]$,
$t\mapsto t^{p}$. Let $d$ be the unique preimage of $c$ under $f$,
which is explicitly given by $d=\sum_{i\in\Z}c_{pi}t^{i}$. In particular,
\[
\ord_{naive}(d)=\ord_{naive}(c)/p>\ord_{naive}(c).
\]
Since $f(d^{n})=c^{n}=1$ and $f$ is injective, we have $d^{n}=1$
and $d\in R\setminus B[[t]]$. This contradicts the way of choosing
$c$. We have proved the lemma.
\end{proof}

\begin{proof}
[Proof of Theorem \ref{cyclic}]\label{pf: B} We first define the
functor $\psi\colon\bigsqcup_{q=0}^{n-1}\Bi(G)\arr\Delta_{G}$. An
object of $\bigsqcup_{q=0}^{n-1}\Bi(G)$ over a $k$-algebra $A$
is a factorization $A=\prod_{q}A_{q}$ plus a tuple $(L_{q},\xi_{q})_{q}$
where $L_{q}$ is an $A_{q}$-module and $\xi_{q}\colon L_{q}^{\otimes n}\arr A_{q}$
is an isomorphism. A morphism $(A=\prod_{q}A_{q},L_{q},\xi_{q})\arr(A=\prod_{q}A'_{q},L'_{q},\xi'_{q})$
exists if and only if $A_{q}\simeq A_{q'}$ as $A$-algebras (so that
such isomorphism is unique) and in this case is a collection of isomorphisms
$L_{q}\arr L'_{q}$ compatible with the maps $\xi_{q}$ and $\xi'_{q}$.
To such an object we associate the invertible $A((t))=\prod_{q}A_{q}((t))$-module
$L=\prod_{q}(L_{q}\otimes_{A_{q}}A_{q}((t)))$ together with the map
\[
L^{\otimes n}\simeq\prod_{q}(L_{q}^{\otimes n}\otimes_{A_{q}}A_{q}((t)))\arr\prod_{q}A_{q}((t))=A((t))\comma(x_{q}\otimes1)_{q}\mapsto(\xi_{q}(x_{q})t^{q})_{q}
\]
It is easy to see that the functor $\psi$ on $\Bi G$ in the index
$q$ is the one in the statement. We are going to show that $\psi$
is an equivalence. Since $\Delta_{G}$ is a prestack by \ref{cor:Delta G is a prestack},
it will be enough to show that $\psi$ is an epimorphism and that
it is fully faithful. Indeed this would imply that $\Delta_{G}$ is
also a stack for the following reason. Given a descent datum for $\Delta_{G}$,
since $\Delta_{G}$ is a prestack, in order to show that it is effective
we can refine this datum, that is refine the covering over which is
defined. If $\psi$ is fully faithful and an epimorphism, it follows
that we can always assume that the descent datum for $\Delta_{G}$
comes from a descent datum for $\bigsqcup_{q=0}^{n-1}\Bi(G)$, which
is therefore effective.

$\psi$ \emph{epimorphism. }Let $\chi\in\Delta_{G}(B)$. From \ref{lem:Pic(B((t))) torsions},
we can assume that the associated invertible sheaf is trivial and
$\chi=(B((t)),b)$. We have $(B((t)),b)\simeq(B((t)),b')$ if and
only if there exists $u\in B((t))^{*}$ such that $u^{n}b=b'$. For
$c\in B((t))^{*}$ we define $\ord c\colon\Spec B\to\Z$ as follows:
if $x\in\Spec B$ is a point with the residue field $\kappa$ and
$c_{x}\in\kappa((t))$ is the induced power series, then $(\ord c)(x):=\ord c_{x}$.
This function is upper semicontinuous. From the additivity of orders,
$\ord b+\ord(b^{-1})$ is constant zero. Since $\ord b$ and $\ord(b^{-1})$
are both upper semicontinuous, they are in fact locally constant.
Thus we may suppose that $\ord b$ is constant, equivalently that
if $b_{j}$ are coefficients of $b$, then for some $i$, $b_{i}$
is a unit and $b_{j}$ are nilpotents for $j<i$.

Thus we can write $b=b_{-}+t^{i}b_{+}$ with $b_{t}\in B[[t]]^{*}$
and $b_{-}\in B((t))$ nilpotent. Set $\omega=b/(t^{i}b_{+})\in B((t))$,
$A=B((t))[Y]/(Y^{n}-\omega)$ and $C=B((t))/(b_{-})$. We have that
$\omega=1$ in $C$ and therefore that $A\otimes_{B((t))}C$ has a
section. Since $A/B((t))$ is étale and $B((t))\arr C$ is surjective
with nilpotent kernel the section extends, that is $\omega$ is an
$n$-th power. Thus we can assume $b_{-}=0$. Since $B[[t]][Y]/(Y^{n}-b_{+})$
is étale over $B[[t]]$, by \ref{cor:shrinking to etale neighborhood}
we can assume there exists $\hat{b}\in B[[t]]^{*}$ such that $\hat{b}^{n}=b_{+}$.
In conclusion we reduce to the case $b=t^{i}$ and, multiplying by
a power of $t^{n}$, we can finally assume $0\leq i<n$. %

$\psi$ \emph{fully faithful. }If $(\shL,\sigma)\in\Delta_{G}(B)$
then, by Lemma \ref{lem:roots in power series}, its automorphisms
are canonically isomorphic to $\mu_{n}(B((t)))=\mu_{n}(B)$. This
easily implies that the restriction of $\psi$ on each component is
fully faithful. Given two objects $\alpha,\beta\in\bigsqcup_{q=0}^{n-1}\Bi(G)$
and an isomorphism $\psi(\alpha)\longrightarrow\psi(\beta)$ of their
images the problem of finding an isomorphism $\alpha\longrightarrow\beta$
inducing the given one is local and easily reducible to the following
claim: if $(B((t)),t^{q})\simeq(B((t)),t^{q'})$ then $q\equiv q'\mod n$.
But the first condition means that there exists $u\in L((t))^{*}$
such that $u^{n}t^{q}=t^{q'}$ and, applying $\ord$, we get the result.
\end{proof}

\subsection{General $p$-groups}

In this section we consider the case of a constant $p$-group $G$
over a field $k$ of characteristic $p$ and the aim is to prove Theorem
\ref{A} in this case. We setup the following notation for this section.
All groups considered in this section are constant.
\begin{defn}
\label{nota:direct system of Fp vector spaces} We set 
\[
S=\{n\geq1\st p\nmid n\}
\]
and, given a finite dimensional $\F_{p}$-vector space $H$ (regarded
as an abelian $p$-group), we define a sheaf of abelian groups 
\[
X_{H}=(\A^{(S)})^{\infty}\otimes_{\F_{p}}H\colon\Aff/\F_{p}\arr(\textrm{Abelian groups})
\]
(that is $X_{H}=[(\A^{(S)})^{\infty}]^{n}$ if $\dim_{\F_{p}}H=n$
after the choice of a basis of $H$). We also define sheaves of abelian
groups $X_{H,m}=\A^{(S_{m})}\otimes_{\F_{p}}H$ for $m\in\N$ with
$S_{m}=\{n\in S\st n\leq m\}$. We finally define $\Delta_{H,m}=X_{H,m}\times\Bi(H)$.
\end{defn}

\begin{lem}
\label{lem:direct system of Fp vector spaces}
\begin{enumerate}
\item We have an isomorphism $X_{H}\times\Bi H\arr\Delta_{H}$. 
\item We have $X_{H}=\varinjlim_{m}X_{H,m}$ as sheaves of abelian groups,
where the transition map $X_{H,m}\arr X_{H,m+1}$ is the composition
of the inclusion $\A^{(S_{m})}\otimes_{\F_{p}}H\arr\A^{(S_{m+1})}\otimes_{\F_{p}}H$
and the Frobenius of $\A^{(S_{m+1})}\otimes_{\F_{p}}H$. 
\item We have an equivalence $\varinjlim_{m}(\Delta_{H,m})\simeq\Delta_{H}$.
\end{enumerate}
\end{lem}

\begin{proof}
The last two assertions are obvious. We prove the first one. If $H$
is the cyclic group of order $p$, then this is just \ref{thm:The case of Zp}.
Otherwise we take a subgroup $1\ne I\subsetneq H$. Since the quotient
map $H\to H/I$ has a section, the morphism $\Delta_{H}\to\Delta_{H/I}$
also has a section. From \ref{prop:fiber product modding out by a central subgroup},
we have $\Delta_{H}\simeq\Delta_{H/I}\times\Delta_{I}$. The assertion
follows from induction on the order of $H$. 
\end{proof}
In the following proposition, we use rigidification, an operation
introduced in \cite{Abramovich2007} for algebraic stacks. Roughly
speaking, it kills some subgroups of stabilizers. Generalization to
non-algebraic stacks will be treated in Appendix \ref{sec:Rigidification}.
Note that from \ref{cor:Delta p-gp stack}, $\Delta_{G}$ is a stack
for a $p$-group $G$. 
\begin{lem}
\label{lem:rigidify Delta}Let $H$ be a finite dimensional $\F_{p}$-vector
space. We have $\Delta_{H}\rig H\simeq X_{H}$.
\end{lem}

\begin{proof}
By \ref{lem:direct system of Fp vector spaces} we have $\Delta_{H}\simeq X_{H}\times\Bi H$.
Since $H$ is abelian the result follows from \ref{prop:trivial rigidification}.
\end{proof}
\begin{prop}
\label{prop:factorizing through the rigidification }Let $G$ be a
$p$-group and $H$ be a central subgroup which is an $\F_{p}$-vector
space. Then $H$ is naturally a subgroup sheaf of the inertia stack
of $\Delta_{G}$ (see Appendix \ref{sec:Rigidification} for the inertia
stack as a group sheaf) and the quotient map $\Delta_{G}\arr\Delta_{G/H}$
is the composition of the rigidification $\Delta_{G}\arr\Delta_{G}\rig H$
and an $X_{H}$-torsor $\Delta_{G}\rig H\arr\Delta_{G/H}$, where
the action of $X_{H}$ on $\Delta_{G}\rig H$ is induced by $\Delta_{G}\times\Delta_{H}\arr\Delta_{G}$
and rigidification.
\end{prop}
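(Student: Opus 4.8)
The plan is to deduce the statement from \ref{prop:fiber product modding out by a central subgroup}, \ref{rem:extension of torsors}, Theorem \ref{thm:The case of Zp} (in the form of \ref{nota:direct system of Fp vector spaces}) and the rigidification formalism of Appendix \ref{sec:Rigidification}.

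First I would realize $H$ inside the inertia of $\Delta_G$. Since $G$ is constant and $H\subseteq Z(G)$, for every $B$ and every $\alA\in\Delta_G(B)=\Bi G(B((t)))$ the constant group $H$ sits in the centre of $\Autsh^G(\alA)$, compatibly with base change in $B$ (by \ref{lem: constant sheaves and t} the automorphisms over $C((t))$ of an object of a constant group are again $H$). Thus $H$ is a central subgroup sheaf of the inertia of $\Delta_G$, and the functor $\Delta_G\arr\Delta_{G/H}$ kills it because $H\mapsto 1$ in $G/H$. By the universal property of rigidification it therefore factors uniquely as $\Delta_G\arr\Delta_G\rig H\arrdi{\bar q}\Delta_{G/H}$, and it remains to prove that $\bar q$ is an $X_H$-torsor, induced by $\alpha$.

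Next I would produce the $X_H$-action. Applying \ref{prop:fiber product modding out by a central subgroup} with $\stX=\Delta_G$ and the identity map, the square with top row $\Delta_G\times\Delta_H\arrdi{\alpha}\Delta_G$, bottom row $\Delta_G\arr\Delta_{G/H}$ and vertical maps $\pr_1$ and the quotient is $2$-Cartesian; this is exactly an action of the group stack $\Delta_H$ on $\Delta_G$ over $\Delta_{G/H}$ making $\Delta_G$ a $\Delta_H$-torsor. By \ref{nota:direct system of Fp vector spaces} the sheaf of groups $X_H$ is the component group $\Delta_H\rig H$ of $\Delta_H\simeq X_H\times\Bi H$ (the $H$ one rigidifies at being the automorphisms of objects), and through $\alpha$ these automorphisms act on $\Delta_G$ precisely through the central subgroup sheaf $H$ of the inertia of $\Delta_G$ of the previous paragraph (both come from $H\subseteq Z(G)$ acting by right translation on torsors). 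Hence, rigidifying the $2$-Cartesian square at $H$ (using that rigidification commutes with the relevant base changes, Appendix \ref{sec:Rigidification}), the map $\alpha$ descends to an action $(\Delta_G\rig H)\times X_H\arr\Delta_G\rig H$ over $\Delta_{G/H}$ and the square becomes a $2$-Cartesian square $(\Delta_G\rig H)\times X_H\xrightarrow{\ \sim\ }(\Delta_G\rig H)\times_{\Delta_{G/H}}(\Delta_G\rig H)$; that is, $\bar q$ is an $X_H$-pseudo-torsor whose pseudo-torsor structure is the one induced by $\alpha$.

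Finally I would check local triviality and conclude. Since $H$ is a central subgroup of the constant $p$-group $G$ whose non-trivial elements all have order $p$, \ref{rem:extension of torsors} shows that for every $\F_p$-algebra $B$ every object of $\Delta_{G/H}(B)$ lifts to $\Delta_G(B)$; equivalently, the pullback of $\bar q$ along an arbitrary $\Spec B\arrdi{\bar P}\Delta_{G/H}$ admits a section, and a lift $P$ of $\bar P$ identifies this pullback with the constant sheaf $X_H$ over $B$ via \ref{prop:fiber product modding out by a central subgroup}, \ref{nota:direct system of Fp vector spaces} and $\Bi H\rig H\simeq\Spec k$. An $X_H$-pseudo-torsor that is trivial on every affine scheme is an $X_H$-torsor; concretely, the classifying map $\Delta_{G/H}\arr\Bi X_H$ sends $\bar P\in\Delta_{G/H}(B)$ to the $X_H$-torsor $(\Delta_G\rig H)\times_{\Delta_{G/H},\bar P}\Spec B$ and $\Delta_G\rig H\simeq\Delta_{G/H}\times_{\Bi X_H}\Spec k$, which is the $X_H$-torsor induced by $\alpha$ after rigidification. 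I expect the main obstacle to be the $2$-categorical bookkeeping needed for the descent of $\alpha$: one must verify that, under the equivalence of \ref{prop:fiber product modding out by a central subgroup}, the subgroup sheaf $H$ of the inertia of $\Delta_G$ restricts on the fibres of $\Delta_G\arr\Delta_{G/H}$ to the $H$-automorphisms of the $\Bi H$-factor of $\Delta_H\simeq X_H\times\Bi H$, and that rigidification is compatible with this base change and with the action; everything else is a universal property or a direct citation.
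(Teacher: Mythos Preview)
Your proposal is correct and follows essentially the same route as the paper: realize $H$ in the inertia via centrality, factor through the rigidification by the universal property, rigidify the action $\Delta_G\times\Delta_H\arr\Delta_G$ of \ref{prop:fiber product modding out by a central subgroup} to get the $X_H$-action on $\Delta_G\rig H$, and then conclude torsoriality. Your write-up is more explicit than the paper's (which compresses the last two steps into a single sentence), in particular by spelling out local triviality via \ref{rem:extension of torsors}; the only quibble is that the parenthetical citation of \ref{lem: constant sheaves and t} in your first paragraph is unnecessary---$H\subseteq Z(G)$ acting on $G$-torsors is immediate and needs no appeal to that lemma.
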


\begin{proof}
The subgroup $H$ acts on any $G$-torsor because its central. Moreover
the functor $\Delta_{G}\arr\Delta_{G/H}$ sends isomorphisms coming
from $H$ to the identity and therefore factors through the rigidification
$\Delta_{G}\rig H$ by \ref{prop:properties of rigidification}, 2).
Rigidifying both sides of $\Delta_{G}\times\Delta_{H}\arr\Delta_{G}$
we get a map $(\Delta_{G}\rig H)\times X_{H}\arr\Delta_{G}\rig H$
over $\Delta_{G/H}$. Using \ref{prop:fiber product modding out by a central subgroup}
and \ref{prop:properties of rigidification}, 3) we can deduce that
$\Delta_{G}\rig H\arr\Delta_{G/H}$ is an $X_{H}$-torsor.
\end{proof}
\begin{lem}
\label{lem:inductive step for p-groups} Let $G$ be a $p$-group
and $H$ be a central subgroup which is an $\F_{p}$-vector space.
Let also $\stY_{*}$ be a direct system of quasi-separated stacks
over $\N$ with a direct system of smooth (étale) atlases $U_{*}$
made of quasi-compact schemes, $\varinjlim_{n}\stY_{n}\arr\Delta_{G/H}$
a map and $\varinjlim_{n}U_{n}\arr\Delta_{G}$ a lifting. Then there
exists a strictly increasing map $q\colon\N\arr\N$, a direct system
of quasi-separated stacks $\stZ_{*}$ with a direct system of smooth
(étale) atlases $U_{*}\times X_{H,q_{*}}$ (where the transition morphisms
$U_{i}\times X_{H,q_{i}}\arr U_{i+1}\times X_{H,q_{i+1}}$ is the
product of the given map $U_{i}\arr U_{i+1}$ and the map $X_{H,q_{i}}\arr X_{H,q_{i+1}}$
of \ref{lem:direct system of Fp vector spaces}), compatible maps
$\stZ_{i}\arr\stY_{i}$ induced by the projection $U_{i}\times X_{H,q_{i}}\arr U_{i}$
and which are a composition of an $H$-gerbe $\stZ_{i}\arr\stZ_{i}\rig H$
and a $X_{H,q_{i}}$-torsor $\stZ_{i}\rig H\arr\stY_{i}$ and an equivalence
\[
\varinjlim_{n}\stZ_{n}\simeq(\varinjlim_{n}\stY_{n})\times_{\Delta_{G/H}}\Delta_{G}
\]
Moreover there is a factorization $U_{i}\times X_{H,q_{i}}\arr U_{i}\times_{\stY_{i}}\stZ_{i}\arr\stZ_{i}$
where the first arrow is an $H$-torsor. 
\end{lem}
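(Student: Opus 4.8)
The plan is to describe the stack $\stZ:=(\varinjlim_{n}\stY_{n})\times_{\Delta_{G/H}}\Delta_{G}$ as a filtered colimit, by descending stage by stage the two-step structure of $\Delta_{G}\arr\Delta_{G/H}$ furnished by \ref{prop:factorizing through the rigidification }. By that proposition, $\Delta_{G}\arr\Delta_{G/H}$ is the composition of the $H$-gerbe $\Delta_{G}\arr\Delta_{G}\rig H$ (banded by the finite abelian group $H$) and the $X_{H}$-torsor $\Delta_{G}\rig H\arr\Delta_{G/H}$. Base changing along $\varinjlim_{n}\stY_{n}\arr\Delta_{G/H}$ yields an $X_{H}$-torsor $\stW\arr\varinjlim_{n}\stY_{n}$ together with an $H$-gerbe $\stZ\arr\stW$. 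The given lifting $\varinjlim_{n}U_{n}\arr\Delta_{G}$ composes to $\varinjlim_{n}U_{n}\arr\Delta_{G}\rig H$, which is a section of the pulled-back torsor $\stW\times_{\varinjlim_{n}\stY_{n}}\varinjlim_{n}U_{n}\arr\varinjlim_{n}U_{n}$, hence an identification $\stW\times_{\varinjlim_{n}\stY_{n}}\varinjlim_{n}U_{n}\simeq(\varinjlim_{n}U_{n})\times X_{H}$; and applying \ref{prop:fiber product modding out by a central subgroup} to $\varinjlim_{n}U_{n}\arr\Delta_{G}$ and then \ref{thm:The case of Zp}, the base change of $\stZ\arr\stW$ along this total space is $(\varinjlim_{n}U_{n})\times X_{H}\times\Bi H$. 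In other words, once the $X_{H}$-torsor is trivialized over the atlas of $\varinjlim_{n}\stY_{n}$, the $H$-gerbe becomes trivial as well. Throughout we use $X_{H}=\varinjlim_{m}X_{H,m}$ with $X_{H,m}$ an affine space (\ref{nota:direct system of Fp vector spaces}), together with the fact that a morphism from a quasi-compact quasi-separated stack into such a filtered colimit factors through a stage (\ref{lem:from qcqs stacks to limit}).

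First I would descend the $X_{H}$-torsor. Being trivialized over the atlas $\varinjlim_{n}U_{n}\arr\varinjlim_{n}\stY_{n}$, the torsor $\stW$ is described by a $1$-cocycle, i.e.\ a compatible system of sections $c_{n}\in X_{H}(U_{n}\times_{\stY_{n}}U_{n})$ satisfying the cocycle identity over $U_{n}\times_{\stY_{n}}U_{n}\times_{\stY_{n}}U_{n}$; all these fibre products are quasi-compact and quasi-separated since $U_{n}$ is quasi-compact and $\stY_{n}$ quasi-separated. By \ref{lem:from qcqs stacks to limit} and exactness of filtered colimits one extracts a strictly increasing map $q\colon\N\arr\N$ and a compatible system of lifts $\widetilde{c}_{n}\in X_{H,q_{n}}(U_{n}\times_{\stY_{n}}U_{n})$ which already satisfy the cocycle identity at level $X_{H,q_{n}}$. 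Descending the trivial $X_{H,q_{n}}$-torsor on $U_{n}$ along $\widetilde{c}_{n}$ produces $X_{H,q_{n}}$-torsors $\stW_{n}\arr\stY_{n}$ with $\stW_{n}\times_{\stY_{n}}U_{n}\simeq U_{n}\times X_{H,q_{n}}$, with transition maps induced by those of \ref{nota:direct system of Fp vector spaces}, and with $\varinjlim_{n}\stW_{n}\simeq\stW$; consequently $\varinjlim_{n}(U_{n}\times X_{H,q_{n}})\simeq(\varinjlim_{n}U_{n})\times X_{H}$ agrees with the total space considered in the first paragraph.

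Next I would descend the $H$-gerbe, which is now immediate: set $\stZ_{n}:=\stW_{n}\times_{\stW}\stZ$. This is the base change of the $H$-gerbe $\stZ\arr\stW$ along the structure map $\stW_{n}\arr\stW$, hence again an $H$-gerbe over $\stW_{n}$, so $\stZ_{n}\rig H=\stW_{n}$ and $\stZ_{n}\arr\stW_{n}\arr\stY_{n}$ is the required composition of an $H$-gerbe and an $X_{H,q_{n}}$-torsor. Since filtered colimits commute with fibre products, $\varinjlim_{n}\stZ_{n}\simeq(\varinjlim_{n}\stW_{n})\times_{\stW}\stZ\simeq\stZ=(\varinjlim_{n}\stY_{n})\times_{\Delta_{G/H}}\Delta_{G}$. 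For the atlases, $U_{n}\times X_{H,q_{n}}\simeq\stW_{n}\times_{\stY_{n}}U_{n}\arr\stW_{n}\arr\stW$ is the $n$-th term of the colimit $(\varinjlim_{n}U_{n})\times X_{H}\arr\stW$ over which $\stZ$ was seen to be trivial, so $\stZ_{n}\times_{\stW_{n}}(U_{n}\times X_{H,q_{n}})\simeq(U_{n}\times X_{H,q_{n}})\times\Bi H$. Hence $U_{n}\times X_{H,q_{n}}\arr\stZ_{n}$ is a smooth (resp.\ \'etale) atlas, compatible with the transition maps; the induced map $\stZ_{n}\arr\stY_{n}$ is the one coming from the projection $U_{n}\times X_{H,q_{n}}\arr U_{n}$; and the identification $U_{n}\times_{\stY_{n}}\stZ_{n}\simeq(U_{n}\times X_{H,q_{n}})\times\Bi H$ gives the factorization $U_{n}\times X_{H,q_{n}}\arr U_{n}\times_{\stY_{n}}\stZ_{n}\arr\stZ_{n}$ with first arrow an $H$-torsor. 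Finally each $\stZ_{n}$ is quasi-separated: the $X_{H,q_{n}}$-torsor $\stW_{n}\arr\stY_{n}$ is affine and $\stY_{n}$ is quasi-separated, while the $H$-gerbe $\stZ_{n}\arr\stW_{n}$ has finite diagonal.

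The conceptual content lies in \ref{prop:factorizing through the rigidification }, \ref{prop:fiber product modding out by a central subgroup}, \ref{thm:The case of Zp} and \ref{lem:from qcqs stacks to limit}; once the torsor-gerbe splitting is set up, the gerbe descent is formal. The one genuinely technical point is the cofinality bookkeeping in the second paragraph — extracting a single strictly increasing $q$ for which the $1$-cocycle, its cocycle identity, and all the $2$-commutativity compatibilities between consecutive levels hold at the finite stage $X_{H,q_{n}}$ — while the rest (base changes of torsors and gerbes remaining torsors and gerbes, the squares $2$-commuting, filtered $2$-colimits commuting with $2$-fibre products) is routine.
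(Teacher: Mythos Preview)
Your proposal is correct and follows essentially the same approach as the paper: factor $\Delta_G\to\Delta_{G/H}$ as an $H$-gerbe followed by an $X_H$-torsor via \ref{prop:factorizing through the rigidification }, trivialize both over the atlas using the given lift and \ref{prop:fiber product modding out by a central subgroup}, descend the torsor by extracting a cocycle at a finite level $X_{H,q_n}$ using quasi-compactness, and then pull back the gerbe. The only cosmetic difference is that you define $\stZ_n:=\stW_n\times_{\stW}\stZ$ by pulling back the limit gerbe, whereas the paper defines it level-wise as $Q_i\times_{P_i}P'_i$ with $P_i=\stY_i\times_{\Delta_{G/H}}(\Delta_G\rig H)$ and $P'_i=\stY_i\times_{\Delta_{G/H}}\Delta_G$; since $P'_i=P_i\times_{\Delta_G\rig H}\Delta_G$, these two descriptions agree.
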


\begin{proof}
Consider one index $i\in\N$ and the Cartesian diagrams   \[   \begin{tikzpicture}[xscale=2.1,yscale=-1.2]     \node (A0_0) at (0, 0) {$P'_{U,i}$};     \node (A0_1) at (1, 0) {$P'_i$};     \node (A0_2) at (2, 0) {$\Delta_G$};     \node (A1_0) at (0, 1) {$P_{U,i}$};     \node (A1_1) at (1, 1) {$P_i$};     \node (A1_2) at (2, 1) {$\Delta_{G}\rig H$};     \node (A2_0) at (0, 2) {$U_i$};     \node (A2_1) at (1, 2) {$\stY_i$};     \node (A2_2) at (2, 2) {$\Delta_{G/H}$};     \path (A0_1) edge [->]node [auto] {$\scriptstyle{}$} (A1_1);     \path (A0_0) edge [->]node [auto] {$\scriptstyle{}$} (A0_1);     \path (A2_1) edge [->]node [auto] {$\scriptstyle{}$} (A2_2);     \path (A1_0) edge [->]node [auto] {$\scriptstyle{}$} (A1_1);     \path (A0_1) edge [->]node [auto] {$\scriptstyle{}$} (A0_2);     \path (A0_2) edge [->]node [auto] {$\scriptstyle{}$} (A1_2);     \path (A1_1) edge [->]node [auto] {$\scriptstyle{}$} (A1_2);     \path (A1_0) edge [->]node [auto] {$\scriptstyle{}$} (A2_0);     \path (A1_1) edge [->]node [auto] {$\scriptstyle{}$} (A2_1);     \path (A0_0) edge [->]node [auto] {$\scriptstyle{}$} (A1_0);     \path (A2_0) edge [->]node [auto] {$\scriptstyle{}$} (A2_1);     \path (A1_2) edge [->]node [auto] {$\scriptstyle{}$} (A2_2);   \end{tikzpicture}   \] Set
also $R_{i}=U_{i}\times_{\stY_{i}}U_{i}$, which is a quasi-compact
algebraic space. By \ref{prop:factorizing through the rigidification }
$P_{i}\arr\stY_{i}$ is a $X_{H}$-torsor, $P'_{i}\arr P_{i}$ an
$H$-gerbe. Moreover the lifting $U_{i}\arr\Delta_{G}$ gives an isomorphism
$P'_{U,i}\simeq U_{i}\times\Delta_{H}$ and $P_{U,i}\simeq U_{i}\times X_{H}$
by \ref{prop:fiber product modding out by a central subgroup}. Thus
the $X_{H}$-torsor $P_{i}\arr\stY_{i}$, by descent along $U_{i}\arr\stY_{i}$
is completely determined by the identification $R_{i}\times X_{H}\simeq_{R_{i}}R_{i}\times X_{H}$,
which consists of an element $\omega_{i}\in X_{H}(R_{i})$ satisfying
the cocycle condition on $U_{i}\times_{\stY_{i}}U_{i}\times_{\stY_{i}}U_{i}$.
The given equivalence $P_{i}\simeq(P_{i+1})_{|\stY_{i}}$ of $X_{H}$-torsors
over $\stY_{i}$ is completely determined by its pullback on $U_{i}$,
which is given by multiplication by $\gamma_{i}\in X_{H}(U_{i})$.
The compatibility this element has to satisfy is expressed by
\[
(\omega_{i+1})_{|R_{i}}(s_{i}^{*}\gamma_{i})=(t_{i}^{*}\gamma_{i})\omega_{i}\text{ in }X_{H}(R_{i})
\]
where $s_{i},t_{i}\colon R_{i}\arr U_{i}$ are the two projections.
Since all $R_{i}$ are quasi-compact and $X_{H}\simeq\varinjlim_{j}X_{H,j}$
we can find an increasing sequence of natural numbers $q\colon\N\arr\N$
and elements $e_{q_{i}}\in X_{H,q_{i}}(R_{i})$, $f_{q_{i}}\in X_{H,q_{i+1}}(U_{i})$
such that:

\begin{enumerate}
\item the element $e_{q_{i}}$ is mapped to $\omega_{i}$ under the map
$X_{H,q_{i}}(R_{i})\arr X_{H}(R_{i})$ and it satisfies the cocycle
condition in $X_{H,q_{i}}(U_{i}\times_{\stY_{i}}U_{i}\times_{\stY_{i}}U_{i})$;
\item the element $f_{q_{i}}$ is mapped to $\gamma_{i}$ under the map
$X_{H,q_{i+1}}(U_{i})\arr X_{H}(U_{i})$ and, if $\overline{e_{q_{i}}}$
is the image of $e_{q_{i}}$ under the map $X_{H,q_{i}}(R_{i})\arr X_{H,q_{i+1}}(R_{i})$,
it satisfies 
\[
(e_{q_{i+1}})_{|R_{i}}(s_{i}^{*}f_{q_{i}})=(t_{i}^{*}f_{q_{i}})\overline{e_{q_{i}}}\text{ in }X_{H,q_{i+1}}(R_{i})
\]
\end{enumerate}
The data of $1)$ determine $X_{H,q_{i}}$-torsors $Q_{i}\arr\stY_{i}$
with a map $U_{i}\arr Q_{i}$ over $\stY_{i}$ and together with an
$X_{H,q_{i}}$-equivariant map $Q_{i}\arr P_{i}$ such that $U_{i}\arr Q_{i}\arr P_{i}$
is the given map. The data of $2)$ determine an $X_{H,q_{i}}$-equivariant
map $Q_{i}\arr(Q_{i+1})_{|\stY_{i}}$inducing the equivalence $P_{i}\simeq(P_{i+1})_{|\stY_{i}}$.

Consider also the $H$-gerbe $Q_{i}'\arr Q_{i}$ pullback of $P_{i}'\arr P_{i}$
along $Q_{i}\arr P_{i}$. We set $\stZ_{i}=Q_{i}'$. We have Cartesian
diagrams   \[   \begin{tikzpicture}[xscale=1.5,yscale=-1.2]     \node (A0_0) at (0, 0) {$Q_i'$};     \node (A0_1) at (1, 0) {$P_i'$};     \node (A0_2) at (2, 0) {$P_{i+1}'$};     \node (A0_3) at (3, 0) {$Q_i'$};     \node (A0_4) at (4, 0) {$Q_{i+1}'$};     \node (A0_5) at (5, 0) {$P_{i+1}'$};     \node (A1_0) at (0, 1) {$Q_i$};     \node (A1_1) at (1, 1) {$P_i$};     \node (A1_2) at (2, 1) {$P_{i+1}$};     \node (A1_3) at (3, 1) {$Q_i$};     \node (A1_4) at (4, 1) {$Q_{i+1}$};     \node (A1_5) at (5, 1) {$P_{i+1}$};     \path (A1_4) edge [->]node [auto] {$\scriptstyle{}$} (A1_5);     \path (A0_0) edge [->]node [auto] {$\scriptstyle{}$} (A0_1);     \path (A0_1) edge [->]node [auto] {$\scriptstyle{}$} (A0_2);     \path (A1_0) edge [->]node [auto] {$\scriptstyle{}$} (A1_1);     \path (A0_3) edge [->]node [auto] {$\scriptstyle{}$} (A1_3);     \path (A0_2) edge [->]node [auto] {$\scriptstyle{}$} (A1_2);     \path (A0_4) edge [->]node [auto] {$\scriptstyle{}$} (A0_5);     \path (A0_3) edge [->]node [auto] {$\scriptstyle{}$} (A0_4);     \path (A0_4) edge [->]node [auto] {$\scriptstyle{}$} (A1_4);     \path (A1_1) edge [->]node [auto] {$\scriptstyle{}$} (A1_2);     \path (A0_5) edge [->]node [auto] {$\scriptstyle{}$} (A1_5);     \path (A0_0) edge [->]node [auto] {$\scriptstyle{}$} (A1_0);     \path (A0_1) edge [->]node [auto] {$\scriptstyle{}$} (A1_1);     \path (A1_3) edge [->]node [auto] {$\scriptstyle{}$} (A1_4);   \end{tikzpicture}   \] Notice
that, if $\stM$ is a stack over $\stY_{i}$, then $\stM\times_{\stY_{i+1}}U_{i+1}\simeq\stM\times_{\stY_{i}}U_{i}$
because $U_{*}$ is a direct system of atlases. Pulling back along
$U_{i+1}\arr\stY_{i+1}$ the above diagrams, we obtain the bottom
rows of the following diagrams.   \[   \begin{tikzpicture}[xscale=3.2,yscale=-1.2]     \node (A0_0) at (0, 0) {$ X_{H,q_i}\times U_{i}$};     \node (A0_1) at (1, 0) {$ X_{H}\times U_{i}$};     \node (A0_2) at (2, 0) {$ X_{H}\times U_{i+1}$};     \node (A1_0) at (0, 1) {$Q_i'\times_{\stY_{i+1}}U_{i+1}$};     \node (A1_1) at (1, 1) {$ \Delta_{H}\times U_{i}$};     \node (A1_2) at (2, 1) {$ \Delta_{H}\times U_{i+1}$};     \node (A2_0) at (0, 2) {$ X_{H,q_i}\times U_{i}$};     \node (A2_1) at (1, 2) {$ X_{H}\times U_{i}$};     \node (A2_2) at (2, 2) {$ X_{H}\times U_{i+1}$};     \node (A3_0) at (0, 3) {$ X_{H,q_i}\times U_{i}$};     \node (A3_1) at (1, 3) {$ X_{H,q_{i+1}}\times U_{i+1}$};     \node (A3_2) at (2, 3) {$ X_{H}\times U_{i+1}$};     \node (A4_0) at (0, 4) {$Q_i'\times_{\stY_{i+1}}U_{i+1}$};     \node (A4_1) at (1, 4) {$Q_{i+1}'\times_{\stY_{i+1}}U_{i+1}$};     \node (A4_2) at (2, 4) {$ \Delta_{H}\times U_{i+1}$};     \node (A5_0) at (0, 5) {$X_{H,q_i}\times U_{i}$};     \node (A5_1) at (1, 5) {$ X_{H,q_{i+1}}\times U_{i+1}$};     \node (A5_2) at (2, 5) {$X_{H}\times U_{i+1}$};     \path (A5_0) edge [->]node [auto] {$\scriptstyle{}$} (A5_1);     \path (A2_1) edge [->]node [auto] {$\scriptstyle{}$} (A2_2);     \path (A0_2) edge [->]node [auto] {$\scriptstyle{\alpha_{i+1}}$} (A1_2);     \path (A4_0) edge [->]node [auto] {$\scriptstyle{}$} (A5_0);     \path (A1_0) edge [->]node [auto] {$\scriptstyle{}$} (A2_0);     \path (A1_1) edge [->]node [auto] {$\scriptstyle{}$} (A2_1);     \path (A1_2) edge [->]node [auto] {$\scriptstyle{}$} (A2_2);     \path (A2_0) edge [->]node [auto] {$\scriptstyle{}$} (A2_1);     \path (A3_0) edge [->]node [auto] {$\scriptstyle{}$} (A3_1);     \path (A0_1) edge [->]node [auto] {$\scriptstyle{\alpha_{i}}$} (A1_1);     \path (A4_2) edge [->]node [auto] {$\scriptstyle{}$} (A5_2);     \path (A4_0) edge [->]node [auto] {$\scriptstyle{}$} (A4_1);     \path (A1_0) edge [->]node [auto] {$\scriptstyle{}$} (A1_1);     \path (A3_1) edge [->]node [auto] {$\scriptstyle{\beta_{i+1}}$} (A4_1);     \path (A1_1) edge [->]node [auto] {$\scriptstyle{}$} (A1_2);     \path (A4_1) edge [->]node [auto] {$\scriptstyle{}$} (A5_1);     \path (A0_0) edge [->]node [auto] {$\scriptstyle{\beta_i}$} (A1_0);     \path (A4_1) edge [->]node [auto] {$\scriptstyle{}$} (A4_2);     \path (A0_0) edge [->]node [auto] {$\scriptstyle{}$} (A0_1);     \path (A3_0) edge [->]node [auto] {$\scriptstyle{\beta_i}$} (A4_0);     \path (A3_2) edge [->]node [auto] {$\scriptstyle{\alpha_{i+1}}$} (A4_2);     \path (A0_1) edge [->]node [auto] {$\scriptstyle{}$} (A0_2);     \path (A3_1) edge [->]node [auto] {$\scriptstyle{}$} (A3_2);     \path (A5_1) edge [->]node [auto] {$\scriptstyle{}$} (A5_2);   \end{tikzpicture}   \] 

The top rows of the above diagrams is instead obtained using \ref{prop:fiber product modding out by a central subgroup},
where the map $\alpha_{i}$ are induced by the map $X_{H}\arr X_{H}\times\Bi H=\Delta_{H}$
and the $\beta_{i}$ are induced by the $\alpha_{i}$. Since $Q_{i}'\times_{\stY_{i+1}}U_{i+1}\simeq Q_{i}'\times_{\stY_{i}}U_{i}$
we see that the atlases $U_{i}\times X_{H,q_{i}}\arrdi{\beta_{i}}Q_{i}'\times_{\stY_{i}}U_{i}\arr Q_{i}'=\stZ_{i}$
define a direct system of smooth (resp. étale) atlases satisfying
the requests of the statement. 

Let us show the last equivalence in the statement. By \ref{cor:about limits and atlas}
the map 
\[
\varinjlim_{n}(U_{n}\times_{\stY_{n}}P_{n})=\varinjlim_{n}(U_{n}\times X_{H})\arr\varinjlim_{n}P_{n}
\]
is a smooth atlas. The map $\varinjlim_{n}Q_{n}\arr\varinjlim_{n}P_{n}$
is therefore an equivalence because its base change along the above
atlas is $\varinjlim_{n}(U_{n}\times X_{H,q_{n}})\arr\varinjlim_{n}(U_{n}\times X_{H})$,
which is an isomorphism. Here we have used \ref{prop:fiber product of direct systems}.
Using again this we see that the map
\[
\varinjlim_{n}\stZ_{n}=\varinjlim_{n}(Q_{n}\times_{P_{n}}P_{n}')\arr\varinjlim_{n}P_{n}'=\varinjlim_{n}(\stY_{n}\times_{\Delta_{G/H}}\Delta_{G})
\]
is an equivalence as well. 
\end{proof}
\begin{lem}
\label{lem:extension of torsors along closed embedding} Let $G$
be a $p$-group, $H$ be a central subgroup which is an $\F_{p}$-vector
space and $X\arr Y$ be a finite, finitely presented and universally
injective map of affine schemes. Then a $2$-commutative diagram   \[   \begin{tikzpicture}[xscale=2.1,yscale=-1.2]     \node (A0_0) at (0, 0) {$X$};     \node (A0_1) at (1, 0) {$\Delta_G$};     \node (A1_0) at (0, 1) {$Y$};     \node (A1_1) at (1, 1) {$\Delta_{G/H}$};     \path (A0_0) edge [->]node [auto] {$\scriptstyle{}$} (A0_1);     \path (A1_0) edge [->,dashed]node [auto] {$\scriptstyle{}$} (A0_1);     \path (A1_0) edge [->]node [auto] {$\scriptstyle{}$} (A1_1);     \path (A0_1) edge [->]node [auto] {$\scriptstyle{}$} (A1_1);     \path (A0_0) edge [->]node [auto] {$\scriptstyle{}$} (A1_0);   \end{tikzpicture}   \] always
admits a dashed map.
\end{lem}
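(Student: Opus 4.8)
The plan is to reduce, using Proposition~\ref{prop:finite and universally injective maps}, to two cases. Writing $Y=\Spec B$, $X=\Spec A$ with $A$ a finite $B$-algebra and $J=\Ker(B\arr A)$, the map $X\arr Y$ factors as $X\arr\Spec(B/J)\arr Y$, where the second map is a closed immersion and the first is finite, universally injective and surjective (the latter by the lying over theorem, since $B/J\hookrightarrow A$ is integral), hence a finite universal homeomorphism. If one can first produce a lift $Y'=\Spec(B/J)\arr\Delta_G$ of $Y'\arr Y\arr\Delta_{G/H}$ extending $X\arr\Delta_G$, and then a lift $Y\arr\Delta_G$ of $Y\arr\Delta_{G/H}$ extending $Y'\arr\Delta_G$, the composite is the desired dashed map. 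So it suffices to treat separately the case of a finite universal homeomorphism and the case of a closed immersion, and in both I would work with the affine rings directly.

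For the finite universal homeomorphism case, the first thing to record is that, $A$ being a finite $B$-module, Lemma~\ref{lem:change of rings for series} yields $A[[t]]=A\otimes_B B[[t]]$ and hence $A((t))=A\otimes_B B((t))$; therefore $\Spec A((t))\arr\Spec B((t))$ is the base change of $\Spec A\arr\Spec B$ along $\Spec B((t))\arr\Spec B$, so it is again finite, universally injective and surjective, i.e. a finite universal homeomorphism. Since $G$ and $G/H$ are constant group schemes, the argument of Remark~\ref{rem:BG insensible to universal homeo, finite} (which only uses that the group scheme is constant) applies to show that $\Delta_G(Y)\arr\Delta_G(X)$ and $\Delta_{G/H}(Y)\arr\Delta_{G/H}(X)$ are equivalences of categories. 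Consequently the given $G$-torsor over $A((t))$ descends to a $G$-torsor $\shB$ over $B((t))$, determined up to unique isomorphism; the $G/H$-torsors $\shB/H$ and the given one over $B((t))$ have isomorphic images over $A((t))$, hence are isomorphic over $B((t))$, compatibly with the prescribed $2$-isomorphism, and this gives the dashed map.

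For the closed immersion case $B\twoheadrightarrow B'$, write $\overline{\shB}$ for the given $G/H$-torsor over $B((t))$ and $\shA'$ for the given $G$-torsor over $B'((t))$. Since $B((t))$ is an $\F_p$-algebra, Remark~\ref{rem:extension of torsors} (vanishing of $\Hl^2(B((t)),H)$ by Artin-Schreier) provides a $G$-torsor $\shB_0$ over $B((t))$ with $\shB_0/H\simeq\overline{\shB}$. Over $B'((t))$ the torsors $\shA'$ and $\shB_0|_{B'((t))}$ have identified $H$-quotients, so by Lemma~\ref{lem:central subgroups and torsors} they correspond to a pair $(\shA',\shI)$ with $\shI$ an $H$-torsor over $B'((t))$. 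As $B((t))\twoheadrightarrow B'((t))$ (Lemma~\ref{lem:change of rings for series}) and $\Hl^1(-,H)$ is right exact along surjections of $\F_p$-algebras (again Artin-Schreier), the restriction $\Hl^1(B((t)),H)\arr\Hl^1(B'((t)),H)$ is surjective; I then pick an $H$-torsor $\shJ$ over $B((t))$ restricting to $\shI^{-1}$ and set $\shB_1=\shB_0\cdot\shJ$, using the map $\Delta_G\times\Delta_H\arr\Delta_G$ of Proposition~\ref{prop:fiber product modding out by a central subgroup}. By that same proposition $\shB_1/H\simeq\shB_0/H\simeq\overline{\shB}$; and since replacing the second entry of a pair in $\Bi(G)\times_{\Bi(G/H)}\Bi(G)$ by its twist under $\shJ$ twists the associated $H$-torsor of Lemma~\ref{lem:central subgroups and torsors} by $\shJ$, the $H$-torsor attached to $(\shA',\shB_1|_{B'((t))})$ becomes trivial, i.e. $\shB_1|_{B'((t))}\simeq\shA'$ compatibly with $H$-quotients. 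Then $\shB_1$ is the desired lift over $Y$.

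The step I expect to be the main obstacle is the coherence of $2$-isomorphisms in the closed immersion case: one must check, via the explicit quasi-inverse in Lemma~\ref{lem:central subgroups and torsors}, that the isomorphism $\shB_1|_{B'((t))}\simeq\shA'$ extracted from the trivialization of the $H$-torsor is compatible with the originally given identification of $G/H$-quotients, so that both triangles of the diagram are genuinely $2$-commutative; this is a routine but lengthy diagram chase, and it is the reason one has to use the precise quasi-inverse of Lemma~\ref{lem:central subgroups and torsors} rather than a bare existence statement. Once this is settled, the finite universal homeomorphism case is essentially formal, its only real input being the identity $A((t))=A\otimes_B B((t))$.
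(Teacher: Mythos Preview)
Your argument is correct and uses the same ingredients as the paper's proof: the factorization into a closed immersion followed by a finite universal homeomorphism, Artin--Schreier for the surjection step, and topological invariance of the \'etale site (Remark~\ref{rem:BG insensible to universal homeo, finite}) for the homeomorphism step. The paper organizes things slightly differently: rather than splitting the scheme map $X\to Y$ and treating the two pieces separately, it writes down the commutative ladder of non-abelian cohomology sets
\[
\Hl^1(-,H)\arr\Hl^1(-,G)\arr\Hl^1(-,G/H)\arr 0
\]
for $C((t))$ and $B((t))$, observes that the diagram chase you perform in your closed-immersion case reduces everything to the surjectivity of $\alpha\colon\Hl^1(C((t)),H)\to\Hl^1(B((t)),H)$, and then proves that surjectivity by factoring $C((t))\to B((t))$ through its image $D$ (which coincides with your $(B/J)((t))$). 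Your route is a bit more explicit about the torsor manipulations via Lemma~\ref{lem:central subgroups and torsors}; the paper's is terser but relies on the reader supplying the same chase.

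Two small remarks. First, when you invoke Lemma~\ref{lem:change of rings for series} to get $A((t))=A\otimes_B B((t))$, ``finite'' alone only gives surjectivity; you also need $A$ finitely presented as a $B$-module (which follows from the hypothesis ``finite, finitely presented''), and when you apply this to $A$ over $B/J$ note that $A$ remains finitely presented over $B/J$ since $J$ annihilates $A$. Second, the $2$-coherence you worry about at the end is not verified in the paper either: it works purely at the level of $\Hl^1$, producing a class in $\Hl^1(C((t)),G)$ with the correct images. For the application (building the compatible system $\varinjlim U_n\to\Delta_G$) only the existence of the two separate triangle $2$-isomorphisms is needed, so this is not a gap.
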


\begin{proof}
Set $X=\Spec B$ and $Y=\Spec C$ and consider the induced map $C\arr B$.
Since $\Hl^{2}(B((t)),H)=\Hl^{2}(C((t)),H)=0$ by the Artin-Schreier
sequence, we have a commutative diagram   \[   \begin{tikzpicture}[xscale=3.3,yscale=-1.2]     \node (A0_0) at (0, 0) {$\Hl^1(C((t)),H)$};     \node (A0_1) at (1, 0) {$\Hl^1(C((t)),G)$};     \node (A0_2) at (2, 0) {$\Hl^1(C((t)),G/H)$};     \node (A0_3) at (3, 0) {$0$};     \node (A1_0) at (0, 1) {$\Hl^1(B((t)),H)$};     \node (A1_1) at (1, 1) {$\Hl^1(B((t)),G)$};     \node (A1_2) at (2, 1) {$\Hl^1(B((t)),G/H)$};     \node (A1_3) at (3, 1) {$0$};     \path (A0_1) edge [->]node [auto] {$\scriptstyle{}$} (A1_1);     \path (A0_0) edge [->]node [auto] {$\scriptstyle{}$} (A0_1);     \path (A0_1) edge [->]node [auto] {$\scriptstyle{}$} (A0_2);     \path (A1_0) edge [->]node [auto] {$\scriptstyle{}$} (A1_1);     \path (A1_1) edge [->]node [auto] {$\scriptstyle{}$} (A1_2);     \path (A0_2) edge [->]node [auto] {$\scriptstyle{}$} (A1_2);     \path (A1_2) edge [->]node [auto] {$\scriptstyle{}$} (A1_3);     \path (A0_2) edge [->]node [auto] {$\scriptstyle{}$} (A0_3);     \path (A0_0) edge [->]node [auto] {$\scriptstyle{\alpha}$} (A1_0);   \end{tikzpicture}   \] with
exact rows. By hypothesis there are $u\in\Hl^{1}(B((t)),G)$ and $v\in\Hl^{1}(C((t)),G/H)$
which agree in $\Hl^{1}(B((t)),G/H)$. We can find a common lifting
in $\Hl^{1}(C((t)),G)$ by proving that the map $\alpha$ is surjective.
By \ref{lem:change of rings for series} we have that $\Spec B((t))\arr\Spec C((t))$
is the base change of $\Spec B\arr\Spec C$ and therefore is finite
and universally injective. Let $D$ be the image of $C((t))\arr B((t))$.
The map $\Hl^{1}(C((t)),H)\arr\Hl^{1}(D,H)$ is surjective because
$H\simeq(\Z/p\Z)^{l}$ for some $l$ and using the description of
$\Z/p\Z$-torsors in \ref{nota:description of ZpZ torsors}. By \ref{prop:finite and universally injective maps}
the map $\Spec B((t))\arr\Spec D$ is a finite universal homeomorphism.
Thus $\Hl^{1}(D,H)\arr\Hl^{1}(B((t)),H)$ is bijective by \ref{rem:BG insensible to universal homeo, finite}.
\end{proof}

\begin{proof}
[Proof of Theorem \ref{A}, \ref{thma-p-gp} and \ref{thma-abelian-p}.]\label{pf: A p}
Since $p$-groups have non trivial center we can find a sequence of
quotients
\[
G=G_{l}\arr G_{l-1}\arr G_{l-2}\arr\cdots\arr G_{0}\arr G_{-1}=0
\]
where $\Ker(G_{u}\arr G_{u-1})$ is central in $G_{u}$ and an $\F_{p}$-vector
space. We proceed by induction on $l$. In the base case $l=0$, so
that $G=G_{0}$ is an $\F_{p}$-vector space, following \ref{lem:direct system of Fp vector spaces}
it is enough to set $\stX_{n}=X_{G,n}\times\Bi G$. Consider now the
inductive step and set $H=\Ker(G=G_{l}\arr G_{l-1})$. Of course we
can assume $H\neq0$, so that we can use the inductive hypothesis
on $G/H$, obtaining a direct system $\stY_{*}$ and a map $\overline{v}\colon\N\arr\N$
with a direct system of atlases $\A^{\overline{v}_{*}}$. The first
result follows applying \ref{lem:inductive step for p-groups} with
$U_{n}=\A^{\overline{v}_{n}}$. We just have to prove the existence
of a lifting $\varinjlim_{n}U_{n}\arr\Delta_{G}$. Since the schemes
$U_{n}$ are affine one always find a lifting $U_{n}\arr\Delta_{G}$
thanks to \ref{rem:extension of torsors}. Thanks to \ref{lem:extension of torsors along closed embedding}
any lifting $U_{n}\arr\Delta_{G}$ always extends to a lifting $U_{n+1}\arr\Delta_{G}$.

Assume now $G$ abelian and set $X_{G}=X_{G/H}\times X_{H}$. By induction
we can assume $\Delta_{G/H}=X_{G/H}\times\Bi(G/H)$. By \ref{rem:extension of torsors}
and \ref{lem:extension of torsors along closed embedding} there is
a lifting $X_{G/H}\arr\Delta_{G}$ of the given map $X_{G/H}\arr\Delta_{G/H}$.
In particular, using \ref{prop:fiber product modding out by a central subgroup},
we obtain a map $X_{G}=X_{G/H}\times X_{H}\arr X_{G/H}\times\Delta_{H}\arr\Delta_{G}$,
which is finite and étale of degree $\sharp G$. Since $G$ is abelian
and using \ref{lem: constant sheaves and t} we have $(\Autsh_{\Delta_{G}}P)(B)=G(B((t)))=G(B)$
for all $P\in\Delta_{G}(B)$. By \ref{prop:rigidification as gerbe},
it follows that the rigidification $F=\Delta_{G}\rig G$ is the sheaf
of isomorphism classes of $\Delta_{G}$ and that $\Delta_{G}\arr F$
is a gerbe locally $\Bi G$. Since $X_{G}\arr\Delta_{G}$ and, thanks
to \ref{cor:about limits and atlas}, $\A^{v}=\varinjlim_{n}\A^{v_{n}}\arr\Delta_{G}$
are finite and étale of degree $\sharp G$, by \ref{lem:dividing rank for gerbes}
we can conclude that $X_{G}\arr F$ and $\A^{v}\arr F$ are isomorphisms.
Since a gerbe having a section is trivial we get our result.
\end{proof}
With notation and hypothesis from Theorem \ref{A} set $\A^{v}=\varinjlim\A^{v_{*}}$,
$\overline{\Delta_{G}}$ for the coarse ind-algebraic space of $\Delta_{G}$
and consider the induced map $\A^{v}\arr\overline{\Delta_{G}}$. We
want to show that when $G$ is non-abelian this map is not an isomorphism
in general. The key point is the following Lemma.
\begin{lem}
If $K$ is an algebraically closed field and $P\in\Delta_{G}(K)$
then $H=\Autsh_{\Delta_{G}}(P)$ is (non canonically) a subgroup of
$G$ and the fiber of $\A^{v}(K)\arr\overline{\Delta_{G}}(K)\simeq\Delta_{G}(K)/\simeq$
over $P$ has cardinality $\sharp G/\sharp H$.
\end{lem}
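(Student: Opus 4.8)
The plan is to reduce everything to an orbit count, using that $\A^{v}\arr\Delta_{G}$ is finite and \'etale of degree $\sharp G$ (established in the proof of Theorem \ref{A} for constant $p$-groups). Given $P\colon\Spec K\arr\Delta_{G}$, form the fiber product $E=\A^{v}\times_{\Delta_{G},P}\Spec K$. Since $\A^{v}\arr\Delta_{G}$ is representable, finite and \'etale of degree $\sharp G$, $E$ is a scheme, finite and \'etale of degree $\sharp G$ over $K$, hence $\sharp E(K)=\sharp G$ because $K$ is algebraically closed. By the universal property of the $2$-fiber product, $E(K)$ is the set of pairs $(a,\phi)$ with $a\in\A^{v}(K)$ and $\phi$ an isomorphism in $\Delta_{G}(K)$ from $\overline a$ (the image of $a$ under $\A^{v}\arr\Delta_{G}$) to $P$. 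The projection $(a,\phi)\mapsto a$ maps $E(K)$ onto the fiber $\mathcal F$ over the class of $P$ of the map $\A^{v}(K)\arr\overline{\Delta_{G}}(K)$, which by definition of the coarse ind-algebraic space is identified with $\A^{v}(K)\arr\Delta_{G}(K)/\simeq$; and for $a\in\mathcal F$ the fiber of this projection is $\Iso_{\Delta_{G}(K)}(\overline a,P)$, a nonempty torsor under post-composition by $H=\Aut_{\Delta_{G}(K)}(P)$, hence of cardinality $\sharp H$. Therefore $\sharp G=\sharp E(K)=\sharp\mathcal F\cdot\sharp H$, i.e. $\sharp\mathcal F=\sharp G/\sharp H$, provided $H$ is finite.

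It remains to identify $H$ with a subgroup of $G$. Since $K$ is a field, $F:=K((t))$ is a field and $P$ is a genuine $G$-torsor over $\Spec F$, classified (after choosing a separable closure $F^{\mathrm{sep}}$ and a trivialization of $P$ over it) by a continuous homomorphism $\rho\colon\Gal(F^{\mathrm{sep}}/F)\arr G$, well defined up to conjugacy. A direct computation of the $G$-equivariant automorphisms of the trivial torsor identifies $\Aut_{\Delta_{G}(K)}(P)$ with the centralizer $Z_{G}(\rho(\Gal(F^{\mathrm{sep}}/F)))$, a subgroup of $G$; equivalently, $\Autsh_{\Delta_{G}}(P)$ is the inner twist ${}^{P}G$ of $G$ by $P$ via conjugation, a finite \'etale $F$-group scheme that becomes constant over $F^{\mathrm{sep}}$, so its $F$-points embed into $G(F^{\mathrm{sep}})=G$. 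In particular $H$ is finite, and the embedding depends on the chosen trivialization (equivalently, on $\rho$ inside its conjugacy class), which is precisely why it is only canonical up to conjugation.

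The one point that needs a little care is that $\A^{v}=\varinjlim_{n}\A^{v_{n}}$ is an ind-scheme and $\Delta_{G}=\varinjlim_{n}\stX_{n}$, so one must check that $E$ really is a finite \'etale $K$-scheme of degree $\sharp G$. By \ref{lem:from qcqs stacks to limit} the map $P\colon\Spec K\arr\Delta_{G}$ factors through $\stX_{n}$ for some $n$, so $E\simeq\varinjlim_{m\ge n}(\A^{v_{m}}\times_{\stX_{m}}\Spec K)$; each term is finite and \'etale of degree $\sharp G$ over $K$, and since the atlas squares are $2$-Cartesian, each transition map is a base change of the finite and universally injective map $\A^{v_{m}}\arr\A^{v_{m+1}}$, hence a finite universally injective self-map of $\bigsqcup_{i=1}^{\sharp G}\Spec K$, which is necessarily an isomorphism. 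Thus $E$ is as claimed and the counting argument above applies verbatim. I expect this ind-scheme bookkeeping to be the only real obstacle; everything else is the one-line orbit count together with the standard description of the automorphism group of a torsor over a field.
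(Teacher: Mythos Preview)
Your argument is correct and in fact cleaner than the paper's. The paper fixes $n$ with $P_n\in\stX_n(K)$ mapping to $P$, forms the fiber $U=\stX_n\times_{\overline{\stX_n}}\Spec K$ over the image point in the coarse space, invokes that the reduction of $U$ is the residual gerbe $\Bi H$, and then counts by pulling the $\sharp G$-sheeted atlas back through the chain $\Spec K\to\Bi H\hookrightarrow U\to\stX_n$, with an extra step checking that $V(K)\to\mathcal F$ is bijective via injectivity of $\A^{v_n}(K)\to\A^{v}(K)$. Your direct orbit count on $E(K)$ bypasses the residual-gerbe machinery entirely and makes the identity $\sharp G=\sharp\mathcal F\cdot\sharp H$ transparent; the paper's route has the advantage of exhibiting explicitly where the approximation $\stX_n$ and its coarse space enter, but at the cost of more moving parts. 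Your identification of $H$ with the centralizer $Z_G(\rho(\Gal))$ is the same as the paper's, phrased in Galois-cohomological language rather than via base change to $\overline{K((t))}$.

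One small correction in the bookkeeping: the transition map $E_m\to E_{m+1}$ is not literally a base change of $\A^{v_m}\to\A^{v_{m+1}}$ along $E_{m+1}\to\A^{v_{m+1}}$; that would require $\stX_m\to\stX_{m+1}$ to be a monomorphism, which finite universally injective maps need not be (one gets $\A^{v_m}\times_{\stX_{m+1}}\Spec K$ rather than $\A^{v_m}\times_{\stX_m}\Spec K$). The conclusion is even easier, though: since the atlas square is $2$-Cartesian, $\A^{v_m}\simeq\A^{v_{m+1}}\times_{\stX_{m+1}}\stX_m$, whence
\[
E_m=\A^{v_m}\times_{\stX_m}\Spec K\simeq(\A^{v_{m+1}}\times_{\stX_{m+1}}\stX_m)\times_{\stX_m}\Spec K\simeq\A^{v_{m+1}}\times_{\stX_{m+1}}\Spec K=E_{m+1},
\]
so every transition map is already an isomorphism and $E\simeq E_n$ is finite \'etale of degree $\sharp G$ over $K$ as soon as $P$ factors through $\stX_n$. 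This is exactly the content of \ref{cor:about limits and atlas} applied to the system of atlases.
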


\begin{proof}
There exist $n\in\N$ and $P_{n}\in\stX_{n}(K)$ inducing $P\in\Delta_{G}(K)$.
By \ref{lem:limit of coarse} we have $\Autsh_{\stX_{n}}(P_{n})=H$
and, since $\stX_{n}$ is a quasi-separated DM stack, it follows that
$H$ is a finite and constant group scheme. Moreover the map
\[
H(K)=\Aut_{K((t))}^{G}(P)\arr\Aut_{\overline{K((t))}}^{G}(P\times\overline{K((t))})\simeq G
\]
is injective (the last isomorphism depends on the choice of a section
in $P(\overline{K((t))})$. Thanks to \ref{cor:about limits and atlas}
we have $2$-Cartesian diagrams   \[   \begin{tikzpicture}[xscale=1.5,yscale=-1.2]     \node (A0_0) at (0, 0) {$Y$};     \node (A0_1) at (1, 0) {$W$};     \node (A0_2) at (2, 0) {$V$};     \node (A0_3) at (3, 0) {$\A^{v_n}$};     \node (A0_4) at (4, 0) {$\A^v$};     \node (A1_0) at (0, 1) {$\Spec K$};     \node (A1_1) at (1, 1) {$\Bi H$};     \node (A1_2) at (2, 1) {$U$};     \node (A1_3) at (3, 1) {$\stX_n$};     \node (A1_4) at (4, 1) {$\Delta_G$};     \node (A2_2) at (2, 2) {$\Spec K$};     \node (A2_3) at (3, 2) {$\overline\stX_n$};     \path (A1_3) edge [->]node [auto] {$\scriptstyle{}$} (A1_4);     \path (A0_0) edge [->]node [auto] {$\scriptstyle{}$} (A0_1);     \path (A0_1) edge [->]node [auto] {$\scriptstyle{}$} (A1_1);     \path (A1_0) edge [->]node [auto] {$\scriptstyle{}$} (A1_1);     \path (A0_3) edge [->]node [auto] {$\scriptstyle{}$} (A1_3);     \path (A1_1) edge [->]node [auto] {$\scriptstyle{}$} (A1_2);     \path (A2_2) edge [->]node [auto] {$\scriptstyle{}$} (A2_3);     \path (A0_3) edge [->]node [auto] {$\scriptstyle{}$} (A0_4);     \path (A0_4) edge [->]node [auto] {$\scriptstyle{}$} (A1_4);     \path (A0_2) edge [->]node [auto] {$\scriptstyle{}$} (A1_2);     \path (A0_0) edge [->]node [auto] {$\scriptstyle{}$} (A1_0);     \path (A1_3) edge [->]node [auto] {$\scriptstyle{}$} (A2_3);     \path (A0_1) edge [->]node [auto] {$\scriptstyle{}$} (A0_2);     \path (A1_2) edge [->]node [auto] {$\scriptstyle{}$} (A1_3);     \path (A1_2) edge [->]node [auto] {$\scriptstyle{}$} (A2_2);     \path (A0_2) edge [->]node [auto] {$\scriptstyle{}$} (A0_3);   \end{tikzpicture}   \] If
$F\subseteq\A^{v}(K)$ is the fiber we are looking for then we get
an induced map $V(K)\arr F$ which is easily seen to be surjective.
From \ref{lem:limit of coarse} it follows that $\A^{v_{n}}(K)\arr\A^{v}(K)$
is injective, which implies that $V(K)\arr F$ is bijective. From
\cite[\href{http://stacks.math.columbia.edu/tag/06ML}{06ML}]{SP014}
one get that $\Bi H$ is the reduction of $U$. Thus $W(K)=V(K)$.
Notice that, since $\overline{\stX_{n}}$ has schematically representable
diagonal, $V$, $W$ and $Y$ are all schemes. Since the vertical
maps in the top row are finite and étale of degree $\sharp G$ and
$Y\arr W$ is an $H$-torsor we conclude that $\sharp Y=G$ and $\sharp W=\sharp Y/\sharp H$
as required. 
\end{proof}
We see that if $G$ is not abelian and $P$ is a Galois extension
of $K((t))$ with group $G$, where $K$ is an algebraically closed
field, then the map $\A^{v}\arr\overline{\Delta_{G}}$ is not injective.
Indeed the fiber of $\A^{v}(K)\arr\overline{\Delta_{G}}(K)$ over
$P$ has cardinality $\sharp G/\sharp Z(G)$ because $\Aut_{K((t))}^{G}(P)=Z(G)$.
\begin{rem}
\label{rem:Harbater vs us}The moduli functor $F'$ described in \cite[Proof of 2.1]{Harbater1980}
is very similar to the sheaf of isomorphism classes of $\Delta_{G}$
but with some differences. Firstly $F'$ maps pointed connected affine
schemes to sets, while we look at the category of all (non-pointed)
affine schemes, which is standard in modern moduli theory. Secondly,
for a connected and pointed affine scheme $\Spec B$, he defines $F'(\Spec B)$
as the set of equivalence classes of pointed $G$-torsors on $B\otimes_{k}k((t))$
rather than $B((t))$ as in our case. Two covers are defined to be
equivalent if they agree after a finite étale pullback of $B\otimes_{k}k[[t]]$.
This equivalence relation plays the role of ``killing terms of positive
degrees'', while the same role is played by \ref{lem:removing the positive part}
in our setting. This is better understood in the case $G=\Z/p\Z$
where one can show that $F'$ is exactly the sheaf $\overline{\Delta}$
of isomorphism classes of $\Delta_{\Z/p\Z}$ (one can ignore base
points here because $\Z/p\Z$ is abelian). 

A map $\alpha\colon F'\arr\overline{\Delta}$ is well defined because
if two torsors $P,Q$ over $B\otimes k((t))$ become isomorphic after
an étale cover of $B\otimes k[[t]],$ by \ref{cor:shrinking to etale neighborhood}
$P\times B((t)),Q\times B((t))$ become isomorphic over $C((t))$,
where $C/B$ is an étale covering. The surjectivity of $\alpha$ is
easy: from the description of $\Delta_{\Z/p\Z}$ a torsor in $\overline{\Delta}(B)$
is given by an element $b\in B((t))$ with zero positive part and,
therefore, belonging to $B\otimes k((t))\subseteq B((t))$ (see also
\ref{lem:change of rings for series}). For the injectivity take $e\in B\otimes k((t))$
defining a torsor over $B\otimes k((t))$ which become trivial in
$\overline{\Delta}(B)$. Write $e=e_{-}+e_{+}$ as usual. Since $e_{+}\in B\otimes k[[t]]$
(which means that its associated torsor extends to $B\otimes k[[t]]$)
one has $e=e_{-}$ in $F'(B)$. Using the same notation and strategy
of \ref{thm:The case of Zp}, in particular of the essential surjectivity,
one can assume $e=\phi_{k}(\underline{b})$ for $\underline{b}\in\A^{(S)}$.
Since $e=0$ in $\overline{\Delta}=(\A^{(S)})^{\infty}$ it follows
that the coefficients of $b$ and $e$ are nilpotent. Since $e^{p}=e$
in $F'(B)$ it follows that $e=0$ in $F'(B)$. 

\end{rem}

\subsection{Semidirect products\label{subsec:Semidirect-products}}

The aim of this section is to complete the proof of Theorem \ref{A}.
So let $k$ be a field of positive characteristic $p$ and $G$ be
a finite and étale group scheme over $k$ such that $G\times_{k}\overline{k}$
is a semidirect product of a $p$-group and cyclic group of rank coprime
with $p$. 

Extending the base field by a Galois extension and using \ref{rem:base change for Delta}
and \ref{lem:descent of ind along torsors} we can assume that $G$
is constant, say $G=H\rtimes C_{n}$ where $H$ is a $p$-group and
$C_{n}$ is a cyclic group of order $n$ coprime with $p$. We can
moreover assume that the base field $k$ has all $n$-roots of unity,
so that $C_{n}\simeq\mu_{n}$ as group schemes. More precisely we
assume that $C_{n}=\mu_{n}(k)\subseteq k^{*}$ is the group of $n$-th
roots of unity of $k$.

Consider $\Spec k\arr\Delta_{C_{n}}$ given by $(k((t)),t^{q})$ as
in Theorem \ref{cyclic} and denote by $\stZ_{G,q}$ the fiber product
$\Spec k\times_{\Delta_{C_{n}}}\Delta_{G}$, which is the fibered
category of pairs $(P,\delta)$ where $P\in\Delta_{G}(B)$ and $\delta\colon P\arr\Spec(B((t))[X]/(X^{n}-t^{q}))$
is a $G$-equivariant map.
\begin{prop}
\label{prop:reducing to coprime numbers} Let $d=(n,q)$ and $G_{d}=H\rtimes C_{n/d}<G$.
Then the functor $\stZ_{G_{d},q/d}\arr\stZ_{G,q}$ induced by $\Delta_{G_{d}}\arr\Delta_{G}$
and $\Delta_{C_{n/d}}\arr\Delta_{C_{n}}$ is an equivalence.
\end{prop}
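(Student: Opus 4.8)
The plan is to exhibit an explicit quasi-inverse. Write $n'=n/d$ and $q'=q/d$; then $C_{n'}<C_n$ is identified with $\mu_{n'}\hookrightarrow\mu_n$ under the fixed identification $C_n\simeq\mu_n$, and $G_d=H\rtimes C_{n'}$ is the kernel of $G=H\rtimes C_n\arr C_n\arr C_n/C_{n'}$, so $G_d$ is normal in $G$ with quotient $C_n/C_{n'}\simeq C_d$; since $k$ contains all $n$-th roots of unity, $C_d\simeq\mu_d$ is constant. It is convenient to use the $\Z/n$-graded description of a $\mu_n$-torsor over a ring $R$ (and likewise $\Z/n'$-, $\Z/d$-graded for $\mu_{n'}$, $\mu_d$): induction along $\mu_{n'}\hookrightarrow\mu_n$ replaces the degree-$j$ piece by the degree-$i$ pieces for all $i\equiv j\pmod{n'}$, and a short computation shows that it sends the $\mu_{n'}$-torsor $R[X]/(X^{n'}-a)$ to the $\mu_n$-torsor $R[X]/(X^{n}-a^{d})$ (because $X^{n'}=a$ forces $X^n=a^d$); in particular $\Delta_{C_{n'}}^*\arr\Delta_{C_n}^*$ sends $(k((t)),t^{q'})$ to $(k((t)),t^{q})$, so that the functor $\stZ_{G_d,q'}\arr\stZ_{G,q}$ of the statement is indeed well defined as the induced map of fibre products.

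First I would make both sides explicit: an object of $\stZ_{G,q}$ over $B$ is a pair $(P,\delta)$ with $P\in\Delta_G^*(B)$ and $\delta$ a $C_n$-equivariant isomorphism $P/H\arr\Spec(B((t))[X]/(X^n-t^q))$ (since $H$ acts trivially on the target, this is the same datum as a $G$-equivariant map $P\arr\Spec(B((t))[X]/(X^n-t^q))$), morphisms being $G$-equivariant isomorphisms of the $P$'s compatible with the $\delta$'s; objects of $\stZ_{G_d,q'}$ are described in the same way with $(G_d,C_{n'},n',q')$ in place of $(G,C_n,n,q)$. The induction functor $\Phi$ sends $(P_0,\delta_0)$ to $(P_0\times^{G_d}G,\Phi(\delta_0))$, where $(P_0\times^{G_d}G)/H\simeq\mathrm{Ind}_{\mu_{n'}}^{\mu_n}(P_0/H)$ and $\Phi(\delta_0)$ is the induced isomorphism onto $\Spec(B((t))[X]/(X^n-t^q))$; note $P_0\times^{G_d}G\in\Delta_G^*$ by the lemma preceding this proposition, because $(P_0\times^{G_d}G)/H$ is the torsor $B((t))[X]/(X^n-t^q)$, which is free over $B((t))$.

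For the quasi-inverse, given $(P,\delta)\in\stZ_{G,q}(B)$, push $\delta$ forward through $P/H\arr(P/H)/C_{n'}=P/G_d$ to get an isomorphism $P/G_d\arr\Spec\big(B((t))[X]/(X^n-t^q)\big)^{\mu_{n'}}$. The ring of invariants equals $B((t))[X^{n'}]/\big((X^{n'})^d-t^q\big)$, and the substitution $W=X^{n'}t^{-q'}$ (legitimate since $(X^{n'})^d=t^q=(t^{q'})^d$ and $t^{q'}$ is a unit) identifies it with $B((t))[W]/(W^d-1)=\prod_{\zeta\in\mu_d(k)}B((t))$, the trivial $C_d$-torsor over $B((t))$; let $s$ be the section ``$W=1$''. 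Then $P_0:=P\times_{P/G_d,s}\Spec B((t))$ is a $G_d$-torsor over $B((t))$, and the same diagram computes $P_0/H$ as $\Spec\big(B((t))[X]/(X^n-t^q,\,X^{n'}-t^{q'})\big)=\Spec\big(B((t))[X]/(X^{n'}-t^{q'})\big)$, i.e. the $\mu_{n'}$-torsor $t^{q'}$; call the resulting isomorphism $\delta_0$. Since $P_0/H$ is free over $B((t))$ it lies in $\Delta_{C_{n'}}^*$, hence $P_0\in\Delta_{G_d}^*$ by the same preceding lemma, so $(P_0,\delta_0)\in\stZ_{G_d,q'}(B)$; a morphism $(P,\delta)\arr(P',\delta')$ induces an isomorphism $P/G_d\arr P'/G_d$ respecting the two ``$W=1$'' sections and hence restricts to a morphism $(P_0,\delta_0)\arr(P_0',\delta_0')$, giving the functor $\Psi$.

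Finally I would verify $\Phi\Psi\simeq\mathrm{id}$ and $\Psi\Phi\simeq\mathrm{id}$. Both reduce to the standard correspondence between a $G$-torsor $P$ together with a section $s$ of $P/G_d$ and the $G_d$-torsor $P_0=P\times_{P/G_d,s}\Spec B((t))$, the inverse being $P_0\mapsto(P_0\times^{G_d}G,\text{canonical section})$, which is an equivalence because both constructions trivialize after the faithfully flat base change $P_0\arr\Spec B((t))$: for $\Phi\Psi$ the multiplication isomorphism $P_0\times^{G_d}G\arr P$ carries $\Phi(\delta_0)$ back to $\delta$, and for $\Psi\Phi$ one checks that the section of $(P_0\times^{G_d}G)/G_d$ extracted from $\Phi(\delta_0)$ is the canonical one. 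I expect the only real work to be this last bookkeeping: tracking the $\Z/n$-graded descriptions of the torsors and the left/right $G$-actions through the two quotients $P\arr P/H\arr P/G_d$ in order to see that $\delta$ produces precisely the ``$W=1$'' trivialization of $P/G_d$ and that the whole construction is natural and compatible with induction. None of this is deep, but it is where care is needed.
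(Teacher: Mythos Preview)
Your proposal is correct and is essentially the same as the paper's proof: the paper defines the quasi-inverse directly as the fibre product $P\times_{Q_{n,q}}Q_{n/d,q/d}\arr Q_{n/d,q/d}$, where $Q_{m,r}=\Spec(B((t))[X]/(X^{m}-t^{r}))$ and $Q_{n/d,q/d}\hookrightarrow Q_{n,q}$ is $X\mapsto X$, which is exactly your $P_0$ once one notes that $Q_{n/d,q/d}$ is the preimage in $Q_{n,q}\simeq P/H$ of your section ``$W=1$'' of $Q_{n,q}/C_{n'}\simeq P/G_d$. The paper simply states this quasi-inverse in one line and omits the bookkeeping you spell out.
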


\begin{proof}
Set $Q_{n,q}=\Spec(B((t))[X]/(X^{n}-t^{q}))$. The map $X\longmapsto X$
induces a map $Q_{n/d,q/d}\arr Q_{n,q}$ which is $C_{n/d}$-equivariant,
that is $Q_{n,q}$ is the $C_{n}$-torsor induced by the $C_{n/d}$-torsor
$Q_{n/d,q/d}$. We obtain a quasi-inverse $\stZ_{G,q}\arr\stZ_{G_{d},q/d}$
mapping $P\arrdi{\phi}Q_{n,q}$ to the fiber product $P\times_{Q_{n,q}}Q_{n/d,q/d}\arr Q_{n/d,q/d}$.
\end{proof}
\begin{rem}
\label{rem:cyclic case for q coprime} If $(n,q)=1$ we have an isomorphism
of $B$-algebras
\[
B((s))\arr B((t))[X](X^{n}-t^{q})
\]
such that the $C_{n}$-action induced on the left is $s\longmapsto\xi^{\beta}s$
for $\xi\in C_{n}$, where $\beta\in\Z/n\Z$ is the inverse of $q$.
Indeed write $\beta q=1+\alpha n$ for some $\alpha,\beta\in\N$.
We have $(X^{\beta}/t^{\alpha})^{n}=t$ in $B((t))[X]/(X^{n}-t^{q})$
and isomorphisms   \[   \begin{tikzpicture}[xscale=3.9,yscale=-0.7]     
\node (A0_0) at (0.25, 0) {$s$};     
\node (A0_1) at (1, 0) {$X$};    
\node (A0_2) at (2, 0) {$X^{\beta}/t^{\alpha}$};     
\node (A1_0) at (0.25, 1) {$B((s))$};     
\node (A1_1) at (1, 1) {$B((t))[X]/(X^n-t)$};     
\node (A1_2) at (2, 1) {$B((t))[X]/(X^n-t^q)$};    
\path (A0_0) edge [|->]node [auto] {$\scriptstyle{}$} (A0_1);   
 \path (A1_0) edge [->]node [auto] {$\scriptstyle{\simeq}$} (A1_1);  
  \path (A0_1) edge [|->]node [auto] {$\scriptstyle{}$} (A0_2);    
\path (A1_1) edge [->]node [auto] {$\scriptstyle{\simeq}$} (A1_2);   \end{tikzpicture}   \] 
\end{rem}

\begin{defn}
Given a $p$-group $H$ and an autoequivalence $\phi\colon\Delta_{H}\arr\Delta_{H}$
we define $\stZ_{\phi}$ as the stack of pairs $(P,u)$ where $P\in\Delta_{H}$
and $u\colon P\arr\phi(P)$ is an isomorphism in $\Delta_{H}$.
\end{defn}

There are two natural autoequivalences of $\Delta_{H}$: $\phi_{\psi}\colon\Delta_{H}\arr\Delta_{H}$
obtained composing by an isomorphism $\psi\colon H\arr H$; $\phi_{\xi}\colon\Delta_{H}\arr\Delta_{H}$
induced by a $n$-th root of unity $\xi$ using the Cartesian diagram
  \[   \begin{tikzpicture}[xscale=2.7,yscale=-0.6]     \node (A0_0) at (0, 0) {$\phi_\xi(P)$};     \node (A0_1) at (1, 0) {$P$};     \node (A2_0) at (0, 2) {$\Spec B((t))$};     \node (A2_1) at (1, 2) {$\Spec B((t))$};     \node (A3_0) at (0, 3) {$\xi t$};     \node (A3_1) at (1, 3) {$t$};     \path (A3_1) edge [|->]node [auto] {$\scriptstyle{}$} (A3_0);     \path (A2_0) edge [->]node [auto] {$\scriptstyle{}$} (A2_1);     \path (A0_0) edge [->]node [auto] {$\scriptstyle{}$} (A0_1);     \path (A0_1) edge [->]node [auto] {$\scriptstyle{}$} (A2_1);     \path (A0_0) edge [->]node [auto] {$\scriptstyle{}$} (A2_0);   \end{tikzpicture}   \] 
\begin{prop}
\label{prop:passing to Zphi} Assume $(q,n)=1$ and let $\zeta\in C_{n}$
be a primitive $n$-th root of unity and $\psi\colon H\arr H$ be
the automorphism image of $\zeta$ under $C_{n}\arr\Aut(H)$. Set
$\xi=\zeta^{\beta}$ where $\beta\in\Z/n\Z$ is the inverse of $q$
and $\phi=\phi_{\psi}\circ\phi_{\xi}\colon\Delta_{H}\arr\Delta_{H}$.
Then $\stZ_{G,q}$ is an open and closed substack of $\stZ_{\phi}$.
\end{prop}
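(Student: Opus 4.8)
The plan is to build a fully faithful functor $\stZ_{G,q}\arr\stZ_{\phi}$ and identify its essential image with an explicit open and closed substack of $\stZ_{\phi}$. First I unwind $\stZ_{G,q}$: an object over a $k$-algebra $B$ is a $G$-torsor $P$ over $B((t))$ together with a $G$-equivariant map $\delta\colon P\arr Q_{n,q}=\Spec(B((t))[X]/(X^{n}-t^{q}))$, where $G$ acts on $Q_{n,q}$ through $G\arr G/H=C_{n}\simeq\mu_{n}$. Since $\delta$ is $H$-invariant and $C_{n}$-equivariant between two $C_{n}$-torsors over $B((t))$, it identifies $P/H$ with $Q_{n,q}$; moreover $P$ automatically lies in $\Delta_{G}^{*}$ because $P/H\simeq Q_{n,q}$ lies in $\Delta_{C_{n}}^{*}$ (use \ref{rem:test *} and the Lemma just proved), so $\stZ_{G,q}=\Spec k\times_{\Delta_{C_{n}}}\Delta_{G}$. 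Using the isomorphism $B((s))\simeq B((t))[X]/(X^{n}-t^{q})$ of \ref{rem:cyclic case for q coprime}, under which a generator $\zeta$ of $C_{n}$ acts on $\Spec B((s))$ by $s\mapsto\xi s$, we may view $P$ (via $\delta$) as an $H$-torsor $\overline{P}$ over $B((s))$, i.e.\ as an object of $\Delta_{H}(B)$ after renaming $s$ to $t$. The residual action $\sigma$ of $\zeta$ on $P$ is then an isomorphism $\overline{P}\arr\overline{P}$ covering $s\mapsto\xi s$ and satisfying the semidirect-product relation $\sigma(ph)=\sigma(p)\psi^{\pm1}(h)$ for $h\in H$; via the Cartesian square defining $\phi_{\xi}$ and the twist defining $\phi_{\psi}$ (chosen with the matching convention) this is precisely the datum of an isomorphism $u\colon\overline{P}\arr\phi(\overline{P})$ in $\Delta_{H}$. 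The assignment $(P,\delta)\mapsto(\overline{P},u)$ is functorial and defines $\stZ_{G,q}\arr\stZ_{\phi}$.

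For full faithfulness, a morphism $(P_{1},\delta_{1})\arr(P_{2},\delta_{2})$ in $\stZ_{G,q}$ is a $G$-equivariant isomorphism $\gamma\colon P_{1}\arr P_{2}$ compatible with $\delta_{1},\delta_{2}$. Compatibility with the $\delta_{i}$ means $\gamma$ lies over $Q_{n,q}\simeq\Spec B((s))$, so $\gamma$ is an $H$-equivariant isomorphism $\overline{P}_{1}\arr\overline{P}_{2}$ over $B((s))$; since $G=\langle H,\zeta\rangle$, being $G$-equivariant is equivalent to this $H$-equivariance together with commutation $\gamma\circ\sigma_{1}=\sigma_{2}\circ\gamma$, which under the identification above is exactly the condition $\phi(\gamma)\circ u_{1}=u_{2}\circ\gamma$ defining a morphism $(\overline{P}_{1},u_{1})\arr(\overline{P}_{2},u_{2})$ in $\stZ_{\phi}$. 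Hence the functor is fully faithful.

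It remains to identify the essential image. Given $(P,u)\in\stZ_{\phi}(B)$, let $\sigma_{u}\colon P\arr P$ be the composite of $u$ with the canonical map $\phi(P)\arr P$; it covers $s\mapsto\xi s$ and is $\psi^{\pm1}$-equivariant for the $H$-action. Then $\sigma_{u}^{\,n}$ covers $(s\mapsto\xi s)^{n}=\id$ (since $\xi^{n}=1$) and is $\psi^{n}=\id$-equivariant, hence is an $H$-equivariant automorphism of the $H$-torsor $P$ over $B((s))$. fppf-locally on $\Spec B$ we trivialize $P$, and then \ref{lem: constant sheaves and t} identifies $\sigma_{u}^{\,n}$ with an element $c_{u}\in H(B)$; the locus $\{c_{u}=1\}$ is open and closed in $\Spec B$ because $H$ is finite and constant, and these loci glue by descent to an open and closed substack $\stZ'\subseteq\stZ_{\phi}$. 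If $(P,\delta)$ comes from $\stZ_{G,q}$ then $\sigma_{u}$ is the action of $\zeta$ and $\sigma_{u}^{\,n}$ is the action of $\zeta^{n}=1$, so $c_{u}=1$; thus the functor factors through $\stZ'$. Conversely, if $c_{u}=1$, the $H$-action together with $\sigma_{u}$ satisfies the defining relations of $G=H\rtimes C_{n}$ and so defines a $G$-action on $P$ over $B((t))=B((s^{n}))$; the quotient $P/H\simeq\Spec B((s))$ with the induced $C_{n}$-action is isomorphic to $Q_{n,q}$ by \ref{rem:cyclic case for q coprime}, a direct check shows $P\arr\Spec B((t))$ is then a $G$-torsor, it lies in $\Delta_{G}^{*}$ by the preceding Lemma, and $(P,\ P\arr P/H\simeq Q_{n,q})$ is an object of $\stZ_{G,q}$ whose image is (canonically) $(P,u)$. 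Therefore $\stZ_{G,q}\arr\stZ'$ is an equivalence, proving the Proposition.

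The main obstacle is purely bookkeeping: one must fix once and for all the conventions (left versus right actions, the direction of the twist in $\phi_{\psi}$, and the direction of $\psi$ in the semidirect product versus in $\phi$) so that ``$\sigma$ covers $s\mapsto\xi s$ and is $\psi$-twisted equivariant'' genuinely coincides with ``$u\colon\overline{P}\arr\phi(\overline{P})$'', and then verify the several elementary compatibilities (well-definedness on morphisms, the cocycle/relation check reconstructing the $G$-torsor, and that the reconstructed torsor is canonically the one we began with). None of these is conceptually difficult, but they require care.
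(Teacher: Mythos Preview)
Your approach is essentially the same as the paper's: build the functor $(P,\delta)\mapsto(\overline P,u)$, check full faithfulness by comparing $G$-equivariance with $H$-equivariance plus commutation with the action of a generator of $C_n$, and identify the essential image with the locus where the $n$-th iterate $\sigma_u^{\,n}$ is the identity. The paper carries this out with the same ingredients (Remark~\ref{rem:cyclic case for q coprime}, Lemma~\ref{lem: constant sheaves and t}).

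There is, however, a genuine gap in your open-and-closed argument. You write ``fppf-locally on $\Spec B$ we trivialize $P$'' and then identify $\sigma_u^{\,n}$ with an element of $H(B)$ via Lemma~\ref{lem: constant sheaves and t}. But an $H$-torsor over $B((s))$ need \emph{not} become trivial after any fppf cover of $\Spec B$: already for $B=k$ algebraically closed and $H=\Z/p\Z$, the Artin--Schreier cover $k((s))[y]/(y^p-y-s^{-1})$ is nontrivial over $A((s))$ for every nonzero $k$-algebra $A$ (the equation $s^{-1}=u^p-u$ forces infinitely many nonzero negative coefficients in $u$). So this step fails, and consequently there is no well-defined element $c_u\in H(B)$.

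The fix is exactly what the paper does and requires no trivialization. The section $\sigma_u^{\,n}$ lives in $\Autsh^{H}_{B((s))}(P)$, which is a finite \'etale group scheme over $\Spec B((s))$ (an inner form of the constant group $H$). The identity section is therefore open and closed, so the locus $\{\sigma_u^{\,n}=\id\}$ is open and closed in $\Spec B((s))$; now Lemma~\ref{lem: constant sheaves and t} shows it is pulled back from an open and closed subset of $\Spec B$. With this correction your argument goes through and coincides with the paper's.
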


\begin{proof}
Let $P\in\Delta_{H}$. We have $\phi(P)=\phi_{\xi}(P)$ as schemes.
Thus an isomorphism $u\colon P\arr\phi(P)$, via $\phi(P)=\phi_{\xi}(P)\arr P$,
corresponds to an isomorphism $v\colon P\arr P$. The morphism $u$
is over $\Spec B((t))$ if and only if the following diagram commutes
  \[   \begin{tikzpicture}[xscale=2.7,yscale=-0.6]     \node (A0_0) at (0, 0) {$P$};     \node (A0_1) at (1, 0) {$P$};     \node (A2_0) at (0, 2) {$\Spec B((t))$};     \node (A2_1) at (1, 2) {$\Spec B((t))$};     \node (A3_0) at (0, 3) {$\xi t$};     \node (A3_1) at (1, 3) {$t$};     \path (A3_1) edge [|->]node [auto] {$\scriptstyle{}$} (A3_0);     \path (A2_0) edge [->]node [auto] {$\scriptstyle{}$} (A2_1);     \path (A0_0) edge [->]node [auto] {$\scriptstyle{v}$} (A0_1);     \path (A0_1) edge [->]node [auto] {$\scriptstyle{}$} (A2_1);     \path (A0_0) edge [->]node [auto] {$\scriptstyle{}$} (A2_0);   \end{tikzpicture}   \] Finally,
by going through the definitions, we see that $u$ is $H$-equivariant
if and only if $\psi(h)=vhv^{-1}$ in $\Aut(P)$ for all $h\in H$.
We identify $\stZ_{\phi}$ with the stack of pairs $(P,v)$ as above.

Set $S_{B}=\Spec B((s))$ with the $C_{n}$-action given by $s\mapsto\lambda^{\beta}s$
for $\lambda\in C_{n}$. By \ref{rem:cyclic case for q coprime} $S_{B}$
is isomorphic to $\Spec B((t))[X]/(X^{n}-t^{q})$ and therefore, by
construction, an object $(P,\delta)\in\stZ_{G,q}(B)$ is a $G$-torsor
over $B((t))$ with a $G$-equivariant map $\delta\colon P\arr S_{B}$.
In particular $P\arrdi{\delta}S_{B}$ is an $H$-torsor. Set $g_{0}=(1,\zeta)\in H\rtimes C_{n}=G$,
so that $\psi(h)=g_{0}hg_{0}^{-1}$ in $G$. Since $\delta$ is $G$-equivariant
it follows that $(P,g_{0})\in\stZ_{\phi}$. We therefore get a map
$\stZ_{G,q}\arr\stZ_{\phi}$. This map is fully faithful. Indeed if
$(P,\delta),(P',\delta')\in\stZ_{G,q}(B)$ the map on isomorphisms
is   \[   \begin{tikzpicture}[xscale=1.5,yscale=-1.2]     \node (A0_0) at (0, 0) {$\Iso_{\stZ_{G,q}}((P,\delta),(P',\delta'))=\Iso_{S_{B}}^{G}(P,P')$};     \node (A1_0) at (0, 1) {$\Iso_{\stZ_{\phi}}((P,g_{0}),(P',g_{0}))=\{\omega\in\Iso_{S_{B}}^{H}(P,P')\st g_{0}\omega=\omega g_{0}\}$};     \path (A0_0) edge [->]node [auto] {$\scriptstyle{}$} (A1_0);   \end{tikzpicture}   \] We
are going to show that the substack $\overline{\stZ}_{\phi}$ of pairs
$(P,v)$ where $v^{n}=\id$ is the essential image of $\stZ_{G,q}\arr\stZ_{\phi}$.
Since $g_{0}^{n}=1$ we have the inclusion $\subseteq$. Now let $(P,v)\in\overline{\stZ}_{\phi}$.
Since $v^{n}=1$ we get the map $G=H\rtimes C_{n}\arr\Aut(P)$ sending
$(h,\zeta^{m})$ to $hv^{m}$, defining a $G$-action on $P$. By
construction the map $\delta\colon P\arr S_{B}$ is $G$-equivariant.
Since $P/H\simeq S_{B}$ it remains to show that $G$ acts freely
on $P$. If $p(hv^{l})=p$ for some $p\in P$, then $\delta(ph)\zeta^{l}=\delta(p)\zeta^{l}=\delta(p)$
and therefore $n\mid l$ and $v^{l}=1$. Finally $ph=p$ implies $h=1$
in $H\subseteq\Aut(P)$ because $P$ is an $H$-torsor. Thus $G$
acts on $P$ freely.

We now show that $\overline{\stZ}_{\phi}$ is open and closed in $\stZ_{\phi}$.
If $(P,v)\in\stZ_{\phi}(B)$ we have that $v^{n}\in\Aut_{B((t))}^{H}(P)$
because $\psi(h)=vhv^{-1}$ and $\psi$ has order $n$. The group
scheme $\Autsh_{B((t))}^{H}(P)\arr\Spec B((t))$ is finite and étale,
thus the locus $W$ in $\Spec B((t))$ where $v^{n}=\id$ is open
and closed in $\Spec B((t))$. By \ref{lem: constant sheaves and t}
there is an open and closed subset $\widetilde{W}$ in $\Spec B$
inducing $W$. By construction the base change of $\overline{\stZ}_{\phi}\arr\stZ_{\phi}$
along $(P,v)\colon\Spec B\arr\stZ_{\phi}$ is $\widetilde{W}$, which
ends the proof.
\end{proof}
\begin{prop}
\label{prop:Zphi as limit} If $\phi\colon\Delta_{H}\arr\Delta_{H}$
is an equivalence then there exists a direct system of separated DM
stacks $\stZ_{*}$ with finite and universally injective transition
maps, with a direct system of finite and étale atlases $Z_{n}\arr\stZ_{n}$
of degree $(\sharp H)^{2}$ from affine schemes and an equivalence
$\varinjlim_{n}\stZ_{n}\simeq\stZ_{\phi}$.
\end{prop}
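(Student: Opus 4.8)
The plan is to realise $\stZ_\phi$ as a $2$-fibre product via Remark \ref{rem:objs plus iso} and approximate it levelwise, following the template of Lemmas \ref{lem:descent of ind along torsors}--\ref{lem:transition Z}. By Theorem \ref{A} applied to the constant $p$-group $H$ there is a direct system $\stX_*$ of separated DM stacks of finite type over $k$, with finite and universally injective transition maps $\delta_{u,v}\colon\stX_u\arr\stX_v$ for $u\le v$, with a compatible direct system of finite and étale atlases $U_u\arr\stX_u$ of degree $\sharp H$ from affine schemes, and with an equivalence $\varinjlim_u\stX_u\simeq\Delta_H$; write $\delta_u\colon\stX_u\arr\Delta_H$ for the structure maps. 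Being separated DM stacks of finite type, the $\stX_u$ have finite diagonal. By Remark \ref{rem:objs plus iso} (with $\stW_1=\stW_0=\Delta_H$, $F=\phi$, $G=\id$), $\stZ_\phi$ is the $2$-fibre product $\Delta_H\times_{\Gamma_\phi,\,\Delta_H\times\Delta_H,\,\Gamma_{\id}}\Delta_H$.

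First I would approximate $\phi$. By Lemma \ref{lem:from qcqs stacks to limit} the functor $\varinjlim_w\Hom(\stX_u,\stX_w)\arr\Hom(\stX_u,\Delta_H)$ is an equivalence, so, arguing as in the proof of Lemma \ref{lem:descent of ind along torsors}, one can choose a strictly increasing $\nu\colon\N\arr\N$ with $\nu(u)\ge u$, morphisms $\phi_u\colon\stX_u\arr\stX_{\nu(u)}$ together with $2$-isomorphisms $\delta_{\nu(u)}\circ\phi_u\simeq\phi\circ\delta_u$, and $2$-isomorphisms $\kappa_u\colon\delta_{\nu(u),\nu(u+1)}\circ\phi_u\simeq\phi_{u+1}\circ\delta_{u,u+1}$ compatible with $\phi$; faithfulness of the transition maps of $\stX_*$ forces the $\kappa_u$ to be unique.

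Now set $\stZ_u:=\stX_u\times_{\Gamma_{\phi_u},\,\stX_u\times\stX_{\nu(u)},\,\Gamma_{\delta_{u,\nu(u)}}}\stX_u$, which by Remark \ref{rem:objs plus iso} (with $\stW_1=\stX_u$, $\stW_0=\stX_{\nu(u)}$, $F=\phi_u$, $G=\delta_{u,\nu(u)}$) is the stack of pairs $(P,\zeta)$ with $P\in\stX_u$ and $\zeta\colon\delta_{u,\nu(u)}(P)\arr\phi_u(P)$ an isomorphism in $\stX_{\nu(u)}$. As a $2$-fibre product of DM stacks of finite type with finite diagonal over such a stack, $\stZ_u$ is itself a DM stack of finite type with finite diagonal (compare the opening of the proof of Lemma \ref{lem:transition Z}), in particular separated. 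The $\kappa_u$, together with the canonical identifications $\delta_{u+1,\nu(u+1)}\circ\delta_{u,u+1}=\delta_{\nu(u),\nu(u+1)}\circ\delta_{u,\nu(u)}$, produce transition maps $\stZ_u\arr\stZ_{u+1}$; applying Remark \ref{rem:product of universally injective finite} to the finite and universally injective maps $\delta_{u,u+1}\times\delta_{\nu(u),\nu(u+1)}\colon\stX_u\times\stX_{\nu(u)}\arr\stX_{u+1}\times\stX_{\nu(u+1)}$ and $\delta_{u,u+1}\colon\stX_u\arr\stX_{u+1}$ (used twice) shows that the induced map on the $2$-fibre products presenting $\stZ_u$ and $\stZ_{u+1}$ — that is, the transition map — is finite and universally injective. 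For the atlases, the natural morphism $\stZ_u\arr\stX_u\times\stX_u$ is the base change of the affine diagonal of $\stX_u\times\stX_{\nu(u)}$ along $(\Gamma_{\phi_u},\Gamma_{\delta_{u,\nu(u)}})\colon\stX_u\times\stX_u\arr(\stX_u\times\stX_{\nu(u)})^2$, hence affine; consequently $Z_u:=\stZ_u\times_{\stX_u\times\stX_u}(U_u\times U_u)$ is an affine scheme, finite and étale over $\stZ_u$ of degree $(\sharp H)^2$, and the identifications $U_u\times U_u=(\stX_u\times\stX_u)\times_{\stX_{u+1}\times\stX_{u+1}}(U_{u+1}\times U_{u+1})$ make $Z_*\arr\stZ_*$ into a direct system of finite and étale atlases.

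Since the $\phi_u$ and $\kappa_u$ were chosen compatibly with $\phi$, passing to the direct limit in the fibre-product presentations — using $\varinjlim_u\stX_u\simeq\Delta_H$ and that filtered direct limits commute with these $2$-fibre products (Proposition \ref{prop:fiber product of direct systems}) — yields $\varinjlim_u\stZ_u\simeq\Delta_H\times_{\Gamma_\phi,\,\Delta_H\times\Delta_H,\,\Gamma_{\id}}\Delta_H\simeq\stZ_\phi$; alternatively, essential surjectivity, fullness and faithfulness can be checked directly from $\Delta_H(T)\simeq\varinjlim_u\stX_u(T)$ for affine $T$ (Lemma \ref{lem:from qcqs stacks to limit}), exactly as in the proof that $\varinjlim_u\stX_{\underline{\omega_u}}\arr\widetilde{\stX}$ is an equivalence. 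I expect the main obstacle to be the middle step: organising the levelwise approximations $\phi_u$ and their unique compatibilities $\kappa_u$ so that the two $2$-Cartesian squares presenting $\stZ_u$ and $\stZ_{u+1}$ are linked by finite and universally injective maps, for only then does Remark \ref{rem:product of universally injective finite} deliver the finiteness and universal injectivity of the transition maps. A secondary nuisance is that the naive candidate atlas for $\stZ_u$ is only an algebraic space, which is why one pulls back the atlas $U_u\times U_u$ of $\stX_u\times\stX_u$ along the affine morphism $\stZ_u\arr\stX_u\times\stX_u$, thereby arriving at the degree $(\sharp H)^2$.
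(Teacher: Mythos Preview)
Your proposal is correct and follows essentially the same approach as the paper: both realise $\stZ_\phi$ as the $2$-fibre product of the graph of $\phi$ against the diagonal via Remark~\ref{rem:objs plus iso}, approximate $\phi$ levelwise using Lemma~\ref{lem:from qcqs stacks to limit}, define $\stZ_n$ as the corresponding finite-level fibre product, appeal to Remark~\ref{rem:product of universally injective finite} for the transition maps and to Proposition~\ref{prop:fiber product of direct systems} for the limit, and obtain the atlas $Z_n$ of degree $(\sharp H)^2$ by pulling back $Y_n\times Y_n$. The only cosmetic difference is that the paper takes the fibre product over $\stY_{\delta_n}\times\stY_{\delta_n}$ (both coordinates at level $\delta_n$) whereas you use $\stX_u\times\stX_{\nu(u)}$; the resulting intermediate stacks differ slightly but the argument and the limit are the same.
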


\begin{proof}
Consider a direct system of DM stacks $\stY_{*}$ as in Theorem \ref{A}
for the $p$-group $H$. Denote by $\Gamma_{\phi}\colon\Delta_{H}\arr\Delta_{H}\times\Delta_{H}$
be the graph of $\phi$ and by $\gamma_{u,v}\colon\stY_{u}\arr\stY_{v}$
the transition maps. By \ref{rem:objs plus iso} $\stZ_{\phi}$ is
the fiber product of $\Gamma_{\phi}$ and the diagonal of $\Delta_{H}$.
There exist an increasing function $\delta\colon\N\arr\N$ and $2$-commutative
diagrams   \[   \begin{tikzpicture}[xscale=1.5,yscale=-1.2]     \node (A0_0) at (0, 0) {$\stY_n$};     \node (A0_1) at (1, 0) {$\stY_{n+1}$};     \node (A0_2) at (2, 0) {$\Delta_H$};     \node (A1_0) at (0, 1) {$\stY_{\delta_n}$};     \node (A1_1) at (1, 1) {$\stY_{\delta_{n+1}}$};     \node (A1_2) at (2, 1) {$\Delta_H$};     \path (A0_0) edge [->]node [auto] {$\scriptstyle{}$} (A0_1);     \path (A0_1) edge [->]node [auto] {$\scriptstyle{}$} (A0_2);     \path (A1_0) edge [->]node [auto] {$\scriptstyle{}$} (A1_1);     \path (A1_1) edge [->]node [auto] {$\scriptstyle{}$} (A1_2);     \path (A0_2) edge [->]node [auto] {$\scriptstyle{\phi}$} (A1_2);     \path (A0_0) edge [->]node [auto] {$\scriptstyle{\phi_n}$} (A1_0);     \path (A0_1) edge [->]node [auto] {$\scriptstyle{\phi_{n+1}}$} (A1_1);   \end{tikzpicture}   \] Similarly
$\stY_{n}\arr\stY_{n}\times\stY_{n}\arrdi{\gamma_{n,\delta_{n}}\times\phi_{n}}\stY_{\delta_{n}}\times\stY_{\delta_{n}}$
and $\stY_{n}\arr\stY_{n}\times\stY_{n}\arrdi{\gamma_{n,\delta_{n}}\times\gamma_{n,\delta_{n}}}\stY_{\delta_{n}}\times\stY_{\delta_{n}}$approximate
$\Gamma_{\phi}$ and the diagonal of $\Delta_{H}$ respectively. By
\ref{prop:fiber product of direct systems} it follows that the fiber
product $\stZ_{n}=\stY_{n}\times_{\stY_{\delta_{n}}\times\stY_{\delta_{n}}}\stY_{n}$
of the two maps form a direct system of separated DM stacks whose
limit is $\stZ_{\phi}$. By \ref{rem:product of universally injective finite}
the transition maps are finite and universally injective. Let $Y_{n}\arr\stY_{n}$
be the finite and étale atlases of degree $\sharp H$ given in Theorem
\ref{A}. The induced map $Z_{n}=Y_{n}\times_{\stY_{\delta_{n}}\times\stY_{\delta_{n}}}Y_{n}\arr\stZ_{n}$
is finite and étale of degree $(\sharp H)^{2}$. Since $\stY_{\delta_{n}}\times\stY_{\delta_{n}}$
has affine diagonal it follows that $Z_{n}$ is affine. Finally, using
the usual properties of fiber products and the fact that $Y_{n}\arr\stY_{n}$
is a direct system of atlases, we see that the maps $Z_{n}\longrightarrow Z_{n+1}\times_{\shZ_{n+1}}\shZ_{n}$
are isomorphisms.
\end{proof}

\begin{proof}
[Proof of Theorem \ref{A}, \ref{thma-general}]\label{pf: A-general}
Recall that we have reduced the problem to the case of a constant
group $G=H\rtimes C_{n}$ at the beginning of this subsection \pageref{subsec:Semidirect-products}.
Consider $\pi\colon\Delta_{G}\arr\Delta_{C_{n}}$ and the decomposition
$\Delta_{C_{n}}=\bigsqcup_{q=1}^{n}\Bi G_{q}$, where $G_{q}=C_{n}$,
of Theorem \ref{cyclic}. The map
\[
\bigsqcup_{q=1}^{n}(\Bi G_{q}\times_{\Delta_{C_{n}}}\Delta_{G})\arr\Delta_{G}
\]
is well defined and an equivalence because given $k$-algebras $A_{1}$
and $A_{2}$ the map $\Delta_{G}(A_{1}\times A_{2})\arr\Delta_{G}(A_{1})\times\Delta_{G}(A_{2})$
is an equivalence. Thus in the statement of the theorem $\Delta_{G}$
can be replaced by $\widetilde{\stZ}_{G,q}=(\Bi G_{q}\times_{\Delta_{C_{n}}}\Delta_{G})$.
We must show that $\widetilde{\stZ}_{G,q}$ is a stack in the fpqc
topology if $\stZ_{G,q}$ is so. Let $B$ be a ring, $\shU=\{B\arr B_{i}\}_{i\in I}$
a covering and $\xi\in\widetilde{\stZ}_{G,q}(\shU)$ be a descent
datum. Given a $B$-scheme $Y$ we denote by $\shU_{Y}=\shU\times_{B}Y$
and by $\xi_{Y}\in\widetilde{\stZ}_{G,q}(\shU_{Y})$ the pullback.
Denote by $r\colon\widetilde{\stZ}_{G,q}\arr\Bi C_{n}$ the structure
map. The descent datum $r(\xi)$ yields a $C_{n}$-torsor $F\arr\Spec B$.
Let $Y\arr\Spec B$ a $B$-scheme with a factorization $Y\arr F$.
This factorization is a trivialization of the $C_{n}$-torsor over
$Y$ and therefore it induces a descent datum of $(\widetilde{\stZ}_{G,q}\times_{\Bi C_{n}}\Spec k)(\shU_{Y})=\stZ_{G,q}(\stU_{Y})$
which is therefore effective, yielding $\eta_{Y}\in\stZ_{G,q}(Y)$
and $\widetilde{\eta}_{Y}\in\widetilde{\stZ}_{G,q}(Y)$. By construction
$\widetilde{\eta}_{Y}\in\widetilde{\stZ}_{G,q}(Y)$ induces the descent
datum $\xi_{Y}\in\widetilde{\stZ}_{G,q}(\stU_{Y})$. In particular
we get $\widetilde{\eta}_{F}\in\widetilde{\stZ}_{G,q}(F)$. Since
$\widetilde{\stZ}_{G,q}$ is a prestack, the objects $\widetilde{\eta}_{F\times_{B}F}$
obtained using the two projections $F\times_{B}F\arr F$ are isomorphic
via a given isomorphism: they both induce $\xi_{F\times_{B}F}\in\widetilde{\stZ}_{G,q}(\shU_{F\times_{B}F})$
which does not depend on the projections being a pullback of $\xi\in\widetilde{\stZ}_{G,q}(\shU)$.
In conclusion $\widetilde{\eta}_{F}$ gives a descent datum for $\widetilde{\stZ}_{G,q}$
over the covering $F\arr\Spec B$. In order to get a global object
in $\widetilde{\stZ}_{G,q}(B)$ inducing the given descent datum $\xi$
it is enough to notice that, by \ref{lem:change of rings for series},
$\Delta_{G}$ satisfies descent along coverings $U\arr\Spec B$ which
are finite, flat and finitely presented.

Thanks to \ref{lem:descent of ind along torsors}, it is enough to
show that $\stZ_{G,q}$ is a limit as in the statement. Using \ref{prop:reducing to coprime numbers}
we can further assume $n$ and $q$ coprime and, using \ref{prop:passing to Zphi},
we can replace $\stZ_{G,q}$ by $\stZ_{\phi}$, where $\phi=\phi_{\psi}\circ\phi_{\xi}$
as in \ref{prop:passing to Zphi}. The conclusion now follows from
\ref{prop:Zphi as limit}.
\end{proof}

\appendix

\section{\label{sec:Limit} Limit of fibered categories}

In this appendix we discuss the notion of inductive limit of stacks.
To simplify the exposition and since general colimits were not needed
in this paper we will only talk about limit over the natural numbers
$\N$. General results can be found in \cite[Appendix A]{Tonini2016}.

A \emph{direct system }of categories $\shC_{*}$ (indexed by $\N$)
is a collection of categories $\shC_{n}$ for $n\in\N$ and functors
$\psi_{n}\colon\shC_{n}\arr\shC_{n+1}$. Given indexes $n<m$ we also
set 
\[
\psi_{n,m}\colon C_{n}\arrdi{\psi_{n}}\shC_{n+1}\arr\cdots\arr\shC_{m-1}\arrdi{\psi_{m-1}}\shC_{m}
\]
and $\psi_{n,n}=\id_{\shC_{n}}$. The limit $\varinjlim_{n\in\N}C_{n}$
or $\shC_{\infty}$ is the category defined as follows. Its objects
are pairs $(n,x)$ with $n\in\N$ and $x\in\shC_{n}$. Given pairs
$(n,x)$ and $(m,y)$ we set
\[
\Hom_{\shC_{\infty}}((n,x),(m,x))=\varinjlim_{q>n+m}\Hom_{\shC_{q}}(\psi_{n,q}(x),\psi_{m,q}(y))
\]
Composition is defined in the obvious way. There are obvious functors
$\shC_{n}\arr\shC_{\infty}$.

Given a category $\shD$ we denote by $\Hom(\shC_{*},\shD)$ the category
whose objects are collections $(F_{n},\alpha_{n})$ where $F_{n}\colon\shC_{n}\arr\shD$
are functors and $\alpha_{n}\colon F_{n+1}\circ\psi_{n}\arr F_{n}$
are natural isomorphisms of functors $\shC_{n}\arr\shD$. There is
an obvious functor $\Hom(\shC_{\infty},\shD)\arr\Hom(\shC_{*},\shD)$
and we have:
\begin{prop}
\cite[Remark A.3]{Tonini2016} The functor $\Hom(\shC_{\infty},\shD)\arr\Hom(\shC_{*},\shD)$
is an equivalence.
\end{prop}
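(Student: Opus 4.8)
The plan is to prove that the functor in the statement, say $\Phi\colon\Hom(\shC_{\infty},\shD)\arr\Hom(\shC_{*},\shD)$, is an equivalence by exhibiting an explicit quasi-inverse $\Psi$; concretely this amounts to checking that $\shC_{\infty}$, as constructed, has the universal property of the $2$-colimit of the diagram $\shC_{*}$. First I would set up the bookkeeping of $2$-cells: given $(F_{n},\alpha_{n})\in\Hom(\shC_{*},\shD)$ with $\alpha_{n}\colon F_{n+1}\circ\psi_{n}\Rightarrow F_{n}$, define for each $n\leq m$ a natural isomorphism $\alpha_{n,m}\colon F_{m}\circ\psi_{n,m}\Rightarrow F_{n}$ by composing $\alpha_{n},\alpha_{n+1},\dots,\alpha_{m-1}$ (suitably whiskered), with $\alpha_{n,n}=\id$. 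Because the transition functors $\psi_{n,m}$ compose strictly, these satisfy the cocycle identity $\alpha_{n,m}=\alpha_{n,\ell}\circ\bigl(\alpha_{\ell,m}\ast\psi_{n,\ell}\bigr)$ for $n\leq\ell\leq m$, which is the only coherence we will need.

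Next I would define $\Psi$. On objects, $\Psi(F_{n},\alpha_{n})=F$ with $F(n,x)=F_{n}(x)$, and for a morphism $(n,x)\to(m,y)$ of $\shC_{\infty}$ represented by $q>n+m$ and $f\colon\psi_{n,q}(x)\arr\psi_{m,q}(y)$ in $\shC_{q}$, set
\[
F(f)=(\alpha_{m,q})_{y}\circ F_{q}(f)\circ(\alpha_{n,q})_{x}^{-1}\colon F_{n}(x)\arr F_{m}(y).
\]
On morphisms, a map $(\beta_{n})\colon(F_{n},\alpha_{n})\to(F'_{n},\alpha'_{n})$ (a compatible family of natural transformations $\beta_{n}\colon F_{n}\Rightarrow F'_{n}$) is sent to the natural transformation $\beta\colon F\Rightarrow F'$ with $\beta_{(n,x)}=(\beta_{n})_{x}$. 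The first thing to verify is that $F(f)$ does not depend on the chosen representative: if $f'=\psi_{q,q'}(f)$ with $q'>q$, then naturality of $\alpha_{q,q'}$ and the cocycle identity give $(\alpha_{m,q'})_{y}\circ F_{q'}(f')\circ(\alpha_{n,q'})_{x}^{-1}=(\alpha_{m,q})_{y}\circ F_{q}(f)\circ(\alpha_{n,q})_{x}^{-1}$, and since $\N$ is linearly ordered any two representatives agree after passing to a large enough index. Functoriality of $F$ (identities, composition) is then a short diagram chase: for composable morphisms one picks a common large index and the inner $(\alpha)^{\pm1}$ cancel. Naturality of $\beta$ against a morphism represented by $f$ follows from naturality of $\beta_{q}$ and the compatibility of $(\beta_{n})$ with the $\alpha$'s.

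Finally I would check that $\Psi$ is quasi-inverse to $\Phi$. The composite $\Phi\circ\Psi$ is equal to the identity: by naturality of $\alpha_{n,q}$ one has $F\circ\iota_{n}=F_{n}$ on the nose, and the recovered $2$-cell equals $\alpha_{n}$ by the cocycle identity (where $\iota_{n}\colon\shC_{n}\arr\shC_{\infty}$ are the canonical functors). For $\Psi\circ\Phi$: writing $\Phi(F)=(F\circ\iota_{n},\gamma_{n})$ with $\gamma_{n}$ the image under $F$ of the canonical natural isomorphism $\iota_{n+1}\circ\psi_{n}\Rightarrow\iota_{n}$ (componentwise given by identity morphisms in $\shC_{\infty}$), one notes that every morphism $(n,x)\to(m,y)$ of $\shC_{\infty}$ represented by $f$ factors as the canonical isomorphism $(n,x)\simeq(q,\psi_{n,q}(x))$, then $\iota_{q}(f)$, then the canonical isomorphism $(q,\psi_{m,q}(y))\simeq(m,y)$; applying $F$ and comparing with the defining formula for $\Psi$ (whose iterated $\gamma$'s are precisely the $F$-images of these canonical isomorphisms) gives $\Psi(\Phi(F))=F$ on the nose. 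Hence $\Phi$ is in fact an isomorphism of categories, a fortiori an equivalence.

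The main obstacle is the well-definedness of $\Psi$ on morphisms, i.e.\ independence of the formula for $F(f)$ from the representative $(q,f)$; this rests on the cocycle identity for the $\alpha_{n,m}$ together with naturality of these $2$-cells. Once that is in place, everything else — functoriality of $\Psi$, naturality of the $\beta$'s, and the two triangle identities — is routine coherence bookkeeping, made painless by the fact that $\N$ is linearly ordered, so all the index-level $\Hom$-colimits are filtered and no $2$-categorical coherence beyond the single cocycle condition intervenes.
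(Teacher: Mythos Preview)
Your proof is correct. The paper does not actually give a proof of this proposition: it is stated with a bare citation to \cite[Remark A.3]{Tonini2016} and no argument is supplied. Your approach---constructing an explicit quasi-inverse $\Psi$ and verifying the two composites are the identity---is the standard way to establish that $\shC_{\infty}$ has the universal property of the $2$-colimit, and your bookkeeping is accurate; in particular the well-definedness check for $F(f)$ (independence of the representative $(q,f)$) via the cocycle identity for the $\alpha_{n,m}$ is exactly the point where the argument lives, and you handle it correctly. The observation that both composites are strict identities, so that $\Phi$ is an isomorphism of categories and not merely an equivalence, is a nice sharpening.
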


This justifies calling $\shC_{\infty}$ the limit of the direct system
$\shC_{*}$.

Let $\shS$ be a category with fiber products. A direct system of
fibered categories $\stX_{*}$ over $\shS$ (indexed by $\N$) is
a direct system of categories $\shX_{*}$ together with maps $\stX_{n}\arr\shS$
making $\stX_{n}$ into a fibered category over $\shS$ and such that
the transition maps $\stX_{n}\arr\stX_{n+1}$ are maps of fibered
categories. Result \cite[Proposition A.4]{Tonini2016} translates
into what follows.

The induced functor $\stX_{\infty}\arr\shS$ makes $\stX_{\infty}$
into a fibered category over $\shS$ and the maps $\stX_{n}\arr\stX_{\infty}$
are map of fibered categories. Given an object $s\in\shS$ there is
an induced direct system of categories $\stX_{*}(s)$ and there is
a natural equivalence
\[
\varinjlim_{n\in\N}\stX_{n}(s)\arrdi{\simeq}\stX_{\infty}(s)
\]
In particular if all the $\stX_{n}$ are fibered in sets (resp. groupoids)
so is $\stX_{\infty}$.

If $\stY$ is another fibered category over $\shS$ denotes by $\Hom_{\shS}(\stX_{*},\stY)$
the subcategory of $\Hom(\stX_{*},\stY)$ of objects $(F_{n},\alpha_{n})$
where $F_{n}$ are base preserving functors and $\alpha_{n}$ are
base preserving natural transformations. Also the arrows in the category
$\Hom_{\shS}(\stX_{*},\stY)$ are required to be base preserving natural
transformations. There is an induced functor $\Hom_{\shS}(\stX_{\infty},\stY)\arr\Hom_{\shS}(\stX_{*},\stY)$
which is an equivalence of categories.

A direct check using the definition of fiber product yields the following. 
\begin{prop}
\label{prop:fiber product of direct systems} Let $\stX_{*},\stY_{*}$
and $\stZ_{*}$ be direct system of categories fibered in groupoids
over $\shS$ and assume they are given $2$-commutative diagrams   \[   \begin{tikzpicture}[xscale=1.9,yscale=-1.2]     \node (A0_0) at (0, 0) {$\stX_n$};     \node (A0_1) at (1, 0) {$\stX_{n+1}$};     \node (A0_2) at (2, 0) {$\stZ_{n}$};     \node (A0_3) at (3, 0) {$\stZ_{n+1}$};     \node (A1_0) at (0, 1) {$\stY_n$};     \node (A1_1) at (1, 1) {$\stY_{n+1}$};     \node (A1_2) at (2, 1) {$\stY_{n}$};     \node (A1_3) at (3, 1) {$\stY_{n+1}$};     \path (A0_0) edge [->]node [auto] {$\scriptstyle{}$} (A0_1);     \path (A0_1) edge [->]node [auto] {$\scriptstyle{a_{n+1}}$} (A1_1);     \path (A1_0) edge [->]node [auto] {$\scriptstyle{}$} (A1_1);     \path (A0_3) edge [->]node [auto] {$\scriptstyle{b_{n+1}}$} (A1_3);     \path (A0_2) edge [->]node [auto] {$\scriptstyle{b_n}$} (A1_2);     \path (A0_0) edge [->]node [auto] {$\scriptstyle{a_n}$} (A1_0);     \path (A0_2) edge [->]node [auto] {$\scriptstyle{}$} (A0_3);     \path (A1_2) edge [->]node [auto] {$\scriptstyle{}$} (A1_3);   \end{tikzpicture}   \] Then
the canonical map
\[
\varinjlim_{n\in\N}(\stX_{n}\times_{\stY_{n}}\stZ_{n})\arr\varinjlim_{n\in\N}\stX_{n}\times_{{\displaystyle \varinjlim_{n\in\N}}\stY_{n}}\varinjlim_{n\in\N}\stZ_{n}
\]
is an equivalence.
\end{prop}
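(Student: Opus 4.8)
The plan is to reduce the assertion to a statement about direct systems of groupoids and then verify that statement by a direct inspection of objects and morphisms. By the fiberwise description of the limit recalled above, for any direct system $\stW_*$ of categories fibered in groupoids over $\shS$ and any $s\in\shS$ there is a natural equivalence $\varinjlim_{n}\stW_n(s)\arrdi{\simeq}\stW_\infty(s)$; moreover both the $2$-fiber product and the limit are formed fiberwise, so that $(\stX_n\times_{\stY_n}\stZ_n)(s)=\stX_n(s)\times_{\stY_n(s)}\stZ_n(s)$ and similarly for $\varinjlim_n$. Hence it is enough to prove the following purely $2$-categorical fact: given direct systems of groupoids $\shX_*,\shY_*,\shZ_*$ together with morphisms of direct systems $a_*\colon\shX_*\arr\shY_*$ and $b_*\colon\shZ_*\arr\shY_*$, the canonical functor
\[
\varinjlim_{n}\big(\shX_n\times_{\shY_n}\shZ_n\big)\arr\big(\varinjlim_{n}\shX_n\big)\times_{\varinjlim_{n}\shY_n}\big(\varinjlim_{n}\shZ_n\big)
\]
is an equivalence, where the transition functors on the left are those induced by $2$-functoriality of the fiber product from the given $2$-commutative squares.

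To check this I would unwind the definitions. An object of $\shX_n\times_{\shY_n}\shZ_n$ is a triple $(x,z,\phi)$ with $x\in\shX_n$, $z\in\shZ_n$ and $\phi\colon a_n(x)\arrdi{\sim}b_n(z)$ an isomorphism in $\shY_n$, and a morphism is a compatible pair of arrows. An object of the right hand side is a triple $\big((n,x),(m,z),\psi\big)$ with $\psi$ an isomorphism in $\varinjlim_n\shY_n$ between the images of $a_n(x)$ and $b_m(z)$; by definition of the Hom-sets of a filtered colimit of categories, $\psi$ is represented by an honest isomorphism in some $\shY_q$ with $q\ge n,m$. For essential surjectivity I would push $x$ and $z$ forward to such an index $q$ and observe that the resulting object of $\shX_q\times_{\shY_q}\shZ_q$ maps to the given one up to isomorphism. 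For fullness and faithfulness I would, after pushing forward, compare a morphism of the left hand limit, which is represented at some finite level $q$ by a pair of arrows making the relevant square commute in $\shY_q$, with a morphism of the right hand side, which is a pair of arrows of $\varinjlim_n\shX_n$ and $\varinjlim_n\shZ_n$ whose images make the corresponding square commute in $\varinjlim_n\shY_n$; since an equality of two arrows in a filtered colimit holds already at a finite stage, the two notions of morphism coincide.

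The only genuine obstacle is the index bookkeeping: each object or morphism on the right involves finitely many data (the underlying objects, the comparison isomorphism, the two component arrows, and the commutativities they must satisfy), and one must produce a single index $q$ at which all of them are defined and all the required diagrams commute. This is the standard fact that a finite diagram in a filtered colimit of categories is realized at a finite stage, and since $\N$ is filtered it presents no real difficulty — which is why the assertion is a \emph{direct check}. Carrying it through gives the equivalence of the two groupoids, and transporting back along the fiberwise equivalences above yields the proposition.
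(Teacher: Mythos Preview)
Your proposal is correct and is precisely the ``direct check using the definition of fiber product'' that the paper invokes in lieu of a proof; the paper gives no further argument, and your fiberwise reduction followed by the filtered-colimit bookkeeping is exactly what is intended.
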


\begin{cor}
\label{cor:about limits and atlas} Let $\stX_{*}$ and $\stY_{*}$
be direct systems of categories fibered in groupoids over $\shS$
and assume to have $2$-Cartesian diagrams   \[   \begin{tikzpicture}[xscale=1.5,yscale=-1.2]     \node (A0_0) at (0, 0) {$\stY_n$};     \node (A0_1) at (1, 0) {$\stY_{n+1}$};     \node (A1_0) at (0, 1) {$\stX_n$};     \node (A1_1) at (1, 1) {$\stX_{n+1}$};     \path (A0_0) edge [->]node [auto] {$\scriptstyle{}$} (A0_1);     \path (A1_0) edge [->]node [auto] {$\scriptstyle{}$} (A1_1);     \path (A0_1) edge [->]node [auto] {$\scriptstyle{}$} (A1_1);     \path (A0_0) edge [->]node [auto] {$\scriptstyle{}$} (A1_0);   \end{tikzpicture}   \] Then
the following diagrams are also $2$-Cartesian for all $n\in\N$:  \[   \begin{tikzpicture}[xscale=2.0,yscale=-1.2]     \node (A0_0) at (0, 0) {$\stY_n$};     \node (A0_1) at (1, 0) {$\varinjlim\stY_*$};     \node (A1_0) at (0, 1) {$\stX_n$};     \node (A1_1) at (1, 1) {$\varinjlim \stX_*$};     \path (A0_0) edge [->]node [auto] {$\scriptstyle{}$} (A0_1);     \path (A1_0) edge [->]node [auto] {$\scriptstyle{}$} (A1_1);     \path (A0_1) edge [->]node [auto] {$\scriptstyle{}$} (A1_1);     \path (A0_0) edge [->]node [auto] {$\scriptstyle{}$} (A1_0);   \end{tikzpicture}   \] 
\end{cor}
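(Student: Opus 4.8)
The plan is to deduce the corollary from Proposition \ref{prop:fiber product of direct systems} together with the pasting lemma for $2$-Cartesian squares. Fix $n\in\N$ and let $f_{*}\colon\stY_{*}\arr\stX_{*}$ be the morphism of direct systems given by the vertical arrows in the hypothesis; it induces $f_{\infty}\colon\varinjlim\stY_{*}\arr\varinjlim\stX_{*}$. I must show that the canonical functor $\stY_{n}\arr\stX_{n}\times_{\varinjlim\stX_{*}}\varinjlim\stY_{*}$, built from the structure map $\stY_{n}\arr\varinjlim\stY_{*}$ and the given map $\stY_{n}\arr\stX_{n}$, is an equivalence. Since a direct limit over $\N$ is computed by any cofinal subsystem, I would first replace $\stX_{*}$ and $\stY_{*}$ by their subsystems on indices $\geq n$ and reindex, reducing to the case $n=0$.

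Next I would apply Proposition \ref{prop:fiber product of direct systems} to the three direct systems over $\shS$ consisting of $\stX_{*}$ in the middle, the constant system with value $\stX_{0}$ and identity transitions mapping to $\stX_{*}$ by the transition maps $\stX_{0}\arr\stX_{m}$, and the system $\stY_{*}$ mapping to $\stX_{*}$ by $f_{*}$. The $2$-commutativity required of these two morphisms of direct systems is exactly the $2$-commutativity of the squares in the hypothesis. The proposition then yields an equivalence
\[
\varinjlim_{m}\bigl(\stX_{0}\times_{\stX_{m}}\stY_{m}\bigr)\;\simeq\;\stX_{0}\times_{\varinjlim\stX_{*}}\varinjlim\stY_{*}.
\]

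It remains to identify the left-hand side with $\stY_{0}$. For each $m$, pasting the given $2$-Cartesian squares for the indices $0,1,\dots,m$ shows that the composite rectangle with top row $\stY_{0}\arr\stY_{m}$, bottom row $\stX_{0}\arr\stX_{m}$ and vertical maps the structure maps is again $2$-Cartesian, so the transition map and the structure map exhibit $\stY_{0}\simeq\stX_{0}\times_{\stX_{m}}\stY_{m}$. Granting that these equivalences are natural in $m$, they assemble into an equivalence between the system $m\mapsto\stX_{0}\times_{\stX_{m}}\stY_{m}$ and the constant system with value $\stY_{0}$; passing to the colimit and combining with the displayed equivalence gives $\stY_{0}\simeq\stX_{0}\times_{\varinjlim\stX_{*}}\varinjlim\stY_{*}$. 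A last check that this composite equivalence agrees with the canonical functor then establishes the $2$-Cartesianity.

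The main obstacle is the naturality claim just used: one must verify that the canonical equivalences $\stX_{0}\times_{\stX_{m}}\stY_{m}\simeq\stY_{0}$ commute with the transition maps of the fibre-product system $m\mapsto\stX_{0}\times_{\stX_{m}}\stY_{m}$. I expect this to be routine but fiddly, amounting to an organised chase of the $2$-morphisms witnessing $2$-commutativity of the given squares and repeated use of the pasting lemma for $2$-Cartesian squares of categories fibered in groupoids; no new idea should be needed.
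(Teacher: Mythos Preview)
Your argument is correct and follows essentially the same route as the paper's proof: the paper writes the maps $\stY_{n}\xrightarrow{\simeq}\stX_{n}\times_{\stX_{m}}\stY_{m}\to\stX_{n}\times_{\stX_{\infty}}\stY_{\infty}$ for all $m>n$ and passes to the limit on $m$, which is exactly your application of Proposition~\ref{prop:fiber product of direct systems} combined with the pasting lemma. You are more explicit than the paper about the naturality in $m$ of the equivalences $\stY_{n}\simeq\stX_{n}\times_{\stX_{m}}\stY_{m}$, which the paper leaves implicit.
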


\begin{proof}
We have maps $\stY_{n}\arrdi{\simeq}\stX_{n}\times_{\stX_{m}}\stY_{m}\arr\stX_{n}\times_{\stX_{\infty}}\stY_{\infty}$
for all $m>n$. Passing to the limit on $m$ we get the result.
\end{proof}
\begin{lem}
\label{lem:check stack on finite coverings} Assume that $\shS$ has
a Grothendieck topology such that for all coverings $\shV=\{U_{i}\arr U\}_{i\in I}$
there exists a finite subset $J\subseteq I$ for which $\shV_{J}=\{U_{j}\arr U\}_{j\in J}$
is also a covering. Let also $\stY$ be a fibered category. Then $\stY$
is a stack (resp. pre-stack) if and only if given a finite covering
$\shU$ of $U\in\shS$ the functor $\stY(U)\arr\stY(\shU)$ is an
equivalence (resp. fully faithful), where $\stY(\shU)$ is the category
of descent data of $\stY$ over $\shU$.
\end{lem}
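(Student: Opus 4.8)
The plan is to reduce descent along an arbitrary covering to descent along a finite subcovering, invoking the hypothesis on the topology twice: once on the given covering of $U$, and once on the pullbacks to the individual members $U_{i}$.

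The ``only if'' directions being immediate (a stack, resp.\ pre-stack, satisfies the relevant descent condition for every covering), I would fix an arbitrary covering $\shU=\{U_{i}\arr U\}_{i\in I}$, choose by hypothesis a finite $J\subseteq I$ with $\shU_{J}=\{U_{j}\arr U\}_{j\in J}$ still a covering, and consider the restriction functor $R\colon\stY(\shU)\arr\stY(\shU_{J})$ that forgets the components indexed by $I\setminus J$. The heart of the argument is the claim that, \emph{as soon as $\stY$ satisfies the pre-stack condition for finite coverings, $R$ is fully faithful}. To prove this I would take descent data $\xi=(x_{i},\phi_{ij})$ and $\eta=(y_{i},\psi_{ij})$ over $\shU$ and recover a morphism $\alpha=(\alpha_{i}\colon x_{i}\arr y_{i})_{i\in I}$ from its restriction $(\alpha_{j})_{j\in J}$: for each $i\in I$ the family $\{U_{i}\times_{U}U_{j}\arr U_{i}\}_{j\in J}$ is a \emph{finite} covering of $U_{i}$, being the pullback of $\shU_{J}$, and over it the hom-presheaf $\Homsh_{\stY}(x_{i},y_{i})$ is a sheaf by the finite pre-stack hypothesis; conjugating $\alpha_{j}$ by $\phi_{ij},\psi_{ij}$ gives a section of this sheaf over $U_{i}\times_{U}U_{j}$, the cocycle identities for $\xi,\eta$ together with the compatibility of $(\alpha_{j})$ force these sections to agree on the triple overlaps $U_{i}\times_{U}U_{j}\times_{U}U_{j'}$, and hence they glue to a unique $\alpha_{i}$. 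Separatedness of these sheaves yields faithfulness of $R$ and the glueing yields fullness; the verifications that the glued family is again a morphism of descent data over $\shU$ and that it restricts over $\shU_{J}$ to the given one are routine and I would only sketch them.

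With $R$ fully faithful available, the pre-stack statement is purely formal: $\stY(U)\arr\stY(\shU_{J})$ is fully faithful by the finite pre-stack hypothesis and factors as $\stY(U)\arr\stY(\shU)\arrdi{R}\stY(\shU_{J})$, so $\stY(U)\arr\stY(\shU)$ is fully faithful because $R$ is (if $g\circ f$ and $g$ are fully faithful, so is $f$). For the stack statement, the finite stack hypothesis in particular implies the finite pre-stack hypothesis, so by the previous paragraph $\stY$ is already a pre-stack and $\stY(U)\arr\stY(\shU)$ is fully faithful; it then remains to prove essential surjectivity. Given $\xi\in\stY(\shU)$, applying the finite stack hypothesis to the covering $\shU_{J}$ produces $x\in\stY(U)$ together with an isomorphism $R(\xi)=\xi|_{\shU_{J}}\simeq x|_{\shU_{J}}=R(x|_{\shU})$ in $\stY(\shU_{J})$; since $R$ is fully faithful and fully faithful functors reflect isomorphisms, this lifts to an isomorphism $\xi\simeq x|_{\shU}$ in $\stY(\shU)$, so $\xi$ lies in the essential image of $\stY(U)\arr\stY(\shU)$.

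The hard part will be the fully faithfulness of $R$: it is the only place where descent data and cocycle identities are handled explicitly, and it is precisely where the finiteness assumption on the topology is used, since the covering $\{U_{i}\times_{U}U_{j}\arr U_{i}\}_{j\in J}$ of $U_{i}$ must itself be finite for the finite pre-stack condition to apply to it. The rest is formal category theory.
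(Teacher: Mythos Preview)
Your proof is correct and follows the paper's strategy: factor $\stY(U)\to\stY(\shU_J)$ through the restriction $R\colon\stY(\shU)\to\stY(\shU_J)$ and control $R$. The one difference worth noting is that the paper only argues $R$ is \emph{faithful}, via a commutative square with target $\prod_{i\in I}\prod_{j\in J}\stY(U_i\times_U U_j)$, and asserts this suffices. Faithfulness of $R$ does yield the pre-stack half (if $R\circ F$ is fully faithful and $R$ is faithful then $F$ is fully faithful), but it does not by itself give essential surjectivity in the stack half: from $R(\xi)\simeq R(x|_{\shU})$ and $R$ merely faithful one cannot conclude $\xi\simeq x|_{\shU}$. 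Your glueing over the finite cover $\{U_i\times_U U_j\to U_i\}_{j\in J}$ upgrades this to $R$ \emph{fully faithful}, which is exactly what is needed to lift that isomorphism back to $\stY(\shU)$. So the step you flag as the hard part is doing real work that the paper's write-up leaves implicit.
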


\begin{proof}
The ``only if'' part is trivial. We show the ``if'' part. Let
$\shV=\{U_{i}\arr U\}_{i\in I}$ be a general covering and consider
a finite subset $J\subseteq I$ for which $\shV_{J}=\{U_{i}\arr U\}_{i\in J}$
is also a covering. Thus the composition $\stY(U)\arr\stY(\shV)\arr\stY(\shV_{J})$
is an equivalence (resp. fully faithful) and it is enough to show
that $\stY(\shV)\arr\stY(\shV_{J})$ is faithful. This follows because
there is a $2$-commutative diagram   \[   \begin{tikzpicture}[xscale=3.3,yscale=-1.5]     \node (A0_0) at (0, 0) {$\stY(\shV)$};     \node (A0_1) at (1, 0) {$\displaystyle\prod_{i\in I}\stY(U_i)$};     \node (A1_0) at (0, 1) {$\stY(\shV_J)$};     \node (A1_1) at (1, 1) {$\displaystyle\prod_{i\in I}(\prod_{j\in J}\stY(U_i\times_U U_j))$};     \path (A0_0) edge [->]node [auto] {$\scriptstyle{a}$} (A0_1);     \path (A0_0) edge [->]node [auto] {$\scriptstyle{}$} (A1_0);     \path (A0_1) edge [->]node [auto] {$\scriptstyle{b}$} (A1_1);     \path (A1_0) edge [->]node [auto] {$\scriptstyle{}$} (A1_1);   \end{tikzpicture}   \] where
the functors $a$ and $b$ are faithful.
\end{proof}
\begin{prop}
\label{prop:limit of stacks is stack} In the hypothesis of \ref{lem:check stack on finite coverings},
if $\stX_{*}$ is a direct system of stacks (resp. pre-stacks) over
$\shS$ then $\stX_{\infty}$ is also a stack (resp. pre-stack) over
$\shS$.
\end{prop}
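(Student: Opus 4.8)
The plan is to reduce to finite coverings via \ref{lem:check stack on finite coverings} and then to exploit the explicit description of the limit given at the beginning of this appendix: for each $s\in\shS$ one has $\stX_{\infty}(s)\simeq\varinjlim_{n}\stX_{n}(s)$, and in a limit of categories indexed by $\N$ every object and every morphism is represented at some finite stage, while any two parallel morphisms that become equal in the limit already become equal at some finite stage. Since a descent datum over a \emph{finite} covering involves only finitely many objects, gluing isomorphisms and cocycle relations, all of them can be pushed into a single $\stX_{m}$, where effectivity is known.

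First I would invoke \ref{lem:check stack on finite coverings}: it suffices to show that for every finite covering $\shU=\{U_{i}\arr U\}_{i\in J}$ with $J$ finite, the functor $\stX_{\infty}(U)\arr\stX_{\infty}(\shU)$ is fully faithful, and, in the stack case, essentially surjective. Throughout I would use that pullback along each $U_{i}\arr U$, each $U_{i}\times_{U}U_{j}\arr U$ and each $U_{i}\times_{U}U_{j}\times_{U}U_{k}\arr U$ is compatible with the transition functors of the system, so that $\stX_{\infty}$ evaluated on each of these (finitely many) objects of $\shS$ is the limit of the corresponding categories $\stX_{n}(-)$.

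For full faithfulness: given $\xi,\eta\in\stX_{\infty}(U)$, choose $n$ so that $\xi,\eta$ come from $\xi_{n},\eta_{n}\in\stX_{n}(U)$. A morphism $\xi|_{\shU}\arr\eta|_{\shU}$ in $\stX_{\infty}(\shU)$ is a family of isomorphisms over the $U_{i}$ (indexed by the finite set $J$) whose two restrictions to each $U_{i}\times_{U}U_{j}$ agree; finiteness of $J$ lets me realize this family in some $\stX_{m}$, $m\ge n$, and the finitely many compatibility identities, holding in the limit, hold already in some $\stX_{m'}$, $m'\ge m$. This gives a morphism $\xi_{m'}|_{\shU}\arr\eta_{m'}|_{\shU}$ in $\stX_{m'}(\shU)$, hence, $\stX_{m'}$ being a prestack, a unique morphism $\xi_{m'}\arr\eta_{m'}$ in $\stX_{m'}(U)$, hence a morphism $\xi\arr\eta$ in $\stX_{\infty}(U)$ inducing the given one. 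Injectivity is similar: two morphisms $\xi\arr\eta$ in $\stX_{\infty}(U)$ that agree in $\stX_{\infty}(\shU)$ come from some $\stX_{m}$ and agree over each $U_{i}$ already in some $\stX_{m'}$, hence agree in $\stX_{m'}(U)$ by the prestack property, hence in $\stX_{\infty}(U)$. This already proves the pre-stack case.

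For essential surjectivity in the stack case: a descent datum for $\stX_{\infty}$ over $\shU$ consists of finitely many objects $x_{i}\in\stX_{\infty}(U_{i})$, finitely many gluing isomorphisms over the $U_{i}\times_{U}U_{j}$, and the cocycle identities over the $U_{i}\times_{U}U_{j}\times_{U}U_{k}$. Pushing the $x_{i}$ into a common stage $\stX_{n}$, then the gluing isomorphisms into some $\stX_{m}$ with $m\ge n$, and finally observing that the finitely many cocycle identities, valid in the limit, are valid already in some $\stX_{m'}$ with $m'\ge m$, I obtain a descent datum for $\stX_{m'}$ over $\shU$; since $\stX_{m'}$ is a stack it is effective, yielding an object of $\stX_{m'}(U)$ whose image in $\stX_{\infty}(U)$ induces the original datum up to the canonical isomorphism. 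The only step requiring care is the bookkeeping that the limit of categories over $\N$ is filtered in the precise sense used above — this is immediate from the description of $\stX_{\infty}$ recalled at the start of this appendix together with the finiteness of the covering, which is exactly why \ref{lem:check stack on finite coverings} is brought in — and all remaining verifications are routine.
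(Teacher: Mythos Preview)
Your proposal is correct and follows exactly the same approach as the paper: reduce to finite coverings via \ref{lem:check stack on finite coverings}, then use that a descent datum over a finite covering involves only finitely many pieces which can therefore be realized in a single $\stX_{m}$. The paper's own proof is the two-line remark ``It is easy to prove descent (resp.\ descent on morphisms) and its uniqueness along coverings indexed by finite sets. By \ref{lem:check stack on finite coverings} this is enough,'' and your write-up simply spells out those easy details.
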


\begin{proof}
It is easy to prove descent (resp. descent on morphisms) and its uniqueness
along coverings indexed by finite sets. By \ref{lem:check stack on finite coverings}
this is enough.
\end{proof}
Clearly the site $\shS$ we have in mind in the above proposition
is a category fibered in groupoids over the category of affine schemes
$\Aff$ with any of the usual topologies, for instance $\Aff/X$,
the category of affine schemes together with a map to a given scheme
$X$.

\section{\label{sec:Rigidification}Rigidification revisited}

Rigidification is an operation that allows us to ``kill'' automorphisms
of a given stack by modding out stabilizers by a given subgroup of
the inertia. This operation is described in \cite[Appendix A]{Abramovich2007}
in the context of algebraic stacks, but one can easily see that this
is a very general construction. In this appendix we discuss it in
its general form so that we can apply it to non-algebraic stacks like
$\Delta_{G}$. 

Let $\shS$ be a site, $\stX$ be a stack in groupoids over $\shS$
and denote by $I(\stX)\arr\stX$ the inertia stack. The inertia stack
can be also thought as the sheaf $\stX^{\op}\arr(\text{Groups})$
mapping $\xi\in\stX(U)$ to $\Aut_{\stX(U)}(\xi)$. By a subgroup
sheaf of the inertia stack we mean a subgroup sheaf of the previous
functor. Notice that given a sheaf $F\colon\stX^{\op}\arr\sets$ and
an object $\xi\in\stX(U)$ one get a sheaf $F_{\xi}$ on $U$ by composing
$(\shS/U)^{\op}\arrdi{\xi}\stX^{\op}\arr\sets$, where the first arrow
comes from the $2$-Yoneda lemma. Concretely one has $F_{\xi}(V\arrdi gU)=F(g^{*}\xi)$.
If $f\colon V\arr U$ is any map in $\shS$ there is a canonical isomorphism
$F_{\xi}\times_{U}V\simeq F_{f^{*}\xi}$.

Notice moreover that a subgroup sheaf $\shH$ of $I(\stX)$ is automatically
normal: if $\xi\in\stX(U)$ and $\omega\in I(\stX)(\xi)=\Aut_{\stX(U)}(\xi)$
then $\omega$ induces the conjugation $I(\stX)(\xi)\arr I(\stX)(\xi)$
and, since $\shH$ is a subsheaf, the subgroup $\shH(\xi)$ is preserved
by the conjugation.

We now describe how to rigidify $\stX$ by any subgroup sheaf $\shH$
of the inertia. We define the category $\widetilde{\stX\rig\shH}$
as follows. The objects are the same as the ones of $\stX$. Given
$\xi\in\stX(U)$ and $\eta\in\stX(V)$ an arrow $\xi\arr\eta$ in
$\widetilde{\stX\rig\shH}$ is a pair $(f,\phi)$ where $f\colon U\arr V$
and $\phi\in(\Isosh_{U}(f^{*}\eta,\xi)/\shH_{\xi})(U)$. Given $\zeta\in\stX(W)$
and arrows $\xi\arrdi{(f,\phi)}\eta\arrdi{(g,\psi)}\zeta$ we have
that $(\Isosh_{U}(f^{*}g^{*}\zeta,f^{*}\eta)/\shH_{f^{*}\eta})\simeq(\Isosh_{V}(g^{*}\zeta,\eta)/\shH_{\eta})\times_{V}U$,
because the action of $\shH_{*}$ is free. Moreover composition induces
a map
\[
(\Isosh_{U}(f^{*}g^{*}\zeta,f^{*}\eta)/\shH_{f^{*}\eta})\times(\Isosh_{U}(f^{*}\eta,\xi)/\shH_{\xi})\arr(\Isosh_{U}(f^{*}g^{*}\zeta,\xi)/\shH_{\xi})
\]
One set $(g,\psi)\circ(f,\phi)=(gf,\omega)$ where $\omega$ is the
image of $(f^{*}\psi,\phi)$ under the above map. It is elementary
to show that this defines a category $\widetilde{\stX\rig\shH}$ together
with a map $\widetilde{\stX\rig\shH}\arr\shS$ making it into a category
fibered in groupoids. The map $\stX\arr\widetilde{\stX\rig\shH}$
is also a map of fibered categories.
\begin{defn}
The rigidification $\stX\rig\shH$ of $\stX$ by $\shH$ over the
site $\shS$ is a stackification of the category fibered in groupoids
$\widetilde{\stX\rig\shH}$ constructed above.
\end{defn}

Depending on the chosen foundation and the notion of category used,
a stackification does not necessarily exists. The usual workaround
is to talk about universes but in our case one can directly construct
a stackification $\stX\rig\shH$. We denote by $\stZ$ the category
constructed as follows. Its objects are pairs $(\shG\arr U,F)$ where
$U\in\shS$, $\shG\arr U$ is a gerbe and $F\colon\shG\arr\stX$ is
a map of fibered categories satisfying the following condition: for
all $y\in\shG$ lying over $V\in\shS$ the map $\Aut_{\shG(V)}(y)\arr\Aut_{\stX(V)}(F(y))$
is an isomorphism onto $\shH(F(y))$. An arrow $(\shG'\arr U',F')\arr(\shG\arr U,F)$
is a triple $(f,\omega,\delta)$ where   \[   \begin{tikzpicture}[xscale=1.6,yscale=-1.2]     \node (A0_0) at (0, 0) {$\shG'$};     \node (A0_1) at (1, 0) {$\shG$};     \node (A1_0) at (0, 1) {$U'$};     \node (A1_1) at (1, 1) {$U$};     \path (A0_0) edge [->]node [auto] {$\scriptstyle{\omega}$} (A0_1);     \path (A1_0) edge [->]node [auto] {$\scriptstyle{f}$} (A1_1);     \path (A0_0) edge [->]node [auto] {$\scriptstyle{}$} (A1_0);     \path (A0_1) edge [->]node [auto] {$\scriptstyle{}$} (A1_1);   \end{tikzpicture}   \] is
a $2$-Cartesian diagram and $\delta\colon F\circ\omega\arr F'$ is
a base preserving natural isomorphism. The class of arrows between
two given objects is in a natural way a category rather than a set.
On the other hand, since the maps from the gerbes to $\stX$ are faithful
by definition, this category is equivalent to a set: between two $1$-arrows
there exist at most one $2$-arrow. In particular $\stZ$ is a $1$-category.
 It is not difficult to show that $\stZ$ is fibered in groupoids
over $\shS$ and that it satisfies descent, i.e., it is a stack in
groupoids over $\shS$.

There is a functor $\Delta\colon\stX\arr\stZ$ mapping $\xi\in\stX(U)$
to $F_{\xi}\colon\Bi\shH_{\xi}\arr\stX\times U\arr\stX$. If $\psi\colon\xi'\arr\xi$
is an isomorphism in $\stX(U)$, then $\Delta(\psi)=(\Bi(c_{\psi}),\lambda_{\psi})$
where $c_{\psi}\colon\shH_{\xi'}\arr\shH_{\xi}$ is the conjugation
by $\psi$ and $\lambda_{\psi}$ is the unique natural transformation
$F_{\xi'}\arr F_{\xi}\circ\Bi(c_{\psi})$ that evaluated in $\shH_{\xi'}$
yields $\xi'\arrdi{\psi}\xi$. For the existence and uniqueness of
$\lambda_{\psi}$ recall that, by descent, a natural transformation
of functors $Q,Q'$ from a stack of torsors to a stack, is the same
datum of an isomorphism between the values of $Q$ and $Q'$ on the
trivial torsor which is functorial with respect to the automorphisms
of the trivial torsor. In our case a natural transformation $F_{\xi'}\arr F_{\xi}\circ\Bi(c_{\psi})$
is an isomorphism $\omega\colon\xi'\arr\xi$ (the values of the functors
on the trivial torsor $\shH_{\xi'}$) such that $c_{\psi}(u)=\omega u\omega^{-1}$
for all $\xi'\arrdi u\xi'\in\shH_{\xi'}$ (that is for all automorphisms
of the trivial torsor).

Given an object $z=(\shG,F)\in\stZ(U)$ there is a natural isomorphism
making the following diagram $2$-Cartesian:   \[   \begin{tikzpicture}[xscale=1.5,yscale=-1.2]     \node (A0_0) at (0, 0) {$\stG$};     \node (A0_1) at (1, 0) {$\stX$};     \node (A1_0) at (0, 1) {$U$};     \node (A1_1) at (1, 1) {$\stZ$};     \path (A0_0) edge [->]node [auto] {$\scriptstyle{F}$} (A0_1);     \path (A1_0) edge [->]node [auto] {$\scriptstyle{z}$} (A1_1);     \path (A0_1) edge [->]node [auto] {$\scriptstyle{\Delta}$} (A1_1);     \path (A0_0) edge [->]node [auto] {$\scriptstyle{}$} (A1_0);   \end{tikzpicture}   \] For
this reason we call $\Delta\colon\stX\arr\stZ$ the universal gerbe.
 The key point in proving this is that if we have a gerbe $\shG$
over $U$ and a section $x\in\shG(U)$ then the functor 
\[
\shG\arr\Bi(\Autsh_{\shG}(x))\comma y\longmapsto\Isosh_{\shG}(x,y)
\]
is well defined and an equivalence. In particular if $(\shG,F)\in\stZ(U)$
then $\Autsh_{\shG}(x)\simeq\shH_{F(x)}$ via $F$. 
\begin{prop}
\label{prop:explicit rigidification}The functor $\Delta\colon\stX\arr\stZ$
induces a fully faithful epimorphism $\widetilde{\stX\rig\shH}\arr\stZ$.
In particular $\stZ$ is a rigidification $\stX\rig\shH$ of $\stX$
by $\shH$.
\end{prop}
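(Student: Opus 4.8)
The plan is to promote $\Delta\colon\stX\arr\stZ$ to a functor $\widetilde\Delta\colon\widetilde{\stX\rig\shH}\arr\stZ$ of categories fibered in groupoids over $\shS$, to show that $\widetilde\Delta$ is fully faithful and an epimorphism, and then to conclude by the universal property of stackification. On objects $\widetilde\Delta$ coincides with $\Delta$, sending $\xi\in\stX(U)$ to $(\Bi\shH_\xi\arr U,\,F_\xi)$. On a morphism $(f,\phi)\colon\xi\arr\eta$, with $f\colon U\arr V$ and $\phi$ a section of the sheaf $\Isosh_U(f^*\eta,\xi)/\shH_\xi$, one produces a triple $(f,\omega,\delta)$: locally on $U$ the class $\phi$ lifts to an isomorphism $\psi\colon f^*\eta\arr\xi$ of $\stX$, and $\Delta(\psi)=(\Bi(c_\psi),\lambda_\psi)$ together with the canonical $2$-Cartesian identification $\Bi\shH_{f^*\eta}\simeq\Bi\shH_\eta\times_V U$ coming from $2$-Yoneda furnishes the required square and natural isomorphism over that part of $U$. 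Two lifts of $\phi$ differ by a section of $\shH_\xi$, which alters the resulting triple only by the $2$-isomorphism it induces; since in $\stZ$ an arrow is an isomorphism class of such triples, the locally defined data glue to a well-defined morphism of $\stZ$. Checking that $\widetilde\Delta$ is then a functor compatible with base change is routine, because composition in $\stZ$ is governed by composition of isomorphisms of $\stX$ in exactly the same way as composition in $\widetilde{\stX\rig\shH}$.

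The heart of the argument is full faithfulness, and this is the step I expect to be the main obstacle. Fix $f\colon U\arr V$; both $\Hom_{\widetilde{\stX\rig\shH}}(\xi,\eta)$ and $\Hom_{\stZ}(\Delta\xi,\Delta\eta)$ decompose over the set of such $f$, so it suffices to match the components over a fixed $f$. A morphism $(f,\omega,\delta)$ of $\stZ$ with source $\Delta\xi$ and target $\Delta\eta$ determines, via its $2$-Cartesian square, an equivalence of $U$-gerbes $\Bi\shH_\xi\simeq\Bi\shH_{f^*\eta}$ and hence an $\shH_{f^*\eta}$-torsor $P$ on $U$, the image of the neutral object; evaluating $\delta$ on the neutral object gives an isomorphism $F_\eta(P)\arr\xi$, i.e. an isomorphism from the $P$-twist of $f^*\eta$ to $\xi$. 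By the descent principle recalled just before the proposition — a natural transformation of functors out of the neutral gerbe $\Bi\shH_{f^*\eta}$ is the same as an isomorphism of their values on the neutral object compatible with its automorphisms — the pair $(P,\delta|_{\mathrm{neutral}})$ is exactly the datum of a section of $\Isosh_U(f^*\eta,\xi)/\shH_\xi$, naturally in $\xi$ and $\eta$; here one uses that $\shH$ is a subsheaf of the inertia, so that conjugation identifies $\shH_{f^*\eta}$ with $\shH_\xi$. Unwinding, this section is the original $\phi$ when $(f,\omega,\delta)=\widetilde\Delta(f,\phi)$, and every $(f,\omega,\delta)$ arises in this way from a unique $\phi$. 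The care needed here is to verify that this dictionary is an honest bijection of $\Hom$-sets, not merely a bijection after refining by a cover.

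For the epimorphism property the argument is the one sketched in the paragraph preceding the proposition: given $(\shG\arr U,F)\in\stZ(U)$, pass to a cover of $U$ over which $\shG$ admits a section $x$; then $y\mapsto\Isosh_\shG(x,y)$ is an equivalence $\shG\simeq\Bi\Autsh_\shG(x)$, and $F$ identifies $\Autsh_\shG(x)$ with $\shH_{F(x)}$ and this functor with $F_{F(x)}$, so $(\shG,F)$ is isomorphic over the cover to $\Delta(F(x))=\widetilde\Delta(F(x))$. Thus every object of $\stZ$ lies locally in the essential image of $\widetilde\Delta$. Finally, a fully faithful epimorphism from a fibered category into a stack exhibits the target as a stackification of the source: for any stack $\stW$ and functor $\widetilde{\stX\rig\shH}\arr\stW$, full faithfulness pins the extension down on morphisms and on objects in the image of $\widetilde\Delta$, and the epimorphism property together with descent in $\stW$ along the covers it provides produces and glues the extension $\stZ\arr\stW$ uniquely. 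Since $\stX\rig\shH$ is by definition a stackification of $\widetilde{\stX\rig\shH}$, we conclude $\stZ\simeq\stX\rig\shH$.
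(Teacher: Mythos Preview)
Your proposal is correct and follows essentially the same approach as the paper. Both arguments treat the epimorphism as immediate from the universal gerbe description, and for full faithfulness both localize so that the gerbe map is given by $\Bi c$ (equivalently, so that your torsor $P$ is trivial), then evaluate the natural transformation on the neutral object to recover an isomorphism $\phi\colon f^*\eta\to\xi$ and check that the remaining ambiguity is exactly $\shH_\xi$. The paper packages this last step slightly more crisply by proving directly that $\Isosh_U(\xi',\xi)\to\Isosh_U(\Delta(\xi'),\Delta(\xi))$ is an $\shH_\xi$-torsor, which immediately yields the quotient description of the target and avoids having to assemble an explicit inverse on $\Hom$-sets; your version instead builds the inverse by hand via $(P,\delta|_{\mathrm{neutral}})$, which amounts to the same computation viewed from the other side.
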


\begin{proof}
Given a functor $T\arr\stZ$ induced by $(\shG\arr T,F)\in\stZ(T)$
then $T\times_{\stZ}\stX$ is the stack of triples $(f,\xi,\omega)$
where $f\colon S\arr T$, $\xi\in\stX(S)$ and $\omega$ is an isomorphism
$(\Bi\shH_{\xi},F_{\xi})\simeq(\shG,F)$. Denote by $\Delta\colon\stX\arr\stZ$
the functor and let $\xi,\xi'\in\stX(U)$. Since $\stX\arr\stZ$ is
clearly an epimorphism, we have to prove that
\[
\Isosh_{U}(\xi',\xi)\arr\Isosh_{U}(\Delta(\xi'),\Delta(\xi))
\]
is invariant by the action of $\shH_{\xi}$ and an $\shH_{\xi}$-torsor.
Notice that a functor of the form $\Bi\shH_{\xi'}\arr\Bi\shH_{\xi}$
is locally induced by a group homomorphism $\shH_{\xi'}\arr\shH_{\xi}$.
Thus it is enough to prove that if $c\colon\shH_{\xi'}\arr\shH_{\xi}$
is an isomorphism of groups and $\lambda\colon F_{\xi'}\arr F_{\xi}\circ\Bi c$
is an isomorphism then the set $J$ of $\phi\colon\xi'\arr\xi$ inducing
$(\Bi(c),\lambda)\colon\Delta(\xi')\arr\Delta(\xi)$ is non empty
and $\shH_{\xi}(U)$ acts transitively of this set. 

The natural transformation $\lambda$ evaluated on the trivial torsor
$\shH_{\xi'}$ yields an isomorphism $\phi\colon\xi'\arr\xi$. The
fact that $\lambda$ is a natural transformation implies that $c=c_{\phi}$
and $\lambda=\lambda_{\phi}$, that is $\phi\in J$. Now let $\xi'\arrdi{\psi}\xi$
be an isomorphism. A natural isomorphism $\Bi(c_{\psi})\arr\Bi(c_{\phi})$
is given by $h\in\shH_{\xi}(U)$ (more precisely the multiplication
$\shH_{\xi}\arr\shH_{\xi}$ by $h$) such that $hc_{\psi}(\omega)=c_{\phi}(\omega)h$
for all $\omega\in\shH_{\xi'}(U)$. Such an $h$ induces a morphism
$\Delta(\psi)\arr\Delta(\phi)$ if and only if $h\psi=\phi$. Since
this condition implies the previous one we see that $J=\shH_{\xi}(U)\phi$.
\end{proof}
We denote by $\Bi_{\stX}\shH$ the stack of $\shH$-torsors over $\stX$
(thought of as a site). An object of $\Bi_{\stX}\shH$ is by definition
an object $\xi\in\stX(U)$ together with an $\shH_{|\stX/\xi}$-torsor
over $\stX/\xi$. Since $\stX$ is fibered in groupoids the forgetful
functor $\stX/\xi\arr\shS/U$ is an equivalence. Thus an object of
$\Bi_{\stX}\shH$ is an object $\xi\in\stX(U)$ together with a $\shH_{\xi}$-torsor
over $U$.
\begin{prop}
\label{prop:properties of rigidification}We have:

\begin{enumerate}
\item given $\xi,\eta\in\stX(U)$ we have $\Isosh_{U}(\Delta(\xi),\Delta(\eta))\simeq\Isosh(\xi,\eta)/\shH_{\eta}$;
\item the functor $\Delta\colon\stX\arr\stX\rig\shH$ is universal among
maps of stacks $F\colon\stX\arr\stY$ such that, for all $\xi\in\stX(U)$,
$\shH_{\xi}$ lies in the kernel of $\Autsh_{U}(\xi)\arr\Autsh_{U}(F(\xi))$;
\item if   \[   \begin{tikzpicture}[xscale=1.5,yscale=-1.2]     \node (A0_0) at (0, 0) {$\stY$};     \node (A0_1) at (1, 0) {$\stX$};     \node (A1_0) at (0, 1) {$\stR$};     \node (A1_1) at (1, 1) {$\stX\rig\shH$};     \path (A0_0) edge [->]node [auto] {$\scriptstyle{b}$} (A0_1);     \path (A1_0) edge [->]node [auto] {$\scriptstyle{}$} (A1_1);     \path (A0_1) edge [->]node [auto] {$\scriptstyle{\Delta}$} (A1_1);     \path (A0_0) edge [->]node [auto] {$\scriptstyle{a}$} (A1_0);   \end{tikzpicture}   \] is
a $2$-Cartesian diagram of stacks then for all $\eta\in\stY(U)$
the map 
\[
\Ker(\Autsh_{U}(\eta)\arr\Autsh_{U}(a(\eta)))\arr\Autsh_{U}(b(\eta))
\]
is an isomorphism onto $\shH_{b(\eta)}$, so that $b^{*}\shH$ is
naturally a subgroup sheaf of $I(\stY)$, and the induced map $\stY\rig b^{*}\shH\arr\stR$
is an equivalence;
\item there is an isomorphism $\stX\times_{\stX\rig\shH}\stX\simeq\Bi_{\stX}(\shH)$;
\item the map $\stX\to\stX\rig\shH$ is a relative gerbe (see \cite[Tag 06P1]{SP014}). 
\end{enumerate}
\end{prop}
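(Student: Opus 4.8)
The plan is to establish the four statements in order, using the explicit model $\stX\rig\shH=\stZ$ provided by \ref{prop:explicit rigidification} and the description of the universal gerbe $\Delta\colon\stX\arr\stZ$ given just above it.

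For (1): by construction an arrow $\xi\arr\eta$ over $U$ in $\widetilde{\stX\rig\shH}$ is a section of $\Isosh_U(\eta,\xi)/\shH_\xi$, so the sheaf of isomorphisms from $\xi$ to $\eta$ there is $\Isosh_U(\eta,\xi)/\shH_\xi$, and the same holds in $\stX\rig\shH$ because $\widetilde{\stX\rig\shH}\arr\stZ$ is fully faithful; inverting isomorphisms identifies this with $\Isosh_U(\xi,\eta)/\shH_\eta$, using that $\shH$ is stable under conjugation inside $I(\stX)$. For (2): the inclusion $\shH_\xi\subseteq\Ker(\Autsh_U(\xi)\arr\Autsh_U(\Delta\xi))$ is immediate from (1). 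Given $F\colon\stX\arr\stY$ killing $\shH$ in the stated sense, use that an object of $\stX\rig\shH$ over $U$ is a gerbe $\shG\arr U$ with $G\colon\shG\arr\stX$ inducing $\Autsh_\shG(y)\simeq\shH_{G(y)}$: then $F\circ G$ kills all automorphisms of $\shG$, hence descends (uniquely) along the epimorphism $\shG\arr U$ to a map $U\arr\stY$, since its two pullbacks to $\shG\times_U\shG$ are canonically identified (two local isomorphisms of objects of a gerbe differ by an automorphism, which $F\circ G$ sends to the identity). This yields $\overline F\colon\stX\rig\shH\arr\stY$ with $\overline F\circ\Delta\simeq F$, and $\overline F$ is unique since $\Delta$ is an epimorphism.

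For (4): an object of $\stX\times_{\stX\rig\shH}\stX$ over $U$ is a triple $(\xi,\eta,\lambda)$ with $\lambda$ a section of $\Isosh_U(\xi,\eta)/\shH_\eta$ by (1); the fibre $P$ of $\Isosh_U(\xi,\eta)\arr\Isosh_U(\xi,\eta)/\shH_\eta$ over $\lambda$ is an $\shH_\eta$-torsor (free transitive action, locally nonempty), giving $(\eta,P)\in\Bi_\stX(\shH)(U)$. Conversely an $\shH_\eta$-torsor $P$, trivialized over a cover $\{V_i\arr U\}$, gives a cocycle $g_{ij}\in\shH_\eta(V_{ij})\subseteq\Autsh(\eta)(V_{ij})$, hence a descent datum for $\eta$; its effectivity ($\stX$ being a stack) produces $\xi\in\stX(U)$ together with local isomorphisms $\xi|_{V_i}\arr\eta|_{V_i}$ whose classes in $\Isosh/\shH_\eta$ glue to a section $\lambda$, and the fibre over $\lambda$ recovers $P$. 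One checks these constructions are quasi-inverse and natural, which gives the claimed isomorphism.

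For (3): writing $\eta=(r,\xi,\mu)\in\stY(U)=(\stR\times_{\stX\rig\shH}\stX)(U)$, an automorphism of $\eta$ is a pair $(\rho,\chi)\in\Autsh_U(r)\times\Autsh_U(\xi)$ compatible via $\mu$; the kernel of the projection to $\Autsh_U(r)=\Autsh_U(a\eta)$ is $\{(\id,\chi)\mid\Delta\chi=\id\}=\{(\id,\chi)\mid\chi\in\shH_\xi\}$, which maps isomorphically onto $\shH_{b\eta}=\shH_\xi$. Thus $b^*\shH\subseteq I(\stY)$ lies in the kernel of $a$, so by (2) the functor $a$ factors as $\stY\arr\stY\rig b^*\shH\arrdi{g}\stR$, and it remains to see $g$ is an equivalence: it is an epimorphism because $a$ is (a base change of the epimorphism $\Delta$); it is faithful because by (1) applied to $(\stY,b^*\shH)$ isomorphisms of $\stY$ with equal $a$-image differ exactly by $b^*\shH$, so $\Autsh(\bar\eta)=\Autsh_\stY(\eta)/b^*\shH_\eta\arr\Autsh_\stR(a\eta)$ is injective; and it is full because $a$ has gerbe fibres (being a base change of the universal gerbe $\Delta$, whose fibres are the gerbes appearing in the Cartesian square before \ref{prop:explicit rigidification}), so an isomorphism in $\stR(U)$ lifts locally to $\stY$ after correcting by an automorphism of its source, which lifts since $\Autsh_\stY(\eta)\arr\Autsh_\stR(a\eta)$ is locally surjective. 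The delicate point is precisely this fullness of $g$: it genuinely uses that $\Delta$ is a gerbe and not merely an epimorphism — a faithful epimorphism of stacks need not be an equivalence — so the Cartesian description of $\Delta$ is essential here; the twisting construction in (4), producing $\xi$ from $(\eta,P)$ by re‑gluing along a torsor under a subsheaf of $\Autsh(\eta)$, is the other place needing a careful (but routine) check that it is quasi‑inverse to the obvious functor.
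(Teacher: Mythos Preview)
Your proof is correct and follows essentially the same strategy as the paper: use \ref{prop:explicit rigidification} for (1), derive (2) from it, and for (3) and (4) exploit the fibre-product description of objects together with (1). The differences are only in execution. For (2) the paper simply invokes the definition of $\stX\rig\shH$ as the stackification of $\widetilde{\stX\rig\shH}$, whose morphisms are already quotients by $\shH$, so any $F$ killing $\shH$ extends tautologically; your argument via descent along the gerbes of the model $\stZ$ is longer but equally valid. For (3) the paper packages faithfulness and fullness into a single step by observing that the fibre-product description of $\stY$ yields a Cartesian square
\[
\begin{array}{ccc}
\Isosh_U(\eta',\eta) & \longrightarrow & \Isosh_U(b(\eta'),b(\eta))\\
\downarrow & & \downarrow\\
\Isosh_U(a(\eta'),a(\eta)) & \longrightarrow & \Isosh_U(b(\eta'),b(\eta))/\shH_{b(\eta)}
\end{array}
\]
so that $\Isosh_U(\eta',\eta)\arr\Isosh_U(a(\eta'),a(\eta))$ is an $\shH_{b(\eta)}$-torsor and hence induces an isomorphism after quotienting; this is slicker than your separate faithful/full argument, though your use of the gerbe fibres for fullness is perfectly sound. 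For (4) your twisting construction of the inverse is more explicit than the paper's, which instead checks the equivalence after pulling back along the epimorphism $\stX\arr\Bi_\stX\shH$ given by the trivial torsor.
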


\begin{proof}
Point $1)$ follows from \ref{prop:explicit rigidification}, while
point $2)$ is a direct consequence of the definition of rigidification.
Now consider point $3)$. The kernel in the statement corresponds
to the group of automorphisms of the object $\eta$ in the fiber product
$U\times_{\stR}\stY$. Using that $\stX\arr\stX\rig\shH$ is the universal
gerbe we get the the isomorphism in the statement. In particular there
is an epimorphism $\stY\rig b^{*}\shH\arr\stR$. This is fully faithful
because, given $\eta,\eta'\in\stY(U)$, by definition of fiber product
one get a Cartesian diagram   \[   \begin{tikzpicture}[xscale=4.0,yscale=-1.2]     \node (A0_0) at (0, 0) {$\Isosh_U(\eta',\eta)$};     \node (A0_1) at (1, 0) {$\Isosh_U(b(\eta'),b(\eta))$};     \node (A1_0) at (0, 1) {$\Isosh_U(a(\eta'),a(\eta))$};     \node (A1_1) at (1, 1) {$\Isosh_U(b(\eta'),b(\eta))/\shH_{b(\eta)}$};     \path (A0_0) edge [->]node [auto] {$\scriptstyle{}$} (A0_1);     \path (A0_0) edge [->]node [auto] {$\scriptstyle{}$} (A1_0);     \path (A0_1) edge [->]node [auto] {$\scriptstyle{}$} (A1_1);     \path (A1_0) edge [->]node [auto] {$\scriptstyle{}$} (A1_1);   \end{tikzpicture}   \] For
point $4)$, denotes by $\stY$ the fiber product in the statement.
It is the stack of triples $(\xi,\xi',\phi)$ where $\xi,\xi'\in\stX(U)$
and $\phi\in\Isosh_{U}(\xi',\xi)/\shH_{\xi}$. The functor $\stY\arr\Bi_{\stX}\shH$
which maps $(\xi,\xi',\phi)$ to $(\xi,P_{\phi})$, where $P_{\phi}$
is defined by the Cartesian diagram   \[   \begin{tikzpicture}[xscale=2.5,yscale=-1.2]     \node (A0_0) at (0, 0) {$P_\phi$};     \node (A0_1) at (1, 0) {$\Isosh_U(\xi',\xi)$};     \node (A1_0) at (0, 1) {$U$};     \node (A1_1) at (1, 1) {$\Isosh_U(\xi',\xi)/\shH_\xi$};     \path (A0_0) edge [->]node [auto] {$\scriptstyle{}$} (A0_1);     \path (A0_0) edge [->]node [auto] {$\scriptstyle{}$} (A1_0);     \path (A0_1) edge [->]node [auto] {$\scriptstyle{}$} (A1_1);     \path (A1_0) edge [->]node [auto] {$\scriptstyle{\phi}$} (A1_1);   \end{tikzpicture}   \] is
an equivalence. This is because the functor $\stX\arr\Bi_{\stX}\shH$
sending $\xi$ to $\xi$ with the trivial torsor is an epimorphism
and the base change $\stY\times_{\Bi_{\stX}\shH}\stX\arr\stX$ is
an equivalence since $\stY\times_{\Bi_{\stX}\shH}\stX$ is the stack
of triples $(\xi,\xi',\psi)$ where $\xi,\xi'\in\stX(U)$ and $\psi\colon\xi'\arr\xi$
is an isomorphism in $\stX(U)$.

For point $5)$, since $\Delta\colon\stX\to\stX\rig\shH$ is an epimorphism,
it has local sections. Moreover given two objects $\xi,\eta\in\stX(U)$
and an isomorphism $\Delta(\xi)\to\Delta(\eta)$, by point $1)$,
this isomorphism locally comes from an isomorphism $\xi\to\eta$ in
$\stX$, as required.
\end{proof}
\begin{prop}
\label{prop:trivial rigidification} Let $\stX$ be a stack in groupoid
over $\shS$ and $\shG\colon\shS^{\op}\to\Ab$ be a sheaf of abelian
groups. Then the map
\[
(\stX\times\Bi_{\shS}\shG)\rig\shG\to\stX
\]
 is an equivalence.
\end{prop}

\begin{proof}
Let $U\in\shS$ be an object and $P\in\Bi_{\shS}\shG(U)$ a $\shG$-torsor.
Since $\shG$ is commutative the action of $\shG$ on $P$ is $\shG$-equivariant
and therefore the map
\[
\shG\times U\to\Autsh_{\Bi_{\shS}\shG}(P)
\]
is well defined and, checking locally, an isomorphism. This implies
that $\shG$, more precisely the restriction $(\stX\times\Bi_{\shS}\shG)^{\op}\to\shS^{\op}\to\Ab$,
is a subgroup of the inertia stack of $(\stX\times\Bi_{\shS}\shG)$,
thought of as a sheaf of groups. Since the functor $\stX\times\Bi_{\shS}\shG\to\stX$
kills the automorphisms in $\shG$ we obtain the map in the statement
thanks to \ref{prop:properties of rigidification}, $2)$. By \ref{prop:properties of rigidification},
$3)$ we can assume $\stX=\shS$. In order to show that $\Bi_{\shS}\shG\rig\shG\to\shS$
is an equivalence, it is enough to show that the functor $\widetilde{\Bi_{\shS}\shG\rig\shG}\to\shS$
is fully faithful. By construction, the objects of the first category
are torsors $P,Q\in\Bi_{\shS}\shG(U)$ and the morphisms are $\Isosh^{\shG}(P,Q)/\shG$.
But this is a sheaf which is locally trivial and therefore it is trivial.
It follows that $\Iso_{\widetilde{\Bi_{\shS}\shG\rig\shG}}(P,Q)$
consists of just one element.
\end{proof}
\begin{prop}
\label{prop:rigidification as gerbe} Let $\stX$ be a stack in groupoid
over $\shS$ and $\shG\colon\shS^{\op}\to(\text{Groups})$ be a sheaf
of groups. Assume there is an isomorphism between the restriction
$\shG_{\stX}\colon\stX^{\op}\to\shS^{\op}\to(\text{Groups})$ and
the inertia $I(\stX)\colon\stX^{\op}\to(\text{Groups})$. Then $\shG_{\stX}$
is a sheaf of abelian groups, $\stX\rig\shG$ is the sheaf of isomorphism
classes of $\stX$ and $\stX\to\stX\rig\shG$ is a relative gerbe.
Moreover for any object $U\in\shS$ and map $U\to\stX\rig\shG$ the
fiber $\stX\times_{\stX\rig\shG}U\to U$ is locally of the form $\Bi_{U}\shG_{|U}\to U$.
\end{prop}

\begin{proof}
Let $h\in\shG_{\stX}(\xi)$ for $\xi\in\stX(U)$. The naturality of
the isomorphism $\shG_{\stX}(\xi)\to I(\stX)(\xi)=\Aut_{\stX(U)}(\xi)$
on the morphism $h\colon\xi\to\xi$ exactly implies that the conjugation
by $h$ on $I(\stX)(\xi)$ is the identity. Thus $\shG_{\stX}\simeq I(\stX)$
is abelian. By \ref{prop:properties of rigidification}, $2)$ any
map $\stX\to\shF$ to a sheaf factors through $\stX\rig\shG$ and
by \ref{prop:properties of rigidification},$1)$ the stack $\stX\rig\shG$
is a actually a sheaf. This implies that $\stX\rig\shG$ is the sheaf
of isomorphism classes of $\stX$. It is a gerbe thanks to \ref{prop:properties of rigidification},
$5)$. For the local form of $\stX\to\stX\rig\shG$ any map $U\to\stX\rig\shG$
locally factors through $\stX$ itself. In this case the fiber is
exactly $\Bi_{U}\shG_{|U}\to U$ thanks to \ref{prop:properties of rigidification},
$4)$.
\end{proof}
\bibliographystyle{alpha}
\bibliography{biblio}

\begin{thebibliography}{{Sta}17}

\bibitem[AGV64]{SGA4-2}
Michael Artin, Alexander Grothendieck, and Jean-Louis Verdier.
\newblock {\em {SGA4-2 - Th{\'{e}}orie des topos et cohomologie {\'{e}}tale des
  sch{\'{e}}mas - S{\'{e}}minaire de G{\'{e}}om{\'{e}}trie Alg{\'{e}}brique du
  Bois Marie - 1963-64 - vol. 2 (Lecture notes in mathematics '''270''')}}.
\newblock 1964.

\bibitem[AOV08]{Abramovich2007}
Dan Abramovich, Martin Olsson, and Angelo Vistoli.
\newblock {Tame stacks in positive characteristic}.
\newblock {\em Annales de l'institut Fourier}, 58(4):1057--1091, 2008.

\bibitem[Bat99]{Batyrev}
Victor~V. Batyrev.
\newblock Non-{A}rchimedean integrals and stringy {E}uler numbers of
  log-terminal pairs.
\newblock {\em J. Eur. Math. Soc. (JEMS)}, 1(1):5--33, 1999.

\bibitem[B{\v{C}}19]{BouthierCesnavicius}
Alexis Bouthier and Kestutis {\v{C}}esnavi{\v{c}}ius.
\newblock Torsors on loop groups and the hitchin fibration.
\newblock arXiv:1908.07480v1, 2019.

\bibitem[DL02]{Denef-Loeser}
Jan Denef and Fran{\c{c}}ois Loeser.
\newblock Motivic integration, quotient singularities and the {M}c{K}ay
  correspondence.
\newblock {\em Compositio Math.}, 131(3):267--290, 2002.

\bibitem[FM02]{Fried-Mezard}
Michael~D. Fried and Ariane M\'{e}zard.
\newblock Configuration spaces for wildly ramified covers.
\newblock In {\em Arithmetic fundamental groups and noncommutative algebra
  ({B}erkeley, {CA}, 1999)}, volume~70 of {\em Proc. Sympos. Pure Math.}, pages
  353--376. Amer. Math. Soc., Providence, RI, 2002.

\bibitem[Har80]{Harbater1980}
David Harbater.
\newblock Moduli of {$p$}-covers of curves.
\newblock {\em Comm. Algebra}, 8(12):1095--1122, 1980.

\bibitem[Kat86]{Katz}
Nicholas~M. Katz.
\newblock Local-to-global extensions of representations of fundamental groups.
\newblock {\em Ann. Inst. Fourier (Grenoble)}, 36(4):69--106, 1986.

\bibitem[OP10]{Obus-Pries}
Andrew Obus and Rachel Pries.
\newblock Wild tame-by-cyclic extensions.
\newblock {\em J. Pure Appl. Algebra}, 214(5):565--573, 2010.

\bibitem[Pri02]{Pries}
Rachel~J. Pries.
\newblock Families of wildly ramified covers of curves.
\newblock {\em Amer. J. Math.}, 124(4):737--768, 2002.

\bibitem[Rom05]{Romagny2005}
Matthieu Romagny.
\newblock {Group actions on stacks and applications}.
\newblock {\em The Michigan Mathematical Journal}, 53(1):209--236, 2005.

\bibitem[{Sta}17]{SP014}
The {Stacks Project Authors}.
\newblock {S}tacks {P}roject.
\newblock \url{http://stacks.math.columbia.edu}, 2017.

\bibitem[Swa80]{Swan}
Richard~G. Swan.
\newblock On seminormality.
\newblock {\em J. Algebra}, 67(1):210--229, 1980.

\bibitem[Ton14]{Tonini-diagonalizable}
Fabio Tonini.
\newblock Stacks of ramified covers under diagonalizable group schemes.
\newblock {\em Int. Math. Res. Not. IMRN}, (8):2165--2244, 2014.

\bibitem[Ton17]{Tonini-monoidal}
Fabio Tonini.
\newblock Ramified {G}alois covers via monoidal functors.
\newblock {\em Transform. Groups}, 22(3):845--868, 2017.

\bibitem[TY]{alpha_p}
Fabio Tonini and Takehiko Yasuda.
\newblock Notes on the motivic mckay correspondence for the group scheme
  $\alpha_p$.
\newblock arXiv:1803.09558, to appear in Proceedings of FJV2017 Kagoshima.

\bibitem[TY19]{Formal-Torsors-II}
Fabio Tonini and Takehiko Yasuda.
\newblock Moduli of formal torsors {II}.
\newblock arXiv:1909.09276, 2019.

\bibitem[TZ19]{Tonini2016}
Fabio Tonini and Lei Zhang.
\newblock Algebraic and {N}ori fundamental gerbes.
\newblock {\em J. Inst. Math. Jussieu}, 18(4):855--897, 2019.

\bibitem[WY15]{Wood-Yasuda-I}
Melanie~Matchett Wood and Takehiko Yasuda.
\newblock Mass formulas for local {G}alois representations and quotient
  singularities {I}: a comparison of counting functions.
\newblock {\em International Mathematics Research Notices},
  2015(23):12590--12619, 2015.

\bibitem[WY17]{Wood-Yasuda-II}
Melanie~Matchett Wood and Takehiko Yasuda.
\newblock Mass formulas for local {G}alois representations and quotient
  singularities {II}: dualities and resolution of singularities.
\newblock {\em Algebra Number Theory}, 11(4):817--840, 2017.

\bibitem[Yas]{Yasuda-motivic}
Takehiko Yasuda.
\newblock Motivic integration over wild {D}eligne-{M}umford stacks.
\newblock arXiv:1908.02932.

\bibitem[Yas14]{Yasuda-p-cyclic}
Takehiko Yasuda.
\newblock The {$p$}-cyclic {M}c{K}ay correspondence via motivic integration.
\newblock {\em Compos. Math.}, 150(7):1125--1168, 2014.

\bibitem[Yas17]{Yasuda-toward}
Takehiko Yasuda.
\newblock Toward motivic integration over wild {D}eligne-{M}umford stacks.
\newblock In {\em Higher dimensional algebraic geometry---in honour of
  {P}rofessor {Y}ujiro {K}awamata's sixtieth birthday}, volume~74 of {\em Adv.
  Stud. Pure Math.}, pages 407--437. Math. Soc. Japan, Tokyo, 2017.

\end{thebibliography}

\end{document}